\newtheorem{theorem}{Theorem}[section]
\newtheorem{lemma}[theorem]{Lemma}
\newtheorem{proposition}[theorem]{Proposition}
\newtheorem{corollary}[theorem]{Corollary}
\newtheorem{conjecture}[theorem]{Conjecture}
\theoremstyle{definition}
\newtheorem*{ack}{Acknowledgements}
\newtheorem*{con}{Conventions}
\newtheorem{remark}[theorem]{Remark}
\newtheorem{example}[theorem]{Example}
\newtheorem{definition}[theorem]{Definition}
\newtheorem{question}{Question}
\numberwithin{equation}{section} \numberwithin{figure}{section}
\DeclareMathOperator{\Aut}{Aut}
\DeclareMathOperator{\Spec}{Spec}
\DeclareMathOperator{\an}{an}
\DeclareMathOperator{\Hom}{Hom}
\newcommand{\Qbar}{\overline{\QQ}}
\newcommand\ZZ{\mathbb{Z}}
\newcommand\QQ{\mathbb{Q}}
\newcommand\CC{\mathbb{C}}
\newcommand\OO{\mathcal{O}}
\title[The Lang-Vojta conjectures on projective pseudo-hyperbolic varieties]{The Lang-Vojta    conjectures on projective pseudo-hyperbolic varieties}
\author{Ariyan Javanpeykar}
\address{Ariyan Javanpeykar \\
Institut f\"{u}r Mathematik\\
Johannes Gutenberg-Universit\"{a}t Mainz\\
Staudingerweg 9, 55099 Mainz\\
Germany.}
\email{peykar@uni-mainz.de}
\subjclass[2010]
{14G99 
(11G35,  
14G05,  
32Q45)} 
\keywords{Integral points, hyperbolicity, automorphisms,  moduli spaces, dynamical systems, Green-Griffiths-Lang-Vojta conjecture.}
\begin{document}

\maketitle
 \tableofcontents

\thispagestyle{empty}

\section{Introduction}

These notes grew out of a mini-course given from May 13th to May 17th at UQ\`AM in Montr\'eal during a workshop on Diophantine Approximation and  Value Distribution Theory.

\subsection{What is in these notes?}  We   start with an overview of Lang-Vojta's conjectures on pseudo-hyperbolic  \emph{projective} varieties. These conjectures relate various  different notions of hyperbolicity.    We  start with Brody hyperbolicity and   discuss conjecturally related notions of hyperbolicity in arithmetic geometry and algebraic geometry in subsequent sections. We slowly work our way towards the most general version of Lang-Vojta's conjectures and provide a summary of all the conjectures in Section \ref{section:conjectures}.     

  After having explained the main conjectures with the case of curves and closed subvarieties of abelian varieties as our guiding principle, we collect recent advances on Lang-Vojta's conjectures and present these in a unified manner. These results are concerned with endomorphisms of hyperbolic varieties, moduli spaces of maps into a hyperbolic variety, and also the behavior of hyperbolicity in families of varieties.  The   results presented in these sections are proven in \cite{vBJK, JAut, JKa, JVez, JXie}. 
  
We also  present results on the Shafarevich conjecture for smooth hypersurfaces obtained in joint work with Daniel Litt \cite{JLitt}. These are motivated by Lawrence-Venkatesh's recent breakthrough on the non-density of integral points on the moduli space of   hypersurfaces \cite{LV}, and are in accordance with Lang-Vojta's conjecture for \emph{affine} varieties.  Our results in this section are proven using methods from Hodge theory, and are loosely related to Bakker-Tsimerman's chapter in this book \cite{BakkerTsimermanBook}.
  
  In the final section we sketch a proof of the fact that being groupless is a Zariski-countable open condition, and thus in particular stable under generization. To prove this, we follow \cite{JVez} and introduce a non-archimedean notion of hyperbolicity. We then state  a non-archimedean analogue of the Lang-Vojta conjectures which we prove under suitable assumptions. These results suffice to prove   that grouplessness is stable under generization.

\subsection{Anything new in these notes?}  The main contribution of these notes is the systematic presentation and comparison between different notions of hyperbolicity, and their ``pseudofications''. As it is intended to be a broad-audience introduction   to the Lang-Vojta conjectures, it contains all definitions and well-known relations between these.  Also, Lang-Vojta's original conjectures are often only stated for varieties over $\Qbar$, and we  propose natural extensions of their conjectures to varieties over arbitrary algebraically closed fields of characteristic zero.  We also define for each notion appearing in the conjecture the relevant ``exceptional locus'' (which Lang only does for some notions of hyperbolicity in \cite{Lang2}).

The final version of Lang-Vojta's conjecture as stated in Section \ref{section:conjectures} does not appear anywhere in the literature explicitly.
Furthermore, the section on groupless varieties (Section \ref{section:groupless}) contains simple proofs that do not appear explicitly elsewhere.  Also, we  have included a thorough discussion of the a priori difference between  being arithmetically hyperbolic and Mordellic for a projective variety  in Section \ref{section:ps_mordell}.  This difference is not addressed anywhere else in the literature.

 \subsection{Rational points over function fields} We have not included any discussion of rational points on projective varieties over function fields of smooth connected curves over a field $k$, and unfortunately ignore the relation to Lang-Vojta's conjecture throughout these notes. 

\subsection{Other relevant literature} 
Lang stated his conjectures in \cite{Lang2}; see    also \cite[Conjecture~XV.4.3]{CornellSilverman} and \cite[\S0.3]{Abr}. In \cite[Conj.~4.3]{Vojta3} Vojta extended this conjecture to quasi-projective varieties. In   \cite{Lang2} Lang  ``pseudofied'' the notion of Brody hyperbolicity.  Here he was inspired by   Kiernan-Kobayashi's extension of the notion of Kobayashi hyperbolicity introduced in \cite{KiernanKobayashi}. 

There are several beautiful surveys of the Green-Griffiths and Lang-Vojta conjectures. We mention \cite{Corvaja1, Corvaja2, Turchet, DiverioR, Gasbarri,  VojtaLangExc}.

The  first striking consequence of  Lang-Vojta's conjecture was obtained by Caporaso-Harris-Mazur \cite{CHM}. Their results were further investigated by Abramovich, Ascher-Turchet,   Hassett, and Voloch; see \cite{Abr, AbrCor,  AbrMat, AbrVol, AscherTurchet, HassettCor}.

Campana's conjectures   provide a complement to Lang-Vojta's conjectures, and first appeared in \cite{Campana, CampanaOr}; see also   Campana's chapter in this book \cite{CampanaBook}. In a nutshell, the ``opposite'' of being pseudo-hyperbolic (in any sense of the word ``hyperbolic'') is conjecturally captured by Campana's notion of a ``special'' variety.
  
  \begin{ack}
We thank Marc-Hubert Nicole for his patience and   guidance when writing these notes. 

We  would also like to thank Carlo Gasbarri,  Nathan Grieve, Aaron Levin, Steven Lu, Marc-Hubert Nicole, Erwan Rousseau, and Min Ru for organizing the research workshop \emph{Diophantine Approximation and Value Distribution Theory} and giving us the opportunity to give the mini-course on which these notes are based.  

 These notes would not exist without the input of  Kenneth Ascher,  Raymond van Bommel, Ljudmila Kamenova, Robert Kucharczyk, Daniel Litt, Daniel Loughran, Siddharth Mathur, Jackson Morrow,  Alberto Vezzani, and Junyi Xie. 
 
 We are grateful to the organizers Philipp Habegger, Ronan Terpereau, and Susanna Zimmermann of the 7th Swiss-French workshop in Algebraic Geometry in 2018 in Charmey for giving me the opportunity to speak on hyperbolicity of moduli spaces. Part of these notes are also based on the mini-course I gave in Charmey.
 
 We are especially grateful to Raymond van Bommel for providing the diagrams and figure in Section \ref{section:conjectures}.
 
We   gratefully acknowledge support from   SFB/Transregio 45.
\end{ack}

   \begin{con} Throughout these notes, we will let $k$  be an algebraically closed field of characteristic zero. 
If $X$ is a locally finite type scheme over $\mathbb{C}$, we let $X^{\an}$ be the associated complex-analytic space  \cite[Expose~XII]{SGA1}. If $K$ is a field, then  a variety over  $K$ is a finite type separated     $K$-scheme.  

 If $X$ is a variety over a field $K$ and $L/K$ is a field extension, then $X_L:= X\times_{\Spec K} \Spec L$ will denote the base-change  of $X\to \Spec K$ along  $\Spec L\to \Spec K$. More generally, if $R\to R'$ is an extension of rings and $X$ is  a scheme over $R$, we let $X_{R'}$ denote $X\times_{\Spec R} \Spec R'$.

 If $K$ is a number field and $S$ is a finite set of finite places of $K$, then  $\OO_{K,S}$ will denote the ring of $S$-integers of $K$.
 \end{con}

\section{Brody hyperbolicity}\label{section:brody}
We start  with the classical notion of Brody hyperbolicity for complex varieties.   
 
 \begin{definition}
 A complex-analytic space $X$ is \emph{Brody hyperbolic} if every holomorphic map $\mathbb{C}\to X$ is constant. A locally finite type  scheme $X$ over $\CC$ is \emph{Brody hyperbolic} if $X^{\an}$ is Brody hyperbolic.
 \end{definition}
 
 If $X$ is a complex-analytic space, then a non-constant holomorphic map $\CC\to X$ is commonly referred to   as an entire curve in $X$. Thus, to say that $X$ is Brody hyperbolic is to say that $X$ has no entire curves.
 
 We recall that a   complex-analytic space $X$ is \emph{Kobayashi hyperbolic} if Kobayashi's pseudometric on $X$ is a metric \cite{KobayashiBook}. It is a fundamental result of Brody that a \emph{compact} complex-analytic space $X$ is Brody hyperbolic if and only if it is Kobayashi hyperbolic; see \cite[Theorem~3.6.3]{KobayashiBook}.   
 
 \begin{remark}[Descending Brody hyperbolicity]\label{remark:cw_brody}
 Let $X\to Y$  be a proper \'etale (hence finite) morphism of varieties over $\CC$. It is not hard to show that $X$ is Brody hyperbolic if and only if $Y$ is Brody hyperbolic. (It is crucial that $X\to Y$ is finite \textbf{and} \'etale.)
 \end{remark}
 
 Fundamental results in complex analysis lead to the following classification of Brody hyperbolic projective curves.

 \begin{theorem}[Liouville, Riemann, Schwarz, Picard]\label{thm:riemann}
 Let $X$ be a smooth  projective connected curve over $\CC$. Then $X$ is  Brody hyperbolic if and only if $\mathrm{genus}(X)\geq 2$.  
 \end{theorem}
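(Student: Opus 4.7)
The plan is to split into the three possible genus ranges and handle each separately.

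For the ``only if'' direction, I would produce explicit non-constant entire curves when $\mathrm{genus}(X) \in \{0,1\}$. If $\mathrm{genus}(X) = 0$, then $X \cong \PP^1_{\CC}$, and the standard inclusion $\CC \hookrightarrow \PP^1_{\CC}$ of the affine line is a non-constant holomorphic map from $\CC$ to $X^{\an}$, so $X$ is not Brody hyperbolic. If $\mathrm{genus}(X) = 1$, then (after choosing a base point) $X$ is an elliptic curve, and $X^{\an}$ is biholomorphic to $\CC/\Lambda$ for some rank-$2$ lattice $\Lambda \subset \CC$; the quotient map $\CC \to \CC/\Lambda \cong X^{\an}$ is a non-constant entire curve in $X$.

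For the ``if'' direction, suppose $\mathrm{genus}(X) \geq 2$. The key input is the uniformization theorem (Riemann, Koebe, Poincar\'e), which implies that the universal cover of $X^{\an}$ is biholomorphic to the upper half-plane $\mathbb{H}$, or equivalently to the unit disk $\mathbb{D} \subset \CC$. Given any holomorphic map $f \colon \CC \to X^{\an}$, I would use that $\CC$ is simply connected to lift $f$ along the universal covering $\mathbb{D} \to X^{\an}$, obtaining a holomorphic $\widetilde{f} \colon \CC \to \mathbb{D}$. Since $\mathbb{D}$ is bounded in $\CC$, Liouville's theorem forces $\widetilde{f}$ to be constant, and hence $f$ is constant. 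This shows $X$ is Brody hyperbolic.

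The only genuinely non-elementary input is the uniformization theorem for compact Riemann surfaces of genus at least two, which I will invoke as a black box; everything else reduces to Liouville's theorem (for the genus $\geq 2$ case) and the explicit analytic descriptions of $\PP^1$ and of genus-one curves (for the other cases). The classical theorems of Schwarz and Picard could replace Liouville here: Picard's little theorem applied to the composition of $f$ with a suitable chart yields the same conclusion, since a hyperbolic Riemann surface admits plenty of nonconstant holomorphic functions to $\CC \setminus \{0,1\}$. I do not foresee any serious obstacle beyond quoting the uniformization theorem.
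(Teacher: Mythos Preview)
The paper does not actually prove this theorem; it is stated as a classical result attributed to Liouville, Riemann, Schwarz, and Picard, with no argument supplied. Your main argument is correct and is the standard proof: genus~$0$ and~$1$ curves admit explicit entire curves, while for genus $\geq 2$ the uniformization theorem identifies the universal cover with the unit disk, and any holomorphic map $\CC \to X^{\an}$ lifts to a bounded entire function, hence is constant by Liouville. This is precisely in the spirit of the paper's Remark~\ref{remark:bd}, which records that bounded domains are Brody hyperbolic, and of Remark~\ref{remark:abvar_Brody} for the genus-one case.

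One small caveat about your closing aside: a compact Riemann surface of genus $\geq 2$ admits \emph{no} nonconstant holomorphic map to $\CC \setminus \{0,1\}$ (indeed, no nonconstant holomorphic map to $\CC$ at all, by the maximum principle), so the direct application of Picard's little theorem you sketch does not work as phrased. The uniformization-plus-Liouville route you give first is the clean one; drop the Picard remark or rephrase it as an alternative proof that the disk itself is Brody hyperbolic.
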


 More generally, a smooth quasi-projective connected curve $X$ over $\CC$ is Brody hyperbolic if and only if $X$ is not isomorphic to $\mathbb{P}^1_{\CC}$, $\mathbb{A}^1_{\CC}$, $\mathbb{A}^1_{\CC}\setminus \{0\}$, nor a smooth proper connected genus one curve over $\CC$.
 
 \begin{remark}\label{remark:abvar_Brody} It is implicit in Theorem \ref{thm:riemann} that elliptic curves are not Brody hyperbolic. More generally, a non-trivial abelian variety $A$ of dimension $g$ over $\CC$ is not Brody hyperbolic, as its associated complex-analytic space is uniformized by $\mathbb{C}^g$.  Since $A$ even has a dense entire curve, one can consider $A$ to be as far as possible from being Brody hyperbolic.   We mention that Campana conjectured that a projective variety has a dense entire curve if and only if it is ``special''.   We  refer the reader to Campana's article in this book for a further discussion of Campana's conjecture  \cite{CampanaBook}.
 \end{remark}
 
By Remark \ref{remark:abvar_Brody},  an obvious obstruction to a projective variety $X$ over $\mathbb{C}$ being Brody hyperbolic is that it contains an abelian variety.  The theorem of Bloch--Ochiai--Kawamata says that this is the only obstruction if $X$   can be embedded into an abelian variety (see \cite{Kawamata}).

 \begin{theorem}[Bloch--Ochiai--Kawamata]\label{thm:bok}
Let $X$ be a closed subvariety of an abelian variety $A$ over $\CC$. Then $X$ is Brody hyperbolic if and only if $X$ does not contain the translate of a positive-dimensional abelian subvariety of $A$.
 \end{theorem}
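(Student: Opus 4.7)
The plan is to prove both directions, the first by direct construction and the second by analyzing the Zariski closure of an entire curve in $A$. For the easy direction, suppose $X$ contains a translate $a + B$ of a positive-dimensional abelian subvariety $B$ of $A$. By Remark \ref{remark:abvar_Brody}, $B$ is not Brody hyperbolic, so there exists a non-constant holomorphic map $g : \CC \to B$; composing with translation by $a$ yields a non-constant holomorphic map $\CC \to a + B \subseteq X$, hence $X$ is not Brody hyperbolic.

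For the converse, start with a non-constant holomorphic map $f : \CC \to X \subseteq A$, and let $Y \subseteq X$ be the Zariski closure of $f(\CC)$, which has positive dimension. The goal is to show that $Y$ is itself a translate of a positive-dimensional abelian subvariety of $A$. To this end, set
\[
B := \bigl(\{a \in A : a + Y = Y\}\bigr)^{0},
\]
the identity component of the translation stabilizer of $Y$; this is an abelian subvariety of $A$. After replacing $f$ by $z \mapsto f(z) - f(0)$, we may assume $0 \in Y$, so that $B \subseteq Y$. Let $\pi : A \to A/B$ be the quotient, let $\overline{Y} := \pi(Y)$, and let $\overline{f} := \pi \circ f : \CC \to \overline{Y}$, which is an entire curve with Zariski-dense image. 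By construction, the translation stabilizer of $\overline{Y}$ in $A/B$ is finite.

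The crux is to show that $\overline{Y}$ is a point; once this is established, $Y$ is a single fiber of $\pi$, hence a translate of $B$, and $B$ is positive-dimensional since $\dim Y \geq 1$. To prove $\overline{Y}$ is a point, I would invoke Ueno's theorem that a subvariety of an abelian variety with finite translation stabilizer is of general type. The technical core of Bloch--Ochiai--Kawamata is then to show that such a subvariety admits no entire curve with Zariski-dense image. The standard route is Bloch's construction of many holomorphic jet differentials on $\overline{Y}$ vanishing along an ample divisor — exploiting the triviality of the tangent bundle on $A/B$ to pull back holomorphic $1$-forms — followed by the fundamental vanishing theorem, which forces $\overline{f}$ to satisfy non-trivial algebraic differential equations; an induction on dimension then degenerates $\overline{f}$ into a proper subvariety, contradicting Zariski-density unless $\overline{Y}$ was already a point.

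The main obstacle is this last step: producing enough jet differentials on $\overline{Y}$ and exploiting them to trap the entire curve $\overline{f}$ in a proper subvariety. This is the analytic-geometric heart of Bloch--Ochiai--Kawamata and the reason the theorem required substantial new ideas to prove. By contrast, the stabilizer reduction via $B$ is purely formal, and the \emph{if} direction is immediate from Remark \ref{remark:abvar_Brody}.
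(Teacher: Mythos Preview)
The paper does not actually prove this theorem; it is stated as a known result with a reference to Kawamata's paper \cite{Kawamata} and is used as a black box throughout. So there is no ``paper's own proof'' to compare against.

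That said, your sketch is a faithful outline of the classical argument. The easy direction is exactly as you write it, and the stabilizer reduction---passing to $A/B$ so that $\overline{Y}$ has finite translation stabilizer, hence is of general type by Ueno's theorem---is the standard opening move for the hard direction. You are also right that the real content lies in the final step: producing enough (logarithmic) jet differentials vanishing on an ample divisor and invoking the fundamental vanishing theorem to force $\overline{f}$ into a proper subvariety, then iterating. You correctly flag this as the analytic heart of the matter, and indeed this is precisely the step that carries the names Bloch, Ochiai, and Kawamata (with later streamlinings by Noguchi, Green--Griffiths, and Siu). Your proposal is therefore not a complete proof but an accurate roadmap to one; since the paper itself only cites the result, that is already more than the paper provides.
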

 
Throughout these notes, we mostly focus on closed subvarieties of abelian varieties, as in this case the results concerning Lang-Vojta's conjectures are   complete; see  Section \ref{section:questions} for details.
 
 The theorem of Bloch--Ochiai--Kawamata has been pushed further by work of Noguchi--Winkelmann-Yamanoi; see \cite{NWY1, NWY2, NWY3, Yamanoi1, Yamanoi2}.   Other examples of Brody hyperbolic varieties can be constructed as quotients of bounded domains, as we explain now.

\begin{remark}[Bounded domains]\label{remark:bd}
 Let $D$ be a bounded domain in the affine space $\mathbb{C}^N$, and let $X$ be a   reduced  connected locally finite type  scheme over $\CC$. Then, any holomorphic map $X^{\an}\to D$ is constant; see \cite[Remark~2.9]{JVez} for a detailed proof. In particular, the complex-analytic space $D$ is Brody hyperbolic (take $X=\mathbb{A}^1_{\CC}$).
 \end{remark}
 
It follows from Remark \ref{remark:bd} that a (good) quotient of a bounded domain is Brody hyperbolic. This observation applies to locally symmetric varieties, Shimura varieties, and thus moduli spaces of abelian varieties. We conclude this section by recording the fact that the moduli space of abelian varieties (defined appropriately) is a Brody hyperbolic variety.
 \begin{example}\label{exa:bd_Ag} Let $g\geq 1$ and let $N\geq 3$ be integers. Then, 
 the (fine) moduli space of $g$-dimensional  principally polarized abelian varieties with level $N$ structure is a smooth quasi-projective variety over $\CC$ which is Brody hyperbolic. Indeed, its universal cover is biholomorphic to a bounded domain in $\mathbb{C}^{g(g+1)/2}$, so that we can apply Remark \ref{remark:bd}. (As the coarse moduli space of elliptic curves is given by the $j$-line $\mathbb{A}^1_{\mathbb{C}}$, we see that it is not Brody hyperbolic.  This is the reason for which we consider  the moduli space of abelian varieties with level structure.)
 \end{example}

 \section{Mordellic varieties}\label{section:Mordell}
 What should correspond to being Brody hyperbolic in arithmetic geometry? Lang was the first to  propose that a ``Mordellic'' projective variety over $\Qbar$ should be Brody hyperbolic (over the complex numbers). Roughly speaking, a projective variety over $\Qbar$ is Mordellic if it has only finitely many rational points in any fixed number field. To make this more precise, one has to choose models (see Definition \ref{defn:mor} below).   Conversely, a projective variety over a number field which is Brody hyperbolic  (over the complex numbers) should be Mordellic. In this section we will present this conjecture of Lang.
 
Throughout this section, we  let $k$ be an algebraically closed field of characteristic zero.   We first clarify what is meant with a model.

\begin{definition}\label{defn:mor}
Let $X$ be a finite type separated scheme over $k$ and let $A\subset k$ be a subring. A \emph{ model for $X$ over $A$} is a pair $(\mathcal{X},\phi)$ with $\mathcal{X}\to \Spec A$ a finite type separated scheme and $\phi:\mathcal{X}_k  \xrightarrow{\sim}  X$ an isomorphism of schemes over $k$. We will often omit $\phi$  from our notation.
\end{definition}

\begin{remark}
What constitutes the data of a model for $X$ over $A$? To explain this, let $X$ be an affine variety over $\CC$, say $X = \Spec R$. Note that the coordinate ring $R$ of $X$ is a finite type $\CC$-algebra.  Suppose  that $X$ is given by  the zero locus of polynomials $f_1,\ldots, f_r$ with coefficients in a subring $A$, so that  $R \cong \CC[x_1,\ldots,x_n]/(f_1,\ldots,f_r)$. Then $\mathcal{R} := A[x_1,\ldots,x_n]/(f_1,\ldots,f_r)\subset R$ is a finitely generated $A$-algebra    and  $\mathcal{R}\otimes_A \CC = R$. That is, if $\mathcal{X} = \Spec \mathcal{R}$, then $\mathcal{X}$   is a model for $X$ over $A$. We will be interested in studying $A$-valued points on $\mathcal{X}$. We follow common notation and let  $\mathcal{X}(A) $ denote the set $\mathrm{Hom}_A(\Spec A,\mathcal{X})$. Note that  $\mathcal{X}(A)$ is the set of solutions in $A$ of the polynomial system of equations $f_1 = \ldots = f_r = 0$.
\end{remark}

With the notion of model now clarified, we are ready to define what it means for a proper variety to be Mordellic. We leave the more general definition for non-proper varieties to the end of this section.

\begin{definition} A proper scheme  $X$ over $k$ is \emph{Mordellic over $k$} 
(or: \emph{has-only-finitely-many-rational-points over $k$})
if,  for every finitely generated subfield $K\subset k$ and every (proper) model $\mathcal{X}$ over $K$, the set $\mathcal{X}(K) := \Hom_K(\Spec K, \mathcal{X})$ is finite.
\end{definition} 

\begin{remark}[Independence of models]
We point out that the finiteness property required for a projective variety to be Mordellic can also be tested on a fixed model. 
That is, a proper scheme  $X$ over $k$ is Mordellic over $k$ if and only if 
there is a finitely generated subfield $K\subset k$ and a proper model $\mathcal{X}$ for $X$ over $K$ such that  for all finitely generated subfields $L\subset k$ containing $K$, the set $\mathcal{X}(L):=\Hom_K(\Spec L, \mathcal{X})$ is finite.
 \end{remark}

We note that Mordellicity (just like Brody hyperbolicity) descends along finite \'etale morphisms (Remark \ref{remark:cw_brody}).

\begin{remark}[Descending Mordellicity]\label{remark:cw_mordell}
Let $X\to Y$ be a finite \'etale morphism of projective varieties over $k$. Then it follows from the Chevalley-Weil theorem  that $X$ is Mordellic over $k$ if and only if $Y$ is Mordellic over $k$; see Theorem \ref{thm:cw} for a proof (of a more general result).
\end{remark}

 It is clear that $\mathbb{P}^1_k$ is not Mordellic, as $\mathbb{P}^1(\QQ)$ is dense.  A deep theorem of Faltings leads to the following classification of projective Mordellic curves. If $k=\Qbar$, then this theorem is proven in \cite{Faltings2}. The statement below is proven in \cite{FaltingsComplements} (see also \cite{Szpiroa}).

\begin{theorem}[Faltings]\label{thm:faltings_curves}
Let $X$ be a smooth projective connected curve over $k$. Then $X$ is Mordellic over $k$ if and only if  $\mathrm{genus}(X) \geq  2$.
\end{theorem}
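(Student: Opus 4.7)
The plan is to handle the two directions separately.

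For the \emph{only if} direction, suppose $\mathrm{genus}(X) \leq 1$ and produce a finitely generated subfield $K \subset k$ and a model whose set of $K$-points is infinite. If $\mathrm{genus}(X) = 0$ then $X \cong \mathbb{P}^1_k$, and already the $\QQ$-points of the model $\mathbb{P}^1_\QQ$ are infinite. If $\mathrm{genus}(X) = 1$, fix a base point to view $X$ as an elliptic curve $E$, spread out to an elliptic curve $\mathcal{E}$ over some finitely generated $K_0 \subset k$, and select a non-torsion point $P \in E(k)$; such a $P$ exists because $E(\Qbar)$ is already of infinite rank (its torsion is countable while $E(\Qbar)$ is uncountable). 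Enlarging $K_0$ to $K = K_0(P)$, the multiples $\{nP : n \in \ZZ\}$ lie in $\mathcal{E}(K)$, witnessing that $X$ is not Mordellic.

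For the \emph{if} direction, assume $\mathrm{genus}(X) \geq 2$. A finitely generated subfield $K \subset k$ is either a number field or has positive transcendence degree over $\QQ$, and the argument splits accordingly. The number-field case is precisely Faltings' theorem, which the plan is to invoke rather than reprove; its essential ingredients are the Mordell--Weil theorem applied to $\Jac(X)$, Faltings' finiteness theorem for abelian varieties of bounded Faltings height, and the Tate isogeny theorem.

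For $K$ of positive transcendence degree over $\QQ$, the strategy is to reduce to the arithmetic case by spreading out. Write $K = \mathrm{Frac}(A)$ for some finitely generated integral $\QQ$-algebra $A$, set $B = \Spec A$, and extend $\mathcal{X} \to \Spec K$ to a smooth projective family $\pi \colon \mathcal{X}_B \to U$ over a dense open $U \subset B$; after shrinking $U$, the $K$-points of $\mathcal{X}$ correspond to sections of $\pi$. If $\pi$ is non-isotrivial, the function-field Mordell conjecture of Manin and Grauert (extended to higher-dimensional bases by slicing $U$ with generic curves) bounds these sections. If $\pi$ is isotrivial, then after a finite \'etale base change every section is constant with image in a single genus $\geq 2$ curve $X_0$ definable over a number field, reducing to the arithmetic case already handled.

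The main obstacle is clearly Faltings' theorem itself. The necessity direction and the descent from general $k$ to number fields are comparatively soft, but the arithmetic core requires either Faltings' original Arakelov-theoretic approach via Shafarevich's conjecture or Vojta's diophantine-approximation approach (later simplified by Bombieri). Neither admits a short treatment, which is why the paper cites \cite{Faltings2,FaltingsComplements,Szpiroa} rather than reproving the result.
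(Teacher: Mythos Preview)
Your overall strategy for the ``if'' direction --- invoke Faltings over number fields, then extend to arbitrary finitely generated $K\subset k$ via the Grauert--Manin function-field Mordell theorem together with an isotrivial/non-isotrivial dichotomy --- is essentially what Faltings does in \cite{FaltingsComplements}, which is exactly what the paper cites rather than reproves. (The paper also sketches, in the remark ``From Shafarevich to Mordell'', the alternative Kodaira--Parshin route through the Shafarevich conjecture for abelian varieties.)

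There are, however, two genuine gaps. First, in the genus-one case you assert that $E(\Qbar)$ is uncountable; but $\Qbar$ is countable, so $E(\Qbar)$ is countable as well, and your cardinality argument collapses precisely when $k=\Qbar$. It is still true that $E$ acquires a non-torsion point over some finitely generated subfield of $k$, but this requires an actual argument --- the paper explicitly flags this as non-obvious in the Hassett--Tschinkel remark (Remark~\ref{remark:ht}) immediately following the theorem, and points to \cite{HassettTschinkel} for a proof. Second, your isotrivial case is not correct as stated: after trivializing the family as $X_0 \times U' \to U'$, a section is an arbitrary morphism $U' \to X_0$, and these need \emph{not} be constant. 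The non-constant ones are finite in number by de~Franchis--Severi, while the constant ones correspond to points of $X_0$ over the field of constants of $U'$, which has strictly smaller transcendence degree; so the correct structure is an induction on transcendence degree, not a one-step jump to number fields.
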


Recall that abelian varieties are very far from being Brody hyperbolic (Remark \ref{remark:abvar_Brody}). The following remark says that abelian varieties are also very far from being Mordellic.

 \begin{remark}[Hassett-Tschinkel] \label{remark:ht} It is not at all    obvious that a smooth projective connected curve of genus one over $\Qbar$ is not Mordellic. Indeed, it is not an obvious fact that an elliptic curve over a number field $K$ has positive rank over some finite field extension of $K$, although this is certainly true and can be proven in many different ways.  In fact, by a theorem of Hassett-Tschinkel (see \cite[\S 3.1]{JAut} or \cite[\S3]{HassettTschinkel}), if $A$ is an abelian variety over $k$, then there is a  finitely generated subfield $K\subset k$ and an abelian variety $\mathcal{A}$ over $K$ with $\mathcal{A}_k\cong A$ such that $\mathcal{A}(K)$ is dense in $A$. This theorem is not hard to prove when $k$ is uncountable but requires non-trivial arguments otherwise. Thus, if $\dim A\neq 0$, then one can consider the abelian variety $A$ to be as far as possible from being Mordellic. This statement is to be compared with the conclusion of Remark \ref{remark:abvar_Brody}.
 \end{remark}

By Hassett--Tschinkel's theorem (Remark \ref{remark:ht}), an obvious obstruction to a projective variety $X$ over $k$ being Mordellic is that it contains an abelian variety. The following theorem of Faltings says that this is the only obstruction if $X$   can be embedded into an abelian variety; see \cite{FaltingsLang}.

\begin{theorem}[Faltings]\label{thm:faltings}
Let $X$ be a closed subvariety of an abelian variety $A$ over $k$. Then $X$ is Mordellic over $k$  if and only if $X$ does not contain the translate of a positive-dimensional abelian subvariety of $A$.
\end{theorem}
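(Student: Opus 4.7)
The plan is to split into the two directions, both of which rely on results already recalled in the excerpt, together with Faltings' theorem on the Mordell-Lang conjecture.

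First I would dispatch the easier direction, namely that if $X$ contains a translate $a+B$ of a positive-dimensional abelian subvariety $B\subseteq A$, then $X$ is not Mordellic. By the Hassett--Tschinkel theorem (Remark \ref{remark:ht}), there is a finitely generated subfield $K_0\subseteq k$ and a model $\mathcal{B}$ of $B$ over $K_0$ with $\mathcal{B}(K_0)$ Zariski dense in $B$. After enlarging $K_0$ to some finitely generated $K\subseteq k$ over which $a$, $A$, and $X$ all descend, with $\mathcal{X}\subseteq \mathcal{A}$ a corresponding model of $X\subseteq A$, the translate $a+\mathcal{B}(K)$ sits inside $\mathcal{X}(K)$ and is infinite. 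This contradicts Mordellicity of $X$.

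For the hard direction, assume $X$ contains no translate of a positive-dimensional abelian subvariety. Fix a finitely generated $K\subseteq k$ and a proper model $\mathcal{X}\subseteq \mathcal{A}$ of $X\subseteq A$ over $K$; I want to show $\mathcal{X}(K)$ is finite. Since $K$ has characteristic zero, the Lang--N\'eron theorem gives that $\mathcal{A}(K)$ is a finitely generated abelian group. The crucial input is then Faltings' theorem (the Mordell--Lang conjecture): if $\Gamma\subseteq \mathcal{A}(\overline{K})$ is a finitely generated subgroup, then $\mathcal{X}(\overline{K})\cap \Gamma$ is a finite union of cosets $a_i+\Gamma_i$ where each $\Gamma_i = B_i(\overline{K})\cap \Gamma$ for some abelian subvariety $B_i\subseteq \mathcal{A}_{\overline{K}}$ with $a_i+B_i\subseteq \mathcal{X}_{\overline{K}}$. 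Applying this to $\Gamma=\mathcal{A}(K)$ and using that $X_{\overline{k}}$ (hence $\mathcal{X}_{\overline{K}}$) contains no translate of a positive-dimensional abelian subvariety, each $B_i$ must be trivial. Consequently $\mathcal{X}(K)\subseteq \mathcal{X}(\overline{K})\cap \mathcal{A}(K)$ is a finite union of points.

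The heart of the argument is Faltings' Mordell--Lang theorem itself, and this is where the main obstacle lies. Faltings proves it by an intricate application of Vojta's method: assuming for contradiction an infinite sequence of points on $\mathcal{X}$ with unbounded (and carefully spaced) heights, one constructs an auxiliary global section of a suitable line bundle on a high power $\mathcal{X}^n$ which vanishes to prescribed multi-index order along the locus cut out by the chosen points, and then derives a contradiction by comparing the arithmetic index of this section at one of those points with height estimates. The key ingredients are Arakelov geometry, Faltings' product theorem, and fine height machinery on abelian varieties. Once Mordell--Lang is granted, the deduction for closed subvarieties of abelian varieties is essentially formal, as outlined above; the easy direction would be written out first, and then the hard direction invoked as a black box.
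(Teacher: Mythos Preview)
The paper does not give its own proof of this theorem: it simply records the statement and points to Faltings's paper \cite{FaltingsLang}. So there is nothing to compare line-by-line. Your outline is the standard deduction and is correct in both directions: the ``easy'' direction via Hassett--Tschinkel is exactly what the paper's Remark~\ref{remark:ht} sets up, and the ``hard'' direction---Lang--N\'eron to get $\mathcal{A}(K)$ finitely generated for $K$ finitely generated over $\QQ$, followed by Faltings's Mordell--Lang to express $\mathcal{X}(K)$ as a finite union of cosets of abelian subvarieties contained in $X$, hence points under the hypothesis---is precisely the content of \cite{FaltingsLang}. Your brief summary of Faltings's method (Vojta's approach, the product theorem, Arakelov height machinery) is also accurate as a high-level description of what happens inside that black box.

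One small point worth tightening if you write this out fully: Faltings's original argument is formulated over number fields, and the passage to arbitrary finitely generated fields of characteristic zero (which is what ``Mordellic over $k$'' requires) needs either a specialization argument or the observation that the Diophantine approximation machinery works verbatim over such fields. This is standard and handled in the literature cited by the paper (e.g.\ \cite{FaltingsComplements, Szpiroa} for curves, and the extensions are well known for subvarieties of abelian varieties), but it is worth flagging explicitly rather than folding silently into the black box.
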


There are strong similarities between the statements in the previous section and the current section. These similarities  (and a healthy dose of optimism)  lead to the first version of the Lang-Vojta conjecture. To state this conjecture, let us say that a variety $X$ over $k$   is \emph{strongly-Brody hyperbolic over $k$} if, for every subfield $k_0\subset k$, every model $\mathcal{X}$ for $X$ over $k_0$, and every embedding $k_0\to \CC$, the variety $\mathcal{X}_{\CC}$ is Brody hyperbolic.

\begin{conjecture}[Weak Lang-Vojta, I]
Let $X$ be a projective variety over $k$. Then $X$ is Mordellic over $k$ if and only if $X$ is strongly-Brody hyperbolic over $k$.
\end{conjecture}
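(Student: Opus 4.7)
As stated, this is an open conjecture, so my proof plan is necessarily a strategy for reducing it to known cases and isolating the remaining obstruction rather than a complete argument.

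The plan is to first dispose of the two classes of projective varieties where the statement is already known. For smooth projective curves, combining Theorem \ref{thm:riemann} (Brody hyperbolic $\iff$ $\mathrm{genus}\geq 2$) with Theorem \ref{thm:faltings_curves} (Mordellic $\iff$ $\mathrm{genus}\geq 2$) finishes the case, after the formal observation that ``strongly-Brody'' reduces to ``Brody'' for curves, since the genus is invariant under passing to models and complex embeddings. For closed subvarieties of abelian varieties, Bloch--Ochiai--Kawamata (Theorem \ref{thm:bok}) and Faltings (Theorem \ref{thm:faltings}) identify \emph{the same} geometric obstruction, namely the presence of a translate of a positive-dimensional abelian subvariety. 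Here one must also verify that this condition is insensitive to the choice of model and embedding $k_0\hookrightarrow\CC$, which is a standard spreading-out argument.

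For the direction \emph{Mordellic} $\Rightarrow$ \emph{strongly-Brody hyperbolic}, I would argue by contraposition. Suppose some specialization $\mathcal{X}_{\CC}$ of a model admits an entire curve $f\colon \CC\to \mathcal{X}_{\CC}$; the task is to produce, after spreading out and passing to a sufficiently large number field $K$, an infinite set of $K$-points on $\mathcal{X}$. The natural attack is to take the Zariski closure $Y$ of $f(\CC)$ and to transport to $Y$ the abelian structure one expects on the exceptional locus. Concretely: if the Albanese image of $Y$ has positive dimension, it contains a translate of an abelian subvariety by Bloch--Ochiai--Kawamata, and Hassett--Tschinkel (Remark \ref{remark:ht}) combined with Faltings (Theorem \ref{thm:faltings}) would furnish many rational points. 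This already reveals that the direction depends on the algebro-geometric Green--Griffiths--Lang picture for varieties whose Albanese is trivial.

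For the converse direction, which I expect to be the main obstacle, essentially no general mechanism is known. The naive reduction via the Albanese fails for simply-connected Brody hyperbolic varieties, and a frontal assault must import ideas from beyond the cases already handled. One candidate strategy is the Lawrence--Venkatesh method, using $p$-adic Hodge theory to constrain the $K$-points of $\mathcal{X}$, but this requires a rich variation of Hodge structure on $X$ that one has no reason to expect in general. The fundamental difficulty is that strong-Brody hyperbolicity is a transcendental/analytic condition while Mordellicity is Diophantine, and the conjecture is precisely the assertion that these two a priori unrelated conditions coincide; no plausible plan known today avoids either (i) strong structural input of the Albanese/period-map type, or (ii) a new bridge between complex hyperbolicity and heights in the style of Vojta's inequality. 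Absent such input, the best one can hope for is the comprehensive verification of the conjecture in the two classes above.
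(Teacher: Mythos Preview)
Your assessment is correct and matches the paper's treatment exactly: this statement is recorded in the paper as an open \emph{conjecture}, with no proof offered. The paper, like you, isolates the two classes where the equivalence is known---smooth projective curves (via Theorems~\ref{thm:riemann} and~\ref{thm:faltings_curves}) and closed subvarieties of abelian varieties (via Theorems~\ref{thm:bok} and~\ref{thm:faltings})---and otherwise presents the statement as a guiding principle rather than a theorem.

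One minor comment on your sketch for the direction \emph{Mordellic} $\Rightarrow$ \emph{strongly-Brody hyperbolic}: the Albanese/Bloch--Ochiai--Kawamata route you outline is not merely incomplete for varieties with trivial Albanese, it is already the full extent of what is known, and the paper does not claim anything beyond this. Your identification of the converse as the deeper obstruction, and of Lawrence--Venkatesh-type methods as the only currently plausible bridge, is in line with the paper's later discussion (see the remarks surrounding Theorem~\ref{thm:shaf_Faltings} and Section~\ref{section:persistence}). There is nothing to correct here; you have accurately diagnosed both what is known and where the genuine gap lies.
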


As stated, this conjecture does not predict that, if $X$ is a projective  Brody hyperbolic variety over $\CC$, then every conjugate of $X$ is Brody hyperbolic. We state this conjecture separately.

\begin{conjecture}[Conjugates of Brody hyperbolic varieties]\label{conj:conjugates}
If $X$ is a variety over $k$. Then $X$ is strongly-Brody hyperbolic over $k$ if and only if  there is a subfield $k_0\subset k$, a model $\mathcal{X}$ for $X$ over $k_0$, and an embedding $k_0\to \CC$ such that the variety $\mathcal{X}_{\CC}$ is Brody hyperbolic.
\end{conjecture}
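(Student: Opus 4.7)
The ``only if'' direction is immediate: if $X$ is strongly-Brody hyperbolic over $k$, then by definition \emph{every} triple $(k_0, \mathcal{X}, k_0 \hookrightarrow \CC)$ produces a Brody hyperbolic $\mathcal{X}_\CC$, so in particular some such triple exists.

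For the converse, the plan is to reduce to a statement about $\Aut(\CC)$-conjugates and then to isolate the genuine analytic obstacle. First, by spreading out and comparing any two models over a common finitely generated subfield, one may assume a single finitely generated $k_0 \subset k$ and a single model $\mathcal{X}/k_0$ are fixed; the question then becomes whether Brody hyperbolicity of $\mathcal{X}_\CC$ depends on the chosen embedding $k_0 \hookrightarrow \CC$. Given two such embeddings $\sigma_1, \sigma_2$, their images are isomorphic subfields of $\CC$, and since $\CC$ is algebraically closed of characteristic zero and of transcendence degree $2^{\aleph_0}$ over $\QQ$, the isomorphism $\sigma_2 \circ \sigma_1^{-1}$ extends to an automorphism of $\CC$. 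Hence it suffices to prove the following: if $Y$ is a projective Brody hyperbolic variety over $\CC$ and $\sigma \in \Aut(\CC)$, then the conjugate variety $Y^\sigma$ is also Brody hyperbolic.

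This last statement is the heart of the conjecture, and in the two main classes developed in the text it can be verified by hand. For smooth projective curves (Theorem \ref{thm:riemann}) the genus is $\sigma$-invariant. For closed subvarieties of an abelian variety (Theorem \ref{thm:bok}) the obstruction ``contains the translate of a positive-dimensional abelian subvariety'' is algebraic and manifestly invariant under $\sigma$. The general strategy I would pursue is to show that Brody hyperbolicity of a projective complex variety is equivalent to a purely algebro-geometric condition---candidates include Demailly's algebraic hyperbolicity, ampleness of the cotangent bundle, or the non-existence of non-constant morphisms from positive-dimensional abelian varieties (in the spirit of Green--Griffiths--Lang)---since any such algebraic reformulation would transfer along $\sigma$ by transport of structure.

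The main obstacle, and the reason the statement remains only conjectural, is that no such algebraic characterization of Brody hyperbolicity is currently known in the generality asked for. A non-constant entire curve $f : \CC \to Y^\sigma$ is a transcendental-analytic datum, and there is no evident mechanism to transport it back to an entire curve $\CC \to Y$: the only continuous automorphisms of $\CC$ are the identity and complex conjugation, so a general $\sigma$ is wildly discontinuous and cannot be composed analytically with $f$. A full proof of the conjecture would therefore plausibly require either establishing one of the algebraic equivalences indicated above (a famous open problem in its own right) or discovering a direct transfer principle for entire curves under field automorphisms of $\CC$; both appear to be out of reach of existing techniques.
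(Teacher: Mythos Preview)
Your analysis is appropriate, but note that the statement you are asked to prove is a \emph{conjecture} in the paper, not a theorem: the paper offers no proof whatsoever. Immediately after stating it, the paper only remarks that ``Concretely, Conjecture~\ref{conj:conjugates} says that, if $X$ is a Brody hyperbolic variety over $\mathbb{C}$ and $\sigma$ is a field automorphism of $\CC$, then the $\sigma$-conjugate $X^{\sigma}$ of $X$ is again Brody hyperbolic.'' Your reduction of the nontrivial direction to precisely this $\Aut(\CC)$-conjugate statement is therefore exactly the reformulation the paper intends, and your explanation of why the remaining step is out of reach (the absence of a known algebraic characterization of Brody hyperbolicity, and the discontinuity of a generic $\sigma$) is an accurate account of the obstruction.

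In short: there is no ``paper's own proof'' to compare against, and you have correctly diagnosed that the hard direction is genuinely open. Your write-up is not a proof but a correct discussion of the conjecture's content and difficulty; it would be misleading to label it a proof proposal rather than, say, a remark on the conjecture.
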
 

Concretely,  Conjecture \ref{conj:conjugates} says that, if $X$ is a Brody hyperbolic variety over $\mathbb{C}$ and $\sigma$ is a field automorphism of $\CC$, then the $\sigma$-conjugate $X^{\sigma}$ of $X$ is again Brody hyperbolic.

We briefly discuss the notion of Mordellicity for quasi-projective (not necessarily proper) schemes. We will also comment on this more general notion in Section \ref{section:ps_mordell}. This notion appears in this generality (to our knowledge) for the first time in Vojta's paper \cite{VojtaLangExc}, and it is also studied in \cite{JXie}. It is intimately related to the notion of ``arithmetic hyperbolicity''   \cite{JAut, JLalg}; see Section \ref{section:ps_mordell} for a discussion. 

In the non-proper case, it is   natural to study  integral points rather than rational points. Vojta noticed in \cite{VojtaLangExc} that, in fact, it is more natural to study ``near-integral points''. Below we make this more precise.

\begin{definition}\label{def:nip}
Let $X\to S$ be a morphism of schemes with $S$  integral.  We define $X(S)^{(1)}$ to be the set of  $P$ in $X(K(S))$ such that, for every point $s$ in $S$ of codimension one, the point $P$ lies in the image of $X(\mathcal{O}_{S,s}) \to X(K(S))$.
\end{definition}

Vojta refers to the points in $X(S)^{(1)}$ as ``near-integral'' $S$-points.  We point out that on an affine variety, there is no difference between the finiteness of integral points and ``near-integral'' points; see Section \ref{section:ps_mordell}.

\begin{definition}[Quasi-projective Mordellic varieties]
A variety  $X$ over $k$ is \emph{Mordellic over $k$} if, for every $\ZZ$-finitely generated subring $A\subset k$ and every model  $\mathcal{X}$ for $X$ over $A$,  the set  $\mathcal{X}(A)^{(1)}$ of near-integral $A$-points is finite.
\end{definition}

The study of near-integral points might seem unnatural at first. To convince the reader that this notion is slightly more natural than the notion of integral point, we include the following   remark.
 
 \begin{remark}[Why ``near-integral'' points?]  Consider a proper scheme $\mathcal{X}$ over $\mathbb{Z}$ with generic fibre $X:=\mathcal{X}_{\mathbb{Q}}$. Let $K$ be a finitely generated field of characteristic zero and let $A\subset K$ be a regular $\ZZ$-finitely generated subring. Then, the set of $K$-rational points $X(K)$  \textbf{equals} the set of near-integral $A$-points of $\mathcal{X}$. On the other hand,  if $K$ has transcendence degree at least one over $\QQ$, then it is not necessarily true that every $K$-point of $X$ is an $A$-point of $\mathcal{X}$. Thus,   studying $K$-rational points on the  proper variety $X$  over $\QQ$ is equivalent to studying near-integral points of the proper scheme $\mathcal{X}$ over $\ZZ$.
 \end{remark}

With this definition at hand, we are able to state Faltings's finiteness theorem for abelian varieties over number rings as a statement about the Mordellicity of the appropriate moduli space. The analogous statement on its Brody hyperbolicity is  Example \ref{exa:bd_Ag}.

\begin{theorem}[Faltings, Shafarevich's conjecture for principally polarized abelian varieties]\label{thm:shaf_Faltings} Let $k$ be an algebraically closed field of characteristic zero.
Let $g\geq 1$ and let $N\geq 3$ be integers. Then, 
 the (fine) moduli space $\mathcal{A}_{g,k}^{[N]}$ of $g$-dimensional  principally polarized abelian varieties with level $N$ structure is a smooth quasi-projective Mordellic variety over $k$. 
\end{theorem}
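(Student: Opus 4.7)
The plan is to deduce this from two classical inputs. The smoothness and quasi-projectivity of $\mathcal{A}_{g,k}^{[N]}$ are standard: for $N\geq 3$, Mumford's geometric invariant theory produces a smooth quasi-projective fine moduli scheme $\mathcal{A}_g^{[N]}$ over $\ZZ[1/N]$ representing the functor of principally polarized abelian schemes of relative dimension $g$ with level $N$ structure, and base change to $k$ yields the variety in question. The content of the theorem is therefore the Mordellicity assertion.

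To verify Mordellicity, I would fix a $\ZZ$-finitely generated subring $A\subset k$ and a model $\mathcal{X}$ for $\mathcal{A}_{g,k}^{[N]}$ over $A$. Any two such models agree after inverting finitely many elements of $A$; moreover, enlarging $A$ to a larger $\ZZ$-finitely generated $A'\subset k$ only weakens the near-integrality condition, so $\mathcal{X}(A)^{(1)}$ injects into $\mathcal{X}(A')^{(1)}$. Replacing $A$ by its integral closure in $k$ preserves $\ZZ$-finite generation and, by a Chevalley--Weil type argument, only multiplies the cardinality of $\mathcal{X}(A)^{(1)}$ by a bounded factor. After these reductions I may assume that $A$ is normal, contains $\ZZ[1/N]$, and that $\mathcal{X}=\mathcal{A}_{g,A}^{[N]}$.

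Let $K$ denote the fraction field of $A$. By the fine moduli interpretation, an element of $\mathcal{X}(A)^{(1)}$ corresponds to a principally polarized abelian variety $B/K$ of dimension $g$ with level $N$ structure such that $B$ has good reduction at every height-one prime of $A$. By the standard extension theorem for abelian schemes across closed subsets of codimension $\geq 2$ on a regular base (Faltings--Chai), $B$ extends to a principally polarized abelian scheme $\mathcal{B}$ over $\Spec A$ with level $N$ structure. Hence $\mathcal{X}(A)^{(1)}$ injects into the set of isomorphism classes of such abelian schemes over $\Spec A$, and it suffices to prove this latter set is finite.

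For $A=\OO_{K,S}$ the ring of $S$-integers in a number field, this finiteness is precisely Faltings's celebrated Shafarevich conjecture for principally polarized abelian varieties. To reduce the general $\ZZ$-finitely generated case to the number-ring case, I would apply Noether normalization to obtain a finite injection $\OO_{K,S}[t_1,\ldots,t_r]\hookrightarrow A$ and induct on $r\geq 0$, with the base case $r=0$ supplied by Faltings's theorem and the inductive step using either the function-field analogue of Shafarevich for abelian varieties or a specialization argument to a well-chosen codimension-one subscheme. The main obstacle is exactly this last reduction, which requires combining Faltings's arithmetic finiteness with its geometric counterpart; Faltings's original theorem supplies the arithmetic heart of the proof.
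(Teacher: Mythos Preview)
The paper does not actually prove this theorem; it is stated as Faltings's result, with the passage from $\Qbar$ to an arbitrary algebraically closed field $k$ of characteristic zero attributed (in Remark~\ref{remark:md} and the discussion after Theorem~\ref{thm:A}) to Martin-Deschamps in Szpiro's seminar. Your outline is broadly the right one and is consistent with what the paper references: Faltings's theorem over number rings supplies the arithmetic core, and one then propagates finiteness to arbitrary $\ZZ$-finitely generated bases. The paper frames this second step in two equivalent ways. First, the moduli space has an \emph{arithmetically-pure model} (Remark~\ref{remark:md}), which is exactly your Faltings--Chai extension statement repackaged via Grothendieck's theorem on homomorphisms of abelian schemes; purity lets one replace near-integral points by honest integral points. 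Second (discussion after Theorem~\ref{thm:A}), Martin-Deschamps runs a specialization argument using the \emph{geometric hyperbolicity} of $\mathcal{A}_g^{[N]}$ over $\CC$ (i.e., finiteness of pointed families of abelian varieties, again a theorem of Faltings), which is precisely what your ``function-field analogue of Shafarevich'' or ``specialization to a codimension-one subscheme'' would need in the inductive step. So your Noether-normalization induction is a correct repackaging of the same content; the paper's viewpoint has the advantage of isolating the two ingredients (arithmetic hyperbolicity over $\Qbar$, geometric hyperbolicity over $\CC$) cleanly, as in Theorem~\ref{thm:A}.

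Two small corrections to your reductions. You write ``replacing $A$ by its integral closure in $k$'': you mean the normalization of $A$ in its fraction field, not in $k$ (the latter is not finitely generated). And no Chevalley--Weil is needed for that step: if $A'$ is the normalization of $A$ then $\mathcal{X}(A)^{(1)}$ simply injects into $\mathcal{X}(A')^{(1)}$, since every height-one prime of $A$ lies under one of $A'$.
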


 Example \ref{exa:bd_Ag}  and Theorem \ref{thm:shaf_Faltings} suggest that there might also be an analogue of Lang-Vojta's conjecture for quasi-projective schemes. It seems reasonable to suspect that an affine variety over $k$ is Mordellic over $k$ if and only if it is strongly-Brody hyperbolic over $k$; see for example \cite{JLevin} for a discussion of Lang's conjectures in the affine case. However,  stating a reasonable conjecture for quasi-projective varieties   requires some care, and would take us astray from our current objective.    We refer the interested reader to articles of Ascher-Turchet and Campana  in this book \cite{AscherTurchetBook, CampanaBook} for a related discussion, and the  book  by Vojta \cite{Vojta3}.
 
 \begin{remark}[From Shafarevich to Mordell]
Let us briefly explain how Faltings shows that Theorem \ref{thm:shaf_Faltings} implies Faltings's finiteness theorem for curves (Theorem \ref{thm:faltings_curves}). Let $X$ be a smooth projective connected curve of genus at least two over $k$. By a construction of Kodaira \cite{MartinDeschamps}, there is a finite \'etale morphism $Y\to X$, an integer $g\geq 1$, and a non-constant morphism $Y\to \mathcal{A}_{g,k}^{[3]}$. Since $\mathcal{A}_{g,k}^{[3]}$ is  Mordellic    over $k$ and $Y\to \mathcal{A}_{g,k}^{[3]}$ has finite fibres, it follows that $Y$ is  Mordellic    over $k$.  As Mordellicity descends along finite \'etale morphisms (Remark \ref{remark:cw_mordell}), we conclude that $X$ is Mordellic, as required.
 \end{remark}

 \section{Groupless varieties}\label{section:groupless}
 To study Lang-Vojta's conjectures, it is natural to study varieties which do  not  ``contain'' any algebraic groups. Indeed, as we have explained in Remark \ref{remark:abvar_Brody} (resp. Remark  \ref{remark:ht}), a Brody hyperbolic variety (resp. a Mordellic variety) does not admit any non-trivial morphisms from an abelian variety. For projective varieties, it turns out that this is equivalent  to not admitting a non-constant map from any connected algebraic group (see Lemma \ref{lem:why_groupless} below). 
 
As before,   we let $k$ be an algebraically closed field of characteristic zero.
We start with the following definition.
\begin{definition}
A variety $X$ over $k$ is \emph{groupless} if every morphism  $\mathbb{G}_{m,k}\to X$ (of varieties over $k$) is constant, and for every every abelian variety $A$ over $k$, every morphism $A\to X$ is constant.
\end{definition}

\begin{remark}\label{remark:gr}
We claim that , for proper varieties, the notion of grouplessness can be tested on morphisms (or even rational maps) from abelian varieties.
 That is,  
 a proper variety  $X$ over $k$ is groupless if and only if, for every abelian variety  $A$ over $k$, every rational map $A\dashrightarrow X$ is constant. To show this, first note that   a morphism $\mathbb{G}_{m,k}\to X$ extends to a morphism $\mathbb{P}^1_k\to X$ and that $\mathbb{P}^1_k$ is surjected upon by an elliptic curve. Therefore, if every morphism from an abelian variety  is constant, then  $X$ is groupless and has no rational curves.   Now, if $X$ is proper over $k$ and has no rational curves,     every rational map $A\dashrightarrow X$ with $A$ an abelian variety extends to a morphism (see \cite[Lemma~3.5]{JKa}). Thus, if  every morphism $A\to X$ is constant with $A$ an abelian variety, we conclude that every rational map $A\dashrightarrow X $ is constant. This proves the claim. We  also conclude that a proper variety is groupless  if and only if it is ``algebraically hyperbolic'' in Lang's sense \cite[p.~176]{Lang2}. 
 \end{remark}
 
 \begin{remark}[Lang's algebraic exceptional set]
 For $X$ a proper variety over $k$, Lang defines the \emph{algebraic exceptional set $\mathrm{Exc}_{alg}(X)$ of $X$} to be the union of all non-constant rational images of abelian varieties in $X$. With Lang's terminology at hand, as is explained in Remark \ref{remark:gr}, a proper variety $X$ over $k$ is groupless over $k$ if and only if $\mathrm{Exc}_{alg}(X)$ is empty.
 
 \end{remark}

Let us   clear up why we refer to this property as groupless.

\begin{lemma}[Why call this groupless?]\label{lem:why_groupless}
A variety $X$ over $k$ is groupless if and only if for all   finite type connected group schemes $G$ over $k$, every morphism $G\to X$ is constant.
\end{lemma}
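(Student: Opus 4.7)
The reverse implication is immediate, since $\Gm$ and abelian varieties are themselves finite type connected group schemes over $k$. For the forward implication, suppose $X$ is groupless and let $f\colon G\to X$ be a morphism with $G$ a finite type connected group scheme over $k$. As $\characteristic k=0$, Cartier's theorem ensures that $G$ is automatically smooth (in particular reduced), and Chevalley's structure theorem then yields a short exact sequence
$$1\longrightarrow L\longrightarrow G\longrightarrow A\longrightarrow 1$$
in which $L$ is a connected linear algebraic group and $A$ is an abelian variety. The plan is to first show that every morphism $L\to X$ is constant, and then descend $f$ along $G\to A$ and apply grouplessness a second time.

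First I would observe that every morphism $\Ga\to X$ is constant: its restriction to the dense open subscheme $\Gm\subset \Ga$ is constant by the groupless hypothesis, and two morphisms from an irreducible variety into a separated scheme that agree on a dense open must coincide. Now, given any morphism $f\colon L\to X$, I would consider the ``invariance subgroup''
$$N \ := \ \bigcap_{\ell \in L(k)}\bigl\{g \in L \ \bigm|\ f(\ell g) = f(\ell)\bigr\},$$
which is a closed subscheme of $L$ (each term being the preimage of the closed point $f(\ell)\in X$ under $g\mapsto f(\ell g)$), and a direct check shows it is a subgroup. For any one-parameter subgroup $\iota\colon H\hookrightarrow L$ with $H\in\{\Ga,\Gm\}$ and any $\ell\in L(k)$, the map $h\mapsto f(\ell\cdot \iota(h))$ is a morphism $H\to X$, hence constant by the preceding observation or by the groupless hypothesis; thus $\iota(H(k))\subseteq N(k)$. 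In characteristic zero, the Levi decomposition, the generation of connected reductive groups by a maximal torus and root subgroups, and the exponential map on the unipotent radical together imply that $L(k)$ is generated as an abstract group by the $k$-points of its $\Ga$- and $\Gm$-subgroups. Therefore $N(k)=L(k)$; since $L$ is reduced and $k$ algebraically closed, $N=L$ as subschemes, and evaluating at $\ell=e$ gives $f\equiv f(e)$.

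Returning to $f\colon G\to X$, the same argument applied to translates shows that $\ell\mapsto f(g_0\ell)$ is a morphism $L\to X$, hence constant by the previous paragraph, so $f$ is constant on each $L$-coset of $G$. Equivalently, the two composites $G\times_A G \rightrightarrows G \xrightarrow{f} X$ agree on $k$-points, and therefore as morphisms, because $G\times_A G$ is reduced and $X$ is separated. Fppf descent along the faithfully flat morphism $G\to A$ then produces a morphism $\bar f\colon A\to X$ with $f=\bar f\circ(G\to A)$, and $\bar f$ is constant by the groupless hypothesis. I expect the main obstacle to be the structural fact that $L(k)$ is generated by its one-parameter subgroups, which genuinely requires the classification theory of linear algebraic groups in characteristic zero; everything else is a routine combination of rigidity, Chevalley's theorem, and fppf descent.
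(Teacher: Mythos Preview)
Your proof is correct and follows precisely the approach the paper indicates: the paper's own proof simply invokes Chevalley's structure theorem and defers the details to \cite[Lemma~2.5]{JKa}, whereas you have written out those details in full (the reduction to the linear part via the invariance subgroup and generation by one-parameter subgroups, followed by fppf descent to the abelian quotient). There is nothing to correct; your argument is exactly the kind of detailed proof the cited reference contains.
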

\begin{proof}
This follows from Chevalley's structure theorem for   algebraic groups over the algebraically closed field $k$ of characteristic zero. A detailed proof is given in \cite[Lemma~2.5]{JKa}.
\end{proof}

 The  notion of grouplessness   is well-studied, and sometimes referred to as ``algebraic hyperbolicity'' or ``algebraic Lang hyperbolicity''; see   \cite{ShangZhang}, \cite[page~176]{Lang2}, \cite[Remark~3.2.24]{KobayashiBook}, or \cite[Definition~3.4]{KovacsSubs}. We will only  use the term ``algebraically hyperbolic'' for the notion introduced by Demailly in \cite{Demailly} (see also \cite{vBJK, JKa,  JXie}).  The term ``groupless'' was first used in  \cite[Definition~2.1]{JKa} and \cite[Definition~3.1]{JVez}.

\begin{example} \label{ex} 
A zero-dimensional variety is groupless. Note that
$\mathbb{P}^1_k$, $\mathbb{A}^1_k$, $\mathbb{A}^1_k\setminus \{0\}$ and smooth proper  genus one curves over $k$  are not groupless.  
\end{example}

Much like Brody hyperbolicity and Mordellicity, grouplessness descends along finite \'etale morphisms. We include a sketch of the  proof of this  simple fact.

\begin{lemma}[Descending grouplessness]\label{lem:cw_groupless}
Let $X\to Y$ be a  finite \'etale morphism of varieties over $k$. Then $X$ is groupless over $k$ if and only if $Y$ is groupless over $k$. 
\end{lemma}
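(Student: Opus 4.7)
The plan is to prove the two directions separately: the direction ``$Y$ groupless $\Rightarrow$ $X$ groupless'' is essentially formal from the definitions, while ``$X$ groupless $\Rightarrow$ $Y$ groupless'' requires understanding finite étale covers of $\mathbb{G}_m$ and of abelian varieties and then pulling back along the cover $X \to Y$.

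For the easy direction, suppose $Y$ is groupless. Let $G$ be either $\mathbb{G}_{m,k}$ or an abelian variety over $k$, and let $f : G \to X$ be a morphism. Composing with $X \to Y$ gives a morphism $G \to Y$, which is constant by assumption. Hence the image of $f$ is contained in a (geometric) fibre of $X \to Y$, which is a finite set of points. Since $G$ is connected, $f$ must be constant, so $X$ is groupless.

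For the harder direction, assume $X$ is groupless, and let $f : G \to Y$ be a morphism with $G = \mathbb{G}_{m,k}$ or $G = A$ an abelian variety. Form the pullback $Z := G \times_Y X$, which is finite étale over $G$ via the first projection. Let $Z_0 \subseteq Z$ be a connected component; then $Z_0 \to G$ is a connected finite étale cover. Using that $k$ is algebraically closed of characteristic zero, the structure of the étale fundamental group shows that (after fixing a basepoint) $Z_0$ is itself of the same type as $G$: if $G = \mathbb{G}_{m,k}$, then $Z_0 \cong \mathbb{G}_{m,k}$ and the covering map is $z \mapsto z^n$; if $G = A$, then $Z_0$ carries the structure of an abelian variety $A'$ and $Z_0 \to A$ is an isogeny. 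Composing the second projection $Z_0 \to X$ with this identification produces a morphism from $\mathbb{G}_{m,k}$ (resp.\ an abelian variety) to $X$, which is constant by grouplessness of $X$. Since $Z_0 \to G$ is surjective, this forces $f|_{\mathrm{image}(Z_0)} $ to be constant, and hence $f$ itself is constant on all of $G$.

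The main obstacle is the identification in the forward direction of a connected finite étale cover of $\mathbb{G}_{m,k}$, respectively of an abelian variety $A$, with an object of the same type. For $\mathbb{G}_{m,k}$ this uses that its étale fundamental group is $\widehat{\ZZ}$ so every connected cover is Kummer. For $A$ one needs the standard fact that a connected finite étale cover of an abelian variety inherits the structure of an abelian variety once a preimage of the origin is chosen, with the covering map becoming an isogeny; this is where grouplessness is transferred cleanly from $X$ to $Y$ via composition. Once these structural facts are in hand, both directions combine to yield the equivalence.
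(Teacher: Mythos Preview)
Your proof is correct and follows essentially the same approach as the paper: both directions are handled identically, with the nontrivial one proceeding by pulling back along $X\to Y$, taking a connected component of the resulting finite \'etale cover of $G$, identifying it with $\mathbb{G}_{m,k}$ or an abelian variety, and using grouplessness of $X$ together with surjectivity onto $G$. You have in fact supplied more detail than the paper does on the structural identification of the connected components (via Kummer covers and isogenies), which the paper simply asserts.
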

\begin{proof} If $Y$ is groupless, then  $X$ is obviously groupless. Therefore, to prove the lemma, we may assume that $X$ is groupless.  Let $G$ be $\mathbb{G}_{m,k}$ or an abelian variety over $k$. Let $G\to Y $ be a morphism. Consider the  pull-back $G':=G\times_Y X$ of $G\to Y$ along $X\to Y$. Then, as $k$ is algebraically closed and of characteristic zero, each connected component of $G'$ is (or: can be endowed with the structure of)   an algebraic group isomorphic to $\mathbb{G}_{m,k} $ or an abelian variety over $k$. Therefore, the morphism $G'\to X$ is   constant. This implies that $G\to Y$ is constant.
\end{proof}

We include an elementary proof of the fact that the classification of one-dimensional groupless varieties is the same as that of one-dimensional   Brody hyperbolic  curves.
\begin{lemma} 
A smooth quasi-projective connected curve $X$ over $k$ is groupless over $k$ if and only if $X$ is not isomorphic to $\mathbb{P}^1_k$, $\mathbb{A}^1_k$, $\mathbb{A}^1_k\setminus\{0\}$, nor a smooth proper connected curve of genus one over $k$.
\end{lemma}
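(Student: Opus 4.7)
The plan is to prove both directions, with most of the work going into showing that a curve outside the exceptional list admits no non-constant morphism from $\mathbb{G}_{m,k}$ or from an abelian variety. The ``only if'' direction is painless: the identity on $\mathbb{G}_{m,k}$ and the open immersions $\mathbb{G}_{m,k}\hookrightarrow\mathbb{A}^1_k\hookrightarrow \mathbb{P}^1_k$ exhibit non-constant morphisms from $\mathbb{G}_{m,k}$ to each of the first three curves, while a smooth proper genus-one curve has a $k$-point (as $k$ is algebraically closed), so it is an abelian variety on which the identity is non-constant.

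For the ``if'' direction, let $\overline{X}$ be the smooth projective completion of $X$, let $g$ be its genus, and let $n$ be the number of punctures; the hypothesis excludes precisely the pairs $(g,n) \in \{(0,0), (0,1), (0,2), (1,0)\}$. First, I would handle morphisms from $\mathbb{G}_{m,k}$. Any morphism $\varphi: \mathbb{G}_{m,k}\to X \subseteq \overline{X}$ extends to $\overline{\varphi}: \mathbb{P}^1_k \to \overline{X}$ by the valuative criterion, since $\overline{X}$ is proper. If $g\geq 1$, Riemann-Hurwitz forbids a non-constant morphism $\mathbb{P}^1_k \to \overline{X}$, so $\varphi$ is constant. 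If $g=0$, the hypothesis forces $n\geq 3$; a non-constant extension $\overline{\varphi}$ would be surjective onto $\mathbb{P}^1_k$, yet its image lies in $\varphi(\mathbb{G}_{m,k})\cup\{\overline{\varphi}(0),\overline{\varphi}(\infty)\}\subseteq X \cup \{\textrm{two points}\}$, contradicting $n\geq 3$.

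Next I would handle morphisms $f: A\to X$ from abelian varieties, composing to view $f$ as a morphism to $\overline{X}$. The cases $g=0$ and $g=1$ are straightforward. When $g=0$, properness of $A$ forces $f(A)$ to be closed and irreducible in $\mathbb{P}^1_k$, hence a point since $f(A)\subseteq X \subsetneq \mathbb{P}^1_k$. When $g=1$, fixing $f(0_A)$ as the origin of $\overline{X}$ turns $f$ into a group homomorphism by the rigidity lemma, so its image is an abelian subvariety of $\overline{X}$, hence either $\{0\}$ or all of $\overline{X}$; the latter is impossible since $X \subsetneq \overline{X}$.

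The main obstacle is the case $g\geq 2$ with $A$ abelian. The plan is: assume $f:A\to \overline{X}$ is non-constant (hence surjective), and form the Stein factorization $A\to C\xrightarrow{\pi}\overline{X}$, so that $C$ is a smooth projective curve. If $g(C)=0$, then $\pi:\mathbb{P}^1_k\to \overline{X}$ is a non-constant finite map with $g(\overline{X})\geq 2$, contradicting Riemann-Hurwitz. If $g(C)\geq 1$, compose $A\to C$ with the Abel-Jacobi embedding $j:C\hookrightarrow \Jac(C)$ and, after translating the target, apply rigidity to obtain a group homomorphism $\phi: A\to \Jac(C)$ with image an abelian subvariety $B$. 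Since $A\to C$ is surjective, $j(C)$ equals a translate of $B$, forcing $\dim B = 1$ and hence $g(C)=1$. But then $\pi:C\to \overline{X}$ is a finite map from a curve of genus one to one of genus at least two, again violating Riemann-Hurwitz; this yields the desired contradiction.
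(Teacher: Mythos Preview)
Your proof is correct, but it takes a different route from the paper's. The paper first passes to a finite \'etale cover $Y\to X$ whose smooth projective completion $\overline{Y}$ has genus at least two, then invokes the descent of grouplessness along finite \'etale morphisms (Lemma~\ref{lem:cw_groupless}) to reduce everything to the single case of a smooth projective curve of genus $\geq 2$; there, the argument ends by noting that a non-constant map $A\to\overline{Y}$ would force $\overline{Y}$ (via its Jacobian embedding) to carry a group structure, contradicting finiteness of $\Aut(\overline{Y})$. You instead perform a direct case analysis on $(g,n)$, which avoids the descent lemma entirely and makes the argument self-contained, at the cost of handling $g=0$, $g=1$, and $g\geq 2$ separately. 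In your $g\geq 2$ case, the Stein factorization is an unnecessary detour: since $f:A\to\overline{X}$ is already surjective, you could embed $\overline{X}$ directly into $\Jac(\overline{X})$ and run the same rigidity argument (this is essentially what the paper does), skipping $C$ altogether. Either endgame---yours via Riemann--Hurwitz on $C\to\overline{X}$, or the paper's via finiteness of automorphisms---is fine.
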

\begin{proof} If $X$ is groupless, then $X$ is not isomorphic to $\mathbb{P}^1_k$, $\mathbb{A}^1_k$, $\mathbb{A}^1_k\setminus\{0\}$, nor a smooth proper connected curve of genus one over $k$; see Example \ref{ex}.  Thus to prove the lemma,, we may (and do) assume that $X$ is not isomorphic to either of these curves.  
 Let $Y\to X$ be a finite \'etale cover of $X$ such that the smooth projective model $\overline{Y}$ of $Y$ is of genus at least two. (It is clear that such a cover exists when $X = \mathbb{G}_{m,k}\setminus \{1\}$ or $X = E\setminus \{0\}$ with $E$ an elliptic curve over $k$. This  is enough to conclude that such a cover always exists.) By Lemma \ref{lem:cw_groupless}, the variety $X$ is groupless if and only if $ {Y}$ is groupless. Thus, it suffices to show that $\overline{Y}$ is groupless. To do so,  assume that we have a morphism $\mathbb{G}_{m,k}\to \overline{Y}$. By Riemann-Hurwitz, this   morphism is constant, as $\overline{Y}$ has genus at least two. Now, let $A$ be an abelian variety over $k$ and let  $A\to \overline{Y}$ be a morphism.  To show that this morphism is constant, we compose $A\to \overline{Y}$ with the Jacobian map $\overline{Y}\to \mathrm{Jac}(\overline{Y})$ (after choosing some point on $\overline{Y}$). If the morphism $A\to \overline{Y}$ is non-constant, then it is surjective. Since a morphism of abelian varieties is a homomorphism (up to translation of the origin), this induces a group structure on the genus $>1$ curve  $\overline{Y}$. However, as the automorphism group of (the positive-dimensional variety) $\overline{Y}$ is finite,  the curve $\overline{Y}$ can not be endowed with the structure of an algebraic group. This shows that $A\to \overline{Y}$ is constant, and concludes the proof.
 \end{proof}

Bloch--Ochiai--Kawatama's theorem (Theorem \ref{thm:bok}) and Faltings's analogous theorem  for rational points on closed subvarieties of abelian varieties (Theorem \ref{thm:faltings}) characterize ``hyperbolic'' subvarieties of abelian varieties. It turns out that this characterization also holds for groupless varieties, as we explain now.

 If $X$ is a closed subvariety  of an abelian variety $A$ over $k$, we define  the special locus $\mathrm{Sp}(X)$ of $X$ to be  the union of the translates of positive-dimensional abelian subvarieties of $A$ contained in $X$.
 
 \begin{lemma} \label{lem:special_locus}
 Let $X$ be a  closed subvariety of an abelian variety $A$ over $k$. Then $X$ is groupless  over $k$ if and only if $\mathrm{Sp}(X)$ is empty.
 \end{lemma}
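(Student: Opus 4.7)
The plan is to handle the two implications separately, with the forward direction being essentially immediate and the backward direction reducing to the standard fact that a morphism between abelian varieties is a homomorphism followed by a translation.

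First I would dispatch the forward direction. Assume $X$ is groupless and suppose toward a contradiction that $\mathrm{Sp}(X)$ is non-empty. Then there exist a point $a \in A(k)$ and a positive-dimensional abelian subvariety $B \subseteq A$ with $a + B \subseteq X$. The morphism $B \to X$ defined by $b \mapsto a + b$ is then non-constant, contradicting grouplessness of $X$.

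For the backward direction, I would assume $\mathrm{Sp}(X) = \emptyset$ and verify the two defining conditions of grouplessness. For any morphism $\mathbb{G}_{m,k} \to X$, properness of $X$ (as a closed subvariety of the proper variety $A$) lets me extend it to a morphism $\mathbb{P}^1_k \to X$, and then composing with $X \hookrightarrow A$ gives $\mathbb{P}^1_k \to A$. Since $A$ contains no rational curves (any morphism from $\mathbb{P}^1$ to an abelian variety is constant), this extension, and hence the original map, is constant. Next, let $B$ be an abelian variety over $k$ and let $f \colon B \to X$ be a morphism. Composing with the inclusion yields $g \colon B \to A$, and by the standard fact that morphisms between abelian varieties are homomorphisms up to translation, we can write $g = t_{g(0)} \circ \varphi$ where $\varphi \colon B \to A$ is a group homomorphism and $t_{g(0)}$ is translation by $g(0)$. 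The image $\varphi(B) \subseteq A$ is an abelian subvariety, so $g(B) = g(0) + \varphi(B)$ is the translate of an abelian subvariety. Since $g(B) \subseteq X$, if $f$ were non-constant then $\varphi(B)$ would be positive-dimensional and we would have $g(0) + \varphi(B) \subseteq \mathrm{Sp}(X)$, contradicting our assumption. Hence $f$ is constant, and $X$ is groupless.

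The main obstacle here is not really an obstacle: the whole argument hinges on the structural result that any morphism of abelian varieties is a translate of a homomorphism, together with the absence of rational curves in $A$. Both are standard, so the lemma reduces to a clean bookkeeping argument about translates of abelian subvarieties.
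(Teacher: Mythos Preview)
Your proof is correct and follows essentially the same approach as the paper: both directions use that morphisms of abelian varieties are homomorphisms up to translation, so the image of any abelian variety in $A$ is a translate of an abelian subvariety. The only cosmetic difference is in handling $\mathbb{G}_{m,k}$: you extend to $\mathbb{P}^1_k$ and invoke the absence of rational curves in $A$, while the paper phrases this as the Albanese of $\mathbb{P}^1_k$ being trivial---but these are the same fact.
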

 \begin{proof} Clearly, if $X$ is groupless over $k$, then $X$ does not contain the translate of a positive-dimensional abelian subvariety of $A$, so that $\mathrm{Sp}(X)$ is empty.  Conversely, assume that $X$ does not contain the translate of a non-zero abelian subvariety of $A$. Let us show that $X$ is groupless.
 Since the Albanese variety of $\mathbb{P}^1_k$ is trivial, any map $\mathbb{G}_{m,k}\to X$ is constant. Thus, to conclude the proof, we   have to show that all morphisms $A'\to X$ are constant, where $A'$ is an abelian variety over $k$. To do so, note that the image of $A'\to X $  in $A$     is the translate of an  abelian subvariety of $A$, as morphisms of abelian varieties are homomorphisms up to translation. This means that the image of $A'\to X$ is the translate of an abelian subvariety, hence a point (by our assumption).
 \end{proof}

 \begin{remark}\label{remark:groupless}
 Let $A$ be a simple abelian surface. Let $X = A\setminus \{0\}$. Then  $X$ is groupless.  This remark might seem misplaced, but it shows that ``grouplessness'' as defined above   does not capture the non-hyperbolicity of a quasi-projective variety. The ``correct'' definition in the quasi-projective case  is discussed in Section \ref{section:ps_groupless} (and is also discussed   in
  \cite{JXie, VojtaLangExc}).
 \end{remark}
 
 Although grouplessness does not capture the non-hyperbolicity of  quasi-projective varieties (Remark \ref{remark:groupless}), Lang conjectured that grouplessness is equivalent to  being Mordellic and to being Brody hyperbolic (up to choosing a model over $\CC$) for \emph{projective} varieties.  This brings us to the second form of Lang-Vojta's conjecture.
\begin{conjecture}[Weak Lang-Vojta, II]\label{conj:II}
Let $X$ be a projective variety over $k$.
Then the following are equivalent.
\begin{enumerate}
\item The projective variety $X$ is   Mordellic over $k$.
\item The variety $X$ is strongly-Brody hyperbolic over $k$.
\item The variety $X$ is groupless over $k$.
\end{enumerate}
\end{conjecture}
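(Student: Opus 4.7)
The plan is to split Conjecture \ref{conj:II} into the two easy implications $(1) \Rightarrow (3)$ and $(2) \Rightarrow (3)$, which follow relatively directly from the definitions together with Hassett-Tschinkel (Remark \ref{remark:ht}), and the hard converses $(3) \Rightarrow (1)$ and $(3) \Rightarrow (2)$, which constitute the actual depth of Lang-Vojta and are accessible only in restricted settings.

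For $(2) \Rightarrow (3)$ I would argue contrapositively: a non-constant morphism $G \to X$, with $G = \Gm$ or $G$ an abelian variety over $k$, spreads out to a model $\mathcal{X}$ of $X$ over some finitely generated $k_0 \subset k$ together with a non-constant morphism $G_{k_0} \to \mathcal{X}$; fixing an embedding $k_0 \hookrightarrow \CC$, precomposition with either $\exp \colon \CC \to \CC^\ast$ or the universal cover $\CC^g \to A_\CC^{\an}$ from Remark \ref{remark:abvar_Brody} produces a non-constant entire curve in $\mathcal{X}_\CC^{\an}$, violating Brody hyperbolicity. For $(1) \Rightarrow (3)$, suppose $X$ is Mordellic and $f \colon G \to X$ is a non-constant morphism. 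If $G$ is an abelian variety, Hassett-Tschinkel supplies a finitely generated $K \subset k$ and a model $\mathcal{G}$ of $G$ over $K$ with $\mathcal{G}(K)$ Zariski-dense in $G$; spreading out $f$ and $X$ over an enlargement of $K$, the image of $\mathcal{G}(K)$ in $\mathcal{X}(K)$ is Zariski-dense in the positive-dimensional $f(G)$, hence infinite, contradicting Mordellicity. If $G = \Gm$, properness of $X$ extends $f$ to a non-constant morphism $\PP^1 \to X$, and the same density argument applies with $\PP^1(K) \supset K$.

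The hard converses are known only in the presence of additional structure. For closed subvarieties $X \subset A$ of abelian varieties, grouplessness is equivalent to $\mathrm{Sp}(X) = \emptyset$ by Lemma \ref{lem:special_locus}; Theorem \ref{thm:faltings} of Faltings then yields Mordellicity and Theorem \ref{thm:bok} of Bloch-Ochiai-Kawamata yields Brody hyperbolicity, and since the condition $\mathrm{Sp}(X) = \emptyset$ is intrinsic to the pair $(X,A)$ and stable under base change through any model and any embedding, being strongly-Brody hyperbolic follows. For smooth projective curves, Theorem \ref{thm:faltings_curves} combined with the genus classification handles everything. The main obstacle, and the reason Conjecture \ref{conj:II} is wide open in general, is that $(3) \Rightarrow (1)$ is precisely Lang's conjecture on rational points for projective varieties and $(3) \Rightarrow (2)$ is precisely the projective case of the Green-Griffiths-Lang conjecture; neither implication can presently be extracted from the purely algebraic hypothesis of grouplessness without some extra ingredient that manufactures either analytic uniformization data (to produce entire curves) or arithmetic density statements (to produce rational points), and no such general mechanism is currently available.
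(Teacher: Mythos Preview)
The statement in question is a \emph{conjecture}, not a theorem, and the paper accordingly offers no proof of it. Your proposal correctly recognizes this: rather than attempting a full proof, you separate out the accessible implications $(1)\Rightarrow(3)$ and $(2)\Rightarrow(3)$ from the genuinely open converses, and you give correct arguments for the former. Your proof of $(1)\Rightarrow(3)$ via Hassett--Tschinkel is exactly the mechanism the paper invokes (see Remark~\ref{remark:ht} and the proof sketch of Theorem~\ref{thm:ar_is_gr}), and your proof of $(2)\Rightarrow(3)$ via uniformization of $\Gm$ and abelian varieties matches the paper's reasoning around Remark~\ref{remark:abvar_Brody}. Your identification of the hard direction with Lang's conjecture on rational points and the Green--Griffiths--Lang conjecture, together with the known cases (curves, closed subvarieties of abelian varieties), is also in line with the paper's discussion in Sections~\ref{section:questions} and~\ref{section:conjectures}.

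One minor point worth tightening: in your $(1)\Rightarrow(3)$ argument for $G=\Gm$, you pass through $\PP^1$ and then invoke ``the same density argument''. Strictly speaking, Hassett--Tschinkel is about abelian varieties, not $\PP^1$; but density of $K$-points on $\PP^1$ over any infinite field $K$ is elementary, so the step is fine once stated that way. The paper handles this case slightly differently (Remark~\ref{remark:gr}): it observes that $\PP^1$ is dominated by an elliptic curve, reducing the $\Gm$ case to the abelian variety case directly. Either route works.
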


\section{Varieties of general type}\label{section:general_type}
  In this section we discuss the role of varieties of general type in Lang-Vojta's conjecture.
  Recall that a line bundle $L$ on a smooth projective variety $S$ over $k$ is big if there is an ample line bundle $A$ and an effective divisor $D$ such that $L\cong A\otimes \mathcal{O}_S(D)$; see \cite{Lazzie1, Lazzie2}.
We   follow standard terminology and say that an integral proper variety $X$ over   $k$ is of general type if it has a desingularization $X'\to X$ with $X'$ a smooth projective integral variety  over $k$   such that the canonical bundle $\omega_{X'/k}$ is a big line bundle. For example, if $\omega_{X'/k}$ is ample, then it is big. Moreover, we will say that a proper variety $X$ over a field $k$ is of general type if, for every irreducible component $Y$ of $X$, the  reduced closed subscheme $Y_\mathrm{red}$ is of general type.  

Varieties of general type are well-studied; see \cite{Lazzie1, Lazzie2}. For the sake of clarity, we briefly collect some statements. Our aim is to emphasize the similarities with the properties presented in the earlier sections.

For example, much like Brody hyperbolicity, Mordellicity, and grouplessness, the property of being of general type descends along finite \'etale morphisms.
That is, if $X\to Y $ is a finite \'etale morphism of proper schemes over $k$, then $X$ is of general type if and only if $Y$ is of general type.
Moreover, 
a  simple  computation of the degree of the canonical bundle of a curve implies that, if $X$ is  a smooth projective connected curve over $k$, then $X$ is of general type if and only if $\mathrm{genus}(X)\geq 2$.

Kawamata and Ueno classified which closed subvarieties of an abelian variety are of general type. To state their result, for $A$   an abelian variety over $k$ and $X $ a closed subvariety of $A$, recall that the special locus $\mathrm{Sp}(X)$ of $X$ is the union of translates of positive-dimensional abelian subvarieties of $A$ contained in $X$. Note that Bloch--Ochiai--Kawamata's theorem (Theorem \ref{thm:bok}) can be stated as saying that a closed subvariety $X$ of an abelian variety $A$ over $\CC$ is Brody hyperbolic if and only if $\mathrm{Sp}(X)$ is empty. Similarly, Faltings's theorem (Theorem \ref{thm:faltings}) can be stated as saying that a closed subvariety of an abelian variety $A$ over $k$ is Mordellic if and only if $\mathrm{Sp}(X)$ is empty. The latter is also equivalent to saying that $X$ is groupless over $k$ by Lemma \ref{lem:special_locus}.   The theorem of Kawamata-Ueno  now reads as follows.

\begin{theorem}[Kawamata-Ueno]\label{thm:ku} Let $A$ be an abelian variety and let $X$ be a closed subvariety of $A$. Then $\mathrm{Sp}(X)$ is a closed subset of $X$, and  $X$ is of general type if and only if $\mathrm{Sp}(X) \neq X$.
\end{theorem}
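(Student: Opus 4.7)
The plan is to reduce everything to the Ueno fibration of $X \subseteq A$. Set $B := \Stab(X)^0 \subseteq A$, the identity component of the stabilizer subgroup of $X$, which is an abelian subvariety of $A$. Since $X$ is $B$-invariant, the quotient morphism $\pi \colon A \to A/B$ restricts to a surjection $X \to Y := \pi(X) \subseteq A/B$ whose fibers are all translates of $B$. The key black-box input is Ueno's theorem: $Y$ has trivial connected stabilizer in $A/B$, any closed subvariety of an abelian variety with trivial connected stabilizer is of general type, and the Kodaira dimension of $X$ equals $\dim Y$.

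Granting Ueno's theorem, I would deduce two equivalences. First, $X$ is of general type if and only if $\dim X$ equals the Kodaira dimension of $X$, hence if and only if $\dim Y = \dim X$, hence if and only if $\dim B = 0$. Second, $\mathrm{Sp}(X) = X$ if and only if $\dim B > 0$: if $\dim B > 0$, then every point of $X$ lies on a translate of the positive-dimensional abelian subvariety $B$ inside $X$, so $\mathrm{Sp}(X) = X$; conversely, if $\dim B = 0$ and $B' \subseteq A$ is any positive-dimensional abelian subvariety, the set
\[
Z_{B'} := \{\, x \in X : x + B' \subseteq X \,\} = X \cap \bigcap_{b \in B'} (X - b)
\]
is closed and $B'$-invariant, and $Z_{B'} = X$ would force $B' \subseteq \Stab(X)^0 = \{0\}$, contradicting $\dim B' > 0$. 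So each individual $Z_{B'}$ is proper in $X$. Combining the two equivalences yields $X$ of general type $\Leftrightarrow \mathrm{Sp}(X) \neq X$.

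It remains to check that $\mathrm{Sp}(X)$ is closed in $X$. This is automatic when $\dim B > 0$ since then $\mathrm{Sp}(X) = X$, so assume $\dim B = 0$. Then $\mathrm{Sp}(X) = \bigcup_{B'} Z_{B'}$ with each $Z_{B'}$ closed, and the task is to collapse this a priori countable union to a finite one. I would proceed by Noetherian induction on $\dim A$: each $B'$-invariant subvariety $Z_{B'}$ descends to a closed subvariety $Z_{B'}/B' \subseteq A/B'$, to which the induction hypothesis applies; combined with the standard fact that abelian subvarieties of $A$ of bounded degree (with respect to a fixed polarization) form a finite set via Poincar\'e reducibility, this reduces $\mathrm{Sp}(X)$ to a finite union of closed subsets, hence closed.

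The hardest single step is Ueno's theorem itself, namely that a closed subvariety of an abelian variety with trivial connected stabilizer is of general type; its proof hinges on Iitaka fibration theory, positivity of direct images of relative dualizing sheaves, and a careful analysis of the Albanese map, and I would take it as the main black box. The closedness of $\mathrm{Sp}(X)$ is the other subtle point and is best organized via the inductive descent just sketched.
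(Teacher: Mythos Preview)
The paper does not supply a proof of this theorem; it is stated as a known result attributed to Kawamata and Ueno and invoked as a black box throughout. So there is no proof in the paper to compare against.

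On the merits of your proposal: the derivation of the equivalence ``$X$ is of general type $\Leftrightarrow \mathrm{Sp}(X)\neq X$'' from Ueno's structure theorem via the identity component $B=\Stab(X)^0$ of the stabilizer is exactly the standard route, and your two equivalences are correctly argued.

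The sketch for closedness of $\mathrm{Sp}(X)$, however, does not close up as written. Applying the induction hypothesis to $Z_{B'}/B'\subseteq A/B'$ tells you only that $\mathrm{Sp}(Z_{B'}/B')$ is closed --- but you already know each individual $Z_{B'}$ is closed; the difficulty is controlling the union $\bigcup_{B'}Z_{B'}$ over the (countably many) positive-dimensional abelian subvarieties $B'$. Your appeal to ``abelian subvarieties of bounded degree form a finite set'' is true as a statement, but you never explain why the relevant degrees can be bounded, and in general they cannot: already in $E\times E$ there are infinitely many one-dimensional abelian subvarieties, of unbounded degree with respect to any polarization. The genuine content of Kawamata's closedness theorem is precisely a finiteness reduction showing that $\mathrm{Sp}(X)$ is a \emph{finite} union of sets of the form $Z_{B'}$, and this requires more than the descent-plus-bounded-degree outline you give. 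You have correctly identified this as ``the other subtle point,'' but the sketch as it stands has a real gap at exactly that point.
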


Note that being of general type and being groupless are not equivalent. This is not a surprise, as the notion of general type is a birational invariant whereas the blow-up of a smooth groupless surface along a point is no longer groupless. The conjectural relation between varieties of general type and  the three notions (Brody hyperbolicity, Mordellicity, and grouplessness) introduced above is as follows.

\begin{conjecture}[Weak Lang-Vojta, III]\label{conj:III}
Let $X$ be a projective variety over $k$.
Then the following are equivalent.
\begin{enumerate}
\item The projective variety $X$ is Mordellic   over $k$.
\item The variety $X$ is strongly-Brody hyperbolic over $k$.
\item Every integral subvariety of $X$ is of general type.
\item The variety $X$ is groupless over $k$.
\end{enumerate}
\end{conjecture}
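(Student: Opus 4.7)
The plan is to close the loop of implications, with grouplessness (4) serving as the natural meeting point. Three implications---from each of (1), (2), and (3) to (4)---are essentially formal and proceed by contraposition: if $X$ is not groupless then there is a non-constant morphism $f \colon G \to X$ with $G = \mathbb{G}_{m,k}$ or $G$ a positive-dimensional abelian variety, and I will show $X$ simultaneously fails (1), (2), and (3). For (1) $\Rightarrow$ (4), one spreads $f$ out over a finitely generated subring and applies Hassett--Tschinkel (Remark \ref{remark:ht}) to obtain, over some finitely generated subfield $K$, a Zariski-dense (hence infinite) set of $K$-rational points on the image of $f$ in $X$, contradicting Mordellicity. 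For (2) $\Rightarrow$ (4), one composes $f^{\mathrm{an}}$ with a non-constant holomorphic map $\mathbb{C} \to G^{\mathrm{an}}$ (using $z \mapsto e^z$ when $G = \mathbb{G}_{m,\mathbb{C}}$, or the uniformization $\mathbb{C}^g \to A^{\mathrm{an}}$) to obtain an entire curve on any $\mathbb{C}$-specialization of $X$, contradicting strong Brody hyperbolicity. For (3) $\Rightarrow$ (4), the Zariski closure of the image of $f$ is a positive-dimensional integral subvariety $Y \subseteq X$ dominated by $G$, and such a $Y$ is a rational curve when $G = \mathbb{G}_{m,k}$ and has Kodaira dimension $\leq 0$ when $G$ is abelian; either way $Y$ is not of general type, contradicting (3).

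The reverse implications from (4) to (1), (2), and (3) form the deep content of the Lang--Vojta conjectures. My approach is to first install the one case where a complete proof is available: closed subvarieties of an abelian variety. There, all four conditions reduce via Lemma \ref{lem:special_locus} and Theorem \ref{thm:ku} to the emptiness of the special locus $\mathrm{Sp}(X)$, whence Mordellicity follows from Faltings (Theorem \ref{thm:faltings}) and Brody hyperbolicity from Bloch--Ochiai--Kawamata (Theorem \ref{thm:bok}). For general projective $X$ the natural strategy is induction on dimension, using that subvarieties of groupless varieties are clearly groupless: assuming (4) $\Rightarrow$ (1) holds for all projective varieties of dimension strictly less than $\dim X$, if $X$ failed Mordellicity then an infinite set of rational points over some finitely generated subfield $K$ would either be Zariski-dense in $X$ or accumulate on a proper integral subvariety $Z \subsetneq X$. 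The latter case is ruled out by applying the inductive hypothesis to $Z$, which is projective and groupless. An analogous reduction handles (4) $\Rightarrow$ (2) and (4) $\Rightarrow$ (3), so the conjecture reduces to the weak Bombieri--Lang statement that a groupless projective variety over $k$ cannot carry a Zariski-dense set of $K$-rational points, and the parallel statements for entire curves and for non-general-type subvarieties.

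The main obstacle is exactly this Zariski-density step, which is the heart of Lang--Vojta and is open in dimension $\geq 2$ outside of geometries where an abelian-variety-like structure can be exploited. The Green--Griffiths--Lang philosophy singles out (4) $\Rightarrow$ (3) as the first target, since it is purely algebro-geometric: a non-general-type integral subvariety $Y \subseteq X$ should, via analysis of its Iitaka fibration or the image of its Albanese map, produce either rational curves or positive-dimensional abelian quotients inside $Y$, contradicting the grouplessness of the ambient $X$. Even this reduction is open, and the hardest step will be handling subvarieties $Y$ of intermediate Kodaira dimension, where neither an abelian quotient nor a rational curve is directly visible in the standard fibrations, and where one expects to need new input---from Hodge theory, positive-characteristic reduction, or non-archimedean techniques---to exhibit algebraic families of non-hyperbolic subvarieties inside $X$.
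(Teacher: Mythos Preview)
The statement you are addressing is a \emph{conjecture}, not a theorem: the paper states it as Conjecture~\ref{conj:III} and offers no proof, because no proof is known. There is therefore no ``paper's own proof'' to compare against. What the paper does do, in Section~\ref{section:conjectures}, is record precisely the implications you establish in your first paragraph---namely $(1)\Rightarrow(4)$, $(2)\Rightarrow(4)$, and $(3)\Rightarrow(4)$---as the known content, and flag the reverse implications as open. Your arguments for these three easy directions are correct and match the paper's reasoning (Hassett--Tschinkel for $(1)\Rightarrow(4)$, uniformization of abelian varieties for $(2)\Rightarrow(4)$, and the fact that the image of $\mathbb{G}_m$ or an abelian variety has Kodaira dimension $\leq 0$ for $(3)\Rightarrow(4)$).

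Where your proposal stops being a proof is exactly where you say it does: the inductive reduction in your second paragraph is sound as a reduction, but the base step---ruling out a Zariski-dense set of rational points (or a Zariski-dense entire curve, or a non-general-type subvariety equal to $X$ itself) on a groupless projective variety---is the Lang--Vojta conjecture itself, not a lemma one can supply. Your third paragraph acknowledges this openly. So what you have written is not a proof proposal but an accurate survey of the known implications together with a correct identification of the open core; it would be misleading to present it as a proof. If the intent is to document what is provable, you should reframe the write-up as establishing only the implications toward $(4)$, and state clearly that $(4)\Rightarrow(1),(2),(3)$ remain conjectural outside the abelian-variety case you mention.
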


 Note that the notion of general type is a birational invariant, but hyperbolicity is not. What should (conjecturally) correspond to  being of general type? The highly optimistic conjectural answer is that being of general type should correspond to being ``pseudo''-Brody hyperbolic, ``pseudo''-Mordellic, and ``pseudo''-groupless. The definitions of these notions are essentially the same as given above, the only difference being that one has to allow for an ``exceptional locus''. In the following sections we will make this more precise.
 
 \section{Pseudo-grouplessness}\label{section:ps_groupless}
Let $k$ be an algebraically closed field of characteristic zero.
Roughly speaking, a projective variety $X$ over $k$ is groupless if it admits no non-trivial morphisms from a connected algebraic group. Conjecturally,   a projective variety $X$ over $k$ is groupless if and only if  every subvariety of $X$ is of general type. To see what should correspond to being of general type, we will require the more general notion of pseudo-grouplessness.

 \begin{definition}   Let $X$ be a variety over $k$ and let $\Delta\subset X$ be a closed subset.
 We say that  $X$  is \emph{groupless modulo $\Delta$ (over $k$)} if,  for every  finite type connected group scheme $G$ over $k$ and every dense open subscheme $U\subset G$ with $\mathrm{codim}(G\setminus U)\geq 2$, every non-constant morphism $U\to X$ factors over $\Delta$. 
\end{definition}

Hyperbolicity modulo a subset was first introduced by Kiernan--Kobayashi \cite{KiernanKobayashi}, and is   thoroughly studied in Kobayashi's book \cite{KobayashiBook}. As we will see below, it is quite natural to extend the study of hyperbolic varieties to the study of varieties which are hyperbolic modulo a subset.

 For proper schemes, the notion of  ``groupless modulo the empty set'' coincides with the notion of grouplessness introduced before (and studied in \cite{JAut, JKa, JVez}). For the reader's convenience, we include a detailed proof of this.
 
\begin{lemma} Let $X$ be a proper scheme over $k$. Then the following are equivalent.
\begin{enumerate}
\item The scheme $X$ is groupless modulo the empty subscheme $\emptyset$ over $k$.
\item The scheme $X$ is groupless.
\item For every  finite type connected group scheme $G$ over $k$ and every dense open subscheme $V\subset G$, every morphism $V\to X$ is constant.
\end{enumerate}
\end{lemma}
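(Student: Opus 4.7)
The plan is to prove the cycle $(3)\Rightarrow(2)\Rightarrow(1)\Rightarrow(3)$. The first implication is immediate; $(1)\Rightarrow(3)$ uses the codimension-two extension of rational maps from smooth varieties into proper varieties; and $(2)\Rightarrow(1)$ uses that a groupless variety has no rational curves, combined with full extension of rational maps and Lemma \ref{lem:why_groupless}.

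For $(3)\Rightarrow(2)$: take $V=G=\mathbb{G}_{m,k}$ to get that every morphism $\mathbb{G}_{m,k}\to X$ is constant, and $V=G$ an abelian variety to get that every morphism from an abelian variety into $X$ is constant---these are the two conditions defining grouplessness.

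For $(1)\Rightarrow(3)$, let $G$ be a finite type connected group scheme over $k$, let $V\subset G$ be a dense open, and let $f\colon V\to X$ be a morphism. Since $k$ has characteristic zero, $G$ is smooth over $k$, and so the rational map $G\dashrightarrow X$ represented by $f$ extends uniquely to a morphism $U\to X$ on an open $U\subset G$ containing $V$ with $\mathrm{codim}_G(G\setminus U)\geq 2$ (standard extension for rational maps from smooth varieties into proper varieties). Because no morphism from a non-empty scheme factors through $\emptyset$, hypothesis (1) applied to $U\to X$ forces $U\to X$ to be constant, whence $f$ is constant.

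For $(2)\Rightarrow(1)$, first note that $X$ has no rational curves: any non-constant $\mathbb{P}^1_k\to X$, composed with a degree-$2$ cover $E\to\mathbb{P}^1_k$ by an elliptic curve $E$, would yield a non-constant $E\to X$, contradicting grouplessness. Given $U\subset G$ open with $\mathrm{codim}_G(G\setminus U)\geq 2$ and a morphism $f\colon U\to X$, the smoothness of $G$, properness of $X$, and absence of rational curves in $X$ together imply that $f$ extends to a morphism $G\to X$: one resolves the indeterminacy of $G\dashrightarrow X$ by blowups along smooth centers to obtain a morphism $\tilde f\colon\tilde G\to X$, and each exceptional component of $\tilde G\to G$ is uniruled, so its image under $\tilde f$ must be a single point in $X$ (else $X$ would contain a rational curve). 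Thus $\tilde f$ descends to a morphism $G\to X$, which is constant by Lemma \ref{lem:why_groupless}; hence $f$ is constant.

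The main obstacle will be the final extension step in $(2)\Rightarrow(1)$: converting the rational map $G\dashrightarrow X$ (defined in codimension one) into a genuine morphism $G\to X$ under the ``no rational curves'' hypothesis on $X$. This rests on Hironaka's resolution of indeterminacy together with the uniruledness of exceptional divisors of blowups along smooth centers---both classical---but merits a careful citation. Everything else reduces to standard extension lemmas, smoothness of group schemes in characteristic zero, and Lemma \ref{lem:why_groupless}.
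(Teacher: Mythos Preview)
Your proof is correct and uses essentially the same ingredients as the paper: extension of a rational map from a smooth variety to a proper target in codimension one, full extension from a big open to all of $G$ via the absence of rational curves, and Lemma~\ref{lem:why_groupless}. The only difference is cosmetic: you run the cycle $(3)\Rightarrow(2)\Rightarrow(1)\Rightarrow(3)$, whereas the paper runs $(1)\Rightarrow(2)\Rightarrow(3)\Rightarrow(1)$, packaging both extension steps into the single implication $(2)\Rightarrow(3)$ and citing \cite[Lemma~3.5]{JKa} for the rational-curve extension that you sketch via Hironaka.
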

\begin{proof}
It is clear  that $(1)$ implies $(2)$. To show that $(2)$ implies $(3)$, let $G$ be a finite type connected group scheme over $k$, let $V\subset G$ be a  dense open subscheme, and let $f:V\to X$ be a morphism of schemes over $k$. Then, as $X$ is proper over $k$, there is an open subscheme $U\subset G$ containing $V$ with $\mathrm{codim}(G\setminus U)\geq 2$ such that  the morphism $f:V\to X$ extends to a morphism $f':U\to X$. Since $X$ is groupless and proper, it does not  contain any rational curves. Therefore, as the variety underlying $G$ is smooth over $k$ \cite[Tag~047N]{stacks-project}, it follows from \cite[Lemma~3.5]{JKa} (see also\cite[Corollary~1.44]{Debarrebook1}) that the morphism $f':U\to X$ extends (uniquely) to  a morphism $f'':G\to X$. Since $X$ is groupless, the morphism $f''$ is constant. This implies that $f$ is constant.  Finally, it is clear (from the definitions) that $(3)$ implies $(1)$.
\end{proof}

\begin{definition}
 A variety $X$ is \emph{pseudo-groupless (over $k$)} if there is a proper closed subset $\Delta\subsetneq X$ such that $X$ is groupless modulo $\Delta$.
\end{definition}

 The word ``pseudo'' in this definition refers to the fact that the non-hyperbolicity of the variety is concentrated in a proper closed subset. Note that a variety $X$ is pseudo-groupless if and only if every irreducible component of $X$ is pseudo-groupless.
 
 \begin{example}\label{exa:blow_up} Let $C$ be smooth projective connected curve of genus at least two and
let $X$ be the blow-up of $C\times C$ in a point. Then $X$ is not groupless. However, its ``non-grouplessness'' is contained in the exceptional locus $\Delta$ of the blow-up $X\to C\times C$. Thus, as $X$ is groupless modulo $\Delta$, it follows that $X$ is pseudo-groupless.
\end{example}

Let us briefly say that an open subset $U$ of an integral variety $V$ is \emph{big} if $\mathrm{codim}(V\setminus U)$ is at least two. Now, 
 the reader might wonder why we test pseudo-grouplessness  on maps whose domain is a  big open subset of some algebraic group.  The example to keep in mind here is the blow-up of a simple  abelian surface in its origin.   In fact, as we  test   pseudo-grouplessness on big open subsets of abelian varieties (and not merely maps from abelian varieties),  such blow-ups are \emph{not} pseudo-groupless.    Also, roughly speaking, one should consider big open subsets of abelian varieties as far as possible from being hyperbolic, in any sense of the word ``hyperbolic''.  For example, much like how abelian varieties admit a dense entire curve (Remark \ref{remark:abvar_Brody}),  a big open subset of an abelian variety admits a dense entire curve. This is proven using Sard's theorem in \cite{VojtaLangExc}. Thus, big open subsets of abelian varieties are also as far as possible from being Brody hyperbolic.

We now show that the statement of Lemma \ref{lem:cw_groupless} also holds in the ``pseudo'' setting, i.e., we  show that pseudo-grouplessness descends along finite \'etale morphisms..    As we have mentioned before, this descent property also holds for general type varieties.

 \begin{lemma}\label{lemma:cw_for_groupless}
 Let $f:X\to Y $ be a finite \'etale morphism of varieties over $k$. Then $X$ is pseudo-groupless over $k$ if and only if $Y$ is pseudo-groupless over $k$.
 \end{lemma}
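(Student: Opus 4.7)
The plan is to treat the two implications separately, in each case producing a natural candidate for the exceptional locus on the other side and then verifying the groupless-modulo condition. Throughout, after decomposing into irreducible components (pseudo-grouplessness being inherited from components as noted in the excerpt), we may reduce to the case in which $X$ and $Y$ are integral; finite étaleness of $f$ then forces $\dim X = \dim Y$, which ensures that the candidate exceptional loci produced below are proper closed subsets.

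For the direction $(\Leftarrow)$, assume $Y$ is groupless modulo some $\Delta_Y \subsetneq Y$, and set $\Delta_X := f^{-1}(\Delta_Y)$, a proper closed subset of $X$ by surjectivity of $f$. Given a non-constant morphism $\varphi \colon U \to X$ from a big open $U \subset G$ in a connected finite-type group scheme $G$ over $k$, the composition $f \circ \varphi \colon U \to Y$ is also non-constant; otherwise $\varphi$ would take values in a finite fibre $f^{-1}(y)$, and connectedness of $U$ would force $\varphi$ itself to be constant. By hypothesis $f \circ \varphi$ factors through $\Delta_Y$, whence $\varphi$ factors through $f^{-1}(\Delta_Y) = \Delta_X$.

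For the direction $(\Rightarrow)$, assume $X$ is groupless modulo some $\Delta_X \subsetneq X$, and set $\Delta_Y := f(\Delta_X)$, which is closed in $Y$ because $f$ is finite and proper in $Y$ by the dimension reduction above. Let $\psi \colon U \to Y$ be a non-constant morphism from a big open $U \subset G$ in a connected algebraic group $G$. Form the fibre product $V := U \times_Y X \to U$, which is finite étale, and pick a connected component $V_0 \subset V$. Since $U$ is connected (a dense open in the smooth connected scheme $G$), the map $V_0 \to U$ is surjective, and a short argument using surjectivity shows that the second projection $V_0 \to X$ is non-constant. By Zariski--Nagata purity of the branch locus --- applicable because $G$ is smooth in characteristic zero --- the finite étale cover $V_0 \to U$ extends uniquely to a finite étale cover $G' \to G$ in which $V_0$ sits as a big open. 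After choosing a point of $G'$ above the identity of $G$, the group law on $G$ lifts uniquely to endow $G'$ with the structure of a connected finite-type group scheme over $k$ such that $G' \to G$ is an isogeny (using that the étale fundamental group of a connected algebraic group is abelian). Applying the hypothesis on $X$ to the non-constant morphism $V_0 \to X$ from the big open $V_0 \subset G'$, we conclude that it factors through $\Delta_X$; composing with $f$ and using surjectivity of $V_0 \to U$, the original morphism $\psi$ factors through $f(\Delta_X) = \Delta_Y$.

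The main technical obstacle is the extension/lifting step in the hard direction: one needs both that the finite étale cover $V_0 \to U$ extends to a finite étale cover $G' \to G$, and that this extension carries a natural group scheme structure making $V_0$ a big open in a connected algebraic group. The extension is an immediate consequence of Zariski--Nagata purity, and the group scheme structure is the standard fact that a connected finite étale cover of a connected algebraic group in characteristic zero is itself a connected algebraic group in a canonical way (once a base point is chosen). It is precisely to make this lifting possible that pseudo-grouplessness was phrased in terms of big opens in group schemes rather than arbitrary dense opens.
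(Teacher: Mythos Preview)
Your proof is correct and follows essentially the same strategy as the paper's: both directions use the natural candidate exceptional loci $f^{-1}(\Delta_Y)$ and $f(\Delta_X)$, and the nontrivial direction proceeds by pulling back the cover along $U\to Y$, invoking Zariski--Nagata purity to extend to a finite \'etale cover $G'\to G$, endowing a connected component with a group-scheme structure, and then applying the hypothesis. The only cosmetic differences are that you argue directly (non-constant $\Rightarrow$ factors through $\Delta$) while the paper argues by contrapositive (does not factor $\Rightarrow$ constant), and you isolate a single connected component $V_0$ at the outset rather than passing to components after the purity step.
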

 \begin{proof} We adapt the  arguments in the proof of  \cite[Proposition~2.13]{JVez}.  First, if $Y$ is groupless modulo a proper closed subset $\Delta_Y\subset Y$, then clearly $X$ is groupless modulo the proper closed subset $f^{-1}(\Delta_Y)$. Now, assume that $X$ is groupless modulo a proper closed subset $\Delta_X\subsetneq X$.   Let $G$ be a finite type connected (smooth quasi-projective) group scheme over $k$, let $U\subset G$ be a dense open subscheme with $\mathrm{codim}(G\setminus U)\geq 2$ and let $\phi:U\to Y$ be a morphism which does not factor over $f(\Delta_X)$. The pull-back of $G\to Y$ along the finite \'etale morphism $f:X\to Y$ induces a finite \'etale morphism $V:=U\times_Y X\to U$.  Since $U$ is smooth over $k$, by purity of the branch locus \cite[Th\'eor\`eme~X.3.1]{SGA1}, the finite \'etale morphism $V\to U$ extends (uniquely) to a finite \'etale morphism $G'\to G$.  Note that every connected component  $G''$ of $G'$ has the structure of a finite type connected group scheme over $k$ (and with this structure the morphism   $G''\to G$ is a homomorphism).  Now, since smooth morphisms are codimension-preserving, we see that $\mathrm{codim}(G''\setminus V) \geq 2$.  As the morphism $V\to X$ does not  factor over $f^{-1}(f(\Delta_X))$, it does not factor over $\Delta_X$, and is thus constant (as $X$ is groupless modulo $\Delta_X$). This implies that the morphism $U\to Y$ is constant, as required.
 \end{proof}

 \begin{remark} [Birational invariance] \label{lem:birational_invariance}
Let $X$ and $Y$ be proper schemes over $k$. Assume that $X$ is birational to $Y$. Then $X$ is pseudo-groupless over $k$ if and only if $Y$ is pseudo-groupless over $k$. This is proven in \cite{JXie}. Thus, as pseudo-grouplessness is a birational invariant amongst proper varieties, this notion   is more natural to study from a birational perspective than grouplessness.
 \end{remark} 
 
 \begin{remark}\label{remark:non_uni} Contrary to a hyperbolic proper variety, 
 a proper pseudo-groupless variety could have rational curves. For example, the blow-up  of the product of two smooth curves of genus two in a point (as in Example \ref{exa:blow_up}) contains precisely one rational curve.  However, a pseudo-groupless proper variety is not  covered by rational curves, i.e., it is non-uniruled, as all rational curves are contained in a proper closed subset (by definition). 
 \end{remark}
 
  \begin{remark}\label{remark:properness}
 Let $X$ be a proper scheme over $k$ and let $\Delta\subset X$ be a closed subset. It follows from the valuative criterion of properness that  $X$ is groupless modulo $\Delta$ if and only if, for every finite type connected group scheme $G$ over $k$ and every dense open subscheme $U\subset G$, any non-constant morphism $U\to X$ factors over $\Delta$. 
 \end{remark}

Recall that Lemma \ref{lem:why_groupless} says that the grouplessness  of a proper variety entails that there are no non-constant morphisms from \emph{any} connected algebraic group.
One of the main results of \cite{JXie} is the analogue of Lemma \ref{lem:why_groupless} for pseudo-groupless varieties.   The proof of this result (see Theorem \ref{lem:groupless2} below) relies on the structure theory of algebraic groups.

\begin{theorem}\label{lem:groupless2}
If $X$ is a proper scheme $X$ over $k$ and $\Delta$ is a closed subset of $X$, then $X$ is groupless modulo $\Delta$ over $k$ if and only if, for every abelian variety $A$ over $k$ and every open subscheme $U\subset A$ with $\mathrm{codim}(A\setminus U)\geq 2$, every non-constant morphism of varieties $U\to X$ factors over $\Delta$.  
\end{theorem}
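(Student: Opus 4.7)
The forward direction is tautological, since an abelian variety is a finite type connected group scheme. The real content is the reverse direction: assuming the condition on big opens of abelian varieties, show that for any connected finite type group scheme $G$ over $k$, any big open $U \subset G$, and any non-constant $f : U \to X$, one has $f(U) \subset \Delta$.

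My first step is the preliminary claim that every non-constant morphism $\phi : \mathbb{P}^1 \to X$ has image in $\Delta$. Pick any elliptic curve $E$ and any non-constant (hence surjective) morphism $\psi : E \to \mathbb{P}^1$; then $\phi \circ \psi : E \to X$ is non-constant, and applying the hypothesis with $A = U = E$ forces $\phi(\psi(E)) \subset \Delta$, whence $\phi(\mathbb{P}^1) \subset \Delta$ by surjectivity of $\psi$. Combined with the properness of $X$ (so that any morphism $\mathbb{G}_m \to X$ or $\mathbb{G}_a \to X$ extends to $\mathbb{P}^1 \to X$), this shows that every rational curve in $X$ lies in $\Delta$.

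Next, apply Chevalley's structure theorem to get an exact sequence $1 \to L \to G \xrightarrow{\pi} A \to 1$ with $L$ connected linear and $A$ abelian. Since $\pi$ is smooth and $U$ is big in $G$, for $v$ in a big open $V_0 \subset A$ the fiber $L_v \cap U$ is a big open inside $L_v \cong L$. I split into two cases. In the \emph{fiber-constant case}, where $f|_{L_v \cap U}$ is constant for generic $v$, the rigidity of proper morphisms together with the connectedness of the fibers of $\pi|_U$ yields a factorization $f = \bar f \circ \pi$ on $\pi^{-1}(V) \cap U$ for some open $V \subset A$ and morphism $\bar f : V \to X$; because $X$ is proper and $A$ is smooth, $\bar f$ extends across codimension $\geq 2$ loci to a morphism on a big open $V' \subset A$, which is non-constant since $f$ is, and so by hypothesis has image in $\Delta$. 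Closedness of $f^{-1}(\Delta)$ then gives $f(U) \subset \Delta$.

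In the \emph{fiber-non-constant case}, where $f|_{L_v \cap U}$ is non-constant for generic $v$, I exploit that a connected linear algebraic group over an algebraically closed field of characteristic zero is rational, so through every point of $L_v$ pass rational curves that sweep out a dense open subset. Suppose for contradiction that some $l \in L_v \cap U$ has $f(l) \notin \Delta$. Any rational curve $C \subset L_v$ through $l$ meets $U$ in a cofinite open of $C$; by properness of $X$, the restriction $f|_{C \cap U}$ extends to a morphism $\mathbb{P}^1 \to X$. If this extension is non-constant, Step 1 forces its image into $\Delta$, contradicting $f(l) \notin \Delta$; hence $f$ is constant on every such $C$, and since these cover a dense subset of $L_v$, the restriction $f|_{L_v \cap U}$ is forced to be constant, contradicting the case assumption. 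Therefore $f(L_v \cap U) \subset \Delta$ for generic $v$, and closedness of $\Delta$ yields $f(U) \subset \Delta$.

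The principal obstacle is the linear-group case: it requires combining the rational-curve geometry of $L$ with the properness of $X$ and the preliminary Step 1 in a precise way. A secondary technicality is the rigorous descent of $f$ through $\pi$ in the fiber-constant case, which calls on rigidity of proper morphisms and the extension of rational maps into proper targets across big opens.
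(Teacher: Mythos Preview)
Your approach is sound and matches what the paper indicates: the paper does not prove this theorem in-line but defers to \cite{JXie}, noting only that ``the proof of this result relies on the structure theory of algebraic groups.'' Your use of Chevalley's theorem to reduce to the linear-fiber analysis, together with the rational-curve trick (Step~1), is exactly the structure-theoretic reduction the paper alludes to, so the strategy is the intended one.

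A few points where your sketch would benefit from tightening before it counts as a full proof. In the fiber-constant case, the phrase ``rigidity of proper morphisms'' is doing a lot of work: what you actually need is that $\pi|_{U\cap\pi^{-1}(V)} \to V$ is faithfully flat with connected fibres, so that a morphism constant on fibres descends (fpqc descent, or just use local sections of the smooth map $\pi$). Also, the extension of $\bar f$ to a big open $V'\subset A$ is automatic because $A$ is smooth and $X$ is proper, but you should remark that $\bar f$ and $f$ continue to agree via $\pi$ on the dense open $U\cap\pi^{-1}(V)$, which is what lets you conclude $f^{-1}(\Delta)=U$ by density and closedness. In the fiber-non-constant case, the claim that rational curves through $l$ sweep out a dense open of $L_v$ uses both the rationality of $L$ \emph{and} its homogeneity under translation (the birational map to $\mathbb{A}^n$ only gives this directly at a general point). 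Finally, you should note explicitly that your dichotomy is exhaustive: the condition ``$f|_{L_v\cap U}$ is constant'' is constructible in $v$, so one of the two cases holds on a nonempty open. None of these are genuine gaps---the argument goes through---but each deserves a sentence.
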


 Theorem \ref{lem:groupless2} says that the pseudo-grouplessness of a proper variety can be tested on morphisms from big open subsets of abelian varieties (or on rational maps from abelian varieties).  A similar, but different, statement holds for affine varieties. Indeed, 
 if $X$ is an affine variety over $k$, then $X$ is groupless modulo $\Delta\subset X$ if and only if every non-constant morphism $\mathbb{G}_{m,k}\to X$ factors over $\Delta$.

Lang conjectured  that a projective variety is pseudo-groupless if and only if it is of general type. Note that, by the birational invariance of these two notions, this conjecture can be reduced to the case of smooth projective varieties by Hironaka's resolution of singularities.

  \begin{conjecture}[Strong Lang-Vojta, I]\label{conj:S1}
  Let $X$ be a projective variety over $k$. Then   $X$ is pseudo-groupless over $k$ if and only if $X$ is of general type over $k$.
  \end{conjecture}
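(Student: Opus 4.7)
The plan is to prove both directions separately and first to dispose of the case where $X$ is a closed subvariety of an abelian variety, which serves as the guiding example. By Hironaka and the birational invariance of pseudo-grouplessness among proper varieties (Remark \ref{lem:birational_invariance}) and of being of general type, it suffices to treat smooth projective $X$. For $X$ a closed subvariety of an abelian variety $A$, both conditions in the conjecture are equivalent to $\mathrm{Sp}(X)\neq X$: Theorem \ref{thm:ku} gives general type $\Leftrightarrow \mathrm{Sp}(X)\neq X$, while pseudo-grouplessness $\Leftrightarrow \mathrm{Sp}(X)\neq X$ follows by taking $\Delta=\mathrm{Sp}(X)$, using Theorem \ref{lem:groupless2} together with the fact that the image of a morphism from a big open subset of an abelian variety is a translate of an abelian subvariety, hence contained in $\mathrm{Sp}(X)$.

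For the forward direction in general, assume $X$ is of general type. By Theorem \ref{lem:groupless2} it is enough to produce a proper closed subset $\Delta\subsetneq X$ absorbing every non-constant morphism $U\to X$ with $U$ a big open subset of an abelian variety $A$. First I would invoke Kawamata's theorem on the Albanese image together with Ueno's fibration to show that the image of any such $U\to X$ is itself not of general type. One then defines $\Delta$ to be the Zariski closure of the union of all non-general-type subvarieties of $X$, and must show $\Delta\neq X$. This is the geometric form of the Green-Griffiths-Lang conjecture and is the main source of difficulty. A reasonable strategy is to combine Viehweg's weak positivity for direct images of relative pluricanonical sheaves with Noetherian induction on $\dim X$, controlling families of subvarieties of bounded Kodaira dimension via Hilbert-scheme arguments.

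For the reverse direction I would argue by contrapositive. Assume $X$ is not of general type and run the Minimal Model Program. If $K_X$ is not pseudo-effective, then $X$ is uniruled by Boucksom-Demailly-Paun-Peternell, hence is covered by rational curves; these furnish a Zariski-dense family of non-constant morphisms $\mathbb{G}_{m,k}\to X$ that cannot all factor through a fixed proper closed subset, contradicting pseudo-grouplessness. Otherwise $0\leq \kappa(X)<\dim X$; pass to the Iitaka fibration $f:X\dashrightarrow Y$, whose general fibre $F$ satisfies $\kappa(F)=0$. Assuming abundance, $F$ admits a finite \'etale cover that splits as an abelian variety times a simply connected Calabi-Yau-type factor; the abelian factors vary with the fibre and supply a dominating family of non-constant morphisms from abelian varieties into $X$, again contradicting pseudo-grouplessness via Theorem \ref{lem:groupless2}.

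The hard part is that the forward direction rests on the geometric Lang conjecture and the reverse direction on abundance plus the Beauville-Bogomolov decomposition, both of which are deep open problems. Consequently, the conjecture in full generality is presently out of reach with these methods. Realistic intermediate targets are: curves (immediate from Theorem \ref{thm:riemann} and the genus criterion), surfaces (via the Enriques-Kodaira classification and Bogomolov's theorem on symmetric differentials for $c_1^2>c_2$), threefolds (using Kawamata's abundance theorem in dimension three), and closed subvarieties of abelian and semi-abelian varieties, where the argument via Theorem \ref{thm:ku} goes through unconditionally.
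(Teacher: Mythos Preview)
The statement you are attempting to prove is a \emph{conjecture}, and the paper does not offer a proof of it. Immediately after stating it, the paper only records what is known: Example~\ref{example:groupless} verifies it for closed subvarieties of abelian varieties via Kawamata--Ueno (exactly as in your first paragraph), and Remark~\ref{remark:surfaces} observes that the implication pseudo-groupless $\Rightarrow$ general type holds unconditionally for surfaces, holds in general assuming Abundance together with ``certain conjectures on Calabi--Yau varieties'', and that the implication general type $\Rightarrow$ pseudo-groupless is essentially open beyond curves, with partial results for surfaces due to Bogomolov and McQuillan. Your proposal is therefore not being compared against a proof but against this discussion, and at that level your outline is consistent with what the paper says: you correctly identify the forward direction as the geometric Lang/Green--Griffiths problem and the reverse direction as resting on MMP/abundance, and you flag both as open.

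One genuine gap in your reverse-direction sketch is worth naming. When $\kappa(X)=0$ (or more generally when the general Iitaka fibre $F$ has $\kappa(F)=0$), the Beauville--Bogomolov splitting of a finite \'etale cover of $F$ may have \emph{trivial} abelian factor: $F$ could be, up to cover, a strict Calabi--Yau or irreducible holomorphic symplectic variety. In that case your ``dominating family of non-constant morphisms from abelian varieties'' does not exist, and one needs an additional input --- e.g.\ the conjecture that such varieties carry rational curves, or admit non-constant maps from abelian varieties --- to conclude that $X$ is not pseudo-groupless. This is precisely the ``certain conjectures on Calabi--Yau varieties'' alluded to in Remark~\ref{remark:surfaces}, and without it the contrapositive argument stalls even granting abundance. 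Your list of unconditional cases (curves, surfaces, subvarieties of abelian varieties) matches the paper's; threefolds via Kawamata's abundance still runs into the Calabi--Yau issue in the $\kappa=0$ case.
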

  
  Note that this conjecture predicts more than the equivalence of $(3)$ and $(4)$ in Conjecture \ref{conj:III}. Also, even though it is stated for projective varieties, one could as well formulate the conjecture for proper varieties (or even proper algebraic spaces). The resulting ``more general'' conjecture actually follows from the above conjecture.
  
 \begin{example}\label{example:groupless}
  By Kawamata-Ueno's theorem (Theorem \ref{thm:ku}) and Lemma \ref{lem:special_locus}, the Strong Lang-Vojta conjecture holds for closed subvarieties of abelian varieties.
 \end{example}

 \begin{remark}\label{remark:surfaces}
 If $X$ is a proper pseudo-groupless surface, then $X$ is of general type (see \cite{JXie} for a proof). For higher-dimensional varieties, Conjecture \ref{conj:S1} predicts a similar statement, but this is not even known  for threefolds. However,  assuming the Abundance Conjecture   and certain conjectures on Calabi-Yau varieties, one can show that every proper pseudo-groupless variety is of general type (i.e.,  $(1)\implies (2)$ in Conjecture \ref{conj:S1}).  
 Regarding the implication $(2)\implies (1)$, not much is known beyond the one-dimensional case. For example, if $X$ is a proper surface of general type, then Conjecture \ref{conj:S1} implies that there should be a proper closed subset $\Delta\subset X$ such that every rational curve $C\subset X$ is contained in $\Delta$. Such statements are   known to hold for certain surfaces of general type  by the work of Bogomolov  and McQuillan; see  \cite{DeschampsBogomolov, McQuillan}.
 \end{remark}

 \section{Pseudo-Mordellicity and pseudo-arithmetic hyperbolicity}\label{section:ps_mordell}
 
 In the previous section, we introduced pseudo-grouplessness  and stated Lang-Vojta's conjecture that a projective variety is of general type if and only if it is pseudo-groupless. In this section, we explain what the ``pseudo'' analogue is of the notion of Mordellicity, and explain Lang-Vojta's conjecture that a projective variety is  of general type
 if and only if it is pseudo-Mordellic.

\subsection{Pseudo-arithmetic hyperbolicity}
As we have said before, Lang coined the term ``Mordellic''. We will now introduce the related (and a priori different) notion of   arithmetic hyperbolicity      (as defined in \cite{JAut, JLitt, JLalg}); see also \cite[\S 2]{UllmoShimura}, and \cite{Autissier1, Autissier2}. In Section \ref{section:Mordell} we ignored   that the extension of the notion of Mordellicity over $\Qbar$ to  arbitrary algebraically closed fields can actually be done in two a priori different ways. We discuss both   notions now and give them \emph{different} names. We refer the reader to Section \ref{section:Mordell} for our conventions regarding models of varieties, and we continue to let $k$ denote an algebraically closed field of characteristic zero.

\begin{definition}
 Let $X$ be a  variety  over $k$ and let $\Delta$ be a closed subset of $X$. We say that $X$ is \emph{arithmetically hyperbolic modulo $\Delta$ over $k$} if, for every $\ZZ$-finitely generated subring $A$ and every model $\mathcal{X}$ for $X$ over $A$, we have that every positive-dimensional irreducible component of the Zariski closure of $\mathcal{X}(A)$ in $X$ is contained in $\Delta$.
 \end{definition}

 \begin{definition}
 A variety $X$ over $k$ is \emph{pseudo-arithmetically hyperbolic over $k$} if there is a proper closed subset $\Delta\subset X$ such that $X$ is arithmetically hyperbolic modulo $\Delta$ over $k$.
 \end{definition}

  \begin{remark}
 A variety $X$ over $k$ is arithmetically hyperbolic over $k$ (as defined in \cite{JAut} and \cite[\S 4]{JLalg}) if and only if $X$ is arithmetically hyperbolic over $k$ modulo the empty subscheme.  
 \end{remark}

 \begin{lemma}[Independence of model]  Let $X$ be a variety  over $k$ and let $\Delta$ be a closed subset of $k$. Then the following are equivalent.
 \begin{enumerate}
 \item The finite type scheme $X$ over $k$ is arithmetically hyperbolic modulo $\Delta$.
 \item There is a $\ZZ$-finitely generated subring $A\subset k$, there is a model $\mathcal{X}$ for $X$ over $A$, and there is a model $\mathcal{D}\subset \mathcal{X}$ for $\Delta\subset X$ over $A$ such that, for every $\ZZ$-finitely generated subring $ B\subset k$ containing $A$, the set \[\mathcal{X}(B)\setminus \mathcal{D}(B) \] is finite.
 \end{enumerate}
 \end{lemma}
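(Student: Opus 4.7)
The plan is to prove the equivalence via two spreading-out arguments, the common technical input being: for a separated finite-type scheme $\mathcal{X}$ over an integral domain $A\subset k$, the natural map $\mathcal{X}(A)\to X(k)=\mathcal{X}_k(k)$ is injective, since two $A$-points of $\mathcal{X}$ that agree on the generic point $\Spec \mathrm{Frac}(A)$ of $\Spec A$ must coincide by separatedness. Consequently, for any $\ZZ$-finitely generated $B\supseteq A$ inside $k$, the base-change map $\mathcal{X}(A)\hookrightarrow \mathcal{X}(B)=\mathcal{X}_B(B)$ is injective, and an $A$- or $B$-point lies in $\mathcal{D}(A)$ (resp.\ $\mathcal{D}(B)$) if and only if the underlying $k$-point of $X$ lies in $\Delta(k)$.

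For $(1)\Rightarrow(2)$, fix any $\ZZ$-finitely generated $A\subset k$ together with (necessarily separated) models $\mathcal{D}\subset\mathcal{X}$ for $\Delta\subset X$ over $A$, obtained by standard spreading-out. For any $\ZZ$-finitely generated $B\subset k$ containing $A$, the base change $\mathcal{X}_B$ is a model of $X$ over $B$ with $\mathcal{X}(B)=\mathcal{X}_B(B)$ and $\mathcal{D}(B)=\mathcal{D}_B(B)$. By (1) applied to $\mathcal{X}_B$, the Zariski closure of $\mathcal{X}(B)$ in $X$ is of the form $\Delta'\cup S$ with $\Delta'\subseteq\Delta$ closed and $S$ a finite set of isolated closed points. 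Since the $B$-points in $\mathcal{X}(B)\setminus\mathcal{D}(B)$ correspond to those whose underlying $k$-point avoids $\Delta(k)$, they all lie in $S$, and $\mathcal{X}(B)\setminus\mathcal{D}(B)$ is finite.

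For $(2)\Rightarrow(1)$, let $(A_0,\mathcal{X}_0,\mathcal{D}_0)$ witness (2), and let $\mathcal{X}$ be an arbitrary model of $X$ over some $\ZZ$-finitely generated $A\subset k$. Choose a $\ZZ$-finitely generated $B\subset k$ containing both $A$ and $A_0$. Via the identifications with $X$, the models $\mathcal{X}_B$ and $(\mathcal{X}_0)_B$ become isomorphic after base change to $k$; spreading out this isomorphism, together with the equality of the two induced closed subschemes of $\mathcal{X}_k=(\mathcal{X}_0)_k$ that spread $\Delta$, one obtains a $\ZZ$-finitely generated $B'\subset k$ with $B'\supseteq B$ and an isomorphism $\mathcal{X}_{B'}\simeq(\mathcal{X}_0)_{B'}$ carrying $\mathcal{D}_{B'}$ to $\mathcal{D}_{0,B'}$. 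The composed injection $\mathcal{X}(A)\hookrightarrow\mathcal{X}(B')\simeq\mathcal{X}_0(B')$ then sends $\mathcal{X}(A)\setminus\mathcal{D}(A)$ into the finite set $\mathcal{X}_0(B')\setminus\mathcal{D}_0(B')$ supplied by (2). Hence $\mathcal{X}(A)\setminus\mathcal{D}(A)$ is finite, so the Zariski closure of $\mathcal{X}(A)$ in $X$ is contained in $\Delta$ together with finitely many closed points, proving that every positive-dimensional irreducible component of this closure lies in $\Delta$.

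The main obstacle is the simultaneous bookkeeping in the second implication: one has to descend the $k$-isomorphism $\mathcal{X}_k\simeq(\mathcal{X}_0)_k$ and the coincidence of the two models of $\Delta$ inside this common scheme to a single $\ZZ$-finitely generated subring $B'\subset k$ containing $A\cup A_0$. These are routine limit-of-schemes arguments---any isomorphism of finite-type schemes over $k$, and any equality of two finite-type closed subschemes of a common scheme over $k$, descend to a $\ZZ$-finitely generated subring---but they must be executed together so that the resulting injection $\mathcal{X}(A)\hookrightarrow\mathcal{X}_0(B')$ identifies $\mathcal{D}(A)$ with $\mathcal{X}(A)\cap\mathcal{D}_0(B')$.
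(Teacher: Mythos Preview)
Your proof is correct and follows exactly the approach the paper indicates: the paper's own proof is the single sentence ``This follows from standard spreading out arguments,'' and you have supplied those arguments in detail. One small presentational point: in $(2)\Rightarrow(1)$ you invoke $\mathcal{D}(A)$ for a model $\mathcal{D}\subset\mathcal{X}$ of $\Delta$ over $A$ that was never introduced (the arbitrary model $\mathcal{X}$ in (1) does not come equipped with one); this is harmless, since you can either enlarge $A$ to admit such a model or, more cleanly, drop $\mathcal{D}$ altogether and argue directly that the set of $p\in\mathcal{X}(A)$ with $p_k\notin\Delta$ injects into $\mathcal{X}_0(B')\setminus\mathcal{D}_0(B')$.
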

 \begin{proof}
 This follows from standard spreading out arguments. These type of arguments are used in  \cite{JLalg} to prove more general statements in which the objects are algebraic stacks. 
 \end{proof}

 \begin{remark}
We unravel what the notion of arithmetic hyperbolicity modulo $\Delta$ entails for affine varieties. To do so, let $X$ be an affine variety over $k$, and let $\Delta$ be a proper closed subset of $X$. Choose the following data. \begin{itemize}
\item  integers $n, \delta, m\geq 1$; 
\item  polynomials $f_1,\ldots, f_n \in k[x_1,\ldots,x_m]$;
\item  polynomials $d_1,\ldots, d_\delta \in k[x_1,\ldots,x_m]$;  
\item  an isomorphism $$X\cong \Spec(k[x_1,\ldots,x_m]/(f_1,\ldots,f_n));$$  
\item an isomorphism 
\[
\Delta \cong   \Spec(k[x_1,\ldots,x_m]/(d_1,\ldots,d_\delta)).
\]
\end{itemize} Let $A_0$ be the $\ZZ$-finitely generated subring of $k$ generated by the (finitely many) coefficients of the polynomials $f_1,\ldots, f_n, d_1,\ldots, d_\delta$.  
Now, the following statements are equivalent.
\begin{enumerate}
\item The variety $X$ is arithmetically hyperbolic modulo $\Delta$ over $k$.
\item For every   $\ZZ$-finitely generated subring $A\subset k$ containing $A_0$, the set
\[
\{a\in A^m \ | \ f_1(a) = \ldots = f_n(a) =0\}\setminus \{ a \in A^m \ | \ d_1(a) = \ldots = d_\delta(a) =0 \}
\] is finite.
\end{enumerate}
 Thus, roughly speaking, one could say that an algebraic variety over $k$ is arithmetically hyperbolic modulo $\Delta$ over $k$ if ``$X$ minus $\Delta$''  has only finitely many $A$-valued points, for any choice of finitely generated subring $A\subset k$. 
 \end{remark}
 
 \subsection{Pseudo-Mordellicity}
 
 The reader might have  noticed a possibly confusing change in terminology. Why do we not refer to the above notion as being ``Mordellic modulo $\Delta$''? The precise reason brings us to a subtle point in the study of integral points valued in higher-dimensional rings (contrary to those valued in $\OO_{K,S}$ with $S$ a finite set of finite places of a number field $K$). To explain this subtle point, let us first define what it means to be pseudo-Mordellic. For this definition, we will require the notion of ``near-integral'' point (Definition \ref{def:nip}).

\begin{definition}
 Let $X$ be a variety over $k$ and let $\Delta$ be a closed subset of $X$. We say that $X$ is \emph{Mordellic   modulo $\Delta$ over $k$} if, for every $\ZZ$-finitely generated subring $A$ and every model $\mathcal{X}$ for $X$ over $A$, we have that every positive-dimensional irreducible component of the Zariski closure of $\mathcal{X}(A)^{(1)}$ in $X$ is contained in $\Delta$, where $\mathcal{X}(A)^{(1)}$  is defined in Definition \ref{def:nip}.
 \end{definition}

 \begin{remark} Let $X$ be a proper scheme over $k$ and let $\Delta$ be a closed subset of $X$. 
Then, by the valuative criterion of properness, the proper scheme  $X$  is Mordellic modulo $\Delta$ if, for every finitely generated subfield $K\subset k$ and every proper model $\mathcal{X}$ over $K$, the set $\mathcal{X}(K)\setminus \Delta$ is finite.
 \end{remark}
 
 \begin{definition}
 A variety $X$ over $k$ is \emph{pseudo-Mordellic   over $k$} if there is a proper closed subset $\Delta\subset X$ such that $X$ is Mordellic    modulo $\Delta$ over $k$.
 \end{definition}
 
 Note that $X$ is Mordellic over $k$ (as defined in Section \ref{section:Mordell}) if and only if $X$ is Mordellic modulo the empty subset.   It is also clear from the definitions that, if $X$ is Mordellic modulo $\Delta$ over $k$, then $X$ is arithmetically hyperbolic modulo $\Delta$ over $k$. In particular, a pseudo-Mordellic variety is pseudo-arithmetically hyperbolic and a Mordellic variety is arithmetically hyperbolic. Indeed,  roughly speaking, to say that a variety is arithmetically hyperbolic 
  is to say that any set of integral points on it is finite, and to say that a variety is Mordellic is to say that any set of ``near-integral'' points on it is finite. The latter sets are a priori bigger.  However, there is no difference between these two sets when $k=\Qbar$. That is, a variety $X$ over $\Qbar$ is arithmetically hyperbolic modulo $\Delta$ if and only if it is Mordellic modulo $\Delta$ over $\Qbar$.

 Following the exposition in the previous sections, let us   prove the fact that pseudo-arithmetic hyperbolicity (resp. pseudo-Mordellicity) descends along finite \'etale morphisms of varieties.

\begin{theorem} [Chevalley-Weil]\label{thm:cw}
Let $f:X\to Y$ be a finite \'etale surjective morphism of varieties over $k$. Let $\Delta\subset X$ be a closed subset. If $X$ is Mordellic   modulo $\Delta$ over $k$ (resp. arithmetically hyperbolic modulo $\Delta$ over $k$), then $Y$ is Mordellic   modulo $f(\Delta)$ over $k$ (resp. arithmetically hyperbolic modulo $f(\Delta)$ over $k$).  
\end{theorem}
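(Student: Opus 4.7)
The plan is to use a Chevalley--Weil style ``lifting with bounded denominators'' argument: rational/near-integral points on $Y$ are lifted to near-integral points on $X$ after a controlled (finite, and uniform) extension of the base ring, and then the hypothesis on $X$ is applied to deduce the conclusion on $Y$. I treat both cases at once; the Mordellic case uses $\mathcal{X}(A)^{(1)}$ and the arithmetic hyperbolicity case uses $\mathcal{X}(A)$ throughout.

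First I would spread out: pick a $\ZZ$-finitely generated subring $A_0 \subset k$ together with models $\mathcal{X}, \mathcal{Y}$ for $X, Y$ over $A_0$, a morphism $\mathcal{X} \to \mathcal{Y}$ extending $f$, and closed subschemes $\mathcal{D} \subset \mathcal{X}$, $\mathcal{E} \subset \mathcal{Y}$ modelling $\Delta$ and $f(\Delta)$. After inverting finitely many elements of $A_0$, I may assume $\mathcal{X} \to \mathcal{Y}$ is finite \'etale surjective and $\mathcal{E}$ is the scheme-theoretic image of $\mathcal{D}$. Given any $\ZZ$-finitely generated $A \supseteq A_0$ in $k$ and any near-integral point $P \in \mathcal{Y}(A)^{(1)}$, the pullback $\mathcal{X}_P := \mathcal{X} \times_{\mathcal{Y},P} \Spec A \to \Spec A$ is a finite \'etale cover at every codimension-one point of $\Spec A$ (by near-integrality of $P$), of degree bounded by $\deg(f)$.

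The key step is the Chevalley--Weil finiteness statement for $\ZZ$-finitely generated rings: there exist only finitely many isomorphism classes of covers of $\Spec A$ that are finite \'etale in codimension one and of degree at most $\deg(f)$; moreover, after a single finite extension $A'$ of $A$ (itself $\ZZ$-finitely generated), all such covers acquire a section. This is proved via a Hermite--Minkowski argument on the normalizations (ramified only above a fixed divisor coming from $A \setminus A_0$-inversions and closed fibers of $\Spec A_0$) and is the arithmetic heart of the proof; I expect this uniformity to be the main technical obstacle, as it is the statement that replaces the classical Hermite--Minkowski theorem for rings of $S$-integers in number fields. Enlarging $A_0$ once and for all, I may therefore assume that every $P \in \mathcal{Y}(A)^{(1)}$ lifts to some $P' \in \mathcal{X}(A')^{(1)}$ with $f \circ P' = P$, for a fixed $A' \supset A$.

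Finally, let $S$ be the Zariski closure of $\mathcal{Y}(A)^{(1)}$ in $Y$ and let $B \subseteq S$ be any positive-dimensional irreducible component. Let $L_B \subseteq \mathcal{X}(A')^{(1)}$ be the set of all lifts of points in $\mathcal{Y}(A)^{(1)} \cap B$, and set $T_B := \overline{L_B} \subseteq X$. Because $f$ is finite, hence closed, one has $f(T_B) = \overline{f(L_B)} \supseteq \overline{\mathcal{Y}(A)^{(1)} \cap B} = B$, while $T_B \subseteq f^{-1}(B)$ forces $f(T_B) = B$. Since $f$ is finite, some irreducible component $C$ of $T_B$ satisfies $f(C) = B$ and $\dim C = \dim B > 0$. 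Now $T_B \subseteq \overline{\mathcal{X}(A')^{(1)}}$, and the hypothesis that $X$ is arithmetically hyperbolic (resp.\ Mordellic) modulo $\Delta$ says that every positive-dimensional irreducible component of $\overline{\mathcal{X}(A')^{(1)}}$ lies in $\Delta$. Since $C$ is irreducible of positive dimension, it is contained in such a component, hence $C \subseteq \Delta$, and therefore $B = f(C) \subseteq f(\Delta)$. This shows every positive-dimensional component of the Zariski closure of $\mathcal{Y}(A)^{(1)}$ lies in $f(\Delta)$, as required.
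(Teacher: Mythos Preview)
Your proposal is correct and follows essentially the same approach as the paper: spread out to a finite \'etale morphism of models, pull back the cover along each (near-)integral point of $\mathcal{Y}$, use Zariski--Nagata purity to extend to a finite \'etale cover of $\Spec A$, and then invoke Hermite's finiteness theorem over $\ZZ$-finitely generated rings to control these covers uniformly. The only differences are presentational: the paper argues by contradiction with a sequence of points and passes to a subsequence over a fixed cover $B$, whereas you argue directly and absorb all covers into a single extension $A'$; your closing component argument is also spelled out more carefully than the paper's. One small point worth making explicit is that you should take $A$ regular (as the paper does) so that Zariski--Nagata purity applies when extending your codimension-one \'etale covers.
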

\begin{proof}  We assume that $X$ is Mordellic modulo $\Delta$, and show that  $Y$ is Mordellic modulo $f(\Delta)$. (The statement concerning arithmetic hyperbolicity is proven similarly.)

  Let $A\subset k$ be a regular $\ZZ$-finitely generated subring, let $\mathcal{X}$ be a model for $X$ over $A$, let $\mathcal{Y} $ be a model for $Y$ over $A$,   and let $F:\mathcal{X}\to \mathcal{Y}$ be a finite \'etale surjective morphism such that $F_k = f$. Assume for a contradiction that $Y$ is not Mordellic   modulo $f(\Delta)$. Then, replacing $A$ by a larger regular $\ZZ$-finitely generated subring of $k$ if necessary,   for $i=1, 2, \ldots $, we may choose pairwise distinct elements $a_i$  of $\mathcal{Y}(A)^{(1)}$ whose closure in $Y$ is an irreducible positive-dimensional subvariety $R\subset Y$ such that $R\not\subset f(\Delta)$. For every $i=1,2,\ldots$, choose a dense open subscheme $U_i$ of $\Spec A$ whose complement in $\Spec A$ has codimension at least two and such that $a_i$ defines a morphism $a_i:U_i\to \mathcal{X}$. Consider $V_i:= U_i \times_{\mathcal{Y}, F} \mathcal{X}\to \mathcal{X}$, and note that $V_i\to U_i$ is finite \'etale. By Zariski-Nagata purity of the  branch locus \cite[Th\'eor\`eme~X.3.1]{SGA1},  the morphism $V_i\to U_i$ extends to a finite \'etale morphism $\Spec B_i\to A$.   By Hermite's finiteness theorem, as the degree of $B_i$ over $A$ is bounded by $\deg(f)$,  replacing $a_i$ by an infinite subset if necessary, we may and do assume that $B:=B_1 \cong B_2 \cong B_3 \cong \ldots$. Now, the $b_i:V_i \to \mathcal{X}$ define elements in $\mathcal{X}(B)^{(1)}$. Let $S$ be their closure in $X$. Note that $R\subset S$. In particular, $S\not\subset \Delta$. This contradicts the fact that $X$ is Mordellic modulo $\Delta$. Thus, we conclude that $Y$ is Mordellic modulo $f(\Delta)$.
\end{proof}

\begin{corollary}[Pseudo-Chevalley-Weil]\label{cor:cw2}
Let $f:X\to Y$ be a finite \'etale surjective morphism of finite type separated schemes over $k$. Then $X$ is pseudo-Mordellic over $k$ if and only if $Y$ is pseudo-Mordellic over $k$.
\end{corollary}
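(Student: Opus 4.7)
My plan is to treat the two implications separately, with Theorem~\ref{thm:cw} as the main tool. The easy direction is that $Y$ pseudo-Mordellic implies $X$ pseudo-Mordellic: given a proper closed $\Delta_Y \subsetneq Y$ with $Y$ Mordellic modulo $\Delta_Y$, I would set $\Delta_X := f^{-1}(\Delta_Y) \subsetneq X$ (proper because $f$ is surjective). Spreading out $f$ to a finite \'etale morphism $F\colon \mathcal{X} \to \mathcal{Y}$ over some $\ZZ$-finitely generated $A \subset k$, post-composition with $F$ sends $\mathcal{X}(A)^{(1)}$ into $\mathcal{Y}(A)^{(1)}$ (extensions over codimension-one primes compose). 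Since $F$ is proper with zero-dimensional fibres, it preserves dimensions of closed irreducible subsets and commutes with taking closures, so every positive-dimensional irreducible component $C$ of $\overline{\mathcal{X}(A)^{(1)}}$ maps into a positive-dimensional component of $\overline{\mathcal{Y}(A)^{(1)}}$, hence into $\Delta_Y$, and therefore lies in $f^{-1}(\Delta_Y) = \Delta_X$. This gives that $X$ is Mordellic modulo $\Delta_X$.

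For the converse, let $\Delta_X \subsetneq X$ be proper closed with $X$ Mordellic modulo $\Delta_X$. Applying Theorem~\ref{thm:cw} immediately yields that $Y$ is Mordellic modulo $f(\Delta_X)$, but the key obstacle---and really the only obstacle---is that $f(\Delta_X)$ need not be a proper subset of $Y$; the trivial \'etale cover $X = Y \sqcup Y$ with $\Delta_X$ one copy already shows this. The hard part will be to enlarge the symmetry of $\Delta_X$ before applying Theorem~\ref{thm:cw}, so that its image in $Y$ becomes genuinely proper. My strategy is to pass to a Galois closure: take $Z \to Y$ Galois finite \'etale factoring through $f$; then $Z \to X$ is also finite \'etale, so by the easy direction just established $Z$ is pseudo-Mordellic, and it suffices to prove the implication in the Galois case. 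From now on assume $f$ is Galois with finite group $G$.

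After enlarging $A$, spread out the $G$-action on $X$ to a $G$-action on a model $\mathcal{X}/A$; this is possible because $G$ is finite. Then $\mathcal{X}(A)^{(1)}$ is $G$-stable, so $G$ permutes the positive-dimensional irreducible components of $\overline{\mathcal{X}(A)^{(1)}}$. Define the saturation $\Delta_X^{G} := \bigcap_{\sigma \in G} \sigma(\Delta_X) \subseteq \Delta_X$, which is a $G$-invariant proper closed subset of $X$. For any positive-dimensional component $C$ of $\overline{\mathcal{X}(A)^{(1)}}$ and any $\sigma \in G$, the translate $\sigma(C)$ is again such a component and hence lies in $\Delta_X$, so $C \subseteq \sigma^{-1}(\Delta_X)$; intersecting over $\sigma$ gives $C \subseteq \Delta_X^{G}$. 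Thus $X$ is Mordellic modulo $\Delta_X^{G}$. Because $\Delta_X^{G}$ is $G$-invariant, $\Delta_Y := f(\Delta_X^{G})$ satisfies $f^{-1}(\Delta_Y) = \Delta_X^{G}$, and in particular $\Delta_Y \subsetneq Y$. Applying Theorem~\ref{thm:cw} to $X$ Mordellic modulo $\Delta_X^{G}$ now delivers $Y$ Mordellic modulo the proper closed subset $\Delta_Y$, which gives pseudo-Mordellicity of $Y$ and completes the proof.
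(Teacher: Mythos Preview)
Your argument is correct, and its overall shape matches the paper's: both directions ultimately rest on Theorem~\ref{thm:cw}. For the implication ``$Y$ pseudo-Mordellic $\Rightarrow$ $X$ pseudo-Mordellic'' the paper simply invokes the finiteness of the fibres of $f$, and your pullback $\Delta_X=f^{-1}(\Delta_Y)$ together with the observation that $F$ pushes near-integral points forward is exactly the content of that remark made precise.

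Where you genuinely add something is in the converse. The paper's proof of that direction is the single clause ``it follows from Theorem~\ref{thm:cw} that $Y$ is pseudo-Mordellic over $k$,'' which tacitly assumes that an arbitrary witness $\Delta_X\subsetneq X$ satisfies $f(\Delta_X)\subsetneq Y$. You correctly observe that this can fail (your $X=Y\sqcup Y$ example), and you repair it by passing to a Galois closure $Z\to Y$ and replacing $\Delta$ by the $G$-saturation $\Delta^G=\bigcap_{\sigma\in G}\sigma(\Delta)$, which is still a witness for Mordellicity (because $G$ permutes the positive-dimensional components of the closure of any $\mathcal{X}(A)^{(1)}$) and now has proper image in $Y$. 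This Galois-saturation step is not in the paper; it is a genuine and clean completion of the argument that the paper leaves implicit.
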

\begin{proof}  Since $f:X\to Y$ has finite fibres, the fibres of $f$ are Mordellic over $k$. Therefore,  if $Y$ is pseudo-Mordellic over $k$,   it easily follows that $X$ is pseudo-Mordellic over $k$. Conversely, if $X$ is pseudo-Mordellic over $k$, then it follows from  Theorem \ref{thm:cw}  that $Y$ is pseudo-Mordellic over $k$.
\end{proof}

\begin{corollary}[Pseudo-Chevalley-Weil, II]
Let $f:X\to Y$ be a finite \'etale surjective morphism of finite type separated schemes over $k$. Then $X$ is pseudo-arithmetically hyperbolic over $k$ if and only if $Y$ is pseudo-arithmetically hyperbolic over $k$.
\end{corollary}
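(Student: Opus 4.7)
The plan is to follow the proof of Corollary \ref{cor:cw2} essentially verbatim, invoking Theorem \ref{thm:cw} in its arithmetic-hyperbolicity formulation rather than in its Mordellic one. Both directions become short once one uses that $f$ is finite (hence closed and dimension-preserving) and surjective (so preimages of proper closed subsets are proper), and that finite \'etale morphisms spread out.

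For the direction ``$Y$ pseudo-arithmetically hyperbolic $\Rightarrow$ $X$ pseudo-arithmetically hyperbolic'', I would choose a proper closed subset $\Delta_Y\subsetneq Y$ witnessing pseudo-arithmetic hyperbolicity of $Y$, and set $\Delta_X := f^{-1}(\Delta_Y)\subsetneq X$, which is a proper closed subset of $X$ since $f$ is surjective. After spreading $f$ out to a finite \'etale morphism $F:\mathcal{X}\to\mathcal{Y}$ over some $\ZZ$-finitely generated subring $A\subset k$, composition with $F$ sends $\mathcal{X}(A)$ into $\mathcal{Y}(A)$. If $Z_0$ is a positive-dimensional irreducible component of the Zariski closure of $\mathcal{X}(A)$ in $X$, then, since $f$ is closed and dimension-preserving, $f(Z_0)$ is a positive-dimensional closed irreducible subset of the Zariski closure of $\mathcal{Y}(A)$ in $Y$, hence contained in one of its positive-dimensional irreducible components, which by hypothesis lies in $\Delta_Y$. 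Thus $Z_0\subset f^{-1}(\Delta_Y)=\Delta_X$, so $X$ is arithmetically hyperbolic modulo $\Delta_X$.

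For the converse, I would take $\Delta_X\subsetneq X$ witnessing pseudo-arithmetic hyperbolicity of $X$. Theorem \ref{thm:cw} gives directly that $Y$ is arithmetically hyperbolic modulo $f(\Delta_X)$, and $f(\Delta_X)$ is closed in $Y$ by finiteness of $f$. The only point requiring verification is that $f(\Delta_X)\subsetneq Y$, which I would settle by a dimension count on irreducible components: because $f$ is finite and \'etale, each irreducible component $X_i$ of $X$ is mapped onto an irreducible component $Y_j$ of $Y$ of the same dimension, and $\Delta_X\cap X_i\subsetneq X_i$ has strictly smaller dimension than $X_i$, so $f(\Delta_X\cap X_i)\subsetneq Y_j$; taking the union over the finitely many components gives $f(\Delta_X)\subsetneq Y$. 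The main ``obstacle'' is really Theorem \ref{thm:cw} itself, whose proof (spreading out, Zariski--Nagata purity of the branch locus, and Hermite's finiteness theorem) is the substantive input; once that theorem is granted, the present corollary is purely formal, exactly as in the Mordellic case treated in Corollary \ref{cor:cw2}.
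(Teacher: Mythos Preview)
Your proposal is correct and follows exactly the approach the paper indicates (its proof is the single line ``Similar to the proof of Corollary~\ref{cor:cw2}''): one direction is elementary via $\Delta_X=f^{-1}(\Delta_Y)$, and the other is Theorem~\ref{thm:cw} in its arithmetic-hyperbolicity form. One small caveat: your verification that $f(\Delta_X)\subsetneq Y$ tacitly assumes $\Delta_X\cap X_i\subsetneq X_i$ for every irreducible component $X_i$ of $X$, which need not hold when $X$ is reducible; this is harmless, since one can simply restrict to an irreducible component $X_i$ not contained in $\Delta_X$ and apply Theorem~\ref{thm:cw} to the finite \'etale surjection $X_i\to f(X_i)$.
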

\begin{proof}  Similar to the proof of Corollary \ref{cor:cw2}.
\end{proof}

 \begin{remark}[Birational invariance]
 The birational invariance of the notion of pseudo-Mordellicity is essentially built into the definition. Indeed,  the infinitude of the set of near-integral points is preserved under proper birational modifications.  More precisely, 
 let $X$ and $Y$ be proper integral varieties over $k$ which are birational. Then $X$ is pseudo-Mordellic over $k$ if and only if $Y$ is pseudo-Mordellic over $k$. 
 \end{remark}
 
 It is not  clear to us whether the notion of pseudo-arithmetic hyperbolicity over $k$ is a birational invariant for proper varieties over $k$, unless $k=\Qbar$.
 Similarly, it is not so clear to us whether pseudo-arithmetically hyperbolic proper varieties are pseudo-groupless. On the other hand, this is not so hard to prove for pseudo-Mordellic varieties.
 
 \begin{theorem}\label{thm:ar_is_gr}  
 If $X$ is a pseudo-Mordellic  proper variety over $k$, then $X$ is pseudo-groupless over $k$.
 \end{theorem}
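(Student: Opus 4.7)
The plan is to show that the same $\Delta$ witnessing pseudo-Mordellicity also witnesses pseudo-grouplessness. Let $\Delta\subsetneq X$ be a proper closed subset with $X$ Mordellic modulo $\Delta$, and assume for contradiction that $X$ is not groupless modulo $\Delta$. By Theorem \ref{lem:groupless2} (using properness of $X$), there exist an abelian variety $A$ over $k$, an open subscheme $U\subset A$ with $\mathrm{codim}(A\setminus U)\geq 2$, and a non-constant morphism $\phi\colon U\to X$ whose image is not contained in $\Delta$. The Zariski closure $W\subset X$ of $\phi(U)$ is then an irreducible, positive-dimensional subvariety with $W\not\subset \Delta$. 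My strategy is to produce an infinite family of near-integral points on a suitable model of $X$ whose closure contains $W$, which will contradict Mordellicity modulo $\Delta$.

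I next spread everything out. Using Remark \ref{remark:ht} (Hassett--Tschinkel), I choose a regular $\ZZ$-finitely generated subring $A_0\subset k$ with fraction field $K$ and an abelian scheme $\mathcal{A}\to \Spec A_0$ with $\mathcal{A}_k\cong A$ such that $\mathcal{A}(K)$ is Zariski dense in $A$. After localizing $A_0$ if necessary, I also obtain a proper model $\mathcal{X}\to \Spec A_0$ of $X$ (since $X$ is proper), a closed subscheme $\mathcal{D}\subset \mathcal{X}$ modeling $\Delta$, an open subscheme $\mathcal{U}\subset \mathcal{A}$ modeling $U$, and a morphism $\Phi\colon \mathcal{U}\to \mathcal{X}$ over $A_0$ with $\Phi_k=\phi$. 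The key claim is that for every $a\in \mathcal{A}(K)\cap U(K)$, the $K$-point $\phi(a)\in X(K)$ lies in $\mathcal{X}(A_0)^{(1)}$. Indeed, by properness of $\mathcal{A}\to \Spec A_0$ the point $a$ extends uniquely to a section $\widetilde{a}\colon \Spec A_0\to \mathcal{A}$, and for every codimension-one point $s\in \Spec A_0$ the generic fibre of $\widetilde{a}|_{\Spec \OO_{A_0,s}}$ factors through $\mathcal{U}$. Composing with $\Phi$ produces a morphism $\Spec K\to \mathcal{X}$; since $\mathcal{X}\to \Spec A_0$ is proper and $\OO_{A_0,s}$ is a DVR, the valuative criterion of properness extends this uniquely to $\Spec \OO_{A_0,s}\to \mathcal{X}$, showing $\phi(a)\in \mathcal{X}(A_0)^{(1)}$.

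Finally, since $\mathcal{A}(K)$ is Zariski dense in $A$ and $U\subset A$ is a dense open subset, the set $\mathcal{A}(K)\cap U(K)$ is Zariski dense in $U$, and hence $\phi(\mathcal{A}(K)\cap U(K))$ is Zariski dense in $W$. Thus the Zariski closure of $\mathcal{X}(A_0)^{(1)}$ in $X$ contains the positive-dimensional irreducible subvariety $W$, which is not contained in $\Delta$ — contradicting that $X$ is Mordellic modulo $\Delta$. The main delicate point is the spreading-out step, which must simultaneously preserve the properness of $\mathcal{X}$, the abelian-scheme structure of $\mathcal{A}$, and the density conclusion of Hassett--Tschinkel; none of these is difficult in isolation, but each must be arranged with care.
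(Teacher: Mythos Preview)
Your argument is correct and follows essentially the same approach as the paper, which merely cites Hassett--Tschinkel's density theorem (Remark~\ref{remark:ht}) and refers to \cite{JAut, JXie} for details. You have supplied those details: spread out, use potential density of $\mathcal{A}(K)$ to produce a Zariski-dense set of $K$-points landing in $U$, and observe that their images are near-integral on the proper model $\mathcal{X}$, forcing the closure of $\mathcal{X}(A_0)^{(1)}$ to contain $W\not\subset\Delta$. One small simplification: since $\mathcal{X}\to\Spec A_0$ is proper, you have $\mathcal{X}(K)=\mathcal{X}(A_0)^{(1)}$ outright, so the detour through $\widetilde{a}$ and $\mathcal{U}$ is unnecessary---any $K$-point of $X$ is automatically near-integral.
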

 \begin{proof}  
The fact that an arithmetically hyperbolic variety is groupless is proven in \cite[\S 3]{JAut} using Hassett--Tschinkel's theorem on potential density of rational points on an abelian variety over a field $K$ of characteristic zero (Remark \ref{remark:ht}). The statement of the theorem is proven in \cite{JXie} using similar arguments.
 \end{proof}

\begin{remark}
Let $X$ be a proper surface over $k$. If $X$ is pseudo-Mordellic over $k$, then  $X$ is of general type.
To prove this, note that $X$ is pseudo-groupless (Theorem \ref{thm:ar_is_gr}), so that the claim follows from the fact that pseudo-groupless proper surfaces are of general type; see Remark \ref{remark:surfaces}.
\end{remark}
 
Recall that a closed subvariety $X$ of an abelian variety $A$ is groupless modulo its special locus $\mathrm{Sp}(X)$, where $\mathrm{Sp}(X)$ is the union of translates of non-zero abelian subvarieties of $A$ contained in $X$. (We are freely using here Kawamata-Ueno's theorem that $\mathrm{Sp}(X)$ is a closed subset of $X$.) This was proven  in Lemma \ref{lem:special_locus}. In \cite{FaltingsLang} Faltings proved the arithmetic analogue of this statement.
 
 \begin{theorem}[Faltings]\label{thm:faltings_big} Let $A$ be an abelian variety over $k$, and let $X\subset A$ be a closed subvariety. Then   $X$ is Mordellic modulo $\mathrm{Sp}(X)$.
 \end{theorem}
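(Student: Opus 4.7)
The plan is to combine the Mordell--Weil--N\'eron theorem for finitely generated fields of characteristic zero with Faltings's structure theorem for rational points on closed subvarieties of abelian varieties (the strengthening of Theorem \ref{thm:faltings} in which the exceptional locus is identified precisely with $\mathrm{Sp}(X)$).

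First I would spread $X \hookrightarrow A$ out: choose a regular $\ZZ$-finitely generated subring $R \subset k$ together with an abelian scheme $\mathcal{A} \to \Spec R$ and a closed immersion of models $\mathcal{X} \hookrightarrow \mathcal{A}$ recovering $X \hookrightarrow A$ upon base change to $k$, and set $K := \mathrm{Frac}(R)$. Since $\mathcal{X}$ is proper over $R$ and the codimension-one points of the regular ring $R$ are discrete valuation rings, the valuative criterion of properness yields $\mathcal{X}(R)^{(1)} = \mathcal{X}(K)$. Writing $X_K := \mathcal{X}_K$ for the generic fibre, viewed as a closed subvariety of $A_K := \mathcal{A}_K$, it then suffices to show that every positive-dimensional component of the Zariski closure of $X_K(K) \subset A_K(K)$ in $X_K$ is contained in the $K$-special locus $\mathrm{Sp}(X_K)$, since this locus base-changes into $\mathrm{Sp}(X)$ over $k$.

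Next, the Mordell--Weil--N\'eron theorem applied to $A_K$ over the finitely generated field $K$ gives that $\Gamma := A_K(K)$ is a finitely generated abelian group. Faltings's theorem on subvarieties of abelian varieties (see \cite{FaltingsLang}) then guarantees that $X_K(K) = X_K(K) \cap \Gamma$ is contained in a \emph{finite} union $\bigcup_{i=1}^n (\gamma_i + B_i)$, where each $B_i$ is an abelian subvariety of $A_K$ and each translate $\gamma_i + B_i$ is entirely contained in $X_K$. The translates with $\dim B_i \geq 1$ lie in $\mathrm{Sp}(X_K)$ by definition, while those with $\dim B_i = 0$ contribute only finitely many points. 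Hence every positive-dimensional component of the Zariski closure of $X_K(K)$ in $X_K$ lies in $\mathrm{Sp}(X_K)$, and base-changing to $k$ gives the statement.

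The substantive input is of course Faltings's theorem itself; the reduction is routine spreading out plus Mordell--Weil, and the crucial formal observation is that near-integral points on the \emph{proper} model $\mathcal{X}$ coincide with $K$-rational points, which is what makes the passage from the classical number-field statement to the statement over an arbitrary algebraically closed field $k$ transparent in the closed-subvariety-of-an-abelian-variety case.
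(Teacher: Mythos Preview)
The paper does not supply a proof of this theorem: it simply records the statement and attributes it to Faltings \cite{FaltingsLang}. So there is no argument in the paper to compare against, and your sketch is the standard way one deduces the ``Mordellic modulo $\mathrm{Sp}(X)$'' formulation over an arbitrary algebraically closed $k$ from Faltings's structure theorem together with Mordell--Weil--N\'eron. The logical structure (spread out, identify near-integral points on the proper model with $K$-rational points, use that $A_K(K)$ is finitely generated, feed this into Faltings's description of $X_K(K)\cap\Gamma$ as a finite union of translates $\gamma_i+B_i\subset X_K$, and observe that the positive-dimensional pieces land in $\mathrm{Sp}(X)$) is correct.

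One point of phrasing deserves tightening. You write ``choose a regular $\ZZ$-finitely generated subring $R\subset k$'' and then verify the condition for that single $R$ and for $K=\mathrm{Frac}(R)$. But the definition of \emph{Mordellic modulo $\Delta$} in the paper quantifies over \emph{all} $\ZZ$-finitely generated subrings and all models; equivalently (by independence of model and the remark that for proper $X$ one may test on $\mathcal{X}(K)\setminus\Delta$ for all finitely generated subfields $K\subset k$), you must run the argument for every finitely generated $K\subset k$ over which $X\hookrightarrow A$ descends, not just one. Your argument is of course uniform in $K$ --- nothing about a particular $R$ is used beyond the existence of a model --- so the fix is purely cosmetic: replace ``choose'' by ``let $K\subset k$ be an arbitrary finitely generated subfield over which $X\hookrightarrow A$ is defined'' and proceed exactly as you do.

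A second minor remark: when you say ``$\mathrm{Sp}(X_K)$ base-changes into $\mathrm{Sp}(X)$'', what you need (and what is true) is the inclusion $\mathrm{Sp}(X_K)_k\subset\mathrm{Sp}(X)$, since a translate of an abelian subvariety over $K$ becomes one over $k$; you are not claiming equality, and indeed abelian subvarieties of $A$ need not all be defined over the particular $K$. This inclusion is exactly what the argument uses.
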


Lang and Vojta conjectured that a projective variety over $\Qbar$ is  pseudo-Mordellic if and only if it is of general type. We propose extending this to arbitrary algebraically closed fields of characteristic zero. As we also expect the notions of pseudo-arithmetic hyperbolicity and pseudo-Mordellicity to coincide, we include this in our version of the Lang-Vojta conjecture.

  \begin{conjecture}[Strong Lang-Vojta, II]\label{conj:S2}
  Let $X$ be a projective variety over $k$. Then the following statements are equivalent.
  \begin{enumerate}
  \item The   variety $X$ is pseudo-Mordellic over $k$.
  \item The   variety $X$ is pseudo-arithmetically hyperbolic over $k$.
  \item  The   variety $X$ is pseudo-groupless over $k$.
  \item The projective variety $X$ is of general type over $k$.
  \end{enumerate}
  
  \end{conjecture}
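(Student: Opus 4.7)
The plan is to close a cycle $(1) \Rightarrow (2) \Rightarrow (3) \Rightarrow (4) \Rightarrow (1)$. The first implication is tautological, the middle two are known in low dimensions and reduce to standard MMP-type conjectures in general, and the last is the arithmetic heart of Lang-Vojta, which I expect to be the main obstacle.

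The implication $(1) \Rightarrow (2)$ is immediate: for any model $\mathcal{X}$ over a $\mathbb{Z}$-finitely generated subring $A \subset k$, the inclusion $\mathcal{X}(A) \subseteq \mathcal{X}(A)^{(1)}$ of $A$-points into near-integral $A$-points passes to Zariski closures, so Mordellicity modulo $\Delta$ forces arithmetic hyperbolicity modulo the same $\Delta$. For $(2) \Rightarrow (3)$ I would follow the proof of Theorem \ref{thm:ar_is_gr}, which establishes the analogous implication from pseudo-Mordellicity to pseudo-grouplessness. The key inputs are Theorem \ref{lem:groupless2}, which reduces pseudo-grouplessness to testing on big opens of abelian varieties, and Hassett-Tschinkel's potential density theorem (Remark \ref{remark:ht}): given a non-constant morphism $f \colon U \to X$ from a big open $U \subset A$ whose image is not contained in a proposed $\Delta$, one spreads out over a regular $\mathbb{Z}$-finitely generated ring $R$, produces a Zariski-dense family of $R$-points of $\mathcal{A}$ via Hassett-Tschinkel, restricts them to $\mathcal{U}$ using that $\mathcal{A} \setminus \mathcal{U}$ has codimension $\geq 2$, and then composes with $f$ to produce integral points on $\mathcal{X}$ whose Zariski closure is positive-dimensional and avoids $\Delta$, contradicting $(2)$.

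For $(3) \Rightarrow (4)$ the approach is birational classification. Both properties are birational invariants of proper varieties (see Remark \ref{lem:birational_invariance}), so we may pass to a smooth projective model. Arguing by contrapositive, assume $X$ is not of general type. The Abundance Conjecture, together with expected structural results on varieties of Kodaira dimension zero, implies that $X$ admits either a dominant rational map to a positive-dimensional Mori fiber space (producing a covering family of rational curves, hence non-constant maps $\mathbb{G}_m \to X$ violating pseudo-grouplessness), or a finite \'etale cover carrying a non-trivial map from an abelian variety (violating pseudo-grouplessness via Theorem \ref{lem:groupless2} and the descent property Lemma \ref{lemma:cw_for_groupless}). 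Unconditionally this step is known only in dimension $\leq 2$.

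The main obstacle is $(4) \Rightarrow (1)$, the deepest arithmetic content of the conjecture. The expected strategy, following Vojta, is to prove a height inequality of the form $h_{K_X}(P) \leq \epsilon \, h_H(P) + O_\epsilon(1)$ for all near-integral $P$ outside a proper exceptional locus, by applying the Schmidt subspace theorem to sections of a sufficiently high pluricanonical system; bigness of $K_X$ would then force the near-integral points to concentrate inside a proper closed subset. At present this is known only for closed subvarieties of abelian varieties (Theorem \ref{thm:faltings_big}, due to Faltings), for curves (Theorem \ref{thm:faltings_curves}), and for varieties admitting a Kodaira-Parshin-type finite map to one of these. I do not expect to establish this implication unconditionally, and indeed no general strategy beyond these special cases is presently available.
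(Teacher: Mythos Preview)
The statement you are attempting to prove is a \emph{conjecture}, not a theorem: the paper explicitly labels it as such and offers no proof. What the paper does record is a subset of the implications, namely $(1)\Rightarrow(2)$ (immediate from $\mathcal{X}(A)\subset\mathcal{X}(A)^{(1)}$), and $(1)\Rightarrow(3)$ via Theorem~\ref{thm:ar_is_gr}. The implications $(3)\Rightarrow(4)$ and $(4)\Rightarrow(1)$ are, as you yourself note, open in general; your discussion of these two steps is an accurate summary of the expected obstacles and is consistent with the paper's own commentary (Remark~\ref{remark:surfaces}). So in this sense your ``proposal'' is not wrong so much as it is a survey of what is known and what is not, which is the only honest thing one can write here.

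There is, however, one genuine gap worth flagging. You claim $(2)\Rightarrow(3)$ by adapting the Hassett--Tschinkel argument behind Theorem~\ref{thm:ar_is_gr}, but the paper explicitly states (just before that theorem) that ``it is not so clear to us whether pseudo-arithmetically hyperbolic proper varieties are pseudo-groupless.'' The problem with your sketch is the step ``restricts them to $\mathcal{U}$ using that $\mathcal{A}\setminus\mathcal{U}$ has codimension $\geq 2$'': a Zariski-dense set of $R$-points on the abelian scheme $\mathcal{A}$ does not obviously yield $R$-points on the big open $\mathcal{U}$, because a section $\Spec R\to\mathcal{A}$ may meet the spread-out complement along a locus of codimension one in $\Spec R$. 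What you do get are \emph{near-integral} points of $\mathcal{U}$, and hence of $\mathcal{X}$ after composing with $f$; this is exactly why the argument proves $(1)\Rightarrow(3)$ (pseudo-Mordellicity controls near-integral points) but does not obviously give $(2)\Rightarrow(3)$ (pseudo-arithmetic hyperbolicity only controls honest integral points). The distinction between the two notions, discussed at length in Section~\ref{section:ps_mordell2}, is precisely the obstruction here.
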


 This is a good time to collect   examples of arithmetically hyperbolic varieties.

\begin{example}  It follows from Faltings's theorem \cite{FaltingsLang} that  a normal projective connected pseudo-groupless surface $X$ over $k$ with $\mathrm{h}^1(X,\mathcal{O}_X) > 2$ is pseudo-Mordellic. Let us prove this claim.  To do so, let $\Delta\subset X$ be a proper closed subset such that $X$ is groupless modulo $\Delta$. Moreover, let $A$ be the Albanese variety of $X$, let $p:X\to A$ be the canonical map (after choosing some basepoint in $X(k)$), and  let $Y$ be the image of $X$ in $A$. Note that $\dim Y \geq 1$. If $\dim Y = 1$, then the condition on the dimension of $A$ implies that $Y$ is not an elliptic curve. In this case, since $\dim X = 2$ and $\dim Y = 1$, the claim   follows from Faltings's (earlier) finiteness theorem for hyperbolic curves. However, if $\dim Y = 2$, we have to appeal to Faltings's Big Theorem. Indeed, in this case, the morphism $X\to Y$ is generically finite. Let $X\to X'\to Y$ be the Stein factorization of the morphism $X\to Y$, where $X'\to Y$ is a finite morphism with $X'$ normal. Since $X$ and $X'$ are birational, it suffices to show that $X'$ is pseudo-Mordellic (by the birational invariance of pseudo-Mordellicity and pseudo-grouplessness).  Thus, we may and do assume that $X=X'$, so that $X\to A$ is finite.  
If the rational points on $X$ are dense, then they are also dense in $Y$, so that $Y$ is an abelian subvariety of $A$, contradicting our assumption that   $\mathrm{h}^1(X,\mathcal{O}_X) = \dim A > 2$. Thus, the rational points on $X$ are not dense.  In particular, every irreducible component of the closure of a set of rational points on $X$ is a curve of genus $1$ (as $X$ does not admit any curves of genus zero). Since $X$ is pseudo-groupless, these components are contained in $\Delta$.
\end{example}

 \begin{example} 
 Let $X$ be a smooth projective connected curve over $k$,  let $n\geq 1$ be an integer, and let $\Delta$ be a  proper closed subset of $\mathrm{Sym}^n_X$. It follows from Faltings's theorem that  $\mathrm{Sym}^n_X$ is  groupless modulo $\Delta$ over $k$ if and only if $\mathrm{Sym}_X^n$ is arithmetically hyperbolic modulo $\Delta$ over $k$.  
 \end{example}
 
 \begin{example}[Moriwaki]
 Let $X$ be a smooth projective variety over $k$ such that $\Omega^1_X$ is ample and globally generated. Then $X$ is  Mordellic by a theorem of Moriwaki \cite{Moriwaki}. We refer the reader to Ascher-Turchet's chapter in this book for the analogous finiteness result in the logarithmic case \cite{AscherTurchetBook}.
 \end{example}
 
 \begin{example}
For every $\ZZ$-finitely generated normal integral domain $A$ of characteristic zero, the set of $A$-isomorphism classes of smooth sextic surfaces  in $\mathbb{P}^3_A$ is finite; see \cite{JLFano}. This finiteness statement can be reformulated as saying that the moduli stack of smooth sextic surfaces is Mordellic.
 \end{example}

  \begin{example}
  Let $X$ be  a smooth proper hyperkaehler variety over $k$ with Picard number at least three. Then $X$ is not arithmetically hyperbolic; see \cite{JAut}.
  \end{example}
  \subsection{Intermezzo: Arithmetic hyperbolicity and Mordellicity}\label{section:ps_mordell2}
  Let $k$ be an algebraically closed field of characteristic zero. In this section, we show that the (a priori) difference between arithmetic hyperbolicity (modulo some subset) and Mordellicity is quite subtle, as this difference disappears in many well-studied cases.  
  
  The following notion of purity for models over $\ZZ$-finitely generated rings  was first considered in \cite{vBJK} precisely to study the a priori difference between arithmetic hyperbolicity and Mordellicity. 
  
  \begin{definition}[Pure model]
 Let $X$ be a variety over $k$ and let $A\subset k$ be a subring. A model $\mathcal{X}$ for $X$ over $A$ is \emph{pure over $A$} (or: \emph{satisfies the extension property over $A$}) if, for every smooth finite type separated integral scheme $T$ over $A$, every dense open subscheme $U\subset T$ with $T\setminus U$   of codimension at least two in $T$, and every $A$-morphism $f:U\to \mathcal{X}$, there is a (unique) morphism $\overline{f}:T\to \mathcal{X}$   extending the morphism $f$.   (The uniqueness of the extension $\overline{f}$ follows from our convention that a model for $X$ over $A$ is separated.)
  \end{definition}
  
  \begin{remark} Let $X$ be a variety over $k$, and let $A\subset k$ be a subring.
  Let $\mathcal{X}$ be a pure model for $X$ over $A$, and let $ B\subset k$ be a  subring  containing $A$ such that $\Spec B \to \Spec A$ is smooth (hence finite type). Then $\mathcal{X}_B$ is pure over $B$. 
  \end{remark}
  
  \begin{definition} A variety $X$ over $k$ has an \emph{arithmetically-pure model} if there is a $\ZZ$-finitely generated subring $A\subset k$ and a pure model $\mathcal{X}$ for $X$ over $A$.
  \end{definition}

\begin{remark}
Let $X$ be a proper variety over $k$ which has an arithmetically-pure model. Then $X$ has no rational curves. To prove this, assume that $\mathbb{P}^1_k\to X$ is a non-constant (hence finite) morphism, i.e., the proper variety $X$ has a rational curve over $k$. Then, if we let $0$ denote the point $(0:0:1)$ in $\mathbb{P}^2_k$, the composed morphism $\mathbb{P}^2_k\setminus\{0\}\to \mathbb{P}^1_k\to X$ does not extend to a morphism from $\mathbb{P}^2_k$ to $X$. Now,   choose a $\ZZ$-finitely generated subring $A\subset k$ and a   model $\mathcal{X}$ over $A$ such that the morphism $\mathbb{P}^1_k\to X$ descends to a morphism $\mathbb{P}^1_A\to \mathcal{X}$ of $A$-schemes. Define $U=\mathbb{P}^2_A\setminus\{0\}$ and $T = \mathbb{P}^2_A$, where we let $\{0\}$ denote  the image of the section of $\mathbb{P}^2_A\to \Spec A$ induced by $0$ in $\mathbb{P}^2_k$.  Since the morphism $U_k\to \mathcal{X}_k$ does not extend to a morphism $T_k\to X_k$, we see that the morphism $U\to \mathcal{X}$ does not extend to a morphism $T\to \mathcal{X}$, so that $\mathcal{X}$ is not pure.  This shows that a proper variety over $k$ with a rational curve has no arithmetically-pure model.
\end{remark}

\begin{remark} Let $X$ be a proper variety over $k$. A pure model for $X$ over a $\ZZ$-finitely generated subring $A$ of $k$ might have rational curves in every special fibre (of positive characteristic). Examples of such varieties can be constructed as complete subvarieties of the moduli space of principally polarized abelian varieties.
\end{remark}

\begin{remark} Let $X$ be a smooth projective   variety over $k$.
If $\Omega^1_{X/k}$ is ample, then $X$ has an arithmetically-pure model. Indeed, choose  a  $\ZZ$-finitely generated subring $A\subset k$ with $A$ smooth over $\ZZ$ and  a smooth projective model $\mathcal{X}$ for $X$ over $A$ such that $\Omega_{\mathcal{X}/A}$ is ample. Then, the geometric fibres of $\mathcal{X}\to \Spec A$ do not contain any rational curves, so that    \cite[Proposition~6.2]{GLL} implies that $\mathcal{X}$ is a pure model for $X$  over $A$. 
\end{remark}

\begin{remark}
Let $k\subset L$ be an extension of algebraically closed fields of characteristic zero, and let $X$ be a variety over $k$. Then $X$ has an arithmetically-pure model if and only if $X_L$ has an arithmetically-pure model.
\end{remark}
  
  \begin{theorem}\label{thm:ar_hyp_is_mor}
Let $X$ be a variety over $k$ which has an arithmetically-pure model. Let $\Delta \subset X$ be a closed subset. Then $X$ is Mordellic modulo $\Delta$ over $k$ if and only if $X$ is arithmetically hyperbolic modulo $\Delta$ over $k$.
  \end{theorem}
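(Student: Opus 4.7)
The direction Mordellic modulo $\Delta$ $\Longrightarrow$ arithmetically hyperbolic modulo $\Delta$ is immediate from the inclusion $\mathcal{X}(A) \subseteq \mathcal{X}(A)^{(1)}$ holding for every model of $X$ over any $\ZZ$-finitely generated $A \subset k$. For the converse, my plan rests on a single observation: on a pure model, near-integral points are automatically integral.

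To exploit this, I first reduce to a pure model. Fix an arithmetically-pure model $\mathcal{X}_0 \to \Spec A_0$ for $X$. Given an arbitrary $\ZZ$-finitely generated subring $A \subset k$ and a model $\mathcal{X}$ of $X$ over $A$, I would enlarge $A$, via a standard spreading-out argument together with generic smoothness in characteristic zero, to a $\ZZ$-finitely generated subring of $k$ which contains $A_0$, is smooth over $A_0$, and over which $\mathcal{X} \cong (\mathcal{X}_0)_A$. Purity of $(\mathcal{X}_0)_A$ over $A$ then follows directly from the definition: given $T/A$ smooth and $U \subset T$ with $\mathrm{codim}(T \setminus U) \geq 2$, any morphism $U \to (\mathcal{X}_0)_A$ projects to an $A_0$-morphism $U \to \mathcal{X}_0$, which extends to $T$ by purity of $\mathcal{X}_0$ over $A_0$ (since $T$ is smooth over $A_0$), and this extension combines with the structure morphism $T \to \Spec A$ to yield the required $A$-morphism $T \to (\mathcal{X}_0)_A$.

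The heart of the argument is the identity $\mathcal{X}(A)^{(1)} = \mathcal{X}(A)$ for this enlarged pair. Given $P \in \mathcal{X}(A)^{(1)}$, spreading out the generic-fibre morphism $\Spec K(A) \to \mathcal{X}$ and gluing with the extensions over $\Spec \mathcal{O}_{A,\mathfrak{p}}$ supplied by the near-integrality condition produces an $A$-morphism $U \to \mathcal{X}$ on a dense open $U \subset \Spec A$ containing every height-one point, so that $\mathrm{codim}(\Spec A \setminus U) \geq 2$. Applying purity of $\mathcal{X}$ over $A$ to the tautologically smooth cover $T = \Spec A$ then yields a unique extension $\Spec A \to \mathcal{X}$, i.e.\ an element of $\mathcal{X}(A)$. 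Consequently, $\mathcal{X}(A)^{(1)}$ and $\mathcal{X}(A)$ have the same Zariski closure in $X$, and the arithmetic hyperbolicity hypothesis forces every positive-dimensional component of that closure to lie in $\Delta$.

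To descend the conclusion to the original pair $(A, \mathcal{X})$ before the enlargement, I would use that a near-integral $A$-point of the unenlarged $\mathcal{X}$ base-changes to a near-integral $B$-point of $\mathcal{X}_B$: every height-one point of $\Spec B$ either dominates a height-one point of $\Spec A$ (where extension is supplied by near-integrality over $A$) or lies above the generic point of $\Spec A$ (where extension is automatic, as $P$ is already defined on the generic fibre). Hence the Zariski closure of $\mathcal{X}(A)^{(1)}$ is contained in that of $\mathcal{X}_B(B)^{(1)}$ inside $X$, and the conclusion for the enlarged pair propagates back. The main obstacle I anticipate is precisely this bookkeeping around the enlargement step — checking that purity is preserved under smooth base change of the parameter ring and that near-integrality behaves well along it — rather than the purity extension itself, which is a one-line application of the definition once the pure model is in place.
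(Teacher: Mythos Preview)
Your proposal is correct and follows essentially the same route as the paper: the key observation in both is that once one works over a ring $B$ for which $\Spec B$ is smooth over the base of the pure model, purity forces $\mathcal{X}(B)^{(1)} = \mathcal{X}(B)$, so Mordellicity reduces to arithmetic hyperbolicity. The paper's write-up is terser and avoids your final descent step by invoking the ``Independence of model'' lemma at the outset: since Mordellicity can be tested on a single model, one simply fixes the pure model $\mathcal{X}$ over $A$ and checks $\mathcal{X}(B)^{(1)}\setminus\Delta$ is finite for all $\ZZ$-finitely generated $B \supset A$, enlarging $B$ (by localization, so $K(B)$ is unchanged) to make $\Spec B \to \Spec A$ smooth. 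Your version instead starts from an arbitrary model and transports everything back, which is correct but introduces exactly the bookkeeping you flag as the main obstacle; the paper's framing sidesteps it entirely.
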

  \begin{proof}We follow the proof of \cite[Theorem~8.10]{vBJK}.
  Suppose that $X$ is arithmetically hyperbolic modulo $\Delta$ over $k$. Let $A\subset k$ be a $\ZZ$-finitely generated subring and let $\mathcal{X}$ be a pure model for $X$ over $A$. It suffices to show that, for every $\ZZ$-finitely generated subring $B\subset k$ containing $A$, the set $\mathcal{X}(B)^{(1)}\setminus \Delta$ is finite. To do so, we may and do assume that $\Spec B\to \Spec A$  is smooth in which case it follows from the definition of a pure model that $\mathcal{X}(B)^{(1)} = \mathcal{X}(B)$. We conclude that 
  \[
  \mathcal{X}(B)^{(1)}\setminus \Delta   =\mathcal{X}(B)\setminus \Delta
  \] is finite. This shows that $X$ is Mordellic modulo $\Delta$ over $k$.
  \end{proof}
  
    \begin{lemma}[Affine varieties]\label{lem:affine}
  Let $X$ be an affine variety over $k$.  Then $X$ has an arithmetically-pure model. 
  \end{lemma}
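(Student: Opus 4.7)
The plan is to exhibit a finitely-generated model for $X$ over a regular subring of $k$, and then to verify pureness by reducing morphism extension to function extension (which is available on normal schemes across subsets of codimension $\geq 2$). The essential point is that the target is affine, so a morphism to $\mathcal{X}$ is determined by finitely many regular functions plus the equations they satisfy.

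First, I would choose equations: write $X \cong \Spec\bigl(k[x_1,\ldots,x_n]/(f_1,\ldots,f_m)\bigr)$, let $A_0 \subset k$ be the $\ZZ$-subalgebra generated by the (finitely many) coefficients of the $f_i$, and set $\mathcal{X}_0 = \Spec\bigl(A_0[x_1,\ldots,x_n]/(f_1,\ldots,f_m)\bigr)$. This is an affine model for $X$ over $A_0$. Since the regular locus of $\Spec A_0$ is open and dense, I can localize at a suitable nonzero $d \in A_0$ so that $A := A_0[1/d]$ is regular (equivalently, smooth over $\ZZ$), and replace $\mathcal{X}_0$ by its base change $\mathcal{X}$ to $A$.

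Next I would verify pureness. Let $T$ be a smooth finite type separated integral $A$-scheme, let $U \subset T$ be a dense open with $T \setminus U$ of codimension $\geq 2$, and let $f \colon U \to \mathcal{X}$ be an $A$-morphism. Because $A$ is regular and $T$ is smooth over $A$, the scheme $T$ is regular and in particular normal; by algebraic Hartogs (i.e.\ $\mathcal{O}_{T,\eta} \cap \bigcap_{\mathrm{ht}(\mathfrak{p})=1}\mathcal{O}_{T,\mathfrak{p}} = \Gamma(T,\mathcal{O}_T)$ for $T$ normal integral with generic point $\eta$) the restriction map $\Gamma(T,\mathcal{O}_T) \to \Gamma(U,\mathcal{O}_U)$ is an isomorphism. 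The morphism $f$ corresponds to an $A$-algebra homomorphism
\[
A[x_1,\ldots,x_n]/(f_1,\ldots,f_m) \longrightarrow \Gamma(U,\mathcal{O}_U),
\]
i.e.\ to sections $g_1,\ldots,g_n \in \Gamma(U,\mathcal{O}_U)$ with $f_i(g_1,\ldots,g_n) = 0$. Each $g_j$ extends uniquely to $\overline{g}_j \in \Gamma(T,\mathcal{O}_T)$, and the relations $f_i(\overline{g}_1,\ldots,\overline{g}_n) = 0$ persist because $U$ is dense in the integral scheme $T$. This produces the (unique) extension $\overline{f}\colon T \to \mathcal{X}$, proving that $\mathcal{X}$ is pure over $A$.

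There is no real obstacle here beyond arranging the regularity of the base $A$, which is standard; the conceptual content is entirely contained in the fact that on a normal integral scheme, regular functions extend across codimension $\geq 2$ subsets, combined with the affineness of the target, which turns ``extending a morphism'' into ``extending finitely many functions.''
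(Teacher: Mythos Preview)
Your proof is correct and follows the same idea as the paper, which simply invokes Hartogs' Lemma in one line; you have spelled out the details. One small inaccuracy: your parenthetical ``(equivalently, smooth over $\ZZ$)'' is not quite right (e.g.\ $\ZZ[i]$ is regular but ramified over $2$), but this is irrelevant since you only use that $A$ is regular, which suffices to make $T$ regular (hence normal) and run the Hartogs argument.
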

  \begin{proof}
 Affine varieties have an arithmetically-pure model by Hartog's Lemma.  
  \end{proof}
  
  \begin{lemma}\label{lem:semiab}
Let $X$ be a variety over $k$ which admits   a  finite morphism to some semi-abelian variety over $k$. Then $X$ has an arithmetically-pure model. 
  \end{lemma}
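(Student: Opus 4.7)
The plan is to spread the finite morphism $f\colon X\to G$ out over a $\ZZ$-finitely generated subring $A\subset k$, verify that the resulting model of $G$ satisfies the extension property, and then transfer this property to $\mathcal{X}$ along the finite map.

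First, I would choose a $\ZZ$-finitely generated regular subring $A\subset k$, a semi-abelian scheme $\mathcal{G}\to\Spec A$ with $\mathcal{G}_k\simeq G$, a model $\mathcal{X}$ of $X$ over $A$, and a finite $A$-morphism $\mathcal{F}\colon\mathcal{X}\to\mathcal{G}$ whose base change to $k$ is $f$. Enlarging $A$ if necessary, I may assume the canonical extension $1\to\mathcal{T}\to\mathcal{G}\to\mathcal{A}_{0}\to 1$ is defined over $A$ with $\mathcal{T}\cong\GG_{m,A}^{r}$ a split torus and $\mathcal{A}_{0}\to\Spec A$ an abelian scheme.

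Second, I would verify the extension property for $\mathcal{G}$. Given a smooth integral $A$-scheme $T$, a dense open $U\subset T$ with $\mathrm{codim}_{T}(T\setminus U)\geq 2$, and a morphism $g\colon U\to\mathcal{G}$, the composition $U\to\mathcal{A}_{0}$ extends uniquely to $\bar h\colon T\to\mathcal{A}_{0}$ by \cite[Proposition~6.2]{GLL}, since $\mathcal{A}_{0}\to\Spec A$ is smooth and proper and has no rational curves in its geometric fibres. Pulling the $\mathcal{T}$-torsor $\mathcal{G}\to\mathcal{A}_{0}$ back along $\bar h$ yields a $\GG_{m,T}^{r}$-torsor $P\to T$ together with a section over $U$. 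The corresponding $r$-tuple of line bundles on $T$ is trivial on $U$; since $T$ is smooth (hence locally factorial), these line bundles are trivial on $T$, and their trivialising sections on $U$ extend to nowhere-vanishing sections on $T$ by Hartogs's lemma (zero loci of non-zero sections being pure of codimension one on a locally factorial scheme). This produces a section of $P$ over $T$, and composing with $P\to\mathcal{G}$ furnishes the desired extension of $g$.

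Third, I would transfer the extension property from $\mathcal{G}$ to $\mathcal{X}$. Given a morphism $\phi\colon U\to\mathcal{X}$, I would extend $\mathcal{F}\circ\phi\colon U\to\mathcal{G}$ to $\psi\colon T\to\mathcal{G}$ by the previous step. Forming the finite $T$-scheme $Y:=T\times_{\mathcal{G},\psi}\mathcal{X}$, the pair $(\mathrm{id}_{U},\phi)$ defines a section $\sigma\colon U\to Y$ of the projection $\pi\colon Y\to T$ over $U$. Let $Z\subset Y$ be the scheme-theoretic closure of $\sigma(U)$; then $Z$ is integral and $\pi|_{Z}\colon Z\to T$ is finite, surjective (being closed and dominant) and birational (being an isomorphism over $U$). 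Since $T$ is normal, Zariski's Main Theorem forces $\pi|_{Z}$ to be an isomorphism, and composing its inverse with the second projection $Z\hookrightarrow Y\to\mathcal{X}$ yields the extension $T\to\mathcal{X}$ of $\phi$. The crux of the argument is this third step, where a section of the finite cover $Y\to T$ must be recovered from its restriction over a big open; this is precisely Zariski's Main Theorem and uses the normality of $T$ furnished by its smoothness over $A$. The rest is standard spreading out combined with purity of abelian schemes via \cite{GLL} and of the toric part via Hartogs's lemma.
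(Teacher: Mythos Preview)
Your proof is correct and follows the same two-step architecture as the paper: first establish the extension property for a semi-abelian scheme model of $G$, then transfer it to $\mathcal{X}$ along the finite map. The only difference is that where the paper cites \cite[Lemma~A.2]{Mochizuki1} as a black box for the purity of semi-abelian schemes, you prove it directly via the d\'evissage into the abelian quotient (handled by \cite[Proposition~6.2]{GLL}) and the split-torus part (handled by Hartogs and $\mathrm{Pic}(T)\simeq\mathrm{Pic}(U)$ for $T$ locally factorial); and where the paper invokes ``Hartogs's Lemma'' for the transfer along the finite morphism, you spell out the Zariski's Main Theorem argument. Both unpackings are standard and correct, so your write-up is a faithful expansion of the paper's proof rather than a different route.
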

  \begin{proof} Let $G$ be a semi-abelian variety and let $X\to G$ be a finite morphism. It follows from Hartog's Lemma that $X$ has an arithmetically-pure model if and only if $G$ has an arithmetically-pure model.  Choose a $\ZZ$-finitely generated subring and a  model $\mathcal{G}$  for $G$ over $A$ such that $\mathcal{G}\to \Spec A$ is a semi-abelian scheme. Then, this model $\mathcal{G}$  has the desired extension property by   \cite[Lemma~A.2]{Mochizuki1}, so that $G$ (hence $X$) has an arithmetically-pure model.
  \end{proof}

  \begin{remark}\label{remark:vB}
  Let $X$ be a projective integral groupless surface over $k$ which admits a non-constant map to some abelian variety. Then $X$ has an arithmetically-pure model by \cite[Lemma~8.11]{vBJK}.
  \end{remark}
  
    \begin{corollary}\label{cor:psar_is_mor}
  Let X be an integral variety over $k$, and let $\Delta\subset X$ be a closed subset. Assume that one of the following statements holds.
  \begin{enumerate}
  \item The variety $X$ is affine over $k$.
  \item There is  a finite morphism $X\to G$ with $G$ a semi-abelian variety over $k$.
  \item We have that $X$ is a groupless surface which  admits a non-constant morphism $X\to A$ with $A$ an abelian variety over $k$.
    \end{enumerate}
    Then $X$ is arithmetically hyperbolic modulo $\Delta$ over $k$ if and only if $X$ is Mordellic modulo $\Delta$ over $k$.
  \end{corollary}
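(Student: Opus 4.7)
The plan is to observe that the corollary is essentially an immediate consequence of Theorem \ref{thm:ar_hyp_is_mor} once one verifies that, in each of the three listed cases, the variety $X$ admits an arithmetically-pure model. So the strategy is to reduce each case to Theorem \ref{thm:ar_hyp_is_mor} by citing the corresponding existence result for pure models established earlier in the section.

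First, I would note that the implication ``Mordellic modulo $\Delta$ implies arithmetically hyperbolic modulo $\Delta$'' always holds, directly from comparing the definitions (since $\mathcal{X}(A) \subseteq \mathcal{X}(A)^{(1)}$). Hence only the converse direction requires an argument. For (1), if $X$ is affine, then Lemma \ref{lem:affine} supplies an arithmetically-pure model, and Theorem \ref{thm:ar_hyp_is_mor} then gives the converse immediately. For (2), if $X$ admits a finite morphism to a semi-abelian variety $G$ over $k$, then Lemma \ref{lem:semiab} again produces an arithmetically-pure model for $X$, and we conclude in the same way.

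For (3), since $X$ is an integral groupless surface equipped with a non-constant morphism to an abelian variety (and, implicitly, projective, as required for Remark \ref{remark:vB} to apply), Remark \ref{remark:vB} provides an arithmetically-pure model for $X$, and Theorem \ref{thm:ar_hyp_is_mor} yields the equivalence.

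There is no real obstacle here: each of the three hypotheses has been set up in the preceding lemmas/remark precisely so that it forces the existence of an arithmetically-pure model, and the work of passing from purity to the equivalence of pseudo-Mordellicity and pseudo-arithmetic hyperbolicity has already been done in Theorem \ref{thm:ar_hyp_is_mor}. The only point that might deserve a brief word is that in case (3) one should check the hypotheses of Remark \ref{remark:vB} match those of the corollary (in particular, integrality is given by the standing assumption of the corollary, and grouplessness and the map to an abelian variety are in the hypothesis); everything else is a direct citation.
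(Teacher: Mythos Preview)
Your proposal is correct and follows essentially the same approach as the paper: in each of the three cases you invoke the corresponding existence result for an arithmetically-pure model (Lemma~\ref{lem:affine}, Lemma~\ref{lem:semiab}, Remark~\ref{remark:vB}) and then apply Theorem~\ref{thm:ar_hyp_is_mor}. The paper's proof is terser but identical in substance.
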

  \begin{proof}
  Assume $(1)$. Then the statement follows from Lemma \ref{lem:affine} and Theorem \ref{thm:ar_hyp_is_mor}.  Similarly, if $(2)$ holds, then the statement follows  from Lemma \ref{lem:semiab} and Theorem \ref{thm:ar_hyp_is_mor}.  
 Finally, assuming $(3)$, the statement follows from Remark \ref{remark:vB} and Theorem \ref{thm:ar_hyp_is_mor}. 
  \end{proof}

  \begin{remark} \label{remark:md}    Let $g\geq 1$ and $N\geq 3$ be integers.
  Now, if $X$ is the fine moduli space of $g$-dimensional principally polarized abelian schemes over $k$ with level $\Qbar$ structure, then $X$ has an arithmetically-pure model. As is explained in \cite{Martin}, this is a consequence of Grothendieck's theorem on homomorphisms of abelian schemes \cite{GrothendieckHom}. The existence of such a model is used by Martin-Deschamps to deduce the Mordellicity of $X_k$ over $k$ from the Mordellicity of $X$ over $\Qbar$  (cf. Theorem \ref{thm:shaf_Faltings}).
  \end{remark}

   \section{Pseudo-Brody   hyperbolicity}\label{section:ps_brody}
   The notion of pseudo-hyperbolicity appeared first in the work of Kiernan and Kobayashi \cite{KiernanKobayashi} and afterwards in  Lang \cite{Lang2}; see also \cite{KobayashiBook}. We recall some of the definitions.
   
   \begin{definition}
Let $X$ be a variety over $\CC$ and let $\Delta$ be a closed subset of $X$. We say that $X$ is \emph{Brody hyperbolic modulo $\Delta$} if every holomorphic non-constant map $\CC\to X^{\an}$  factors over $\Delta$. 
\end{definition}

\begin{definition} A variety $X$ over $\CC$ is   \emph{pseudo-Brody hyperbolic} if there is a proper closed subset $\Delta\subsetneq X$ such that $X$ is Brody hyperbolic modulo $\Delta$.
\end{definition}

Green-Griffiths and Lang conjectured that a projective variety of general type is pseudo-Brody hyperbolic.   The conjecture that a projective variety is of general type if and only if it is pseudo-Brody hyperbolic is commonly referred to as the Green-Griffiths-Lang conjecture.

Note that the notion of pseudo-Brody hyperbolicity is a birational invariant. More precisely, 
if $X$ and $Y$ are proper integral varieties over $\CC$ which are birational, then $X$ is pseudo-Brody hyperbolic if and only if $Y$ is pseudo-Brody hyperbolic. Furthermore, just like the notions of pseudo-Mordellicity and pseudo-grouplessness, the notion of pseudo-Brody hyperbolicity descends along finite \'etale morphisms. That is, if $X\to Y$ is finite \'etale, then $X$ is pseudo-Brody hyperbolic if and only if $Y$ is pseudo-Brody hyperbolic. Also, it is not hard to show that, if a variety $X$ is Brody hyperbolic modulo $\Delta$, then $X$ is groupless modulo $\Delta$.

Note that a variety $X$ is Brody hyperbolic (as defined in Section \ref{section:brody}) if and only if $X$ is Brody hyperbolic modulo the empty set. Bloch--Ochiai--Kawamata's theorem classifies Brody hyperbolic closed subvarieties of abelian varieties. In fact, their result  is a consequence of the following more general statement (also proven in \cite{Kawamata}).

   \begin{theorem}[Bloch--Ochiai--Kawamata]
   Let $X$ be a closed subvariety of an abelian variety $A$. Let $\mathrm{Sp}(X)$ be the special locus of $X$. Then $\mathrm{Sp}(X)$ is a closed subset of $X$ and $X$ is Brody hyperbolic modulo $\mathrm{Sp}(X)$.  
   \end{theorem}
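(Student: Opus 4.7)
The plan is to deduce the theorem from two inputs that are, at this stage of the notes, available: Kawamata--Ueno's structure theorem (Theorem \ref{thm:ku}), which handles closedness of $\mathrm{Sp}(X)$, and the refined form of the Bloch--Ochiai--Kawamata theorem, which asserts that the Zariski closure of an entire curve in an abelian variety is itself a translate of an abelian subvariety.

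The closedness of $\mathrm{Sp}(X)$ in $X$ is precisely the first half of Theorem \ref{thm:ku}, so there is nothing to prove here; we simply invoke that result. For the hyperbolicity modulo $\mathrm{Sp}(X)$, let $f\colon \mathbb{C}\to X^{\an}$ be a non-constant holomorphic map. Composing with the closed immersion $X\hookrightarrow A$ we obtain a non-constant entire curve $\widetilde{f}\colon \mathbb{C}\to A^{\an}$. Let $Y\subset A$ denote the Zariski closure of the image of $\widetilde{f}$ in $A$; since $\widetilde{f}$ factors through $X$, the subvariety $Y$ lies in $X$.

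The heart of the matter is the refined Bloch--Ochiai--Kawamata statement: for any holomorphic map $\mathbb{C}\to A^{\an}$ with $A$ an abelian variety, the Zariski closure of the image is a translate of an abelian subvariety of $A$. Applied to $\widetilde{f}$, this forces $Y$ to be a translate $a+B$ of an abelian subvariety $B\subseteq A$; because $\widetilde{f}$ is non-constant, $B$ has positive dimension. But then $Y=a+B$ is a positive-dimensional translate of an abelian subvariety contained in $X$, so by the very definition of the special locus we obtain $Y\subseteq \mathrm{Sp}(X)$, and in particular $f(\mathbb{C})\subseteq Y\subseteq \mathrm{Sp}(X)$. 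This shows that every entire curve in $X$ factors through $\mathrm{Sp}(X)$, i.e.\ $X$ is Brody hyperbolic modulo $\mathrm{Sp}(X)$.

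The main obstacle in this approach is, of course, not the formal argument above but the invocation of the refined Bloch--Ochiai--Kawamata theorem, which is the genuinely hard analytic input (using value distribution theory / jet differentials to rule out Zariski-dense entire curves when the stabilizer is trivial, then reducing the general case to this one via the quotient by the stabilizer). Once that theorem is granted, together with Kawamata--Ueno, the pseudo-hyperbolic conclusion is essentially a formal unpacking of the definition of $\mathrm{Sp}(X)$.
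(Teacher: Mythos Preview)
Your argument is correct: the closedness of $\mathrm{Sp}(X)$ is exactly Kawamata--Ueno, and the statement that every entire curve in an abelian variety has Zariski closure equal to a translate of an abelian subvariety is indeed the standard strong form of Bloch--Ochiai--Kawamata; the rest is an unwinding of the definition of $\mathrm{Sp}(X)$.

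As for the comparison with the paper: the paper does not supply a proof of this theorem at all. It simply records the statement and attributes it to \cite{Kawamata}. So there is no alternative approach to compare against; your sketch is precisely the standard reduction one would give, and the hard analytic content is, as you say, entirely contained in the cited input.
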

   
   We now introduce the pseudo-analogue of Kobayashi hyperbolicity for algebraic varieties. Of course, these definitions make sense for complex-analytic spaces.
   
      \begin{definition}
Let $X$ be a variety over $\CC$ and let $\Delta$ be a closed subset of $X$.  We say that $X$ is \emph{Kobayashi hyperbolic modulo $\Delta$} if, for every $x$ and $y$ in $X^{\an}\setminus \Delta^{\an}$ with $x\neq y$, the Kobayashi pseudo-distance $d_{X^{\an}}(p,q)$ is positive.   \end{definition} 

\begin{definition} A variety $X$ over $\CC$ is   \emph{pseudo-Kobayashi hyperbolic} if there is a proper closed subset $\Delta\subsetneq X$ such that $X$ is Kobayashi hyperbolic modulo $\Delta$.
\end{definition}

It is clear from the definitions and the fact that the Kobayashi pseudo-metric vanishes everywhere on $\CC$, that a variety $X$ which is Kobayashi hyperbolic modulo a closed subset $\Delta\subset X$ is Brody hyperbolic modulo $\Delta$. 
Nonetheless, the notion of pseudo-Kobayashi hyperbolicity remains quite mysterious at the moment. Indeed,  we do not know whether a pseudo-Brody hyperbolic projective variety $X$ over $\CC$ is pseudo-Kobayashi hyperbolic. 

One can show that the notion of pseudo-Kobayashi hyperbolicity is a birational invariant. That is,  
if $X$ and $Y$ are proper integral varieties over $\CC$ which are birational, then $X$ is pseudo-Kobayashi hyperbolic if and only if $Y$ is pseudo-Kobayashi hyperbolic; see \cite{KobayashiBook}. Moreover, just like the notions of pseudo-Mordellicity and pseudo-grouplessness, pseudo-Kobayashi hyperbolicity descends along finite \'etale morphisms.

Yamanoi proved the pseudo-Kobayashi analogue of Bloch--Ochiai--Kawamata's theorem for closed subvarieties of abelian varieties; see \cite[Theorem~1.2]{Yamanoi2}.

   \begin{theorem}[Yamanoi]\label{thm:yamanoi_Kob}
      Let $X$ be a closed subvariety of an abelian variety $A$. Let $\mathrm{Sp}(X)$ be the special locus of $X$. Then $\mathrm{Sp}(X)$ is a closed subset of $X$ and $X$ is Kobayashi hyperbolic modulo $\mathrm{Sp}(X)$.
   \end{theorem}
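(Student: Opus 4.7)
The closedness of $\mathrm{Sp}(X)$ is already part of Kawamata–Ueno's theorem (Theorem 5.1 above), so the content is the Kobayashi hyperbolicity modulo $\mathrm{Sp}(X)$. My plan is to reduce to the case where the stabilizer of $X$ in $A$ is trivial and then derive a contradiction from a hypothetical degeneracy of the Kobayashi pseudo-distance using entire curves and Bloch–Ochiai–Kawamata.

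First I would carry out the standard Ueno reduction. Let $B \subset A$ be the identity component of $\mathrm{Stab}_A(X) = \{a \in A : a + X = X\}$ and let $\pi : A \to A/B$ denote the quotient isogeny. Then $X = \pi^{-1}(Y)$ for a closed subvariety $Y \subset A/B$ with $\mathrm{Stab}_{A/B}(Y) = 0$, one has $\mathrm{Sp}(X) = \pi^{-1}(\mathrm{Sp}(Y)) \cup \pi^{-1}(\mathrm{Sp}(Y))$ together with the fibres $\pi^{-1}(y) \cap X$ for $y \in Y$ (each a translate of $B$), and these fibres are always contained in $\mathrm{Sp}(X)$. Because the Kobayashi pseudo-distance is distance-decreasing under $\pi$, and vanishes along translates of $B$, the statement for $X$ reduces to the statement for $Y$. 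Hence I may assume $\mathrm{Stab}_A(X) = 0$. In this case $\mathrm{Sp}(X) \subsetneq X$ (otherwise $X$ would be a translate of a proper abelian subvariety of $A$, forcing the stabiliser to be positive-dimensional), and $X$ is of general type by Kawamata–Ueno.

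Next I would argue by contradiction: suppose there exist distinct $p, q \in X \setminus \mathrm{Sp}(X)$ with $d_X(p,q) = 0$. By the definition of the Kobayashi pseudo-distance, one obtains a sequence of chains of holomorphic disks in $X$ joining points converging to $p$ and $q$ whose total Poincaré length tends to $0$. A Brody-type reparametrisation of this sequence, performed carefully so as to keep one of the endpoints $p$ in view, produces in the limit a non-constant holomorphic map $f : \mathbb{C} \to X$ whose image contains $p$ (or, in the weaker Ahlfors-current version, an Ahlfors current supported near $p$). Once such an $f$ is produced, Bloch–Ochiai–Kawamata forces the Zariski closure of $f(\mathbb{C})$ in $A$ to be a translate of a positive-dimensional abelian subvariety of $A$ contained in $X$, hence contained in $\mathrm{Sp}(X)$. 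But then $p \in \mathrm{Sp}(X)$, a contradiction.

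The hard step, by a wide margin, is producing the limiting entire curve that actually witnesses the point $p$ (and not merely some unrelated entire curve in $X$): ordinary Brody reparametrisation yields an entire curve somewhere in $X$ but loses track of which points are reached. This is precisely where Yamanoi's deep Nevanlinna-theoretic machinery for holomorphic curves into semi-abelian varieties enters, in the form of a second main theorem with sharp truncation and a delicate analysis of Ahlfors currents associated to the shrinking chains of disks (building on \cite{Yamanoi1, NWY1, NWY2, NWY3}). This quantitative control of the value distribution of holomorphic curves in $A$, upgrading Bloch–Ochiai–Kawamata from a qualitative statement into a uniform one, is what bridges the a priori gap between pseudo-Brody and pseudo-Kobayashi hyperbolicity for subvarieties of abelian varieties.
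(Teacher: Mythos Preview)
The paper does not prove this theorem; it merely records it and cites \cite[Theorem~1.2]{Yamanoi2}. So there is no ``paper's own proof'' to compare against beyond the bare reference.

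Your outline correctly locates the crux: ordinary Brody reparametrisation applied to a degenerating chain of disks witnessing $d_X(p,q)=0$ produces \emph{some} entire curve in $X$, but with no control over where it lands; Bloch--Ochiai--Kawamata then only says the curve sits in $\mathrm{Sp}(X)$, which is no contradiction. Bridging this gap is exactly the content of Yamanoi's work, and you are right that it rests on his second-main-theorem estimates for maps into (semi-)abelian varieties rather than on soft compactness arguments. Your write-up is honest about deferring this hard step.

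Two minor corrections to your reduction paragraph. First, the displayed formula ``$\mathrm{Sp}(X)=\pi^{-1}(\mathrm{Sp}(Y))\cup\pi^{-1}(\mathrm{Sp}(Y))$'' is a typo. Second, and more substantively, the Ueno reduction is superfluous here: if $B=\mathrm{Stab}_A(X)^0$ has positive dimension then every point of $X$ lies on the translate $x+B\subset X$, so $\mathrm{Sp}(X)=X$ and the statement ``Kobayashi hyperbolic modulo $\mathrm{Sp}(X)$'' is vacuous. Hence one may simply assume $\mathrm{Stab}_A(X)^0=0$ from the outset without invoking distance-decreasing properties of $\pi$.
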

  
The Lang-Vojta conjecture and the Green-Griffiths conjecture predict that the above notions of hyperbolicity are   equivalent. To state this conjecture, we will  need one more definition. (Recall that $k$ denotes an algebraically closed field of characteristic zero.)
\begin{definition} A variety $X$ over $k$ is \emph{strongly-pseudo-Brody hyperbolic} (resp. \emph{strongly-pseudo-Kobayashi hyperbolic}) if, for every subfield $k_0\subset k$, every model $\mathcal{X}$ for $X$ over $k_0$, and every embedding $k_0\to \CC$, the variety $\mathcal{X}_{0,\CC}$ is pseudo-Brody hyperbolic (resp. pseudo-Kobayashi hyperbolic).
\end{definition}

  \begin{conjecture}[Strong Lang-Vojta, III]\label{conj:S3}
  Let $X$ be a projective variety over $k$. Then the following statements are equivalent.
  \begin{enumerate}
  \item The variety $X$ is strongly-pseudo-Brody hyperbolic over $k$.
  \item The variety $X$ is strongly-pseudo-Kobayashi hyperbolic over $k$.
  \item The projective variety $X$ is pseudo-Mordellic over $k$.
    \item The projective variety $X$ is pseudo-arithmetically hyperbolic over $k$.
  \item  The projective variety $X$ is pseudo-groupless over $k$.
  \item The projective variety $X$ is of general type over $k$.
  \end{enumerate}
  
  \end{conjecture}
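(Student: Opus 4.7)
Since Conjecture \ref{conj:S3} bundles together the Green-Griffiths-Lang conjecture, the Bombieri-Lang conjecture, and the strong Lang-Vojta principle, I do not expect to close the full cycle of implications; the plan instead is to reduce it to a small number of ``hard'' arrows and to dispatch the remainder by combining the descent and birational invariance properties already established.

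My starting point would be to record the three implications that are essentially free. First, $(2)\Rightarrow (1)$ is immediate from the fact that the Kobayashi pseudo-distance vanishes along any entire curve. Second, $(3)\Rightarrow (4)$ is definitional: any model produces more near-integral than integral points, so finiteness modulo $\Delta$ of near-integral points is the stronger property. Third, $(3)\Rightarrow (5)$ is Theorem \ref{thm:ar_is_gr}, applied on every subfield of $k$. This already reduces the conjecture to the cycle $(5)\Leftrightarrow(6)$, $(6)\Rightarrow(2)$, $(6)\Rightarrow(3)$, and the closing implication $(4)\Rightarrow(3)$, with $(1)$ and $(2)$ inserted at the appropriate places.

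Next I would handle $(5)\Leftrightarrow(6)$, which is exactly Conjecture \ref{conj:S1} (Strong Lang-Vojta I). For the direction $(5)\Rightarrow(6)$ my plan is to run the strategy sketched in Remark \ref{remark:surfaces}: resolve singularities so as to reduce to smooth projective $X$, apply the Abundance Conjecture to split $X$ (up to birational modification) into pieces of general type, $K$-trivial pieces, and uniruled pieces, then use the pseudo-grouplessness hypothesis to kill the last two classes via Theorem \ref{lem:groupless2} and the birational invariance of pseudo-grouplessness (Remark \ref{lem:birational_invariance}). For the direction $(6)\Rightarrow(5)$, one must produce, for every smooth projective $X$ of general type, a proper closed $\Delta\subsetneq X$ containing every non-constant image of a big open in an abelian variety; here I would first handle the case where $X$ admits a finite cover dominating its Albanese via the Ueno fibration and Kawamata-Ueno (Theorem \ref{thm:ku}), then bootstrap by induction on dimension. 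The obstruction $(6)\Rightarrow(2)$ is the Green-Griffiths-Lang conjecture itself and $(6)\Rightarrow(3)$ is Bombieri-Lang; these I would not attempt to prove from scratch, but rather state as the residual conjectural input, verifying them unconditionally in the test cases $X\subset A$ closed in an abelian variety using Yamanoi's Theorem \ref{thm:yamanoi_Kob} and Faltings's Theorem \ref{thm:faltings_big}, respectively.

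The implication I think is most tractable, and where I would spend the most effort, is $(4)\Rightarrow(3)$. Theorem \ref{thm:ar_hyp_is_mor} shows this holds whenever $X$ admits an arithmetically-pure model, so the plan is to establish arithmetic purity for projective pseudo-groupless varieties; pseudo-grouplessness already forces non-uniruledness (Remark \ref{remark:non_uni}), which is the obvious local obstruction to purity, and combined with Lemmas \ref{lem:affine}, \ref{lem:semiab} and Remark \ref{remark:vB} this plausibly extends via a spreading-out argument using \cite[Proposition~6.2]{GLL} once one controls rational curves in positive-characteristic fibres. Finally, the ``strongly'' in $(1)$ and $(2)$ and the passage between $k$ and $\CC$ require Conjecture \ref{conj:conjugates} on conjugates of hyperbolic varieties; I would isolate this as a separate black box and note that, assuming it, the implications between analytic and algebraic notions propagate across all models and embeddings automatically. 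The principal obstacles, in order of difficulty, are thus: the Green-Griffiths-Lang conjecture $(6)\Rightarrow(2)$, the Bombieri-Lang conjecture $(6)\Rightarrow(3)$, the Abundance Conjecture entering $(5)\Rightarrow(6)$, and the arithmetic-purity input to $(4)\Rightarrow(3)$; everything else reduces to the descent and birational-invariance machinery already assembled in Sections \ref{section:ps_groupless}--\ref{section:ps_brody}.
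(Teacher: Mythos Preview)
The statement you are attempting is Conjecture~\ref{conj:S3}, and the paper does not prove it; it is stated precisely as an open conjecture, with no accompanying proof. So there is no ``paper's own proof'' to compare against. What the paper does provide, in the text surrounding Conjectures~\ref{conj:S3} and~\ref{conj:S6}, is exactly the kind of structural decomposition you give: it records the known implications $(2)\Rightarrow(1)$, $(1)\Rightarrow(5)$, $(3)\Rightarrow(4)$, $(3)\Rightarrow(5)$ and leaves the rest open. Your proposal is therefore not a proof but an (accurate) taxonomy of what is known versus conjectural, and in that respect it matches the paper's own treatment.

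Where your write-up goes beyond the paper is in sketching strategies for the open arrows, and here you should be more cautious. Your plan for $(5)\Rightarrow(6)$ via Abundance is exactly what Remark~\ref{remark:surfaces} gestures at, but the paper is explicit that this is conditional and not known past surfaces. Your plan for $(4)\Rightarrow(3)$ via arithmetic purity is more optimistic than the paper warrants: Theorem~\ref{thm:ar_hyp_is_mor} does reduce the question to purity, but the paper only establishes purity in the special cases of Corollary~\ref{cor:psar_is_mor} (affine, finite over a semi-abelian variety, or groupless surfaces mapping to an abelian variety), and gives no indication that pseudo-grouplessness alone suffices; non-uniruledness over $k$ does not obviously control rational curves in positive-characteristic fibres of a model. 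Finally, you omit the known implication $(1)\Rightarrow(5)$, which the paper records explicitly. In short: your decomposition is sound and agrees with the paper's, but you should present the residual arrows $(5)\Leftrightarrow(6)$, $(6)\Rightarrow(2)$, $(6)\Rightarrow(3)$, $(4)\Rightarrow(3)$ as genuinely open rather than as reductions you expect to close.
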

 
  As stated this conjecture does not predict that every conjugate of a pseudo-Brody hyperbolic variety is again pseudo-Brody hyperbolic. We state this as a separate conjecture, as we did in Conjecture \ref{conj:conjugates} for Brody hyperbolic varieties.
  
  \begin{conjecture}[Conjugates of pseudo-Brody hyperbolic varieties]\label{conj:conjugates2}  
  If $X$ is a variety over $k$ and $\sigma$ is a field automorphism of $k$, then the following statements hold.
  \begin{enumerate}
    \item The variety $X$ is pseudo-Brody hyperbolic if and only if $X^{\sigma}$ is pseudo-Brody hyperbolic.
  \item The variety $X$ is pseudo-Kobayashi hyperbolic if and only if $X^{\sigma}$ is pseudo-Kobayashi hyperbolic.
  \end{enumerate}
  \end{conjecture}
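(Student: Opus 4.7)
The plan is to reduce to the case $k = \CC$ and to an automorphism $\sigma \in \Aut(\CC)$, since any conjugation on an algebraically closed field of characteristic zero factors through some embedding into $\CC$. Once reduced, since $\sigma$ is invertible, only one direction of each biconditional needs to be proved. The central difficulty is that pseudo-Brody and pseudo-Kobayashi hyperbolicity are transcendental notions---they involve holomorphic maps from $\CC$---whereas $\sigma$ acts purely algebraically on $X$ and generically does not extend to any continuous self-map of $\CC$; so the conjecture amounts to asserting that a transcendental phenomenon is ultimately controlled by algebraic data on $X$.

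The cleanest attack is conditional on Conjecture \ref{conj:S3} (Strong Lang-Vojta, III). Granting it, both pseudo-Brody and pseudo-Kobayashi hyperbolicity become equivalent to being of general type. Since bigness of $\omega_{X'/k}$ for a desingularization $X' \to X$ is detected by the growth of $\dim_k \HH^0(X', \omega_{X'/k}^{\otimes n})$, which is manifestly invariant under the conjugation functor $X \mapsto X^{\sigma}$, Conjecture \ref{conj:conjugates2} follows formally from Conjecture \ref{conj:S3}. The same reduction works if one instead grants the equivalence with pseudo-grouplessness, since grouplessness modulo a subvariety is defined entirely by morphisms of schemes and is therefore $\sigma$-equivariant.

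Unconditionally, I would first dispatch the closed subvarieties of abelian varieties, where the exceptional locus admits an algebraic description. If $X \subset A$ is a closed subvariety of an abelian variety, then by Bloch--Ochiai--Kawamata and Theorem \ref{thm:yamanoi_Kob} of Yamanoi, $X$ is Brody and Kobayashi hyperbolic modulo the special locus $\mathrm{Sp}(X)$. Since $\mathrm{Sp}(X)$ is defined algebraically (as the union of translates of positive-dimensional abelian subvarieties of $A$ contained in $X$), its formation commutes with $\sigma$: one has $\mathrm{Sp}(X)^{\sigma} = \mathrm{Sp}(X^{\sigma})$ inside the abelian variety $A^{\sigma}$. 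Applying the same two theorems to $X^{\sigma} \subset A^{\sigma}$ then yields the conjugate assertion for both $(1)$ and $(2)$.

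The hard part, and the main obstacle in any general unconditional attack, is essentially the content of the Green-Griffiths-Lang conjecture: one needs an algebraic-geometric description of the Brody (respectively Kobayashi) exceptional locus, i.e.\ the Zariski closure of all entire curves in $X^{\an}$. Once such a description is available---ideally identifying it with $\mathrm{Exc}_{\mathrm{alg}}(X)$ or with the non-general-type locus---the $\sigma$-equivariance of the exceptional locus is formal and the conjugate statement is immediate. Without such a description, there is no evident way to transport an entire curve $\CC \to X^{\an}$ to an entire curve $\CC \to (X^{\sigma})^{\an}$, since the only continuous field automorphisms of $\CC$ are the identity and complex conjugation; I therefore expect any direct attack on Conjecture \ref{conj:conjugates2} to be of comparable depth to Strong Lang-Vojta III itself.
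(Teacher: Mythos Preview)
The statement you are attempting to prove is labelled a \emph{Conjecture} in the paper, and the paper offers no proof of it. There is therefore nothing in the paper to compare your proposal against. Your write-up is not a proof but rather (correctly) an analysis of why the statement is open: you observe that it follows formally from Conjecture~\ref{conj:S3} (since being of general type, or being pseudo-groupless, is visibly $\sigma$-invariant), you verify the special case of closed subvarieties of abelian varieties via the algebraic description of $\mathrm{Sp}(X)$ together with Bloch--Ochiai--Kawamata and Yamanoi, and you explain that an unconditional proof would require an algebraic description of the Brody/Kobayashi exceptional locus, which is essentially Green--Griffiths--Lang. All of this is accurate and matches the spirit in which the paper presents the conjecture.

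Two minor corrections. First, in the paper the notions \emph{pseudo-Brody hyperbolic} and \emph{pseudo-Kobayashi hyperbolic} are only defined for varieties over $\CC$, so the conjecture should be read with $k=\CC$; your reduction step ``any conjugation on an algebraically closed field of characteristic zero factors through some embedding into $\CC$'' is therefore unnecessary (and in any case fails for fields of cardinality exceeding the continuum). Second, your conditional argument via general type tacitly assumes $X$ is projective, since Conjecture~\ref{conj:S3} is stated only in that setting; you should flag this restriction.
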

  
  We conclude this section with a brief discussion of a theorem of Kwack on the algebraicity of holomorphic maps to a hyperbolic variety, and a possible extension of his result to the pseudo-setting.
  
  \begin{remark}[Borel hyperbolicity] Let $X$ be a variety over $\mathbb{C}$ and let $\Delta\subset X$ be a closed subset. 
  We extend the notion of Borel hyperbolicity introduced in \cite{JKuch} to the pseudo-setting and say that  $X$  is \emph{Borel hyperbolic modulo $\Delta$} if, for every reduced variety $S$ over $\mathbb{C}$, every holomorphic map $f:S^{\an}\to X^{\an}$ with $f(S^{\an})\not\subset \Delta^{\an}$ is the analytification of a morphism $\varphi:S\to X$.  The proof of \cite[Lemma~3.2]{JKuch} shows that, if $X$ is Borel hyperbolic modulo $\Delta$, then it is Brody hyperbolic modulo $\Delta$. In \cite{Kwack} Kwack showed that, if $X$ is a proper Kobayashi hyperbolic variety, then $X$ is Borel hyperbolic (modulo the empty set). It seems reasonable to suspect that Kwack's theorem also holds in the pseudo-setting. 
 Thus, we may ask: if $X$ is Kobayashi hyperbolic modulo $\Delta$, does it follow that $X$ is Borel hyperbolic modulo $\Delta$?  \end{remark}
  
The reader interested in   investigating  further  complex-analytic  notions of hyperbolicity is also encouraged to have a look at the notion of taut-hyperbolicity modulo a subset introduced  by Kiernan-Kobayashi \cite{KiernanKobayashi}; see also \cite[Chapter~5]{KobayashiBook}.

\section{Algebraic hyperbolicity}\label{section:alg_hyp}
In the following three sections we investigate (a priori) different function field analogues of Mordellicity. Conjecturally, they are all equivalent notions.  At this point it is also clear that hyperbolicity \emph{modulo} a subset is more natural to study (especially from a birational perspective) which is why we will give the definitions in this more general context.
 
 The   notion we introduce in this section extends Demailly's notion of algebraic hyperbolicity \cite{Demailly, JKa} to the pseudo-setting. 

 \begin{definition}[Algebraic hyperbolicity modulo a subset] \label{def:alg_hyp}
 Let $X$ be a projective scheme over $k$ and let $\Delta$ be a closed subset of $X$. We say that $X$ is \emph{algebraically hyperbolic over $k$ modulo $\Delta$} if, for every ample line bundle $L$ on $X$, there is a real number $\alpha_{X,\Delta, L}$ depending only on $X$, $\Delta$, and $L$ such that, for every smooth projective connected curve $C$ over $k$ and every morphism $f:C\to X$ with $f(C)\not \subset \Delta$, the inequality
 \[
 \deg_C f^\ast L \leq \alpha_{X,\Delta, L} \cdot \mathrm{genus}(C)
 \] holds.
 \end{definition}

\begin{definition}
 A projective scheme  $X$ is \emph{pseudo-algebraically hyperbolic (over $k$)} if there is a proper closed subset $\Delta$ such that $X$ is algebraically hyperbolic modulo $\Delta$.
\end{definition}

We will say that a projective scheme $X$ is \emph{algebraically hyperbolic} over $k$ if it is algebraically modulo the empty subset.   This terminology is consistent with  that of  \cite{JKa}.

The motivation for introducing and studying algebraically hyperbolic projective schemes are the   results of Demailly stated below. They say that algebraic hyperbolicity lies between Brody hyperbolicity and grouplessness. In particular, the Lang-Vojta conjectures as stated in the previous sections imply that groupless projective  varieties should be algebraically hyperbolic, and that algebraically hyperbolic projective varieties should be Brody hyperbolic. This observation is due to Demailly and allows one to split the conjecture that groupless projective varieties are Brody hyperbolic into two a priori different parts. 

Before stating Demailly's theorems, we note that it is not hard to see that pseudo-algebraic hyperbolicity descends along finite \'etale maps, and that pseudo-algebraic hyperbolicity  for projective schemes is a birational invariant; see \cite[\S4]{JXie} for details.   These two properties should be compared with their counterparts for   pseudo-grouplessness, pseudo-Mordellicity, pseudo-Brody hyperbolicity, and pseudo-Kobayashi hyperbolicity.

Demailly's theorem for projective schemes reads as follows.

\begin{theorem}[Demailly]
Let $X$ be a projective scheme over $\CC$. If $X$ is Brody hyperbolic, then $X$ is algebraically hyperbolic over $\CC$.
\end{theorem}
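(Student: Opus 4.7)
The plan is to combine Brody's compactness theorem with an Ahlfors--Schwarz-type comparison for the Kobayashi--Royden infinitesimal pseudometric. Since $X$ is compact (being projective) and Brody hyperbolic, I would first invoke Brody's theorem \cite[Theorem~3.6.3]{KobayashiBook} to upgrade Brody hyperbolicity to Kobayashi hyperbolicity. Replacing $X$ by $X_{\mathrm{red}}$ and restricting $f$ to the irreducible component of $X$ containing its image, I reduce to the case that $X$ is an integral projective variety over $\CC$.

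Next I would fix an ample line bundle $L$ on $X$ together with a smooth hermitian metric $h$ on $L$ whose first Chern form $\omega := c_1(L,h)$ is strictly positive (for instance, by pulling back the Fubini--Study form along a projective embedding defined by a sufficiently high power of $L$). The crucial analytic input is then a comparison of $\omega$ with the Kobayashi--Royden infinitesimal pseudometric $\kappa_X$: there is a constant $\gamma > 0$, depending only on $(X, L, h)$, such that
\[
\|v\|_\omega \leq \gamma \cdot \kappa_X(v)
\]
for every holomorphic tangent vector $v$ at a smooth point of $X$. Its existence follows from the Kobayashi hyperbolicity of $X$ --- which forces $\kappa_X$ to be strictly positive on non-zero tangent vectors --- together with the upper semi-continuity of $\kappa_X$ and the compactness of the unit $\omega$-sphere bundle over the smooth locus of $X$.

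With this in hand I would analyze $f\colon C \to X$; without loss of generality $f$ is non-constant. If $\mathrm{genus}(C) \in \{0,1\}$, then the universal cover of $C$ is $\mathbb{P}^1_\CC$ or $\CC$, so composing with $f$ produces a non-constant holomorphic map $\CC \to X^{\an}$, contradicting Brody hyperbolicity. Hence $g := \mathrm{genus}(C) \geq 2$, and $C^{\an}$ carries a Poincar\'e metric $\omega_C$ of constant negative curvature with $\int_C \omega_C = 2\pi(2g-2)$ by Gauss--Bonnet. The distance-decreasing property of the Kobayashi pseudometric yields $f^*\kappa_X \leq \kappa_C$, and $\kappa_C$ coincides up to a universal constant with the Poincar\'e metric. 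Combining this with the comparison above produces a constant $\gamma'$, depending only on $(X,L,h)$, such that
\[
f^*\omega \;\leq\; \gamma' \cdot \omega_C
\]
as $(1,1)$-forms on $C^{\an}$. Integrating gives
\[
\deg_C f^* L \;=\; \int_C f^*\omega \;\leq\; \gamma' \int_C \omega_C \;=\; 2\pi \gamma'(2g-2) \;\leq\; 4\pi \gamma' \cdot \mathrm{genus}(C),
\]
so one may take $\alpha_{X,L} := 4\pi \gamma'$.

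The main obstacle is the comparison step, i.e., the uniform lower bound $\kappa_X \geq \gamma^{-1} \|\cdot\|_\omega$. Pointwise positivity of $\kappa_X$ (i.e., Kobayashi hyperbolicity) is not enough; the passage to a uniform bound requires both compactness of $X$ and upper semi-continuity of $\kappa_X$ (Royden's theorem). This is precisely where Brody's theorem, used to upgrade Brody hyperbolicity to Kobayashi hyperbolicity, plays an indispensable role: without strict positivity of $\kappa_X$ no such $\gamma$ could exist, and the integration argument would collapse.
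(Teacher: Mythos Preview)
Your outline is exactly the argument the paper has in mind (it simply cites Demailly's \cite[Theorem~2.1]{Demailly}), so the strategy---Brody's lemma, comparison of $\omega$ with the Kobayashi--Royden metric, distance-decreasing plus Gauss--Bonnet---is the right one.

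There is, however, a genuine gap in your justification of the key inequality $\|v\|_\omega \leq \gamma\,\kappa_X(v)$. You deduce it from pointwise positivity of $\kappa_X$ together with \emph{upper} semi-continuity and compactness. That reasoning is backwards: upper semi-continuity on a compact set controls the \emph{supremum}, not the infimum. An upper semi-continuous function can be strictly positive everywhere on a compact set and still have infimum equal to zero, so Royden's theorem alone does not yield a uniform $\gamma$. The correct route is to use Brody's reparametrization lemma directly: if no such $\gamma$ existed, there would be holomorphic discs $f_n:\mathbb{D}\to X^{\an}$ with $\|f_n'(0)\|_\omega\to\infty$, and Brody's lemma extracts from these a non-constant entire curve, contradicting Brody hyperbolicity. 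In other words, Brody's lemma already gives you the \emph{uniform} bound in one step; passing through Kobayashi hyperbolicity (pointwise positivity) and then trying to upgrade is both a detour and, as written, invalid. Alternatively you could invoke the stronger fact that on a compact Kobayashi hyperbolic space $\kappa_X$ is actually continuous, but that is a harder theorem and unnecessary here.

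A secondary issue: when $X$ is singular you restrict to the smooth locus, but then the unit $\omega$-sphere bundle you want to use is no longer compact, so even a corrected compactness argument would not apply. The Brody reparametrization argument above avoids this: embed $X$ in projective space, measure $\|f'(0)\|$ via the ambient Fubini--Study form, and run Brody's lemma there. This is why, as the paper notes, smoothness of $X$ is not actually needed.
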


A proof of this is given in \cite[Theorem~2.1]{Demailly} when $X$ is smooth. The smoothness of $X$ is however not used in its proof. 
We  stress that it is \emph{not} known whether a pseudo-Brody hyperbolic projective scheme is pseudo-algebraically hyperbolic.  On the other hand, Demailly  proved that algebraically hyperbolic projective schemes are groupless, and his proof can be adapted to show the following more general statement.

\begin{theorem}[Demailly + $\epsilon$]\label{thm:demailly1}
Let $X$ be a projective scheme over $k$ and let $\Delta\subset X$ be a closed subset. If $X$ is algebraically hyperbolic modulo $\Delta$, then $X$ is groupless modulo $\Delta$.
\end{theorem}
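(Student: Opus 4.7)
By Theorem~\ref{lem:groupless2}, since $X$ is projective it suffices to show that for every abelian variety $A$ over $k$ and every open $U\subset A$ with $\mathrm{codim}(A\setminus U)\ge 2$, every non-constant morphism $f:U\to X$ satisfies $f(U)\subset\Delta$. Assume for contradiction that such an $f$ exists with $f(U)\not\subset\Delta$.

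The key mechanism is the theorem of the cube, which implies that for multiplication by $n$, written $[n]:A\to A$, the pullback satisfies $[n]^\ast M\equiv n^2\,M$ numerically for any line bundle $M$ on $A$. Let $M$ denote the (unique) extension of $f^\ast L$ from the big open $U$ to the smooth variety $A$. By a Bertini-type genericity argument using that $A\setminus U$ has codimension $\ge 2$ and that $f^{-1}(\Delta)\subsetneq U$ is a proper closed subset, choose a smooth projective connected curve $C$ of genus $g\ge 1$ and a non-constant morphism $\phi_0:C\to A$ with $\phi_0(C)\subset U$, with $f\circ\phi_0$ non-constant, and with $(f\circ\phi_0)(C)\not\subset\Delta$. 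Put $\phi_n:=[n]\circ\phi_0$; then
\[
\deg_C(f\circ\phi_n)^\ast L \;=\; \deg_C\phi_0^\ast[n]^\ast M \;=\; n^2\cdot\deg_C(f\circ\phi_0)^\ast L.
\]

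The main technical point is to arrange that for infinitely many $n$, the image $\phi_n(C)=[n]\phi_0(C)$ lies in $U$ and $(f\circ\phi_n)(C)\not\subset\Delta$; then $f\circ\phi_n:C\to X$ is a well-defined non-constant morphism with image not in $\Delta$, and the algebraic hyperbolicity bound of Definition~\ref{def:alg_hyp} gives
\[
n^2\cdot\deg_C(f\circ\phi_0)^\ast L \;\le\; \alpha_{X,\Delta,L}\cdot g(C).
\]
Dividing by $n^2$ and letting $n\to\infty$ forces $\deg_C(f\circ\phi_0)^\ast L\le 0$, contradicting the ampleness of $L$ and the non-constancy of $f\circ\phi_0:C\to X$. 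The upshot is that the $n^2$ scaling on the left and the fixed bound on the right coming from a fixed curve $C$ cannot coexist.

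The principal obstacle is thus the genericity step used above: we need $\phi_0(C)\not\subset[n]^{-1}\bigl((A\setminus U)\cup f^{-1}(\Delta)\bigr)$ for infinitely many $n$. Each such set is a proper closed subset of $A$, and by classical orbit-closure results on abelian varieties the closure of $\{[n]a\}_{n\ge 1}$ is a translate of an abelian subvariety of $A$; a curve failing the requirement for all large $n$ would therefore be forced to lie in a translate of a proper abelian subvariety of $A$. This can be avoided by choosing $\phi_0$ so that $\phi_0(C)$ generates $A$ as an abelian variety (available for $C$ of sufficiently high genus via its Albanese map; when $k$ is countable one first spreads out to an uncountable algebraically closed extension, applies the generic case, and specialises back). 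Once this point is secured, the rest of the argument is a one-line intersection-theoretic computation via the theorem of the cube.
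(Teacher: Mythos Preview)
Your overall strategy---reduce via Theorem~\ref{lem:groupless2} to big opens of abelian varieties and then exploit the $n^2$-scaling from multiplication-by-$n$ via the theorem of the cube---is exactly the approach the paper refers to, and your reduction to an uncountable base field is also the right move.

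There is, however, a genuine gap in the genericity step, and it feeds back into the degree formula. The condition you state, $\phi_0(C)\not\subset[n]^{-1}\bigl((A\setminus U)\cup f^{-1}(\Delta)\bigr)$, only ensures that $f\circ\phi_n$ is defined on a dense open of $C$; it does \emph{not} make $f\circ\phi_n$ a morphism $C\to X$. You can of course extend to a morphism $\psi_n:C\to X$ by properness of $X$, but then $\psi_n^*L$ and $\phi_0^*[n]^*M$ are two line bundles on the curve $C$ that agree only on a dense open, which is not enough to conclude they have the same degree. Concretely, if $\pi:A'\to A$ resolves the indeterminacy of $f$ with $\tilde f:A'\to X$, then $\tilde f^*L=\pi^*M+D$ for some divisor $D$ supported on the exceptional locus, and one finds $\deg_C\psi_n^*L=n^2\deg_C\phi_0^*M+\deg_C\tilde\phi_n^*D$; the correction term need not vanish and may itself grow like $n^2$.

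What you actually need is the stronger disjointness condition $\phi_0(C)\cap[n]^{-1}(A\setminus U)=\varnothing$ for infinitely many $n$ (together with non-containment in $[n]^{-1}(\overline{f^{-1}(\Delta)})$). This guarantees $\phi_n(C)\subset U$, so $f\circ\phi_n$ is honestly a morphism $C\to X$ and your degree identity holds on the nose. Over an uncountable $k$ this is achieved by taking $C$ to be a very general complete intersection curve in $A$: each $[n]^{-1}(A\setminus U)$ has codimension at least two, so the curves meeting it form a proper closed subset of the parameter space, and one may avoid the countable union. Your orbit-closure remark about $\{[n]a\}_{n\ge 1}$ does not yield disjointness and should be replaced by this very-general-curve argument; the passage from arbitrary $k$ to uncountable $k$ is then handled by Theorem~\ref{thm:persss}.
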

\begin{proof}
See  \cite{JKa} when $\Delta =\emptyset$. The more general statement is proven in \cite{JXie}. The argument involves   the multiplication  maps on an abelian variety.
\end{proof}

Combining Demailly's theorems with Bloch--Ochiai--Kawamata's theorem, we obtain that a closed subvariety of an abelian variety over $k$ is algebraically hyperbolic over $k$ if and only if it is groupless.  The pseudo-version of this theorem is due to Yamanoi (see Section \ref{section:questions} for a precise statement).

\section{Boundedness}\label{section:boundedness}
To say that a projective variety $X$  is algebraically hyperbolic (Definition \ref{def:alg_hyp}) is to say that the degree of any curve $C$ is bounded uniformly and linearly in the genus of that curve. The reader interested in understanding how far we are from proving that groupless projective schemes  are algebraically hyperbolic is naturally led to studying variants of algebraic hyperbolicity in which one asks (in Definition \ref{def:alg_hyp} above) for ``weaker'' bounds on the degree of a map.   This led the authors of \cite{JKa}  to introducing the   notion of boundedness. To state their definition, we first recall some basic properties of moduli spaces of morphisms between projective schemes.

Let $S$ be  a scheme, and let   $X\to S$   and $Y\to S$ be projective flat morphisms of schemes.  By Grothendieck's theory of Hilbert schemes and Quot schemes \cite{Nitsure}, the functor 
$$
\mathrm{Sch}/S^{op} \to \mathrm{Sets}, \quad T\to S \mapsto \Hom_T(Y_T, X_T)
$$ is representable by  an $S$-scheme which we denote by $\underline{\Hom}_S(X,Y)$. Moreover, for $h\in \QQ[t]$ a polynomial, the subfunctor parametrizing morphisms whose graph has  Hilbert polynomial $h$ is representable by a quasi-projective subscheme  $\underline{\Hom}_S^{h}(Y,X)$ of $\underline{\Hom}_S(Y,X)$. Similarly, the subfunctor  of $\underline{\Hom}_S(X,X)$ parametrizing automorphisms of $X$ over $S$ is representable by a locally finite type group scheme scheme $ {\Aut}_{X/S}$ over $S$. It is imperative to note that this group scheme need not be quasi-compact. In fact, for a K3 surface $X$ over $\mathbb{C}$, the scheme $ {\Aut}_{X/\CC}$ is zero-dimensional. Nonetheless, there are K3 surfaces with infinitely many automorphisms. Thus, the automorphism group scheme of a projective scheme over $k$  is not necessarily of finite type (even when  it is zero-dimensional).

 If $S=\Spec k$, $d\geq 1$ is an integer, and $X=Y = \mathbb{P}^1_k$, let $\underline{\Hom}_k^d(\mathbb{P}^1_k,\mathbb{P}^1_k)$ be the subscheme of $\underline{\Hom}_k(\mathbb{P}^1_k,\mathbb{P}^1_k)$ parametrizing morphisms of degree $d$. In particular, we have that $\underline{\mathrm{Hom}}_k^1(\mathbb{P}^1_k,\mathbb{P}^1_k) =  {\Aut}_{\mathbb{P}^1_k/k}= \mathrm{PGL}_{2,k}$. For every $d\geq 1$, the quasi-projective scheme $\underline{\Hom}_k^d(\mathbb{P}^1_k,\mathbb{P}^1_k)$ is non-empty (and even positive-dimensional). If we identity the subscheme of $\underline{\Hom}_k(\mathbb{P}^1_k,\mathbb{P}^1_k)$ parametrizing constant morphisms with $\mathbb{P}^1_k$, then  
\[ 
\underline{\Hom}_k(\mathbb{P}^1,\mathbb{P}^1_k) = \mathbb{P}^1_k \sqcup \mathrm{PGL}_{2,k} \sqcup \bigsqcup_{d=2}^{\infty} \underline{\Hom}^d_k(\mathbb{P}^1_k,\mathbb{P}^1_k).
\] It follows that the scheme $\underline{\Hom}_k(\mathbb{P}^1,\mathbb{P}^1_k) $ has infinitely many connected components. It is in particular not of finite type.

 It turns out that studying projective varieties $X$ over $k$ for which \emph{every} Hom-scheme $\mathrm{\Hom}_k(Y,X)$ is of finite type is closely related to studying algebraically hyperbolic varieties. The aim of this section is to explain the connection in a systematic manner as is done in \cite{vBJK, JKa, JXie}. We start with the following definitions.

 \begin{definition}[Boundedness modulo a subset] Let $n\geq 1$ be an integer, 
 let $X$ be a projective scheme over $k$, and let $\Delta$ be a closed subset of $X$. We say that $X$ is \emph{$n$-bounded   over $k$ modulo $\Delta$} if, for every normal projective variety $Y$ of dimension at most $n$, the scheme $\underline{\Hom}_k(Y,X)\setminus \underline{\Hom}_k(Y,\Delta)$ is of finite type over $k$. We say that $X$ is \emph{bounded over $k$ modulo $\Delta$} if, for every $n\geq 1$, the scheme $X$ is $n$-bounded modulo $\Delta$.
 \end{definition}

\begin{definition} Let $n\geq 1$ be an integer.
 A projective scheme  $X$ over $k$ is \emph{pseudo-$n$-bounded over $k$} if there is a proper closed subset $\Delta$ such that $X$ is $n$-bounded  modulo $\Delta$.   
\end{definition}

 \begin{definition}
 A projective scheme $X$ over $k$ is \emph{pseudo-bounded over $k$} if it is pseudo-$n$-bounded over $k$ for every $n\geq 1$.
 \end{definition}
 
 \begin{remark}
 At the beginning of this section we discussed the structure of the scheme $\underline{\Hom}_k(\mathbb{P}^1_k,\mathbb{P}^1_k)$. From that discussion it follows that $\mathbb{P}^1_k$ is not $1$-bounded over $k$. In particular, if $X$ is a  $1$-bounded  projective variety over $k$, then it has no rational curves.  It is also not hard to show that $\mathbb{P}^1_k$ is not pseudo-$1$-bounded by showing that, for every $x$ in $\mathbb{P}^1(k)$, there is a $y$ in $\mathbb{P}^1(k)$ such that  
the set of morphisms $f:\mathbb{P}^1_k\to \mathbb{P}^1_k$ with $f(y) = x$ is infinite. We refer the interested reader to Section \ref{section:geom_hyp} for a related discussion. 
 \end{remark}

We   say that $X$ is \emph{bounded} if it is bounded modulo the empty subset. We employ similar terminology for $n$-bounded.  This terminology is consistent with that of \cite{vBJK, JKa}.
Let us start with looking at some implications and relations between these a priori different notions of boundedness.

 Boundedness is a condition on moduli spaces of maps from higher-dimensional varieties. Although it might seem a priori  stronger  than $1$-boundedness,  Lang-Vojta's conjecture predicts their equivalence. In fact, we have the following   result from \cite{JKa} which shows the equivalence of three a priori different notions. In this theorem, the implications $(2)\implies (1)$ and $(3)\implies (1)$ are straightforward consequences of the definitions. 

\begin{theorem}\label{thm:eq_bounded} Let $X$ be a projective scheme over $k$. Then the following are equivalent.
\begin{enumerate}
\item  The projective scheme $X$ is $1$-bounded over $k$.
\item  The projective scheme $X$ is bounded over $k$.
\item  For every ample line bundle $\mathcal{L}$ and every integer $g\geq 0$, there is an integer $\alpha(X,\mathcal{L},g)$ such that, for every smooth projective connected curve $C$ of genus $g$ over $k$ and every morphism $f:C\to X$, the inequality
 \[
 \deg_C f^\ast \mathcal{L} \leq \alpha(X,\mathcal{L},g)
 \] holds.
\end{enumerate}
\end{theorem}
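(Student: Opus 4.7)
The plan is to close the equivalence via the easy $(2)\Rightarrow(1)$ and $(3)\Rightarrow(1)$, together with the substantive $(3)\Rightarrow(2)$ and $(1)\Rightarrow(3)$. The first implication is the definitional specialization of $n$-boundedness to $n=1$, while $(3)\Rightarrow(1)$ follows because a uniform bound on $\deg_C f^{\ast}\LL$ for a fixed curve $C$ of genus $g$ uniformly bounds the Hilbert polynomial of the graph $\Gamma_f\subset C\times X$ with respect to any chosen polarization such as $\omega_C\boxtimes\LL$, so that $\underline{\Hom}_k(C,X)$ sits in only finitely many of Grothendieck's quasi-projective Hilbert-polynomial components and is of finite type.

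For $(3)\Rightarrow(2)$, I would fix a normal projective variety $Y$ of dimension $d$ polarized by a very ample $\OO_Y(1)$. Iterating Bertini for normal varieties, cut out a smooth projective curve $C\subset Y$ as a complete intersection of $d-1$ general hyperplane sections; its genus $g_Y$ is determined by $(Y,\OO_Y(1))$ via adjunction. For any $f:Y\to X$ and ample $\LL$ on $X$,
\[
(\OO_Y(1))^{d-1}\cdot f^{\ast}\LL \;=\; \deg_C (f|_C)^{\ast}\LL \;\leq\; \alpha(X,\LL,g_Y)
\]
by hypothesis $(3)$. Combined with the $f$-independent top intersection $(\OO_Y(1))^d$, the Khovanskii-Teissier log-concavity inequalities propagate this bound to all mixed intersection numbers $(\OO_Y(1))^i\cdot(f^{\ast}\LL)^{d-i}$ for $0\leq i\leq d$. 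These determine the Hilbert polynomial of $\Gamma_f\subset Y\times X$ with respect to $\OO_Y(1)\boxtimes\LL$, so $\underline{\Hom}_k(Y,X)$ is of finite type and $X$ is $d$-bounded.

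For $(1)\Rightarrow(3)$, which is the principal technical step, fix $g$ and $\LL$, and suppose for contradiction that there exist $f_n:C_n\to X$ with $g(C_n)=g$ and $d_n:=\deg f_n^{\ast}\LL\to\infty$. Let $\pi:\mathcal{C}\to M:=M_g^{[N]}$ be the universal curve over the fine moduli space with level structure ($N\geq 3$), equip the $C_n$ with compatible level-$N$ structures to produce points $[C_n]\in M(k)$, and form the relative Hom scheme $\mathcal{H}:=\underline{\Hom}_M(\mathcal{C},X\times M)$. This is locally of finite type over $M$ and decomposes by relative $\LL$-degree as $\mathcal{H}=\bigsqcup_d\mathcal{H}^d$ with each $\mathcal{H}^d\to M$ of finite type. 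The $f_n$ give $k$-points $h_n\in\mathcal{H}^{d_n}$ above $[C_n]$. After extracting a subsequence, either (a) infinitely many $[C_n]$ equal a single $[C]$, in which case $\underline{\Hom}_k(C,X)=\mathcal{H}_{[C]}$ contains $k$-points of unbounded $\LL$-degree, directly contradicting $(1)$ applied to $C$; or (b) the $[C_n]$ have positive-dimensional Zariski closure in $M$, and by restricting to an irreducible component and iterating generic hyperplane sections one produces a smooth projective curve $B$ with a morphism $B\to M$ whose image contains infinitely many $[C_n]$.

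The hard part is case (b): one must reach a contradiction from $(1)$ alone, without invoking $2$-boundedness. Pulling back gives a smooth proper family $\mathcal{C}_B\to B$ of genus-$g$ curves, and the relative Hom scheme $\mathcal{H}_B:=\underline{\Hom}_B(\mathcal{C}_B,X\times B)$ is locally of finite type over $B$ with finite-type fibers by $(1)$. The plan is to analyze its irreducible components: a component with finite image in $B$ contributes degrees only over finitely many points of $B$, each capped by $(1)$; a component dominating $B$ yields, after a finite \'etale base change $B'\to B$ trivializing the component, a morphism from the surface $\mathcal{C}_{B'}\to X$ whose restriction to every fiber has a single fixed degree, and restricting to any one fiber lands in the finite-type scheme $\underline{\Hom}_k(\mathcal{C}_{B',b'},X)$, so $(1)$ bounds the degrees arising from dominant components as well. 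Combining these with the hypothesis that $B$ meets infinitely many strata $\mathcal{H}^{d_n}$ then forces a contradiction. The main obstacle is precisely this bookkeeping---transferring the fiberwise hypothesis $(1)$ into a uniform relative degree bound across $B$, and then across $M$---and I expect this delicate moduli-theoretic argument, possibly aided by properness of the relative Kontsevich compactification $\overline{\mathcal{M}}_{g,0}(X_M/M,d)\to M$, to carry the real weight of the proof.
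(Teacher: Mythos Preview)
Your route through the equivalence is $(1)\Rightarrow(3)\Rightarrow(2)$, whereas the paper goes $(1)\Rightarrow(2)\Rightarrow(3)$. This reordering is not cosmetic: it shifts the hard step to a place where you have strictly less to work with, and that is exactly where your acknowledged gap sits.

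The paper's $(1)\Rightarrow(2)$ is a clean induction on $\dim Y$ for a \emph{fixed} source $Y$: given morphisms $f_i:Y\to X$ with pairwise distinct Hilbert polynomials, one finds a single smooth hyperplane section $H\subset Y$ such that the restrictions $f_i|_H$ still have pairwise distinct Hilbert polynomials, contradicting $(n-1)$-boundedness. No moduli space enters. Only once $(2)$ is in hand does the paper bring in the universal curve and prove $(2)\Rightarrow(3)$ by interpreting it as quasi-compactness of the universal Hom-stack over the moduli of genus-$g$ curves. Having $(2)$ rather than just $(1)$ is what makes that compactness argument go through: one can work over the compactified moduli space and apply boundedness to the (possibly nodal, higher-dimensional) total spaces that arise.

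Your $(1)\Rightarrow(3)$ attempts the moduli step without $(2)$, and case (b) is where this bites. Knowing that each fibre $\mathcal{H}_b$ is of finite type does not prevent $\mathcal{H}_B$ from having infinitely many non-dominant components supported over infinitely many distinct closed points $b_n\in B$, each contributing an unbounded degree $d_n$; your proposed dichotomy does not exclude this. The appeal to the Kontsevich compactification is pointed in the right direction, but properness gives you specialization from generic to special, which is the wrong way here. The missing ingredient is precisely the ability to treat the surface $\mathcal{C}_B$ (or its stable compactification) as a single test variety, and that is $(2)$.

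Your $(3)\Rightarrow(2)$ via Khovanskii--Teissier is a nice alternative to the paper's inductive slicing, but note a small gap: log-concavity bounds the top-degree mixed numbers $\OO_Y(1)^i\cdot(f^\ast\LL)^{d-i}$, while the Hilbert polynomial of $\Gamma_f\cong Y$ via Riemann--Roch also involves lower-degree classes paired with Chern classes of $T_Y$. These are fixed as classes, but you still need to bound their intersections with powers of $f^\ast\LL$; this can be done (e.g.\ by writing each Todd class as a difference of complete intersections of ample divisors and reapplying the nef inequalities), but it is not automatic from what you wrote.

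The fix is simply to reorder: prove $(1)\Rightarrow(2)$ by the hyperplane-section induction on a fixed $Y$, and then run your moduli argument as $(2)\Rightarrow(3)$, where the surface $\mathcal{C}_B$ is now directly handled by $2$-boundedness.
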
 
\begin{proof}
The fact that a $1$-bounded scheme is  $n$-bounded  for every $n\geq 1$ is proven by induction on $n$ in \cite[\S 9]{JKa}.  The idea is that, if  $f_i:Y\to X$ is a sequence of morphisms from an $n$-dimensional smooth projective variety $Y$ with pairwise distinct Hilbert polynomial, then one can find a smooth hyperplane section $H\subset Y$ such that  the restrictions  $f_i|_{H}$ of these morphisms $f_i$  to $H$ still have pairwise distinct Hilbert polynomial. 

The fact that a bounded scheme satisfies the ``uniform'' boundedness property in $(3)$ follows from reformulating this statement in terms of the quasi-compactness of the universal Hom-stack of morphisms of curves of genus $g$ to $X$; see  the proof of \cite[Theorem~1.14]{JKa} for details.
\end{proof}

Studying boundedness is ``easier'' than studying boundedness modulo a subset $\Delta$. Indeed, 
part of the analogue of this theorem for pseudo-boundedness (unfortunately) requires an assumption on the base field $k$. 

\begin{theorem} \label{thm:1_is_b} Let $X$ be a projective scheme over $k$, and let $\Delta$ be a closed subset of $X$.  {Assume} that $k$ is \textbf{uncountable}. Then $X$ is $1$-bounded modulo $\Delta$ if and only if $X$ is bounded modulo $\Delta$.
\end{theorem}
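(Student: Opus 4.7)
The direction $(\Leftarrow)$ is immediate from the definitions: being bounded modulo $\Delta$ is, by definition, being $n$-bounded modulo $\Delta$ for every $n \geq 1$, hence in particular $1$-bounded modulo $\Delta$. For the nontrivial direction $(\Rightarrow)$, the plan is to adapt the inductive argument from the proof of Theorem \ref{thm:eq_bounded} to the setting with a nontrivial $\Delta$, using the uncountability of $k$ to absorb the new obstructions coming from $\Delta$.

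Assume $X$ is $1$-bounded modulo $\Delta$. I would prove by induction on $n \geq 1$ that $X$ is $n$-bounded modulo $\Delta$; the base case $n = 1$ is the hypothesis. For the inductive step, assume that $X$ is $(n-1)$-bounded modulo $\Delta$ for some $n \geq 2$ and let $Y$ be a normal projective variety over $k$ of dimension $n$. Suppose toward a contradiction that there is an infinite sequence of morphisms $f_i \colon Y \to X$ with pairwise distinct Hilbert polynomials (of their graphs, with respect to some fixed polarization of $Y \times_k X$) and satisfying $f_i(Y) \not\subset \Delta$ for every $i$. Fix a very ample line bundle $L$ on $Y$ and consider the linear system $|L|$, which is a positive-dimensional projective space over $k$. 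The goal is to find a single $H \in |L|(k)$ which is smooth and geometrically integral, and which simultaneously satisfies: (a) $f_i(H) \not\subset \Delta$ for every $i$; and (b) the restrictions $f_i|_H \colon H \to X$ still have pairwise distinct Hilbert polynomials.

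For (a), observe that for each $i$ the assumption $f_i(Y) \not\subset \Delta$ implies $f_i^{-1}(\Delta) \subsetneq Y$ is a proper closed subscheme, so the sub-locus $B_i := \{H \in |L| : H \subset f_i^{-1}(\Delta)\}$ is a proper closed subset of $|L|$ (it is closed by incidence, and proper because the ample divisors $H \in |L|$ cover $Y$). For (b), I would import the sub-argument used in the proof of Theorem \ref{thm:eq_bounded} (detailed in \cite[\S 9]{JKa}): after possibly passing to a subsequence, for each pair $i \neq j$ there is a proper closed subset $Z_{ij} \subsetneq |L|$ such that for every $H \in |L|(k) \setminus Z_{ij}$ the restrictions $f_i|_H$ and $f_j|_H$ have distinct Hilbert polynomials. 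Combining these with the sub-locus $B^{\mathrm{sing}} \subsetneq |L|$ of non-smooth or reducible hyperplane sections (a proper closed subset by Bertini, since $k$ has characteristic zero), the total ``bad'' locus
\[
B^{\mathrm{sing}} \cup \bigcup_{i} B_i \cup \bigcup_{i \neq j} Z_{ij}
\]
is a countable union of proper closed subsets of the positive-dimensional projective space $|L|$.

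Since $k$ is uncountable, this countable union does not cover $|L|(k)$, and so there exists $H$ in the complement. Then $H$ is a smooth projective variety of dimension $n-1$, the morphisms $f_i|_H \colon H \to X$ have pairwise distinct Hilbert polynomials, and $f_i|_H(H) \not\subset \Delta$ for every $i$. This shows that $\underline{\Hom}_k(H, X) \setminus \underline{\Hom}_k(H, \Delta)$ is not of finite type, contradicting the $(n-1)$-boundedness modulo $\Delta$ of $X$. I expect the main obstacle to be step (b), for which I would follow the inductive degree-of-restriction argument of \cite[\S 9]{JKa}; the conceptually new input specific to this statement is that the conditions $H \subset f_i^{-1}(\Delta)$ form a \emph{countable} family of proper closed conditions on $|L|$, which is exactly why the uncountability of $k$ enters the hypothesis.
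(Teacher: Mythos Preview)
Your proposal is correct and follows essentially the same inductive hyperplane-section argument as the paper's proof. If anything, you are more explicit than the paper's sketch: the paper simply asserts that one can pick an ample divisor $D\subset Y$ avoiding all the $f_m^{-1}(\Delta)$ and that the restrictions $f_m|_D$ still have pairwise distinct Hilbert polynomials, whereas you correctly isolate the three countable families of proper closed conditions on $|L|$ (the $B_i$, the $Z_{ij}$, and the Bertini locus) and explain that uncountability of $k$ is exactly what lets you avoid them all simultaneously.
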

\begin{proof} This is proven in \cite{vBJK}, and the argument is similar to the proof of Theorem \ref{thm:eq_bounded}. We briefly indicate how the uncountability of $k$ is used.  

Assume that $X$ is $1$-bounded modulo $\Delta$. We show by induction on $n$ that $X$ is $n$-bounded modulo $\Delta$ over $k$. If $n=1$, then this holds by assumption. Thus, let $n>1$ be an integer and assume that $X$ is $(n-1)$-bounded modulo $\Delta$.
Let $Y$ be an $n$-dimensional projective reduced scheme and let $f_m:Y \to X$ be a sequence of morphisms with pairwise distinct Hilbert polynomial   such that, for every $m=1,2,\ldots$, we have   $f_m(Y)\not\subset \Delta$. Since $k$ is \textbf{uncountable}, there is an ample divisor $D$ in $Y$ which  is not contained in $f_m^{-1}(\Delta)$ for   all $m\in \{1,2,\ldots\}$. Now, the restrictions $f_m|_D:D\to X$ have pairwise distinct Hilbert polynomial and, for infinitely many $m$, we have that $f_m(D)\not\subset \Delta$.   This contradicts the induction hypothesis. We conclude that $X$ is bounded modulo $\Delta$ over $k$, as required.
\end{proof}
The ``pseudo'' analogue of the equivalence between $(2)$ and $(3)$ in Theorem \ref{thm:eq_bounded} holds without any additional assumption on $k$; see \cite{vBJK}.
\begin{theorem}\label{thm:eq_bounded2} Let $X$ be a projective scheme over $k$. Then  
  $X$ is bounded modulo $\Delta$ over $k$
if and only if,   for every ample line bundle $\mathcal{L}$ and every integer $g\geq 0$, there is an integer $\alpha(X,\mathcal{L},g)$ such that, for every smooth projective connected curve $C$ of genus $g$ over $k$ and every morphism $f:C\to X$ with $f(C)\not \subset \Delta$, the inequality
 \[
 \deg_C f^\ast \mathcal{L} \leq \alpha(X,\mathcal{L},g)
 \] holds.
\end{theorem}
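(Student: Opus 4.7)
The plan is to prove the two implications separately, with the ``only if'' direction being the substantive one.

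For the ``if'' direction (uniform degree bound implies bounded modulo $\Delta$): fix a normal projective variety $Y$ over $k$ with an ample line bundle $\mathcal{A}$. The Hom scheme $\underline{\Hom}_k(Y,X)$ decomposes as $\sqcup_h H_h$, indexed by the Hilbert polynomial $h$ of the graph $\Gamma_f \subset Y\times X$ with respect to $\mathcal{A}\boxtimes\mathcal{L}$, and each $H_h$ is quasi-projective hence of finite type. To prove $\underline{\Hom}_k(Y,X)\setminus\underline{\Hom}_k(Y,\Delta)$ is of finite type it is thus enough to bound, uniformly in $f$ with $f(Y)\not\subset\Delta$, the intersection numbers $(f^*\mathcal{L})^i\cdot\mathcal{A}^{\dim Y-i}$ for $0\le i\le\dim Y$. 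I proceed by induction on $\dim Y$: for a sufficiently general smooth hyperplane section $Y'\in|\mathcal{A}^{\otimes m}|$ (with $m$ chosen large enough for Bertini), the intersection numbers on $Y$ pull back, up to the factor $m$, to intersection numbers on $Y'$, and since $f^{-1}(\Delta)\subsetneq Y$ is a proper closed subset the generic such $Y'$ satisfies $f(Y')\not\subset\Delta$. The base case $\dim Y=1$ is exactly the hypothesis, which bounds $\deg_C(f|_C)^*\mathcal{L}$ in terms of $\alpha(X,\mathcal{L},g(C))$.

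For the ``only if'' direction, suppose for contradiction there exist $g\ge 0$ and morphisms $f_n:C_n\to X$ from smooth projective connected genus $g$ curves with $f_n(C_n)\not\subset\Delta$ and $d_n:=\deg f_n^*\mathcal{L}\to\infty$. Using finite type of $\mathcal{M}_g$, I fix a smooth projective family $\pi:\mathcal{C}\to B$ of genus $g$ curves over an irreducible finite-type $B$ realising every such curve as a geometric fiber, and after compactifying and resolving I assume $\mathcal{C}$ and $B$ are both normal projective, with $C_n\cong\mathcal{C}_{b_n}$. The $f_n$ yield $k$-points in pairwise distinct components $H_{h_n}$ of the relative Hom scheme $\underline{\Hom}_B(\mathcal{C},X\times B)=\sqcup_h H_h$. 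For each $n$, take an irreducible component $I_n\subset H_{h_n}$ containing $f_n$ and its universal morphism $F_n:\mathcal{C}\times_B I_n\to X$, whose image is not contained in $\Delta$. By passing to a finite cover $T_n\to B$ over which $I_n\to B$ acquires a section, I obtain a morphism $\varphi_n:\mathcal{C}\times_B T_n\to X$ from a normal projective variety whose image misses $\Delta$ and whose Hilbert polynomial encodes $h_n$. Organising the $T_n$ as points of a single Hilbert or Chow scheme parameterising finite covers of $B$ of bounded degree realises all $\varphi_n$ as morphisms from fibers of one universal family of normal projective varieties; a final application of bounded modulo $\Delta$ to a total space of this universal family then forces the $h_n$ into a finite set, contradicting the fact that the $d_n$ tend to infinity.

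The main obstacle is the ``only if'' direction. Bounded modulo $\Delta$ applied to $\mathcal{C}$ itself only constrains those morphisms that extend to $\mathcal{C}\to X$, i.e.\ sections of $H\to B$; the $f_n$, by contrast, live only on individual fibers and need not so extend. The crux is therefore to deform each $f_n$ inside $H_{h_n}$ to a positive-dimensional family over $B$, possibly after a finite base change, and then package all these spreadings into a single finite-type family of normal projective varieties to which bounded modulo $\Delta$ applies uniformly. The pseudo-setting introduces an additional wrinkle: the condition ``image not contained in $\Delta$'' is only open, so one must verify that it persists along each base change and cover used in the argument.
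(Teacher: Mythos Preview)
Your ``if'' direction has the right core insight: for each $f:Y\to X$ with $f(Y)\not\subset\Delta$ you may choose a complete-intersection curve $C\subset Y$ (depending on $f$) with $f(C)\not\subset\Delta$, and the genus of $C$ depends only on $(Y,\mathcal{A},m)$, not on $f$; this is precisely what allows the argument to run without assuming $k$ uncountable (in contrast to Theorem~\ref{thm:1_is_b}). However, your inductive restriction only computes $(f^*\mathcal{L})^i\cdot\mathcal{A}^{\dim Y-i}$ for $i\le\dim Y-1$: the top self-intersection $(f^*\mathcal{L})^{\dim Y}$ does not arise by restricting to any hyperplane section. Moreover, the Hilbert polynomial of $\Gamma_f$ also involves intersections of $f^*\mathcal{L}$ with Chern classes of $T_Y$, not just with powers of $\mathcal{A}$. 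Both issues are repairable: since $f^*\mathcal{L}$ is nef, the Khovanskii--Teissier (log-concavity) inequalities bound $(f^*\mathcal{L})^i\cdot\mathcal{A}^{\dim Y-i}$ for all $i$ in terms of $f^*\mathcal{L}\cdot\mathcal{A}^{\dim Y-1}$ and $\mathcal{A}^{\dim Y}$, and a Hodge-index argument then bounds the numerical class of $f^*\mathcal{L}$ in $N^1(Y)$, which determines the Hilbert polynomial. You should make this explicit.

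Your ``only if'' direction has a genuine gap. The step ``pass to a finite cover $T_n\to B$ over which $I_n\to B$ acquires a section'' presupposes that $I_n\to B$ is dominant; but the component $I_n$ of the relative Hom scheme containing $f_n$ may well map to a proper closed subset of $B$ (for instance $I_n$ could be the single reduced point $\{f_n\}$ if $f_n$ is infinitesimally rigid). Even when $I_n\to B$ is dominant, you give no reason why the degree of the required cover $T_n\to B$ should be bounded independently of $n$, so your next step (``organising the $T_n$ as points of a single Hilbert or Chow scheme parameterising finite covers of $B$ of \emph{bounded degree}'') rests on an unjustified hypothesis. You yourself flag the difficulty in your closing paragraph, but the mechanism you propose does not resolve it. The paper does not give a proof here either; it refers to \cite{vBJK} and indicates (in the discussion of Theorem~\ref{thm:eq_bounded}) that the argument proceeds by reinterpreting the uniform bound as quasi-compactness of the universal Hom-stack over $\overline{\mathcal{M}}_g$ and analysing that directly, rather than by trying to spread each $f_n$ over $B$.
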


It is not hard to see that being pseudo-$n$-bounded descends along finite \'etale maps. Also, if $X$ and $Y$ are projective schemes over $k$ which are birational, then $X$ is pseudo-$1$-bounded if and only if $Y$ is pseudo-$1$-bounded; see \cite[\S4]{JXie}. However, in general, it is not clear that pseudo-$n$-boundedness is a birational invariant  (unless $n=1$ or $k$ is uncountable).

 It is shown in \cite{vBJK, JKa} that pseudo-algebraically hyperbolic varieties are pseudo-bounded. More precisely, one can prove the following statement.
 \begin{theorem}
   If $X$ is algebraically hyperbolic  modulo $\Delta$ over $k$, then  $X$ is  bounded modulo $\Delta$.
 \end{theorem}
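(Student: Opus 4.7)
The plan is to reduce the statement directly to Theorem~\ref{thm:eq_bounded2}, which already reformulates boundedness modulo $\Delta$ as the existence, for each ample line bundle and each fixed genus, of a uniform degree bound on morphisms from smooth projective connected curves of that genus whose image is not contained in $\Delta$. Once that equivalence is granted, algebraic hyperbolicity modulo $\Delta$ is essentially by design what is needed.

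Assuming $X$ is algebraically hyperbolic modulo $\Delta$ over $k$, I would fix an ample line bundle $\mathcal{L}$ on $X$ and let $\alpha_{X,\Delta,\mathcal{L}}$ be the constant supplied by Definition~\ref{def:alg_hyp}. For each integer $g\geq 0$ set
$$\alpha(X,\mathcal{L},g) := \alpha_{X,\Delta,\mathcal{L}}\cdot g.$$
For $g\geq 1$, given any smooth projective connected curve $C$ of genus $g$ and any morphism $f\colon C\to X$ with $f(C)\not\subset\Delta$, the inequality $\deg_C f^\ast\mathcal{L}\leq \alpha_{X,\Delta,\mathcal{L}}\cdot\mathrm{genus}(C)=\alpha(X,\mathcal{L},g)$ is immediate from Definition~\ref{def:alg_hyp}.

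The residual case $g=0$ requires a brief separate argument: any $f\colon \mathbb{P}^1_k\to X$ with $f(\mathbb{P}^1_k)\not\subset\Delta$ satisfies $\deg f^\ast\mathcal{L}\leq \alpha_{X,\Delta,\mathcal{L}}\cdot 0 = 0$, and since $\mathcal{L}$ is ample this forces $f$ to be constant, so the bound $\alpha(X,\mathcal{L},0)=0$ holds trivially. With these choices, the degree-bound criterion of Theorem~\ref{thm:eq_bounded2} is verified for every ample $\mathcal{L}$ and every $g\geq 0$, and the theorem then yields that $X$ is bounded modulo $\Delta$ over $k$.

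I do not expect a serious obstacle here: the substantive content has been absorbed into Theorem~\ref{thm:eq_bounded2}, whose proof translates boundedness modulo $\Delta$ into quasi-compactness of the universal Hom-stack of curve maps avoiding $\Delta$. Once that translation is granted, the implication is a formal unwinding of Definition~\ref{def:alg_hyp}; the only mildly delicate point is the $g=0$ case, which is handled by invoking ampleness of $\mathcal{L}$ to rule out non-constant rational curves with image outside $\Delta$ (consistent with Theorem~\ref{thm:demailly1}, which already shows that algebraic hyperbolicity modulo $\Delta$ forces grouplessness modulo $\Delta$).
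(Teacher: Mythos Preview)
Your proof is correct and takes a genuinely more direct route than the paper's own argument. The paper proceeds in three steps: it first passes to an uncountable algebraically closed extension $L\supset k$ and invokes persistence of algebraic hyperbolicity modulo $\Delta$ along field extensions, then observes that $X_L$ is $1$-bounded modulo $\Delta_L$ (the obvious implication), and finally appeals to Theorem~\ref{thm:1_is_b} (which needs uncountability) to upgrade $1$-boundedness modulo $\Delta_L$ to boundedness modulo $\Delta_L$; an implicit descent back to $k$ finishes the argument. By contrast, you bypass the field extension entirely and apply Theorem~\ref{thm:eq_bounded2} directly, reading off the required genus-by-genus degree bounds $\alpha(X,\mathcal{L},g)=\alpha_{X,\Delta,\mathcal{L}}\cdot g$ straight from Definition~\ref{def:alg_hyp}. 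Your route is shorter and avoids both the persistence result and the uncountability hypothesis of Theorem~\ref{thm:1_is_b}; the paper's route, on the other hand, illustrates the passage-to-uncountable-fields technique that is genuinely needed elsewhere (e.g.\ for the implication $1$-bounded modulo $\Delta$ $\Rightarrow$ bounded modulo $\Delta$). Your handling of the $g=0$ case is fine and indeed already implicit in the definition.
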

 \begin{proof}
 This is proven in three steps in \cite[\S 9]{vBJK}. First, one  chooses an uncountable algebraically closed field $L$ containing $k$ and shows that $X_L$ is algebraically hyperbolic modulo $\Delta_L$. Then, one makes the ``obvious'' observation that $X_L$ is $1$-bounded modulo $\Delta_L$. Finally, as $L$ is uncountable and $X_L$ is $1$-bounded modulo $\Delta_L$, it follows from Theorem \ref{thm:1_is_b} that    $X_L$ is bounded modulo $\Delta_L$. 
 \end{proof}

   Demailly proved that algebraically hyperbolic projective varieties are groupless (Theorem \ref{thm:demailly1}). His proof can be adapted to  show the following more general statement.

  \begin{proposition}[Demailly + $\epsilon$]
  If $X$ is $1$-bounded modulo $\Delta$ over $k$, then  $X$ is  groupless modulo $\Delta$.
\end{proposition}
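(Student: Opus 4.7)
The plan is to adapt Demailly's classical argument (that algebraic hyperbolicity implies grouplessness) to the pseudo-setting, with the multiplication-by-$n$ map on an abelian variety as the main engine. The guiding observation is that $1$-boundedness modulo $\Delta$ forces, for every fixed smooth projective curve $C$ and every ample $L$ on $X$, the scheme $\underline{\Hom}_k(C,X)\setminus \underline{\Hom}_k(C,\Delta)$ to be of finite type; hence only finitely many values of $\deg_C f^*L$ occur among morphisms $f: C \to X$ with $f(C)\not\subset \Delta$. I would assume $X$ is not groupless modulo $\Delta$ and construct a sequence of morphisms from a fixed smooth projective curve $C$ to $X$, none factoring through $\Delta$, with $\deg_C \psi_n^*L \to \infty$, obtaining the desired contradiction.

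Since $X$ is projective and hence proper, Theorem \ref{lem:groupless2} reduces the problem to the case of a non-constant morphism $f: U \to X$ with $U$ a big open of an abelian variety $A$ and $f(U) \not\subset \Delta$. (Maps from $\mathbb{G}_m$ are subsumed: a non-constant $\mathbb{G}_m \to X$ not factoring through $\Delta$ extends to $\mathbb{P}^1 \to X$ by properness, and precomposing with degree-$n$ self-maps of $\mathbb{P}^1$ immediately contradicts $1$-boundedness modulo $\Delta$.) I would then pick a smooth projective curve $C \subset A$ with $C \subset U$ (possible as $\mathrm{codim}_A(A\setminus U) \geq 2$ and a generic smooth complete intersection curve avoids a codimension-two subset), chosen so that $f|_C$ is non-constant and $f(C) \not\subset \Delta$.

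The central step is the following construction: for each $n \geq 2$, define $\psi_n: C \to X$ by $c \mapsto f([n]c + a_n)$ for a translation $a_n \in A$ chosen to satisfy both (i) $[n](C)+a_n \subset U$, so $\psi_n$ is well-defined, and (ii) $\psi_n$ does not factor through $\Delta$. Condition (i) excludes $a_n$ from $(A\setminus U)-[n](C)$, a subset of dimension at most $(\dim A-2)+1$, hence proper closed. Condition (ii) fails precisely when $a_n + [n](C) \subset \overline{f^{-1}(\Delta)}$, and since $\overline{f^{-1}(\Delta)} \subsetneq A$ (as $U$ is dense in $A$ and $f(U)\not\subset \Delta$), the failure locus is contained in the proper closed subset $\overline{f^{-1}(\Delta)}-c_1$ for any fixed $c_1 \in [n](C)$. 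Thus an admissible $a_n$ exists. A standard computation on $\mathrm{NS}(A)_{\mathbb{Q}}$, using that translations act trivially and that $[n]^*$ acts as multiplication by $n^2$ (resp.\ $n$) on the symmetric (resp.\ antisymmetric) part, gives
\[
\deg_C \psi_n^* L \;=\; n^2\,(f^*L)_+\!\cdot C \;+\; n\,(f^*L)_-\!\cdot C.
\]
As $\psi_n^*L$ is nef on $C$, the leading coefficient $(f^*L)_+\!\cdot C$ is non-negative, and since $\deg_C f^*L = (f^*L)_+\!\cdot C + (f^*L)_-\!\cdot C > 0$, the right-hand side tends to $\infty$ with $n$, contradicting $1$-boundedness modulo $\Delta$.

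The main obstacle I expect is the choice of $a_n$ satisfying (ii): one must confirm that the hypothesis ``$f$ does not factor through $\Delta$ on $U$'' upgrades to the global geometric statement ``$\overline{f^{-1}(\Delta)}$ is a proper closed subset of $A$'', which is what powers the dimension count. After that, the argument is essentially the uniform-in-$n$ multiplication-by-$n$ trick, and the $\mathrm{NS}(A)_{\mathbb{Q}}$-computation is standard.
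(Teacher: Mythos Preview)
Your proposal is correct and follows precisely the approach the paper indicates: an adaptation of Demailly's argument using the multiplication-by-$n$ maps on an abelian variety to produce morphisms from a fixed curve with unbounded degree, carefully arranging the translations $a_n$ so that each $\psi_n$ stays inside $U$ and avoids factoring through $\Delta$. The paper gives no detailed proof here, only the remark that Demailly's proof ``can be adapted'' and (in the parallel Theorem~\ref{thm:demailly1}) that ``the argument involves the multiplication maps on an abelian variety''; your write-up is exactly such an adaptation, with the reduction via Theorem~\ref{lem:groupless2}, the extension of $f^\ast L$ across the codimension-two locus, and the $\mathrm{NS}(A)_{\mathbb Q}$ computation all handled as one would expect.
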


\section{Geometric hyperbolicity}\label{section:geom_hyp}
In the definition of Mordellicity over $\Qbar$ one considers the ``finiteness of arithmetic curves'' on some model. On the other hand,  the notions of algebraic hyperbolicity and boundedness require one to test ``boundedness of   curves''. In this section we introduce a new notion in which one considers the   ``finiteness of pointed curves''.

 \begin{definition}[Geometric hyperbolicity modulo a subset]   Let $X$ be a variety over $k$ and let $\Delta$ be a closed subset of $X$. We say that $X$ is \emph{geometrically hyperbolic   over $k$ modulo $\Delta$} if, for every $x$ in $X(k)\setminus \Delta$, every smooth connected curve $C$  over $k$ and every $c$ in $C(k)$, we have that the set $\Hom_k((C,c),(X,x))$ of morphisms $f:C\to X$ with $f(c)=x$ is finite.  
 \end{definition}

\begin{definition}  
 A variety  $X$ over $k$ is \emph{pseudo-geometrically hyperbolic over $k$} if there is a proper closed subset $\Delta$ such that $X$ is geometrically hyperbolic   modulo $\Delta$. 
\end{definition}

 We say that a variety $X$ over $k$  is \emph{geometrically hyperbolic over $k$} if it is geometrically hyperbolic modulo the empty subset.  At this point we should note that a projective scheme $X$ over $k$ is geometrically hyperbolic over $k$ if and only if it is ``$(1,1)$-bounded''. The latter notion is defined in \cite[\S 4]{JKa}, and the equivalence of these two notions is \cite[Lemma~4.6]{JKa} (see also Proposition \ref{prop:1b_is_geomhyp} below). The terminology ``(1,1)-bounded  modulo $\Delta$'' is   used in \cite{vBJK}, and also coincides with being geometrically hyperbolic modulo $\Delta$ for projective schemes by the results in \cite[\S 9]{vBJK}.

 \begin{remark}[Geometric hyperbolicity versus arithmetic hyperbolicity]\label{remark:similarity} Let us say that a scheme $T$ is an \emph{arithmetic curve} if there is a number field $K$ and a finite set of finite places $S$ of $K$ such that $T = \Spec \OO_{K,S}$.  
  Let $X$ be a variety over $\Qbar$. It is not hard to show that the following two statements are equivalent.
  \begin{enumerate}
  \item The variety $X$ is arithmetically hyperbolic (or Mordellic) over $\Qbar$.
  \item For every arithmetic curve $\mathcal{C}$, every closed point $c$ in $\mathcal{C}$,   every model $\mathcal{X}$ for $X$ over $\mathcal{C}$, and every closed point $x$ of $\mathcal{X}$, the subset 
  $$
  \Hom_{\mathcal{C}}((\mathcal{C},c), (\mathcal{X},x)) \subset \mathcal{X}(\mathcal{C})
  $$ of morphisms $f:\mathcal{C}\to \mathcal{X}$ with $f(c)=x$ is finite.
  \end{enumerate}
  Indeed, if $(1)$ holds, then $\Hom_{\mathcal{C}}(\mathcal{C},\mathcal{X})$ is finite by definition, so that clearly the set  $$
  \Hom_{\mathcal{C}}((\mathcal{C},c), (\mathcal{X},x))
  $$  is finite. Conversely, assume that $(2)$ holds. Now, let $\mathcal{C}$ be an arithmetic curve and let $\mathcal{X}$ be a model for $X$ over $\mathcal{C}$. To show that $\mathcal{X}(\mathcal{C}) $ is finite, let $c$ be a closed point of $\mathcal{C}$ and let $\kappa$ be its  residue field. Then $\kappa$ is finite and   $c$ lies in $\mathcal{C}(\kappa)$. In particular, the image of $c$ along any morphism $\mathcal{C}\to \mathcal{X}$ is a $\kappa$-point of $\mathcal{X}$. This shows that 
  \[\mathcal{X}(\mathcal{C}) \subset \bigcup_{x \in \mathcal{X}(\kappa)} \Hom_{\mathcal{C}} ((\mathcal{C},c),(\mathcal{X},x)).\] Since $\mathcal{X}(\kappa)$ is finite and every set $\Hom_{\mathcal{C}} ((\mathcal{C},c),(\mathcal{X},x))$ is finite, we conclude that $\mathcal{X}(\mathcal{C})$ is finite, as required.
  
  The second statement allows one to see the similarity between geometric hyperbolicity and arithmetic hyperbolicity. Indeed, the variety $X$ is geometrically hyperbolic over $\Qbar$ if, for every integral algebraic curve $C$ over $\Qbar$, every closed point $c$ in $C$, and every closed point $x$ of $X$, the set $$\Hom_k((C,c),(X,x)) = \Hom_C((C,c),(X\times C, (x,c))) $$ is finite.  
 \end{remark}
 
 Just like pseudo-grouplessness and pseudo-Mordellicity, it is not hard to see that pseudo-geometric hyperbolicity descends along finite \'etale morphisms. Also, if $X$ and $Y$ are projective varieties  which are birational, then $X$ is pseudo-geometrically hyperbolic if and only if $Y$ is pseudo-geometrically hyperbolic.    
 
 The following proposition says that a projective scheme is geometrically hyperbolic if and only if the moduli space of pointed maps is of finite type. In other words, asking for boundedness of all pointed maps is equivalent to asking for the finiteness of all sets of pointed maps.
 
    \begin{proposition}\label{prop:1b_is_geomhyp} Let $X$ be a projective scheme over $k$ and let $\Delta$ be a   closed subset of $X$. Then the following are equivalent.
   \begin{enumerate}
   \item For every smooth projective connected curve $C$ over $k$, every $c$ in $C(k)$ and every $x$ in $X(k)\setminus \Delta$, the scheme $\underline{\Hom}_k((C,c),(X,x))$ is of finite type over $k$.
   \item The variety $X$ is geometrically hyperbolic modulo $\Delta$.
   \end{enumerate}
  \end{proposition}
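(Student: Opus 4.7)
The plan is to prove the two implications separately: $(2)\Rightarrow(1)$ will be essentially formal, while $(1)\Rightarrow(2)$ will rely on Mori's pointed bend-and-break to extract a rational curve through $x$ from any positive-dimensional component of a pointed Hom scheme.

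First I would address $(2)\Rightarrow(1)$. Fixing $(C,c,x)$ as in statement $(1)$, Grothendieck's theorem on Hom schemes gives the decomposition
\[
\underline{\Hom}_k((C,c),(X,x)) = \bigsqcup_{d} \underline{\Hom}^d_k((C,c),(X,x))
\]
indexed by the degree $d=\deg_C f^{\ast}\mathcal{O}_X(1)$, where each summand is a closed subscheme of the quasi-projective scheme $\underline{\Hom}^d_k(C,X)$ and is therefore itself quasi-projective. It thus suffices to show that only finitely many $d$ contribute a non-empty summand. If infinitely many did, then since $k$ is algebraically closed each non-empty quasi-projective summand would carry at least one $k$-point by the Nullstellensatz, producing infinitely many elements of $\Hom_k((C,c),(X,x))$ and contradicting $(2)$.

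For $(1)\Rightarrow(2)$ I would proceed by contradiction. Suppose $X$ is not geometrically hyperbolic modulo $\Delta$, witnessed by a smooth connected curve $C$, a point $c\in C(k)$ and $x\in X(k)\setminus\Delta$ with $\Hom_k((C,c),(X,x))$ infinite. Let $\overline{C}$ be the smooth projective completion of $C$; since $X$ is projective (hence proper), the valuative criterion of properness extends every morphism $C\to X$ uniquely to $\overline{C}\to X$, still sending $c$ to $x$, so $\Hom_k((\overline{C},c),(X,x))$ is also infinite. By $(1)$, the scheme $\underline{\Hom}_k((\overline{C},c),(X,x))$ is of finite type over $k$; having infinitely many $k$-points, it must contain a positive-dimensional irreducible component $Z$, and we may pick a non-constant pointed morphism $f\in Z$ with $\dim_{[f]}Z\geq 1$. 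Now I would invoke Mori's pointed bend-and-break: since $X$ is projective, the existence of such an $f$ produces a rational curve $\varphi\colon \mathbb{P}^1_k\to X$ through $x$, and after reparametrizing we may assume $\varphi(0)=x$. For every $d\geq 1$ the composition $\varphi\circ(z\mapsto z^d)$ is a pointed morphism $(\mathbb{P}^1_k,0)\to (X,x)$ of degree $d\cdot\deg(\varphi)$ with respect to $\mathcal{O}_X(1)$, so $\underline{\Hom}_k((\mathbb{P}^1_k,0),(X,x))$ has non-empty summands in infinitely many degrees and is not of finite type, contradicting $(1)$ applied to the triple $(\mathbb{P}^1_k,0,x)$.

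The hard part will be the appeal to pointed bend-and-break: the assumption that the pointed Hom scheme is merely of finite type does not on its own forbid a positive-dimensional component (which over an algebraically closed field automatically carries infinitely many $k$-points), and no purely formal move seems to bridge this gap. Bend-and-break is precisely what converts a positive-dimensional family of pointed morphisms into a geometric rational curve through $x$; the rational curve then cascades, via the family of degree-$d$ self-maps of $\mathbb{P}^1_k$ fixing $0$, into the desired violation of the finite-type hypothesis at $(\mathbb{P}^1_k,0,x)$.
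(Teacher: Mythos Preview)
Your proof is correct and follows essentially the same route as the paper: both directions match, with $(1)\Rightarrow(2)$ argued by contradiction via pointed bend-and-break to produce a rational curve through $x$. Your version is in fact a bit more explicit in two places: you spell out the completion $C\hookrightarrow\overline{C}$ (needed since geometric hyperbolicity is tested on not-necessarily-projective curves while $(1)$ only speaks of projective ones), and you close the contradiction directly by composing with $z\mapsto z^d$ on $\mathbb{P}^1_k$, whereas the paper packages this last step as the statement that $(1)$ forces every rational curve to lie in $\Delta$ (citing the groupless-modulo-$\Delta$ proposition).
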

  \begin{proof} This is proven in \cite[\S9]{vBJK}. The proof is a standard application of the bend-and-break principle. Indeed, the implication $(2)\implies (1)$ being obvious, let us show that $(1)\implies (2)$. Thus, let us assume that $X$ is not geometrically hyperbolic modulo $\Delta$, so that there is a sequence $f_1, f_2, \ldots$ of pairwise distinct elements of $\Hom_k((C,c),(X,x))$, where $C$ is a smooth projective connected curve over $k$, $c\in C(k)$ and $x\in X(k)\setminus \Delta$.   Since $\underline{\Hom}_k((C,c),(X,x)) $ is of finite type,  the degree of all the $f_i$ is bounded by some real number (depending only on $X, \Delta, c, x$ and $C$). In particular, it follows that some connected component of $\underline{\Hom}_k((C,c),(X,x))$ has infinitely many elements. As each connected component of $\underline{\Hom}_k((C,c),(X,x))$ is a finite type scheme over $k$, it follows from  bend-and-break   \cite[Proposition~3.5]{Debarrebook1} that there is a rational curve in $X$ containing $x$. This contradicts the  fact that every rational curve in $X$ is contained in $\Delta$ (by Proposition \ref{prop:psgeomhyp_is_psgr1}).
  \end{proof}
  
  This proposition has the following consequence.

 \begin{corollary}\label{cor:urata}
 Let $X$ be a projective scheme over $k$ and let $\Delta$ be a proper closed subset of $X$.  If $X$ is $1$-bounded modulo $\Delta$, then $X$ is geometrically hyperbolic modulo $\Delta$.
 \end{corollary}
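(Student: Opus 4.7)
The plan is to reduce the corollary to Proposition \ref{prop:1b_is_geomhyp} and then to deduce finite-typeness of the relevant pointed Hom-scheme directly from the $1$-boundedness hypothesis. More precisely, by Proposition \ref{prop:1b_is_geomhyp}, in order to conclude that $X$ is geometrically hyperbolic modulo $\Delta$, it suffices to show that for every smooth projective connected curve $C$ over $k$, every $c \in C(k)$, and every $x \in X(k)\setminus \Delta$, the $k$-scheme $\underline{\Hom}_k((C,c),(X,x))$ is of finite type over $k$.

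Fix such $C$, $c$ and $x$. The scheme $\underline{\Hom}_k((C,c),(X,x))$ is a locally closed subscheme of $\underline{\Hom}_k(C,X)$: it is the fibre over $x$ of the evaluation morphism $\mathrm{ev}_c : \underline{\Hom}_k(C,X) \to X$ sending $f \mapsto f(c)$. The key observation is now that, because $x \notin \Delta$, every morphism $f : C \to X$ with $f(c) = x$ satisfies $f(C) \not\subset \Delta$, and more generally, for any $k$-scheme $T$, every $T$-point of $\underline{\Hom}_k((C,c),(X,x))$ corresponds to a morphism $C_T \to X_T$ whose restriction to $\{c\}\times T \cong T$ is the constant morphism at $x$, and which therefore does not factor through $\Delta_T$. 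Thus we obtain an inclusion of locally closed subschemes
\[
\underline{\Hom}_k((C,c),(X,x)) \subset \underline{\Hom}_k(C,X) \setminus \underline{\Hom}_k(C,\Delta).
\]

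Since $C$ is a normal projective variety of dimension one over $k$, the hypothesis that $X$ is $1$-bounded modulo $\Delta$ implies that the scheme $\underline{\Hom}_k(C,X)\setminus \underline{\Hom}_k(C,\Delta)$ is of finite type over $k$. Consequently, its locally closed subscheme $\underline{\Hom}_k((C,c),(X,x))$ is also of finite type over $k$. Applying Proposition \ref{prop:1b_is_geomhyp} then yields the claim.

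The proof is essentially bookkeeping, so there is no serious obstacle. The only subtle point to verify is the scheme-theoretic (rather than merely set-theoretic) inclusion of $\underline{\Hom}_k((C,c),(X,x))$ into the complement $\underline{\Hom}_k(C,X)\setminus \underline{\Hom}_k(C,\Delta)$; this follows from the functorial description of the pointed Hom-scheme combined with the observation that a morphism sending some point to $x\notin \Delta$ cannot factor through $\Delta$ on any base change.
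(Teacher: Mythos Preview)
Your proof is correct and follows essentially the same approach as the paper: reduce to Proposition~\ref{prop:1b_is_geomhyp}, then observe that the pointed Hom-scheme $\underline{\Hom}_k((C,c),(X,x))$ sits inside the finite type scheme $\underline{\Hom}_k(C,X)\setminus \underline{\Hom}_k(C,\Delta)$ because $x\notin\Delta$. The only minor difference is that the paper notes the pointed Hom-scheme is actually \emph{closed} in $\underline{\Hom}_k(C,X)$ (not just locally closed), since it is the fibre of the evaluation morphism over the closed point $x$ of the separated scheme $X$; your argument with ``locally closed'' is still sufficient for finite-typeness.
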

 \begin{proof}
 If $X$ is $1$-bounded, then it is clear that, for every smooth projective connected curve $C$, every $c$ in $C(k)$ and every $x$ in $X(k)\setminus \Delta$, the scheme $\underline{\Hom}_k((C,c),(X,x))$ is of finite type over $k$. Indeed, the latter scheme is closed in  the scheme $\underline{\Hom}_k(C,X)$, and contained in the quasi-projective subscheme $\underline{\Hom}_k(C,X)\setminus \underline{\Hom}_k(C,\Delta)$. Therefore, the result follows from Proposition \ref{prop:1b_is_geomhyp}.
 \end{proof}
 
\begin{remark}
Urata showed that a Brody hyperbolic projective variety over $\CC$ is geometrically hyperbolic over $\CC$; see \cite[Theorem~5.3.10]{KobayashiBook} (or the original \cite{Urata}). Note that Corollary  \ref{cor:urata} generalizes Urata's theorem (in the sense that the assumption in Corollary \ref{cor:urata} is a priori weaker than being Brody hyperbolic, and we also allow for an ``exceptional set'' $\Delta$). Indeed, as a Brody hyperbolic projective variety  is $1$-bounded (even algebraically hyperbolic), Urata's theorem follows directly from Corollary \ref{cor:urata}.  
\end{remark}

 Demailly's argument to show that algebraically hyperbolic projective varieties are groupless (Theorem  \ref{thm:demailly1}) can be adapted to show that geometrically hyperbolic projective varieties  are groupless; see \cite{JXie} for a detailed proof.

 \begin{proposition}\label{prop:psgeomhyp_is_psgr1}   Let $X$ be a projective  variety over $k$ and let $\Delta$ be a   closed subset of $X$. 
 If $X$ is   geometrically hyperbolic modulo $\Delta$ over $k$, then $X$ is  groupless  modulo $\Delta$ over $k$.  
 \end{proposition}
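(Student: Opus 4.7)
The plan is to contradict geometric hyperbolicity modulo $\Delta$ by constructing, from a hypothetical failure of grouplessness modulo $\Delta$, infinitely many distinct pointed morphisms $(C,0) \to (X,x)$ with $x \notin \Delta$. Since $X$ is projective, Theorem~\ref{lem:groupless2} produces from such a failure a non-constant morphism $h \colon U \to X$, where $U$ is a big open subscheme of an abelian variety $A$ over $k$, with $h(U) \not\subset \Delta$.

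After translating on $A$, I may assume $0 \in U$ and $x := h(0) \notin \Delta$. I would then choose a smooth projective curve $\iota \colon C \hookrightarrow A$ through $0$ with $C \subset U$ and $h \circ \iota$ non-constant, which exists because $U$ is big in $A$ and $h$ is non-constant. For each integer $n \geq 1$, the assignment $c \mapsto h(n \iota(c))$ is a morphism defined on the complement of the set $F_n := C \cap [n]^{-1}(A \setminus U)$, and by properness of $X$ and smoothness of $C$ it extends uniquely to a morphism $\psi_n \colon C \to X$ sending $0$ to $x$, provided $F_n$ is finite (see below).

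The infinitely many $\psi_n$ will then produce the desired infinite family in $\Hom_k((C, 0), (X, x))$ once one checks pairwise distinctness of infinitely many of them. For this I would extend $h^* L$, where $L$ is an ample line bundle on $X$, to a line bundle $M$ on $A$, which is possible since $A$ is smooth and $A \setminus U$ has codimension $\geq 2$, and then invoke the standard action of $[n]^*$ on $\Pic(A) \otimes \QQ$ (eigenvalues $n$ on the antisymmetric part and $n^2$ on the symmetric part) to obtain that
\[
\deg_C \psi_n^* L \;=\; n^2 \, (M_s \cdot C) + n \, (M_a \cdot C)
\]
is a polynomial in $n$ of degree at most two, where $M \equiv M_s \otimes M_a$ is a symmetric/antisymmetric decomposition modulo $2$-torsion. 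This polynomial is non-zero because $\deg_C \psi_1^* L = \deg_C (h \circ \iota)^* L > 0$ (as $h \circ \iota$ is non-constant and $L$ is ample), so $\deg_C \psi_n^* L \to \infty$ with $n$ and the $\psi_n$ are pairwise distinct for infinitely many $n$, contradicting the geometric hyperbolicity of $X$ modulo $\Delta$.

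The main obstacle is the finiteness of $F_n$ for every $n \geq 1$, equivalently that $C$ is not contained in $[n]^{-1}(A \setminus U)$ for any $n$. Each $[n]^{-1}(A \setminus U)$ individually has codimension $\geq 2$ in $A$, since $[n]$ is \'etale in characteristic zero, but a single curve $C$ must avoid the \emph{countable} union $\bigcup_{n \geq 1} [n]^{-1}(A \setminus U)$. This is automatic when $\dim A = 2$ (then $A \setminus U$ is finite), and in general I would handle it by first passing to an uncountable algebraically closed extension of $k$, under which both the hypothesis and the conclusion are preserved, and then choosing $C$ as a generic member of an uncountable family of smooth curves in $U$ through $0$.
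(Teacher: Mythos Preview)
Your overall strategy --- produce a non-constant $h\colon U\to X$ from a big open of an abelian variety $A$ and then contradict geometric hyperbolicity by composing with the multiplication maps $[n]$ on $A$ --- is exactly the adaptation of Demailly's argument that the paper points to. Two things in the execution need attention, though.

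Your ``main obstacle'' is not an obstacle. You have translated so that $0\in U$ and chosen $C$ through $0$. Then for every $n$ one has $[n](0)=0\in U$, so the basepoint $0\in C$ never lies in $F_n=C\cap[n]^{-1}(A\setminus U)$. Thus $F_n$ is a proper closed subset of the curve $C$, hence finite, for every $n\geq 1$; no countability issue arises and no enlargement of $k$ is needed.

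More importantly, the proposed workaround would not work even if the obstacle were genuine. You assert that geometric hyperbolicity modulo $\Delta$ persists under passing to an uncountable extension $L/k$, but the paper only records this when $k$ is itself uncountable (the statement after Theorem~\ref{thm:persss}); for countable $k$ it is not known. Running your construction over $L$ would then yield infinitely many pointed maps from a curve defined over $L$, which does not contradict geometric hyperbolicity of $X$ over $k$.

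One genuine subtlety you do gloss over: your displayed degree identity presumes $\psi_n^\ast L\cong \iota^\ast[n]^\ast M$ on all of $C$, whereas a priori these line bundles agree only on $C\setminus F_n$ and may differ by a divisor supported on $F_n$. You should either resolve the rational map $h$ and control the exceptional contribution, or argue the pairwise distinctness of the $\psi_n$ differently.
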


\section{The conjectures summarized}   \label{section:conjectures}
 
After a lengthy preparation, we are finally ready to state the complete version of Lang-Vojta's conjecture.

  \begin{conjecture}[Strong Lang-Vojta, IV]\label{conj:S6}
  Let $X$ be a projective variety over $k$. Then the following statements are equivalent.
  \begin{enumerate}
  \item The variety $X$ is strongly-pseudo-Brody hyperbolic over $k$.
  \item  The variety $X$ is strongly-pseudo-Kobayashi hyperbolic.
  \item The projective variety $X$ is pseudo-Mordellic over $k$.
    \item The projective variety $X$ is pseudo-arithmetically hyperbolic over $k$.
  \item  The projective variety $X$ is pseudo-groupless over $k$.
  \item The projective variety $X$ is pseudo-algebraically hyperbolic over $k$.
  \item The projective variety $X$ is pseudo-bounded over $k$.
\item The projective variety $X$ is pseudo-$1$-bounded over $k$.
\item The projective variety $X$ is pseudo-geometrically hyperbolic over $k$.
  \item The projective variety $X$ is of general type over $k$.
  \end{enumerate}
\end{conjecture}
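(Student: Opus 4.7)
The plan is to organize the ten properties into a web of implications and trace a cycle through them. Several arrows are essentially immediate from the results already collected in the paper. The implication (2)$\implies$(1) is formal, since the Kobayashi pseudo-distance vanishes on every entire curve. The implication (3)$\implies$(4) is trivial because near-integral points always contain the rational points appearing in the definition of arithmetic hyperbolicity. The Hassett--Tschinkel argument used in Theorem \ref{thm:ar_is_gr} yields (3)$\implies$(5), while Theorem \ref{thm:demailly1} and Proposition \ref{prop:psgeomhyp_is_psgr1} give (6)$\implies$(5) and (9)$\implies$(5) respectively. The boundedness hierarchy established in Section \ref{section:boundedness} supplies (6)$\implies$(7)$\implies$(8), and Corollary \ref{cor:urata} closes (8)$\implies$(9). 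Thus a substantial chain already links items (2) through (9) to the central notion of pseudo-grouplessness.

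To close the cycle, I would attack the reverse arrows in three clumps. First, for (10)$\implies$(3),(5),(6),(9), I would try to reduce, via the Iitaka fibration and a conjectural structure theorem in the spirit of Kawamata--Ueno, to two extreme cases: closed subvarieties of abelian varieties and varieties with ample canonical bundle. On the abelian variety side, Faltings's big theorem (Theorem \ref{thm:faltings_big}) together with Kawamata--Ueno (Theorem \ref{thm:ku}) and Yamanoi (Theorem \ref{thm:yamanoi_Kob}) already supply everything needed, and this is precisely the template sketched in Example \ref{example:groupless}. Second, for (5)$\implies$(10), one would invoke the Minimal Model Program together with the Abundance Conjecture, following the strategy outlined in Remark \ref{remark:surfaces}: if $X$ is pseudo-groupless but not of general type, its canonical model carries a non-trivial Iitaka fibration whose fibres have trivial or intermediate Kodaira dimension, which must contribute a non-constant map from some connected algebraic group, contradicting pseudo-grouplessness. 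Third, for (10)$\implies$(1), one invokes the Green--Griffiths--Lang strategy: produce enough global jet differentials with values in a sufficiently negative line bundle on a resolution of $X$ so as to confine every entire curve to a proper closed subset.

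The main obstacle, and the reason this is a conjecture rather than a theorem, lies precisely in the arrows connecting the complex-analytic notions (1), (2) to the algebraic-geometric notions (6)--(9), and in turn linking these to the arithmetic side (3), (4) and to general type (10). In particular, the implication (1)$\implies$(6), asking whether a pseudo-Brody hyperbolic projective variety is pseudo-algebraically hyperbolic, is genuinely open even though its non-pseudo counterpart is a celebrated theorem of Demailly; Demailly's compactness argument naturally produces an exceptional locus that a priori depends on the family of curves considered, and uniformising this locus across all families seems to require fundamentally new analytic input. A related and equally serious obstruction is the conjectural coincidence of the various exceptional sets (Lang's algebraic exceptional set, the Brody and Kobayashi loci, the Zariski closure of the set of rational points, the special locus, and the non-big locus of $\omega_X$); this coincidence is only known unconditionally in the abelian variety case, and no current technique separates or compares these loci in general. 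Finally, even granting the geometric equivalences, bridging to the arithmetic items (3) and (4) requires a Faltings-type theorem for arbitrary varieties of general type, for which no candidate strategy exists beyond the Lawrence--Venkatesh method in very special situations.
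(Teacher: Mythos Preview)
The statement in question is a \emph{conjecture}, not a theorem, and the paper makes no attempt to prove it. Immediately after stating Conjecture~\ref{conj:S6}, the paper simply records which implications are currently known --- namely $(6)\Rightarrow(7)\Rightarrow(8)\Rightarrow(9)\Rightarrow(5)$, $(3)\Rightarrow(4)$, $(3)\Rightarrow(5)$, $(2)\Rightarrow(1)$, and $(1)\Rightarrow(5)$ --- and displays them in a diagram, explicitly stating that ``the content of the Strong Lang-Vojta conjecture is that all the notions appearing in this diagram are equivalent.'' There is no proof to compare your proposal against.

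You clearly recognise this: your third paragraph says outright that this is ``a conjecture rather than a theorem'' and correctly identifies the genuinely open arrows. Your first paragraph is an accurate summary of the known implications and matches the paper's list almost exactly (you omit $(1)\Rightarrow(5)$, which the paper does record; it follows since an entire curve through a big open of an abelian variety gives a non-constant entire curve in $X$). Your second paragraph, sketching how one might attack $(10)\Leftrightarrow(5)$ via MMP/Abundance and $(10)\Rightarrow(1)$ via jet differentials, goes well beyond anything the paper attempts; these are reasonable heuristics and are indeed in the spirit of Remark~\ref{remark:surfaces}, but they rely on further conjectures (Abundance, structure theorems for Calabi--Yau varieties, effective Green--Griffiths) and do not constitute a proof strategy in any actionable sense. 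In short: your proposal is not a proof, but neither is there one in the paper --- and you have correctly diagnosed both the known web of implications and the location of the genuine obstructions.
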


Conjecture \ref{conj:S6} is the final version of the  Lang-Vojta conjecture for pseudo-hyperbolic varieties, and also encompasses Green-Griffiths's conjecture for  projective varieties of general type. We note that one aspect of the Lang-Vojta conjecture and the Green-Griffiths conjecture that is ignored in this conjecture is whether the conjugate of a Brody hyperbolic variety is Brody hyperbolic (see Conjectures \ref{conj:conjugates} and \ref{conj:conjugates2}).

 The following implications are known. 
First,  $(6) \implies (7)$, $(7)\implies (8)$, $(8)\implies (9)$, and $(9)\implies (5)$. 
 Also, 
 $(3)\implies (4)$, $(3)\implies (5)$.
 Finally, $(2)\implies (1)$ and $(1)\implies (5)$.  
The following diagram summarizes these known implications. The content of the Strong Lang-Vojta conjecture is that all the notions appearing in this diagram are equivalent.  \\

\begin{center}\scalebox{0.82}{
\begin{tabular}{m{3.7cm}cm{3.1cm}cm{4.1cm}cm{3.8cm}}
pseudo-algebraically hyperbolic &$\Longrightarrow$ &pseudo-bounded &$\Longrightarrow$ &pseudo-1-bounded &$\Longrightarrow$ &pseudo-geometrically hyperbolic \\[0.2cm]
&&&&&&\qquad \rotatebox[origin=c]{270}{$\Longrightarrow$}\\
&&pseudo-Mordellic &$\Longrightarrow$ &pseudo-arithmetically hyperbolic &$\Longrightarrow$ &pseudo-groupless \\
&&&&&&\qquad \rotatebox[origin=c]{90}{$\Longrightarrow$}\\[0.2cm]
&&&& strongly-pseudo-Kobayashi hyperbolic &$\Longrightarrow$ &stongly-pseudo-Brody hyperbolic
\end{tabular}}
\end{center}\mbox{}\\[0.2cm]

We stress that the Strong Lang-Vojta conjecture is concerned with classifying projective varieties of general type via their complex-analytic or arithmetic properties.  Recall that Campana's special varieties can be considered as being   opposite to varieties of general type. As Campana's conjectures are concerned with characterising special varieties via their complex-analytic or arithmetic properties, his conjectures should be  considered as     providing another part of the conjectural picture.  We refer the reader to \cite{CampanaBook} for a discussion of Campana's conjectures.
 
The following conjecture is a priori weaker then the Strong Lang-Vojta conjecture, as it is only concerned with hyperbolic varieties. It is not clear to us whether the Strong Lang-Vojta conjecture can be deduced from the following weaker version, as there are pseudo-hyperbolic projective varieties which are not birational to a hyperbolic projective variety.

 \begin{conjecture}[Weak Lang-Vojta, IV]\label{conj:weak} 
Let $X$ be a projective variety over $k$. Then the following statements are equivalent.
\begin{enumerate}
\item The variety $X$  is strongly-Brody hyperbolic over $k$.
\item The variety $X$ is strongly-Kobayashi hyperbolic over $k$.
\item The projective variety $X$ is Mordellic over $k$.
\item The projective variety $X$ is arithmetically hyperbolic over $k$.
\item The projective variety $X$ is  groupless over $k$.
\item The projective variety $X$ is algebraically hyperbolic over $k$.
\item The projective variety $X$ is bounded  over $k$.
\item The projective variety $X$ is  $1$-bounded over $k$.
\item The projective variety $X$ is geometrically hyperbolic over $k$.
\item Every integral subvariety of $X$ is of general type. 
\end{enumerate}
\end{conjecture}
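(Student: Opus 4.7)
My plan is to organize the ten conditions into a directed graph and fill in the ``easy'' arrows from the results already collected in the excerpt, isolating the deep implications that genuinely constitute the Lang--Vojta program. First I would record the known implications: the boundedness block gives a chain $(6) \Rightarrow (7) \Rightarrow (8) \Rightarrow (9) \Rightarrow (5)$ via the pseudo-free versions of Theorem \ref{thm:eq_bounded} (with $\Delta = \emptyset$, uncountability of $k$ is not needed for the equivalence of boundedness and $1$-boundedness), Corollary \ref{cor:urata} (equivalently Urata's theorem) and Proposition \ref{prop:psgeomhyp_is_psgr1}. The arithmetic block yields $(3) \Rightarrow (4) \Rightarrow (5)$ by definition together with Theorem \ref{thm:ar_is_gr}. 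The complex-analytic block yields $(2) \Rightarrow (1) \Rightarrow (6)$, the first from Brody's compactness theorem, the second from Demailly's theorem that Brody hyperbolicity implies algebraic hyperbolicity. Finally (10) $\Rightarrow$ (5) is Kawamata--Ueno combined with Lemma \ref{lem:special_locus} in the abelian-variety case, but in general it requires only that a projective variety every subvariety of which is of general type admits no non-constant morphism from $\mathbb{G}_m$ or an abelian variety (which is immediate since the image would be a subvariety dominated by a non--general-type variety).

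Having assembled these arrows, every condition implies (5), and I would next attempt to close the cycle by showing $(5) \Rightarrow (10)$. This is the first main obstacle: beyond surfaces (Remark \ref{remark:surfaces}) it is genuinely open, but a conditional route is available. Assuming Abundance and the standard conjectures on Calabi--Yau varieties, one runs the minimal model program on a resolution $X'$ of $X$; the resulting Iitaka fibration, together with the structure theorem for varieties of intermediate Kodaira dimension, produces either a section of a multiple of the canonical bundle (whence $X'$ is of general type), or a non-trivial morphism from an algebraic group (contradicting (5) via Theorem \ref{lem:groupless2}). I would carry this out by induction on $\dim X$, reducing via the MRC fibration and the Albanese map, and handling the Calabi--Yau case as an input assumption.

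Second, I would attack the closing implication from (10) back to (3) and (1) and (2). For closed subvarieties of abelian varieties this is settled by Bloch--Ochiai--Kawamata (Theorem \ref{thm:bok}), Yamanoi (Theorem \ref{thm:yamanoi_Kob}), and Faltings (Theorems \ref{thm:faltings} and \ref{thm:faltings_big}). My plan in general would be an Albanese reduction: if $X$ is of general type and its Albanese map $X \to \operatorname{Alb}(X)$ is generically finite onto its image $Y \subset \operatorname{Alb}(X)$, then $Y$ is of general type, Faltings/BOK apply to $Y$, and one pulls back finiteness and hyperbolicity along the generically finite map using the Chevalley--Weil formalism (Theorem \ref{thm:cw}) combined with a Stein factorization argument as in the example following Conjecture \ref{conj:S2}. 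When the Albanese is not of maximal rank, one stratifies $X$ by the relative dimension of the Albanese map and runs induction on dimension, the base case being curves of genus $\ge 2$ (covered by Theorems \ref{thm:riemann} and \ref{thm:faltings_curves}).

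The main obstacle is exactly the case where $\operatorname{Alb}(X)$ is trivial (or has dimension strictly less than $\dim X$ on a nonempty open) but $X$ is of general type with no subvariety fibered over a positive-dimensional abelian variety; here the Albanese reduction fails and one must invoke genuinely new techniques. On the complex-analytic side this is the Green--Griffiths--Lang conjecture, for which the state of the art is the jet-differential approach of Demailly--Siu--Diverio--Merker--Rousseau and works in restricted ranges; on the arithmetic side it is the Bombieri--Lang conjecture, for which no unconditional approach is known even for surfaces. I therefore do not expect to complete the cycle unconditionally; what the plan accomplishes is to package the remaining input as the pair $\{(5)\Rightarrow(10),\ (10)\Rightarrow(1)\text{ and }(10)\Rightarrow(3)\}$, with the second half being the essential unresolved content.
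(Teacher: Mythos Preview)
The statement in question is a \emph{conjecture}, not a theorem; the paper does not prove it and explicitly presents it as open. What the paper does provide, immediately after the statement, is exactly the same kind of inventory you assemble: the known implications $(1)\Leftrightarrow(2)$ (Brody's lemma), $(1)\Rightarrow(6)$ (Demailly), $(6)\Rightarrow(7)\Leftrightarrow(8)\Rightarrow(9)\Rightarrow(5)$ (the boundedness chain plus Urata and the Demailly-type argument), $(3)\Rightarrow(4)\Rightarrow(5)$, and $(10)\Rightarrow(5)$. Your catalogue of arrows matches the paper's, and your identification of the residual content---$(5)\Rightarrow(10)$ conditional on Abundance/Calabi--Yau conjectures, and $(10)\Rightarrow(1),(3)$ as Green--Griffiths--Lang and Bombieri--Lang---is precisely what the paper regards as the open heart of the conjecture.

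Two small corrections. First, you record only $(2)\Rightarrow(1)$, but the paper notes $(1)\Leftrightarrow(2)$: for projective (hence compact) varieties, Brody's lemma gives the nontrivial direction $(1)\Rightarrow(2)$, while $(2)\Rightarrow(1)$ is immediate from the definitions; you have the attribution reversed. Second, your Albanese-reduction strategy for $(10)\Rightarrow(3)$ is a reasonable heuristic in the maximal-Albanese-dimension case, but the paper does not propose it as a route to the conjecture; it is not part of any ``proof'' in the paper, and as you yourself note, it collapses when the Albanese is trivial. In sum: there is no proof in the paper to compare against, and your proposal is not a proof either but an accurate survey of known implications and obstructions, consistent with the paper's own discussion.
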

 
 \begin{remark}[Strong implies Weak]
 Let us illustrate why the strong Lang-Vojta conjecture implies the Weak Lang-Vojta conjecture. To do so, let $X$ be a projective variety. Assume that $X$ is groupless. Then $X$ is pseudo-groupless. Thus, by the Strong Lang-Vojta conjecture, we have that $X$ is Mordellic modulo some proper closed subset $\Delta\subset X$. Now, since $X$ is groupless, it follows that $\Delta$ is groupless. Repeating the above argument shows that   $\Delta$ is Mordellic, so that $X$ is Mordellic.  
 \end{remark}

 We know \emph{more} about the Weak Lang-Vojta conjecture than we do about the Strong Lang-Vojta conjecture. Indeed, it is known that $(1)\iff (2)$ by Brody's Lemma . Also, it is not hard to show that  $(2)\implies (5)$. Moreover, we know that $(3)\implies (4)$ and $(4)\implies (5)$.
Of course, we also have that  $(6)\implies (7)$, $(7)\implies (8)$ and $(8)\iff (9)$. In addition, we also have that $(1)\implies (6)$ and that $(10)\implies (5)$. The following diagram summarizes these known implications. \\

\begin{center}
\begin{tabular}{m{2.25cm}cm{1.7cm}cm{2.35cm}cm{2.0cm}c}
Kobayashi hyperbolic &$\Longleftrightarrow$ &Brody \mbox{hyperbolic} &\\[0.2cm]
\qquad \rotatebox[origin=c]{270}{$\Longrightarrow$}\\[0.2cm]
algebraically hyperbolic &$\Longrightarrow$ &bounded &$\Longleftrightarrow$ &1-bounded &$\Longrightarrow$ &geometrically hyperbolic &\\[0.2cm]
&&&&&&\qquad \rotatebox[origin=c]{270}{$\Longrightarrow$}\\[-0.1cm]
&&Mordellic &$\Longrightarrow$ & arithmetically hyperbolic &$\Longrightarrow$ &groupless
\end{tabular}
\end{center}\mbox{}\\[0.0cm]

 \newpage
 The figure below illustrates a projective variety which satisfies the Weak Lang-Vojta conjecture. The picture shows that this variety has infinitely many points valued in a number field (in orange), admits an entire curve (in blue), admits algebraic maps of increasing degree from some fixed  curve (in red), and admits a non-constant map from an abelian variety (in green). It is therefore a non-Mordellic, non-Brody hyperbolic, non-bounded, and non-groupless projective variety. \vspace{2.5cm}

 \begin{center}
 \begin{tikzpicture}[line cap=round,line join=round,>=triangle 45,scale=1.3]
\draw [thick, shift={(0,0)}] plot[domain=1.63:3,variable=\t]({1*2.82*cos(\t r)+0*2.82*sin(\t r)},{0*2.82*cos(\t r)+1*2.82*sin(\t r)});
\draw [thick, shift={(-1,0.5)}] plot[domain=3.19:4.24,variable=\t]({1*1.79*cos(\t r)+0*1.79*sin(\t r)},{0*1.79*cos(\t r)+1*1.79*sin(\t r)});
\draw [thick, shift={(-1.99,-2.81)}] plot[domain=-0.08:1.47,variable=\t]({1*1.72*cos(\t r)+0*1.72*sin(\t r)},{0*1.72*cos(\t r)+1*1.72*sin(\t r)});
\draw [thick, shift={(1.14,-2.89)}] plot[domain=-3.1:0.79,variable=\t]({1*1.42*cos(\t r)+0*1.42*sin(\t r)},{0*1.42*cos(\t r)+1*1.42*sin(\t r)});
\draw [thick, shift={(3.78,-0.21)}] plot[domain=2.6:3.93,variable=\t]({1*2.35*cos(\t r)+0*2.35*sin(\t r)},{0*2.35*cos(\t r)+1*2.35*sin(\t r)});
\draw [thick, shift={(0.55,1.64)}] plot[domain=-0.48:2.13,variable=\t]({1*1.38*cos(\t r)+0*1.38*sin(\t r)},{0*1.38*cos(\t r)+1*1.38*sin(\t r)});
\draw[blue] (-3,-2.5)-- (-5,-2.5);
\draw[blue] (-4,-3.5)-- (-4,-1.5);
\draw [blue, ->] (-3.43,-1.89) -- (-1.8,-0.32);
\draw [red, shift={(2,5)}] plot[domain=1.73:5.66,variable=\t]({1*0.77*cos(\t r)+0*0.77*sin(\t r)},{0*0.77*cos(\t r)+1*0.77*sin(\t r)});
\draw [red, shift={(3.52,3.92)}] plot[domain=0.49:2.52,variable=\t]({1*1.1*cos(\t r)+0*1.1*sin(\t r)},{0*1.1*cos(\t r)+1*1.1*sin(\t r)});
\draw [red, ->] (1.39,4.01) -- (0.43,2.11);
\draw [red, ->] (1.77,3.82) -- (0.81,1.92);
\draw [red, ->] (2.14,3.63) -- (1.19,1.73);
\draw [green, shift={(4.83,-4.82)}] plot[domain=-3.17:0.06,variable=\t]({1*0.79*cos(\t r)+0*0.79*sin(\t r)},{0*0.79*cos(\t r)+1*0.79*sin(\t r)});
\draw [green, shift={(4.83,-4.82)}] plot[domain=-3.15:0.04,variable=\t]({1*1.35*cos(\t r)+0*1.35*sin(\t r)},{0*1.35*cos(\t r)+1*1.35*sin(\t r)});
\draw [green, shift={(4.83,-4.64)}] plot[domain=-0.16:3.34,variable=\t]({1*0.81*cos(\t r)+0*0.81*sin(\t r)},{0*0.81*cos(\t r)+1*0.81*sin(\t r)});
\draw [green, shift={(4.83,-4.64)}] plot[domain=-0.09:3.26,variable=\t]({1*1.36*cos(\t r)+0*1.36*sin(\t r)},{0*1.36*cos(\t r)+1*1.36*sin(\t r)});
\draw [green, rotate around={178.28:(3.75,-4.66)},dash pattern=on 3pt off 3pt] (3.75,-4.66) ellipse (0.28cm and 0.16cm);
\draw [green, ->] (3.37,-3.55) -- (1.9,-2.58);
\draw [orange, shift={(0.09,0.15)},dotted]  plot[domain=3.68:5.04,variable=\t]({1*0.59*cos(\t r)+0*0.59*sin(\t r)},{0*0.59*cos(\t r)+1*0.59*sin(\t r)});
\draw [orange, shift={(0.59,-1.33)},dotted]  plot[domain=1.08:1.9,variable=\t]({1*0.97*cos(\t r)+0*0.97*sin(\t r)},{0*0.97*cos(\t r)+1*0.97*sin(\t r)});
\draw [green, rotate around={176.71:(5.9,-4.79)},dash pattern=on 3pt off 3pt] (5.9,-4.79) ellipse (0.28cm and 0.16cm);
\draw [red, dotted] (1.95,2.53)-- (2.7,2.15);
\begin{footnotesize}
\draw[red] (3.6,5.3) node {curve};
\draw[blue] (-2.8,-3) node {complex plane};
\draw[green] (4.8, -2.9) node {abelian variety};
\draw[red] (3.6,3.0) node {maps of increasing degree};
\draw[orange] (-0.3,0.35) node {infinitely many points};
\draw[orange] (-0.3,0.1) node {over a number field};
\draw[blue] (-3.6,-0.8) node {holomorphic map};
\end{footnotesize}
\end{tikzpicture}
\end{center}

\newpage

  \subsection{The conjecture on exceptional loci}
  
  We now define the exceptional loci for every notion that we have seen so far.
   As usual, we let $k$ be  an algebraically closed field of characteristic zero.

 \begin{definition}     Let $X$ be a     variety over $k$. 
 \begin{itemize}
 \item We define $\Delta_X^{gr}$ to be the intersection of all proper closed subset $\Delta$ such that $X$ is groupless modulo $\Delta$. Note that  $\Delta_X^{gr}$ is a   closed subset of $X$ and that $X$ is groupless modulo $\Delta_X^{gr}$. We refer to $\Delta_X^{gr}$ as the \emph{groupless-exceptional locus} of $X$. 
 \item We define $\Delta_X^{ar-hyp}$ to be the intersection of all proper closed subsets $\Delta$ such that $X$ is arithmetically hyperbolic modulo $\Delta$. Note that $X$ is arithmetically hyperbolic modulo $\Delta_X^{ar-hyp}$. We refer to $\Delta_X^{ar-hyp}$ as the \emph{arithmetic-exceptional locus} of $X$. 
 \item 
We define $\Delta_X^{Mor}$ to be the intersection of all proper closed subsets $\Delta$ such that $X$ is Mordellic  modulo $\Delta$. Note that $X$ is Mordellic   modulo $\Delta_X^{Mor}$. We refer to $\Delta_X^{Mor}$ as the \emph{Mordellic-exceptional locus} of $X$. 
\end{itemize}
 \end{definition}
 
 Assuming $X$ is a proper variety over $k$ for a moment, it seems worthwhile stressing that $\Delta_X^{gr}$ equals the (Zariski) closure of   Lang's algebraic exceptional set $\mathrm{Exc}_{alg}(X)$ as defined in \cite[p.~160]{Lang2}. 
  
  \begin{definition}
Let $X$ be a variety over $\CC$.  
\begin{itemize}
\item We let $\Delta_X^{Br}$ be the intersection of all closed subsets $\Delta$ such that $X$ is Brody hyperbolic modulo $\Delta$. Note that $\Delta^{Br}_X$ is a closed subset of $X$ and that $X$ is Brody hyperbolic modulo $\Delta_X^{Br}$. We refer to $\Delta^{Br}_X$ as the \emph{Brody-exceptional locus} of $X$. 
\item  We let $\Delta^{Kob}_X$ be the intersection of all closed subsets $\Delta$ such that $X$ is Kobayashi hyperbolic modulo $\Delta$. Note that $\Delta^{Kob}_X$ is a closed subset of $X$ and that $X$ is Kobayashi hyperbolic modulo $\Delta^{Kob}_X$. We refer to $\Delta^{Kob}_X$ as the \emph{Kobayashi-exceptional locus} of $X$. 
\end{itemize}
\end{definition}  

We note that $\Delta_X^{Br}$ coincides with Lang's analytic exceptional set $\mathrm{Exc}(X)$ (defined in \cite[p.~160]{Lang2}). Indeed, $\mathrm{Exc}(X)$ is defined to be the Zariski closure of the union of all images of non-constant entire curves $\mathbb{C}\to X^{\an}$.

 \begin{definition} Let $X$ be a     projective scheme over $k$. 
 \begin{itemize}
 \item We define $\Delta_X^{alg-hyp}$ to be the intersection of all proper closed subsets $\Delta$ such that $X$ is algebraically hyperbolic modulo $\Delta$. Note that  $\Delta_X^{alg-hyp}$ is a proper closed subset of $X$ and that $X$ is algebraically hyperbolic modulo $\Delta^{alg-hyp}_X$. We refer to $\Delta_X^{alg-hyp}$ as the \emph{algebraic-exceptional locus} of $X$. 
 \item   For $n\geq 1$, we define $\Delta_X^{n-bounded}$ to be the intersection of all proper closed subsets $\Delta$ such that $X$ is $n$-bounded modulo $\Delta$. Note that  $\Delta_X^{n-bounded}$ is a proper closed subset of $X$ and that $X$ is $n$-bounded modulo $\Delta_X^{n-bounded}$. We refer to $\Delta_X^{n-bounded}$ as the \emph{$n$-bounded-exceptional locus} of $X$. 
 \item   We define $\Delta_X^{bounded}$ to be the intersection of all proper closed subsets $\Delta$ such that $X$ is bounded modulo $\Delta$. Note that  $\Delta_X^{bounded}$ is a proper closed subset of $X$ and that $X$ is bounded modulo $\Delta_X^{bounded}$. We refer to $\Delta_X^{bounded}$ as the \emph{bounded-exceptional locus} of $X$. 
\item   We define $\Delta_X^{geom-hyp}$ to be the intersection of all proper closed subsets $\Delta$ such that $X$ is geometrically hyperbolic modulo $\Delta$. Note that  $\Delta_X^{geom-hyp}$ is a proper closed subset of $X$ and that $X$ is geometrically hyperbolic modulo $\Delta_X^{geom-hyp}$. We refer to $\Delta_X^{geom-hyp}$ as the \emph{geometric-exceptional locus} of $X$. 
\end{itemize}
 \end{definition}
  
 The strongest version of Lang-Vojta's conjecture stated in these notes claims the equality of all exceptional loci. Note that these  loci are all, by definition, closed subsets. This is to be contrasted with Lang's definition of his ``algebraic exceptional set'' (see \cite[p.~160]{Lang2}).

\begin{conjecture}[Strongest Lang-Vojta conjecture]\label{conj:strongest} Let $k$ be an algebraically closed field of characteristic zero.
Let $X$ be a projective variety over $k$. Then the following three statements hold.
\begin{enumerate} 
\item  We have that \[
\Delta^{gr}_X =   \Delta^{Mor}_X = \Delta^{geom-hyp}_X = \Delta^{1-bounded}_X = \Delta^{bounded}_X = \Delta^{alg-hyp}_X.
\] 
\item The projective variety $X$ is of general type if and only if $\Delta_X^{gr} \neq X$.
\item If $k=\CC$, then $\Delta_X^{gr} = \Delta_X^{Br} = \Delta_X^{Kob}$.
\end{enumerate} 
\end{conjecture}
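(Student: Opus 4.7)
The plan is to reduce the conjecture to three subproblems. First, the easy inclusions in (1) follow formally from the implications summarized between Conjectures \ref{conj:S6} and \ref{conj:weak}: whenever ``$X$ is property-$A$ modulo $\Delta$'' implies ``$X$ is property-$B$ modulo $\Delta$'', one has $\Delta_X^B \subseteq \Delta_X^A$. This extracts at once the chain
\[
\Delta_X^{gr} \subseteq \Delta_X^{geom-hyp} \subseteq \Delta_X^{1-bounded} \subseteq \Delta_X^{bounded} \subseteq \Delta_X^{alg-hyp},
\]
together with $\Delta_X^{gr} \subseteq \Delta_X^{ar-hyp} \subseteq \Delta_X^{Mor}$, and, over $\CC$, $\Delta_X^{gr} \subseteq \Delta_X^{Br} \subseteq \Delta_X^{Kob}$. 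All the substantive work consists of reversing these inclusions.

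For the reverse inclusions in (1), the strategy is induction on $\dim X$ pivoting on the abelian-variety case. Fix a proper closed $\Delta \supseteq \Delta_X^{gr}$; it suffices to show $X$ is Mordellic and algebraically hyperbolic modulo $\Delta$. Every integral closed subvariety $Y \subseteq X$ not contained in $\Delta$ has $\Delta_Y^{gr} \subseteq \Delta \cap Y$, and by induction is Mordellic and algebraically hyperbolic modulo a proper closed subset. The remaining offending objects --- curves whose image is $X$ itself, and near-integral points that are Zariski-dense --- are to be handled by pushing forward to $\Alb(X)$ (or, more ambitiously, to an appropriate moduli space of subvarieties, in the spirit of the Kodaira construction used at the end of Section \ref{section:Mordell}), reducing to the case of closed subvarieties of abelian varieties, where Faltings's Theorem \ref{thm:faltings_big}, Kawamata-Ueno's Theorem \ref{thm:ku}, and Yamanoi's Theorem \ref{thm:yamanoi_Kob} provide the required finiteness with exceptional set exactly $\mathrm{Sp}(X)$, which coincides with $\Delta_X^{gr}$ by Lemma \ref{lem:special_locus}.

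For (2), I would use the birational invariance of pseudo-grouplessness (Remark \ref{lem:birational_invariance}) together with Hironaka's resolution to pass to a smooth projective model, and then run the Iitaka fibration. If $X$ is not of general type, then this fibration has positive-dimensional fibres of nonmaximal Kodaira dimension; assuming the expected structural conjectures on such fibres (abundance plus the Calabi-Yau input flagged in Remark \ref{remark:surfaces}) one produces non-constant maps into $X$ from big open subsets of abelian varieties not factoring through any proper closed subset, contradicting pseudo-grouplessness. Conversely, the implication ``general type $\Longrightarrow$ pseudo-groupless'' is exactly the implication $(10)\Longrightarrow(5)$ listed as known in Section \ref{section:conjectures}. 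For (3) over $\CC$, the inclusion $\Delta_X^{Kob} \subseteq \Delta_X^{Br}$ is supplied by Brody's lemma in the projective case, so the crux is $\Delta_X^{Br} \subseteq \Delta_X^{gr}$; the approach would be Demailly's, producing global jet differentials vanishing on an ample divisor on a resolution and adapting to the pseudo-setting by carefully tracking the base locus.

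\emph{The main obstacle} is precisely the passage from pseudo-grouplessness --- a hypothesis involving only maps from $\Gm$ and from abelian varieties --- to control over \emph{all} entire curves, \emph{all} smooth curves of arbitrary genus, and \emph{all} near-integral points over finitely generated rings. Outside the subvariety-of-abelian-variety case, where everything follows from the theorems of Faltings, Bloch-Ochiai-Kawamata, Kawamata-Ueno and Yamanoi, each implication in question is already a central open problem (Bombieri-Lang, Demailly's algebraic hyperbolicity conjecture, Green-Griffiths-Lang), and the strategy above at best organizes Conjecture \ref{conj:strongest} into these named pieces rather than solving them.
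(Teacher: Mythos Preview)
The statement you are attempting to prove is labelled a \emph{conjecture} in the paper, and the paper offers no proof of it. What the paper does is record the known inclusions (the remark immediately following the conjecture) and isolate what remains open. Your proposal ultimately acknowledges this in its final paragraph, so in that sense you and the paper agree: the conjecture decomposes into the Bombieri--Lang, Green--Griffiths--Lang, and Demailly circles of problems, none of which is currently resolved in the required generality.

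That said, two of your intermediate claims are incorrect as stated and would mislead a reader. First, you assert that ``the inclusion $\Delta_X^{Kob} \subseteq \Delta_X^{Br}$ is supplied by Brody's lemma in the projective case''. The paper explicitly contradicts this: it is \emph{not known} whether $\Delta_X^{Kob} \subseteq \Delta_X^{Br}$, and Brody's lemma only yields the implication ``$\Delta_X^{Br} = \emptyset \Longrightarrow \Delta_X^{Kob} = \emptyset$'', which is strictly weaker than the inclusion of loci. Second, you claim that ``general type $\Longrightarrow$ pseudo-groupless'' is the implication $(10)\Longrightarrow(5)$ listed as known in Section~\ref{section:conjectures}. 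But the implication $(10)\Longrightarrow(5)$ recorded there is for the \emph{Weak} Lang--Vojta conjecture, where $(10)$ reads ``every integral subvariety of $X$ is of general type'' and $(5)$ reads ``groupless''; this is not the same as ``$X$ of general type $\Longrightarrow$ $X$ pseudo-groupless''. The latter is precisely the direction of Conjecture~\ref{conj:S1} that the paper flags as open beyond dimension one (Remark~\ref{remark:surfaces}).

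Your inductive strategy via the Albanese map is reasonable heuristics but does not constitute a reduction: when the Albanese variety is trivial (e.g.\ for simply connected varieties of general type) there is nothing to push forward to, and this is exactly the regime where the conjecture is wide open.
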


 \begin{remark}[Which inclusions do we know?] Let $X$ be a projective scheme over $k$.
We have that
\[ \Delta_X^{gr}\subset \Delta_X^{ar-hyp} \subset \Delta_X^{Mor},\] and \[ \Delta_X^{gr}\subset \Delta_X^{geom-hyp}\subset \Delta_X^{1-bounded}\subset \Delta_X^{bounded}\subset \Delta_X^{alg-hyp}.
 \] If $k$ is uncountable, then \[\Delta_X^{1-bounded} = \Delta_X^{bounded}.\] If $k=\CC$, then  \[
 \Delta^{gr}_X \subset \Delta^{Br}_X \subset \Delta^{Kob}_X.\] 
 \end{remark}
 
 \begin{remark}[Reformulating Brody's lemma]
 It is not known whether $\Delta^{Kob}_X \subset \Delta^{Br}_X$.   Brody's lemma can be stated as saying that, if $\Delta^{Br}_X$ is empty, then $\Delta^{Kob}_X$ is empty. 
\end{remark}

\begin{remark}[Reformulating Demailly's theorem]
It is   not known whether  $\Delta^{alg-hyp}_X \subset \Delta^{Kob}_X$. Demailly's theorem (Theorem \ref{thm:demailly1}) can be stated as saying that, if $\Delta^{Kob} $ is empty, then $\Delta^{alg-hyp}$ is empty.
\end{remark}

  \section{Closed subvarieties of abelian varieties}\label{section:questions}
  We have gradually worked our way towards the following theorem which   says that the Strongest Lang-Vojta conjecture holds for closed subvarieties of abelian varieties.    Recall that, for $X$ a closed subvariety of an abelian variety $A$, the subset $\mathrm{Sp}(X)$ is defined to be the union of translates of positive-dimensional abelian subvarieties of $A$ contained in $A$. It is a fundamental fact that $\mathrm{Sp}(X)$ is a closed subset of $X$. It turns out that $\mathrm{Sp}(X)$ is the ``exceptional locus'' of $X$ in any sense of the word ``exceptional locus''.
  
 \begin{theorem}[Bloch-Ochiai-Kawamata, Faltings, Yamanoi, Kawamata-Ueno]\label{thm:exc} Let $A$ be an abelian variety over $k$, and let $X\subset A$ be a closed subvariety. Then the following statements hold.
 \begin{enumerate}
 \item We have that $\mathrm{Sp}(X) \neq X$ of $X$ equals   if and only if $X$ is   of general type.
 \item We have that  $$\mathrm{Sp}(X) = \Delta_X^{gr}=\Delta_X^{Mor} = \Delta_X^{ar-hyp}      = \Delta_X^{geom-hyp} = \Delta_X^{1-bounded} = \Delta_X^{bounded} = \Delta_X^{alg-hyp}.$$ 
 \item If $k=\CC$, then $\Delta^{gr}_X = \Delta_X^{Br} = \Delta_X^{Kob}$.
 \end{enumerate}
 \end{theorem}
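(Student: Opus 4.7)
The plan is to establish the equalities by sandwiching the special locus $\mathrm{Sp}(X)$ between a lower bound and an upper bound for each exceptional locus listed. The ``known implications'' summarized just before the theorem already give the chains
\[
\Delta_X^{gr}\subset \Delta_X^{ar\text{-}hyp}\subset \Delta_X^{Mor}, \qquad \Delta_X^{gr}\subset \Delta_X^{geom\text{-}hyp}\subset \Delta_X^{1\text{-}bounded}\subset \Delta_X^{bounded}\subset \Delta_X^{alg\text{-}hyp},
\]
and (for $k=\CC$) $\Delta_X^{gr}\subset \Delta_X^{Br}\subset \Delta_X^{Kob}$. So the task reduces to two bounds: the \emph{lower} bound $\mathrm{Sp}(X)\subset \Delta_X^{gr}$, and matching \emph{upper} bounds $\Delta_X^{?}\subset \mathrm{Sp}(X)$ for the largest terms in each chain, namely $\Delta_X^{Mor}$, $\Delta_X^{alg\text{-}hyp}$, and (when $k=\CC$) $\Delta_X^{Kob}$.

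For the lower bound, I would argue exactly as in the proof of Lemma \ref{lem:special_locus} but in the ``modulo a subset'' setting: any translate $a+B\subset X$ of a positive-dimensional abelian subvariety $B\subset A$ is the image of a non-constant morphism from $B$, and a $B$-orbit cannot be contained in a proper closed subset $\Delta\subsetneq X$ unless $a+B\subset \Delta$. Thus any $\Delta$ witnessing grouplessness modulo $\Delta$ must contain every translate of a positive-dimensional abelian subvariety contained in $X$, hence contains $\mathrm{Sp}(X)$. Taking the intersection of all such $\Delta$ yields $\mathrm{Sp}(X)\subset \Delta_X^{gr}$. The same argument (with ``abelian variety'' replaced by an entire curve lifting through the universal cover $\CC^g\to A^{\an}$, or by a sequence of curves of unbounded degree as in the multiplication-map trick on abelian varieties) gives $\mathrm{Sp}(X)\subset \Delta_X^{Br}$, $\mathrm{Sp}(X)\subset \Delta_X^{alg\text{-}hyp}$, etc.

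For the upper bounds, I would quote: (a) Kawamata-Ueno (Theorem \ref{thm:ku}) for part (1), noting also that it guarantees $\mathrm{Sp}(X)$ is a genuine closed subset of $X$; (b) Faltings's Big Theorem (Theorem \ref{thm:faltings_big}) for $\Delta_X^{Mor}\subset \mathrm{Sp}(X)$, which together with the chain forces $\Delta_X^{gr}=\Delta_X^{ar\text{-}hyp}=\Delta_X^{Mor}=\mathrm{Sp}(X)$; and (c) Yamanoi (Theorem \ref{thm:yamanoi_Kob}) for $\Delta_X^{Kob}\subset \mathrm{Sp}(X)$, which collapses the complex-analytic chain. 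The remaining task, which I expect to be the main obstacle, is the upper bound $\Delta_X^{alg\text{-}hyp}\subset \mathrm{Sp}(X)$, i.e.\ that $X$ is algebraically hyperbolic modulo $\mathrm{Sp}(X)$. For this I would invoke Yamanoi's quantitative second main theorem for closed subvarieties of abelian varieties, which provides exactly the linear-in-genus degree bound on morphisms $C\to X$ whose image is not contained in $\mathrm{Sp}(X)$; the algebraic-hyperbolicity input then propagates down the chain via the implications already recorded (algebraically hyperbolic $\Rightarrow$ bounded $\Rightarrow$ $1$-bounded $\Rightarrow$ geometrically hyperbolic) to give the equalities for $\Delta_X^{bounded}$, $\Delta_X^{1\text{-}bounded}$ and $\Delta_X^{geom\text{-}hyp}$.

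Finally I would assemble the three blocks: part (1) from Kawamata-Ueno; part (2) from the two chains above together with Faltings (top) and Yamanoi's algebraic-hyperbolicity result (top), pinched against the common lower bound $\mathrm{Sp}(X)\subset \Delta_X^{gr}$; part (3) from Bloch-Ochiai-Kawamata and Yamanoi, again pinched against the lower bound. The really substantive inputs are Faltings's Big Theorem, Kawamata-Ueno's classification of $\mathrm{Sp}(X)$, and Yamanoi's pair of theorems (Nevanlinna-theoretic for Kobayashi, and the algebraic-hyperbolicity/height bound for $\Delta_X^{alg\text{-}hyp}$); the rest of the proof is bookkeeping via the implications already proved in the previous sections.
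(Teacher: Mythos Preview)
Your proposal is correct and follows essentially the same route as the paper: the sandwich argument using the chains of inclusions, with the lower bound $\mathrm{Sp}(X)\subset\Delta_X^{gr}$ from the definition, and the upper bounds supplied by Kawamata--Ueno for (1), Faltings's Big Theorem for $\Delta_X^{Mor}$, Yamanoi's earlier paper for $\Delta_X^{alg\text{-}hyp}$, and Bloch--Ochiai--Kawamata together with Yamanoi for $\Delta_X^{Br}$ and $\Delta_X^{Kob}$. The only minor redundancy is that you argue separate lower bounds $\mathrm{Sp}(X)\subset\Delta_X^{Br}$ and $\mathrm{Sp}(X)\subset\Delta_X^{alg\text{-}hyp}$, whereas these already follow from $\mathrm{Sp}(X)\subset\Delta_X^{gr}$ and the chains.
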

 \begin{proof}  The fact that $\mathrm{Sp}(X) \neq X$ if and only if $X$ is of general type is due to Kawamata-Ueno (see also Theorem \ref{thm:ku}). Moreover, 
  an elementary argument (see Example \ref{example:groupless}) shows that $X$ is groupless modulo $\mathrm{Sp}(X)$, so that $\Delta^{gr}_X\subset \mathrm{Sp}(X)$. On the other hand, it is clear from the definition that  $\mathrm{Sp}(X) \subset \Delta^{gr}_X$. This shows that $\mathrm{Sp}(X) = \Delta_X^{gr}$.
 
 By Faltings's theorem (Theorem \ref{thm:faltings_big}), we have that $X$ is Mordellic modulo $\mathrm{Sp}(X)$. This shows that $\Delta^{Mor}_X = \Delta_X^{ar-hyp} = \Delta^{gr}_X = \mathrm{Sp}(X)$.  (One can also show that $\Delta^{ar-hyp}_X = \Delta^{Mor}_X$ without appealing to Faltings's theorem. Indeed, as $X$ is a closed subvariety of an abelian variety, it follows from Corollary \ref{cor:psar_is_mor}  that $X$ is arithmetically hyperbolic modulo $\Delta$ if and only if $X$ is Mordellic modulo $\Delta$.)
 
 It follows from Bloch--Ochiai--Kawamata's theorem that $\Delta^{Br}_X = \mathrm{Sp}(X)$. Yamanoi improved this result   and showed that $\Delta^{Kob}_X = \mathrm{Sp}(X)$; see Theorem \ref{thm:yamanoi_Kob} (or the original \cite[Theorem~1.2]{Yamanoi2}). In his earlier work \cite[Corollary~1.(3)]{Yamanoi1}, Yamanoi proved that $\Delta^{alg-hyp}_X = \mathrm{Sp}(X)$. Since \[\Delta^{geom-hyp}_X\subset \Delta^{1-bounded}_X \subset \Delta_X^{bounded} \subset \Delta^{alg-hyp}_X, \] this concludes the proof. 
 \end{proof}

\section{Evidence for Lang-Vojta's conjecture}\label{section:evidence}
In the previous sections, we  defined every notion appearing in Lang-Vojta's conjecture, and  we stated the   ``Strongest'', ``Stronger'' and ``Weakest'' versions of Lang-Vojta's conjectures. We also indicated the known implications between these notions, and that the Strongest Lang-Vojta conjecture is known to hold for      closed subvarieties of abelian varieties by work of Bloch-Ochiai-Kawamata, Faltings, Kawamata-Ueno, and Yamanoi. 

In the following four sections, we will present  some   evidence for Lang-Vojta's conjectures. The results in the following sections are all in accordance with the Lang-Vojta conjectures.

\section{Dominant rational self-maps of pseudo-hyperbolic varieties}\label{section:bir}

     Let us start with a classical finiteness result of Matsumura  \cite[\S11]{Iitaka}.

\begin{theorem}[Matsumura]\label{thm:mat}
If $X$ is  a proper integral variety of general type over $k$, then  the set of dominant rational self-maps $X\dashrightarrow X$  is finite.
\end{theorem}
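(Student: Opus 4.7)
The plan is to reduce to a smooth projective model of $X$ and then exploit the action of dominant rational self-maps on spaces of pluricanonical sections. Let $\pi \colon X' \to X$ be a desingularization with $X'$ smooth projective; by definition of general type, $\omega_{X'/k}$ is big. The map $f \mapsto \pi^{-1} \circ f \circ \pi$ is a bijection between dominant rational self-maps of $X$ and those of $X'$, so we may replace $X$ by $X'$ and assume $X$ itself is smooth projective with $K_X$ big.

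Given a dominant rational self-map $f \colon X \dashrightarrow X$, resolve its indeterminacy by a birational morphism $p \colon \widetilde{X} \to X$ of smooth projective varieties to obtain a morphism $\widetilde{f} \colon \widetilde{X} \to X$. For each $m \geq 1$, since $\widetilde{f}$ is dominant (hence generically smooth, as we are in characteristic zero), pullback of top forms gives an injection
\[
\widetilde{f}^{\ast} \colon H^0(X,\omega_X^{\otimes m}) \hookrightarrow H^0(\widetilde{X},\omega_{\widetilde{X}}^{\otimes m});
\]
composing with the canonical identification $H^0(\widetilde{X},\omega_{\widetilde{X}}^{\otimes m}) \cong H^0(X,\omega_X^{\otimes m})$ induced by $p^{\ast}$ yields a linear automorphism $f^{\ast}$ of this finite-dimensional vector space. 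Because $K_X$ is big, for some $m$ the pluricanonical map $\phi = \phi_{|mK_X|} \colon X \dashrightarrow Y \subset \PP^N$ is birational onto its image $Y$. The action $f \mapsto f^{\ast}$ then descends to a linear automorphism $\sigma_f \in \PGL_{N+1}$ stabilising $Y$, with $f = \phi^{-1} \circ \sigma_f \circ \phi$. In particular every dominant rational self-map is automatically birational, and we obtain an injection
\[
\{\, f \colon X \dashrightarrow X \text{ dominant rational}\,\} \hookrightarrow \Aut(Y) \subset \PGL_{N+1},
\]
where $\Aut(Y)$ is the stabiliser of $Y$ in $\PGL_{N+1}$, a linear algebraic group of finite type over $k$.

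It remains to prove that $\Aut(Y)$ is finite. Being of finite type, it has only finitely many connected components, so it suffices to show that its identity component $\Aut(Y)^0$ is trivial. Otherwise, $\Aut(Y)^0$ would contain a non-trivial one-dimensional connected algebraic subgroup isomorphic to $\Ga$, $\Gm$, or an elliptic curve; such a subgroup would act non-trivially on $Y$, and the closures of its general orbits would sweep out a dense open subset of $Y$ by curves of Kodaira dimension $\leq 0$. This forces $Y$ to be uniruled or dominated by images of abelian varieties, contradicting the fact that $Y$ is birational to the general-type variety $X$, since a variety of general type is neither uniruled nor dominated by families of subvarieties of non-maximal Kodaira dimension.

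The hard part is this final step: ruling out positive-dimensional $\Aut(Y)^0$ rests on the classical but non-trivial facts that Kodaira dimension is a birational invariant and that general-type varieties are non-uniruled, together with some care due to the possible singularities of the pluricanonical image $Y$. The preceding reductions and the embedding into $\PGL_{N+1}$ are formal consequences of the bigness of $K_X$ and the birationality of some pluricanonical map.
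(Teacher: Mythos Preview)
The paper does not supply a proof of this theorem; it is stated as a classical result of Matsumura with a citation to \cite[\S 11]{Iitaka}, and is then used as motivation for the analogous statements (Theorems~\ref{thm:bir_is_finite} and \ref{thm:bir_is_finite_bounded}) in the pseudo-Mordellic and pseudo-bounded settings.

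Your argument is essentially the standard one found in Iitaka's book and in Kobayashi--Ochiai, and it is correct in outline. Two small remarks. First, since the stabiliser of $Y$ in $\PGL_{N+1}$ is a \emph{linear} algebraic group, it cannot contain an elliptic curve; that case in your trichotomy is spurious, and only $\Ga$ or $\Gm$ can occur. Second, the contradiction in the last paragraph is cleaner than you state it: a non-trivial $\Ga$- or $\Gm$-action on $Y$ produces a covering family of rational curves, so $Y$ is uniruled, hence of Kodaira dimension $-\infty$; since $Y$ is birational to the general-type variety $X$, this is already a contradiction. The vaguer clause about being ``dominated by families of subvarieties of non-maximal Kodaira dimension'' is not needed and is not quite a well-formed hypothesis as written.
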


Note that Matsumura's theorem is a vast generalization of the statement that a smooth curve of genus at least two has only finitely many automorphisms. Motivated by Lang-Vojta's conjecture, 
the arithmetic analogue of Matsumura's theorem is proven in \cite{JXie} (building on the results in \cite{JAut}) and can be stated as follows. 
    
    \begin{theorem}\label{thm:bir_is_finite} If $X$ is a proper pseudo-Mordellic integral variety over $k$, then the set of rational dominant self-maps $X\dashrightarrow X$ is finite.
    \end{theorem}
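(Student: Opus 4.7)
The plan is to argue by contradiction: suppose there is an infinite sequence $f_1,f_2,\ldots$ of pairwise distinct dominant rational self-maps of $X$, and derive a contradiction with pseudo-Mordellicity. Throughout, I would use that $X$ being proper and pseudo-Mordellic implies $X$ is pseudo-groupless, by Theorem~\ref{thm:ar_is_gr}.

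The first step is to spread out. Fix a proper closed subset $\Delta\subsetneq X$ such that $X$ is Mordellic modulo $\Delta$, and choose a regular $\ZZ$-finitely generated subring $A\subset k$, a flat proper model $\mathcal{X}/A$ of $X$, a closed subscheme $\mathcal{D}\subset\mathcal{X}$ modelling $\Delta$, and (after enlarging $A$ if necessary) rational $A$-maps $F_i\colon\mathcal{X}\dashrightarrow\mathcal{X}$ specializing to the $f_i$. The hypothesis then reads: $\mathcal{X}(B)^{(1)}\setminus\mathcal{D}(B)^{(1)}$ is finite for every $\ZZ$-finitely generated $B\supset A$. Next, I would reduce to the case where each $f_i$ is birational. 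A dominant rational self-map $f$ of degree $d\geq 2$ produces iterates $\{f^n\}_{n\geq 1}$ of pairwise distinct self-maps whose graphs in $X\times X$ have degree $\asymp d^n$ with respect to a fixed polarization, and a boundedness argument combining pseudo-grouplessness with the pseudo-$1$-boundedness implication of Mordellicity (cf.\ the diagram in Section~\ref{section:conjectures}, together with the fact that graphs are birational to $X$) rules out such unbounded families. Thus we may assume every $f_i$ is birational.

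The core of the argument is the finiteness of $\Bir(X)$ for a proper pseudo-Mordellic $X$, following \cite{JAut}. It splits into two parts. Part (i): the identity component $\Aut^0_{X/k}$ is trivial, which is immediate from pseudo-grouplessness, since a positive-dimensional connected algebraic subgroup $G\subset \Aut_{X/k}$ would, via an orbit map, produce a non-constant morphism from $G$ into $X$, contradicting Remark~\ref{remark:gr}. Part (ii): the component group $\Aut(X)/\Aut^0(X)$ is finite; this is the main obstacle. The strategy is to view the graphs $\Gamma_{\sigma}\subset\mathcal{X}\times_A\mathcal{X}$ of distinct automorphisms as pairwise distinct $A$-points of an appropriate Hilbert scheme, and then to invoke the potential density of rational points on the Albanese of $X$ (Hassett--Tschinkel, Remark~\ref{remark:ht}) to transfer an infinite family of automorphisms into infinitely many pairwise distinct near-integral points on $\mathcal{X}$ lying outside $\mathcal{D}$, in direct contradiction with Mordellicity modulo $\Delta$. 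Combining this finiteness of $\Bir(X)$ with the reduction to birational self-maps in the second paragraph gives the desired contradiction and completes the proof.
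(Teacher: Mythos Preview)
Your proposal has two genuine gaps.

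First, the reduction to birational self-maps is not justified. You appeal to ``the pseudo-$1$-boundedness implication of Mordellicity (cf.\ the diagram in Section~\ref{section:conjectures})'', but that diagram shows no such arrow: pseudo-Mordellicity is only known to imply pseudo-grouplessness, and the implication to pseudo-$1$-boundedness is part of the open Strong Lang--Vojta conjecture. The paper instead invokes Amerik's theorem on dynamical systems \cite{Amerik2011}, which produces, for any dominant rational self-map that is not of finite order, an algebraic point with Zariski-dense orbit; this immediately contradicts Mordellicity modulo a proper closed subset. Amerik's result thus gives in one stroke that every dominant rational self-map is birational \emph{and} of finite order, and is the essential arithmetic input you are missing.

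Second, even granting the reduction, your ``core'' argument only addresses $\Aut_k(X)$, not $\Bir_k(X)$. Triviality of $\Aut^0_{X/k}$ and finiteness of the component group of $\Aut_{X/k}$ do not control birational self-maps that are not biregular, and for a proper variety that is not its own minimal model this gap is real. The paper closes it via Prokhorov--Shramov's theory of quasi-minimal models \cite{ProShramov2014}: for a non-uniruled projective variety, an infinite $\Bir_k(X)$ forces an element of infinite order, which then contradicts the consequence of Amerik's theorem above. (Separately, the role you assign to Hassett--Tschinkel is unclear: that result concerns density of rational points on abelian varieties and is used in the paper to show Mordellic $\Rightarrow$ groupless, not to manufacture integral points from automorphisms.)
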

    \begin{proof}[Idea of proof]

We briefly indicate three ingredients of   the proof of Theorem \ref{thm:bir_is_finite}.  
\begin{enumerate}
\item First, one can use Amerik's theorem on dynamical systems \cite{Amerik2011} to show that every dominant rational self-map is a birational self-map of finite order whenever $X$ is a pseudo-Mordellic projective variety.
\item  One can show that, if $X$ is a projective integral variety over $k$ such that $\Aut_k(X)$ is infinite, then $\Aut_k(X)$ has an element of infinite order. (It is crucial here that $k$ is of characteristic zero.) This result is proven in \cite{JAut}.  
\item If $X$ is a projective non-uniruled integral variety over $k$ such that  $\mathrm{Bir}_k(X)$ is infinite, then $\mathrm{Bir}_k(X)$ has a point of infinite order. To prove this, one can  use   Prokhorov-Shramov's notion  of quasi-minimal models (see \cite{ProShramov2014})  to reduce to  the analogous finiteness result for automorphisms stated in $(2)$. The details are in \cite{JXie}.
\end{enumerate}
Combining $(1)$ and $(3)$, one obtains the desired result for pseudo-Mordellic projective varieties (Theorem \ref{thm:bir_is_finite}).
 \end{proof}

    There is a similar finiteness statement for pseudo-algebraically hyperbolic varieties. This finiteness result is proven in \cite{JKa} for algebraically hyperbolic varieties, and in \cite{JXie} for pseudo-algebraically hyperbolic varieties. 
    
    \begin{theorem}
    If $X$ is a projective pseudo-algebraically hyperbolic integral variety over $k$, then  the set of dominant rational self-maps $X\dashrightarrow X$ is finite.
    \end{theorem}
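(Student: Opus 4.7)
The plan is to follow the strategy sketched for Theorem~\ref{thm:bir_is_finite}, substituting pseudo-algebraic hyperbolicity for pseudo-Mordellicity as the input finiteness property. By Theorem~\ref{thm:demailly1}, the variety $X$ is pseudo-groupless, and by Remark~\ref{remark:non_uni} it is therefore non-uniruled. Moreover, the results of Section~\ref{section:boundedness} show that $X$ is also pseudo-bounded. I would fix a single proper closed subset $\Delta\subsetneq X$ modulo which $X$ is simultaneously algebraically hyperbolic, bounded, and groupless (enlarging $\Delta$ if necessary, which preserves all three properties).

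The first step, and the main obstacle, is to show that every dominant rational self-map $\phi\colon X\dashrightarrow X$ is a birational self-map of finite order.  This is the analogue of ingredient~(1) in the sketch of Theorem~\ref{thm:bir_is_finite}.  The approach is to invoke Amerik's theorem \cite{Amerik2011}, which for any dominant rational self-map of infinite order of a projective variety produces a non-preperiodic algebraic point $x$. Combining the non-uniruledness of $X$ with boundedness modulo $\Delta$, one hopes to extract from the forward orbit of $x$ a family of curves in $X$ avoiding $\Delta$ whose genera are bounded but whose degrees with respect to some fixed ample line bundle grow without bound, contradicting the algebraic hyperbolicity of $X$ modulo $\Delta$.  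I expect this to be the most delicate step.

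Having arranged that every dominant rational self-map of $X$ is a birational automorphism, I would pass via Prokhorov--Shramov's theory of quasi-minimal models (applicable because $X$ is non-uniruled, see \cite{ProShramov2014}) to a projective birational model $X'$ of $X$ on which a finite-index subgroup of $\Bir_k(X)=\Bir_k(X')$ acts by biregular automorphisms.  Since pseudo-algebraic hyperbolicity is a birational invariant among projective schemes, $X'$ is pseudo-algebraically hyperbolic, and therefore pseudo-bounded and pseudo-groupless, modulo some proper closed $\Delta'\subsetneq X'$.  It then suffices to show that $\Aut_k(X')$ is finite.

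To conclude, observe that $\Aut_{X'/k}$ is an open subgroup scheme of $\underline{\Hom}_k(X',X')$. Any $k$-point of $\Aut_{X'/k}$ is a surjective morphism $X'\to X'$, which cannot factor through the proper closed subset $\Delta'$; hence
\[
\Aut_{X'/k}\ \subset\ \underline{\Hom}_k(X',X')\setminus\underline{\Hom}_k(X',\Delta'),
\]
and the right-hand side is of finite type by pseudo-boundedness of $X'$. Thus $\Aut_{X'/k}$ is a finite-type group scheme over $k$, and it remains to rule out a positive-dimensional identity component $\Aut^0_{X'/k}$. But such a component would be a positive-dimensional connected algebraic group acting on $X'$; by Chevalley's structure theorem it would contain a copy of $\Gm$ or of a non-trivial abelian variety $A$, and the orbit map through any $x\in X'(k)\setminus\Delta'(k)$ would supply a non-constant morphism from $\Gm$ or from $A$ to $X'$ not factoring through $\Delta'$, contradicting the grouplessness of $X'$ modulo $\Delta'$. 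Therefore $\Aut^0_{X'/k}$ is trivial, $\Aut_{X'/k}$ is a zero-dimensional finite-type group scheme, hence finite, which completes the plan.
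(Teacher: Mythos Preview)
Your overall architecture matches the paper's: pass from pseudo-algebraic hyperbolicity to pseudo-grouplessness and non-uniruledness, reduce the birational group to an automorphism group via Prokhorov--Shramov, and then kill $\Aut$ using boundedness and grouplessness. Your Steps~2--3 are essentially correct; in fact your direct argument that $\Aut_{X'/k}$ is of finite type (from pseudo-boundedness) and zero-dimensional (from pseudo-grouplessness) is a clean variant of the paper's route, which instead invokes the statement ``if $\Aut_k(X)$ is infinite then it has an element of infinite order'' from \cite{JAut}.

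The genuine gap is in Step~1. You correctly identify this as the crux, but the mechanism you propose does not work. Amerik's theorem produces a non-preperiodic \emph{algebraic point}; in the pseudo-Mordellic setting this is immediately decisive, because the forward orbit gives infinitely many $K$-points over a single finitely generated field $K$. Pseudo-algebraic hyperbolicity, however, is a statement about degrees of maps from \emph{curves}, and there is no evident way to ``extract from the forward orbit of $x$ a family of curves of bounded genus and unbounded degree'' as you hope. A countable set of points on $X$ carries no genus/degree data, and nothing in Amerik's output hands you curves. This is exactly why the paper's proof (given for the more general pseudo-$1$-bounded case, Theorem~\ref{thm:bir_is_finite_bounded}) invokes two further ingredients you omit: properties of \emph{dynamical degrees} of dominant rational self-maps, and \emph{Weil's Regularization Theorem}. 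The dynamical degree governs the growth of $\deg(\phi^n)$ and is what one uses to manufacture the contradiction with (pseudo-)boundedness; Weil's theorem is what lets one regularize the birational action on a suitable model. Without these, your Step~1 remains a wish rather than an argument.
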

    
In fact, more generally, we have the following a priori stronger result. 
    
    \begin{theorem}\label{thm:bir_is_finite_bounded}     If $X$ is a projective pseudo-$1$-bounded   integral variety over $k$, then  the set of dominant rational self-maps $X\dashrightarrow X$ is finite.
    \end{theorem}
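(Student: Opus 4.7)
The plan is to adapt the three-step structure of the proof of Theorem~\ref{thm:bir_is_finite}, replacing the arithmetic input (Amerik's theorem) by the geometric consequences of pseudo-$1$-boundedness. First I observe that $X$ is non-uniruled: pseudo-$1$-boundedness implies pseudo-geometric hyperbolicity (Corollary~\ref{cor:urata}), hence pseudo-grouplessness (Proposition~\ref{prop:psgeomhyp_is_psgr1}), and a proper pseudo-groupless variety is non-uniruled (Remark~\ref{remark:non_uni}). In particular, every rational curve on $X$ is contained in the exceptional locus $\Delta \subsetneq X$ witnessing pseudo-$1$-boundedness.

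Next I show that every dominant rational self-map $f\colon X \dashrightarrow X$ is birational and of finite order. This is the step that replaces Amerik's theorem. If $\deg f > 1$, or if $f$ is birational of infinite order, the iterates $\{f^n\}_{n\geq 1}$ form an infinite sequence of pairwise distinct dominant rational self-maps of $X$. Choose a very general smooth projective curve $C \subset X$ with $C \not\subset \Delta$ meeting each indeterminacy locus $\mathrm{Ind}(f^n)$ (of codimension $\geq 2$) only in finitely many points. The restrictions $f^n|_C\colon C\to X$ are then well-defined morphisms. A dynamical-degree analysis on the non-uniruled projective variety $X$ shows that the $L$-degrees $\deg_C (f^n|_C)^{\ast} L = C \cdot (f^n)^{\ast} L$ grow unboundedly in $n$ for a fixed ample line bundle $L$. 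Provided $C$ is chosen generically enough to ensure that $f^n(C) \not\subset \Delta$ for infinitely many $n$, this contradicts pseudo-$1$-boundedness modulo $\Delta$.

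Having reduced to showing $\mathrm{Bir}(X)$ is finite, I invoke Prokhorov--Shramov's quasi-minimal model construction: since $X$ is projective and non-uniruled, there exists a projective birational model $X^{\#}$ of $X$ with $\mathrm{Bir}(X) = \mathrm{Bir}(X^{\#}) = \mathrm{Aut}(X^{\#})$. By the birational invariance of pseudo-$1$-boundedness for projective schemes (Section~\ref{section:boundedness}), $X^{\#}$ is pseudo-$1$-bounded and pseudo-groupless. Extending scalars to an uncountable algebraically closed field $L \supset k$, Theorem~\ref{thm:1_is_b} promotes pseudo-$1$-boundedness to pseudo-boundedness, so $\underline{\mathrm{Hom}}_L(X^{\#}_L, X^{\#}_L) \setminus \underline{\mathrm{Hom}}_L(X^{\#}_L, \Delta^{\#}_L)$ is of finite type over $L$. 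Every automorphism is surjective and so does not factor through $\Delta^{\#}_L$; hence $\mathrm{Aut}_L(X^{\#}_L)$ is of finite type. Its identity component is a connected algebraic group acting on $X^{\#}_L$, which by pseudo-grouplessness (via Theorem~\ref{lem:groupless2}) must fix every point outside $\Delta^{\#}_L$ and hence act trivially. So $\mathrm{Aut}_L(X^{\#}_L)$ is discrete and of finite type, hence finite. Since $\mathrm{Bir}(X)$ injects into $\mathrm{Aut}_L(X^{\#}_L)$, we conclude.

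The main obstacle is the second step: substituting a purely geometric argument for Amerik's arithmetic dynamics to control the iterates of a dominant rational self-map on a pseudo-$1$-bounded projective variety. The delicate technical point is to ensure that for a sufficiently general curve $C$ the images $f^n(C)$ avoid $\Delta$ for infinitely many $n$, despite the possible presence of $f$-preperiodic subvarieties inside $\Delta$. Once this is established, the unbounded growth of $(f^n)^{\ast} L$ on the non-uniruled variety $X$ contradicts the pseudo-$1$-boundedness bound on degrees of morphisms $C \to X$ not factoring through $\Delta$.
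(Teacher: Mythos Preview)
Your approach diverges from the paper's: the paper does \emph{not} replace Amerik's theorem by a purely geometric argument here. Its proof of this theorem explicitly combines Amerik's theorem with Prokhorov--Shramov's quasi-minimal models, Weil's Regularization Theorem, and properties of dynamical degrees. So your second step is a genuine departure, and that is exactly where the gap lies.

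The claim that ``a dynamical-degree analysis shows that $C\cdot (f^n)^{\ast}L$ grows unboundedly in $n$'' is false in general for birational $f$ of infinite order. If the first dynamical degree $\lambda_1(f)$ equals $1$ and the degree sequence $(\deg f^n)_n$ is bounded, no growth occurs at all; such maps exist on non-uniruled varieties. This case is precisely what Weil's Regularization Theorem is for: a birational self-map with bounded degree sequence becomes a regular automorphism on some projective birational model, and one then has to control \emph{automorphisms} of that model. The paper separates these regimes (unbounded degree via dynamical degrees versus bounded degree via Weil regularization), whereas you collapse them into a single degree-growth argument that does not cover the bounded case. The obstacle you flag about $f^n(C)\subset\Delta$ is also real, but it is secondary to this structural gap.

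There is a further inaccuracy in your third step. Prokhorov--Shramov's quasi-minimal model $X^{\#}$ does not in general satisfy $\mathrm{Bir}(X^{\#})=\mathrm{Aut}(X^{\#})$; what one obtains is that $\mathrm{Bir}(X)$ acts on $X^{\#}$ by \emph{pseudo-automorphisms} (isomorphisms in codimension one), which need not be regular automorphisms in dimension $\geq 3$. So you cannot directly embed $\mathrm{Bir}(X)$ into $\underline{\mathrm{Hom}}_L(X^{\#}_L,X^{\#}_L)$ as you do. Bridging this requires either Weil regularization (once degrees are bounded) or an additional argument specific to pseudo-automorphisms; the paper's proof invokes both Weil's theorem and Prokhorov--Shramov for this reason.
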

    \begin{proof}
For $1$-bounded varieties this is proven in \cite{JKa}.  The more general statement for pseudo-$1$-bounded varieties is proven in \cite{JXie} by combining Amerik's theorem \cite{Amerik2011} and Prokhorov-Shramov's theory of quasi-minimal models \cite{ProShramov2014} with Weil's Regularization Theorem and properties of dynamical degrees of rational dominant self-maps.
    \end{proof}

As the reader may have noticed,   for pseudo-Mordellic,  pseudo-algebraically hyperbolic  and pseudo-$1$-bounded projective varieties we have  satisfying  results. 

What do we know in the complex-analytic setting?  We have the following result of  Noguchi \cite[Theorem~5.4.4]{KobayashiBook} for Brody hyperbolic varieties.

    \begin{theorem}[Noguchi]\label{thm:noguchii}
    If $X$ is a Brody hyperbolic projective integral variety over $\CC$, then $\mathrm{Bir}_{\CC}(X)$ is finite.
    \end{theorem}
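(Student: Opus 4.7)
The plan is to deduce the result from Demailly's theorem together with Theorem \ref{thm:bir_is_finite_bounded}, the finiteness statement for dominant rational self-maps of pseudo-$1$-bounded projective integral varieties. First I would observe that, since $X$ is a projective Brody hyperbolic variety over $\CC$, Demailly's theorem (stated just before Theorem \ref{thm:demailly1}) gives that $X$ is algebraically hyperbolic over $\CC$.

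Next, the implication ``algebraically hyperbolic $\Rightarrow$ bounded'' recorded in Section \ref{section:boundedness}, combined with the equivalence between boundedness and $1$-boundedness for projective schemes from Theorem \ref{thm:eq_bounded}, shows that $X$ is $1$-bounded over $\CC$. Taking the exceptional locus $\Delta$ to be empty then exhibits $X$ as pseudo-$1$-bounded. At this stage $X$ is a projective pseudo-$1$-bounded integral variety over $\CC$, so Theorem \ref{thm:bir_is_finite_bounded} applies and gives that the set of dominant rational self-maps $X\dashrightarrow X$ is finite; since $\mathrm{Bir}_\CC(X)$ sits inside this set, it is finite, as required.

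The hard work is entirely packaged into Theorem \ref{thm:bir_is_finite_bounded}, whose proof combines Amerik's theorem on dynamical degrees, Prokhorov--Shramov's theory of quasi-minimal models, and Weil's regularization theorem; granted that input, the present theorem becomes a purely formal consequence of the chain ``Brody hyperbolic $\Rightarrow$ algebraically hyperbolic $\Rightarrow$ bounded $\Rightarrow$ $1$-bounded''. A more direct alternative route would use Brody's lemma to upgrade Brody hyperbolicity to Kobayashi hyperbolicity (since $X$ is projective), so that $\mathrm{Aut}_\CC(X)$ is finite by the classical theorem of Kobayashi, and then exploit the fact that $X$ contains no rational curves in order to upgrade from $\mathrm{Aut}$ to $\mathrm{Bir}$. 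The main obstacle on that alternative route is handling the singularities of $X$, since passing to a resolution can introduce rational curves on the exceptional divisor; this is exactly what makes the pseudo-$1$-bounded chain described above cleaner.
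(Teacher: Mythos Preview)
Your argument is correct and coincides with the paper's first proof: Brody hyperbolic implies bounded (hence pseudo-$1$-bounded) by Demailly, and then Theorem~\ref{thm:bir_is_finite_bounded} gives finiteness of dominant rational self-maps. The paper also records a second, independent proof closer in spirit to your alternative sketch: one takes a resolution $Y\to X$, notes that every birational self-map of $X$ yields a dominant rational map $Y\dashrightarrow X$ which extends to a surjective morphism because $X$ has no rational curves, and then invokes Noguchi's finiteness of $\mathrm{Sur}_{\CC}(Y,X)$.
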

    \begin{proof}[First proof of Theorem \ref{thm:noguchii}]
    Since a Brody hyperbolic projective integral variety over $\CC$ is bounded by, for instance, Demailly's theorem (Theorem  \ref{thm:demailly1}), this follows from Theorem \ref{thm:bir_is_finite_bounded}.  
    \end{proof}
    
      \begin{proof}[Second proof of Theorem \ref{thm:noguchii}]
    Let $Y\to X$ be a resolution of singularities of $X$. Note that, every birational morphism $X\dashrightarrow X$ induces a dominant rational map $Y\dashrightarrow X$. Since $X$ has no rational curves (as $X$ is Brody hyperbolic) and $Y$ is smooth, by \cite[Lemma~3.5]{JKa}, the rational map $Y\dashrightarrow X$ extends uniquely to a surjective morphism $Y\to X$.  
    
    Therefore, we have that 
    \[
    \mathrm{Bir}_{\CC}(X) \subset \mathrm{Sur}_{\CC}(Y,X)
    \]
    Noguchi  proved   that the latter set is finite (see Theorem \ref{thm:noguchi} below). He does so by showing that it is the set of $\CC$-points on a finite type zero-dimensional scheme over $\CC$. We discuss this result of Noguchi  in more detail in the next section.
    \end{proof}
    
 It is important to note that,    in light of Green-Griffiths' and Lang-Vojta's conjectures, one expects an analogous  finiteness result for pseudo-Brody hyperbolic varieties (as pseudo-Brody hyperbolic varieties should be of general type).   This     is however not known, and we state it as a separate conjecture.
 
 \begin{conjecture}[Pseudo-Noguchi, I]\label{conj:noguchi}
 If $X$ is a pseudo-Brody hyperbolic projective integral variety over $\CC$, then $\mathrm{Bir}_{\CC}(X)$ is finite.
 \end{conjecture}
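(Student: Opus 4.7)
My plan is to adapt the second proof of Theorem~\ref{thm:noguchii} to the pseudo-hyperbolic setting. Fix a resolution of singularities $\pi\colon Y\to X$ with $Y$ smooth projective over $\CC$. Every $\phi\in\Bir_{\CC}(X)$ induces a dominant rational map $\phi\circ\pi\colon Y\dashrightarrow X$, and the first step is to extend it to a surjective morphism $Y\to X$, producing an injection $\Bir_{\CC}(X)\hookrightarrow\mathrm{Sur}_{\CC}(Y,X)$. In the Brody hyperbolic case this extension is provided by \cite[Lemma~3.5]{JKa} because $X$ has no rational curves. Under pseudo-Brody hyperbolicity every rational curve of $X$ lies in the proper closed subset $\Delta$; after resolving the indeterminacy on a blow-up $\widetilde{Y}\to Y$, the exceptional divisors are covered by rational curves that map either to points or into $\Delta$. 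The real obstruction to extending $\phi\circ\pi$ back down to $Y$ is the possible existence of non-contracted exceptional components landing in $\Delta$; I would try to rule these out by exploiting that $\phi$ is birational (so its inverse satisfies an analogous constraint) and that the original indeterminacy locus on $Y$ has codimension at least two.

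Once the injection $\Bir_{\CC}(X)\hookrightarrow\mathrm{Sur}_{\CC}(Y,X)$ is in place, the second step is to prove that $\mathrm{Sur}_{\CC}(Y,X)$ is finite. Noguchi's argument in the Brody hyperbolic case realises $\mathrm{Sur}_{\CC}(Y,X)$ as the $\CC$-points of a finite-type scheme and rules out positive-dimensional components by Brody's reparametrization lemma: a one-parameter family of surjective morphisms would produce a non-constant entire curve in $X$. In our setting one must ensure that the limiting entire curve is not forced into $\Delta^{\an}$. Choosing the evaluation point $y\in Y$ so that $t\mapsto f_{t}(y)$ has image not contained in $\Delta$ --- possible because $\Delta$ is proper and each $f_{t}$ is surjective --- and arguing that Brody's rescaling preserves this property, should recover the required contradiction with pseudo-Brody hyperbolicity.

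An alternative route mirrors the proof of Theorem~\ref{thm:bir_is_finite}. First observe that pseudo-Brody hyperbolicity forces $X$ to be non-uniruled (all rational curves of $X$ lie in the proper $\Delta$), so by Prokhorov--Shramov's theory of quasi-minimal models \cite{ProShramov2014} one may replace $X$ birationally by a model $X'$ with $\Bir_{\CC}(X')=\Aut_{\CC}(X')$. The result of \cite{JAut} then reduces the problem to showing that no automorphism of $X'$ has infinite order. The analogue of step~(i) in the proof of Theorem~\ref{thm:bir_is_finite} --- a complex-analytic counterpart to Amerik's theorem --- would construct, from an infinite-order automorphism $\sigma$, a non-constant entire curve in $X'$ whose image is not contained in the pseudo-Brody exceptional locus, thereby giving the desired contradiction.

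The main obstacle I foresee, common to both routes, is the extraction of entire curves with controlled image. Whether from a positive-dimensional component of $\mathrm{Sur}_{\CC}(Y,X)$ or from the forward orbit of an infinite-order automorphism (given by Amerik's algebraic theorem applied over the uncountable field $\CC$), Brody's reparametrization lemma produces an entire curve whose image could a priori collapse into $\Delta$. Ensuring that the limiting entire curve escapes $\Delta$ is the key analytic point, and where a genuinely new idea appears to be required. A clean by-pass, if available, would be to prove the open implication that pseudo-Brody hyperbolic projective varieties are pseudo-$1$-bounded, after which Theorem~\ref{thm:bir_is_finite_bounded} would apply verbatim.
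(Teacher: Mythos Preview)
The statement you are attempting to prove is labelled \emph{Conjecture}~\ref{conj:noguchi} in the paper, not a theorem: the paper explicitly says ``This is however not known, and we state it as a separate conjecture.'' There is therefore no proof in the paper to compare your proposal against. Your write-up is in fact consistent with this: you do not claim a complete argument, and you correctly isolate the genuine obstruction.

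Both of your routes run into the same wall, which you identify yourself. In the first route, the extension step already fails in general: under pseudo-Brody hyperbolicity $X$ may contain rational curves (inside $\Delta$), and \cite[Lemma~3.5]{JKa} requires the target to have \emph{no} rational curves, not merely rational curves confined to a proper closed subset. Your suggested fix via the birational inverse does not obviously work, since the indeterminacy locus of $\phi^{-1}$ need not interact well with the exceptional locus of the blow-up resolving $\phi\circ\pi$. In the second step, and likewise in your alternative route, Brody's reparametrization lemma gives no control on where the limiting entire curve lands; the limit may very well sit inside $\Delta^{\an}$, and nothing in the construction prevents this. This is precisely why the paper leaves the statement open and why, later in \S\ref{section:sur}, it formulates the stronger Conjecture (Pseudo-Noguchi, II) as also open. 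Your proposed by-pass via pseudo-$1$-boundedness is exactly the implication the paper flags as unknown (see the discussion after Theorem~\ref{thm:demailly1}). In short, your diagnosis of the difficulty is accurate, and it is the reason the result remains conjectural.
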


    \begin{remark}[What do we not know yet?]
 First, it is not known whether the automorphism group of a groupless projective variety is finite.  Also,    it is not known whether a pseudo-Kobayashi hyperbolic projective variety has a finite automorphism group. Moreover, it is not know whether a geometric hyperbolic projective variety has only finitely many automorphisms. As these problems are unresolved, the finiteness of the set of birational self-maps is also still open.
    \end{remark}

\section{Finiteness of moduli spaces of surjective morphisms}\label{section:sur}
Our starting point in this section is the following finiteness theorem of Noguchi for dominant rational maps from a fixed variety to a hyperbolic variety (\emph{formerly} a conjecture of Lang); see \cite[\S 6.6]{KobayashiBook} for a discussion of the history of this result.

\begin{theorem}[Noguchi]\label{thm:noguchi}
If $X$  is a Brody hyperbolic proper variety over $\CC$ and $Y$ is a projective integral variety over $\CC$, then the set of dominant rational maps $f:Y\dashrightarrow X$ is finite.
\end{theorem}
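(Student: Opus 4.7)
The plan is to reduce dominant rational maps to surjective morphisms, then combine the boundedness of the Hom-scheme (flowing from algebraic hyperbolicity) with an analytic rigidity result for surjective maps into a hyperbolic target.

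First, I would resolve singularities of $Y$ by a proper birational map $\widetilde{Y}\to Y$. Since $X$ is Brody hyperbolic and proper, it contains no rational curves. Any dominant rational map $f \colon Y \dashrightarrow X$ pulls back to a dominant rational map $\widetilde{Y} \dashrightarrow X$, which by \cite[Lemma~3.5]{JKa} extends uniquely to a morphism $\widetilde{f} \colon \widetilde{Y} \to X$; this $\widetilde{f}$ is surjective, since $\widetilde{Y}$ is proper and the map is dominant. The assignment $f \mapsto \widetilde{f}$ is injective, because a rational map is determined on any dense open subset. Hence it suffices to show that the set $\mathrm{Sur}_{\mathbb{C}}(\widetilde{Y}, X)$ is finite; applying Chow's lemma to $X$ if needed, we may also assume $X$ is projective.

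Second, I would observe that $\underline{\Sur}_{\mathbb{C}}(\widetilde{Y}, X)$ is an open subscheme of $\underline{\Hom}_{\mathbb{C}}(\widetilde{Y}, X)$, and the latter is of finite type over $\mathbb{C}$. Indeed, Demailly's theorem (Theorem \ref{thm:demailly1} and the preceding remarks) shows that the Brody hyperbolic projective variety $X$ is algebraically hyperbolic, and the implications recorded in Section~\ref{section:boundedness} then give that $X$ is bounded, so the Hom-scheme is of finite type.

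Third, I would show that $\underline{\Sur}_{\mathbb{C}}(\widetilde{Y}, X)$ is zero-dimensional, and hence finite. Passing to the associated complex-analytic spaces, Brody's theorem gives that $X^{\mathrm{an}}$ is Kobayashi hyperbolic, and a theorem of Kobayashi--Noguchi then implies that $\mathrm{Hol}(\widetilde{Y}^{\mathrm{an}}, X^{\mathrm{an}})$ is itself a (compact) hyperbolic complex space. If $\underline{\Sur}_{\mathbb{C}}(\widetilde{Y}, X)$ were positive-dimensional through some $f$, it would contain an algebraic curve $C$; the induced analytic family $\{f_t\}_{t\in C^{\mathrm{an}}}$ of surjective holomorphic maps together with the evaluation $\widetilde{Y}^{\mathrm{an}}\times C^{\mathrm{an}}\to X^{\mathrm{an}}$ would, using the surjectivity of each $f_t$, yield a non-constant entire curve $\mathbb{C}\to X^{\mathrm{an}}$ by a Brody-type reparametrization argument applied fiberwise. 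This contradicts the Brody hyperbolicity of $X$.

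The main obstacle is the rigidity step. Positive-dimensional algebraic parameter spaces need not themselves carry entire curves (they could be curves of general type), so one cannot derive a contradiction simply from the existence of a family $C$; one must genuinely exploit surjectivity of each $f_t$ to transport the variation into $X$ itself, where hyperbolicity bites. This is the content of Urata's rigidity theorem for surjective holomorphic maps into a compact hyperbolic space, and it is the only non-formal, properly complex-analytic, input to the proof.
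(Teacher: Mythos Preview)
The paper does not prove this theorem; it is quoted from the literature with a pointer to \cite[\S6.6]{KobayashiBook}, and the paper then records its algebraic generalization (the $1$-bounded case, attributed to \cite{JKa}) immediately afterwards. Your Steps~1 and~2 are correct and coincide with the algebraic route the paper uses for that generalization: resolve $Y$, extend rational maps to morphisms because $X$ has no rational curves, and use Demailly plus boundedness (Theorem~\ref{thm:eq_bounded}) to get that $\underline{\Hom}_{\CC}(\widetilde{Y},X)$ is of finite type.

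Step~3 has a genuine gap, and you misidentify the filler. Urata's theorem, as the paper recalls in the remark following Corollary~\ref{cor:urata}, is the statement that a Brody hyperbolic projective variety is \emph{geometrically} hyperbolic, i.e.\ finiteness of \emph{pointed} maps; it is not a rigidity statement for surjective maps. Your sketched Brody-reparametrization does not produce an entire curve: the evaluation $\widetilde{Y}^{\an}\times C^{\an}\to X^{\an}$ has compact source, and you yourself observe that $C$ need not carry an entire curve. The analytic rigidity you are reaching for is Noguchi's own theorem, so invoking it here is circular.

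The clean fix, already available in the paper, is to replace your Step~3 by Theorem~\ref{thm:ps_grp_sur} (Hwang--Kebekus--Peternell~$+\,\epsilon$): since $X$ is Brody hyperbolic it is groupless, hence pseudo-groupless, so $\underline{\Sur}_{\CC}(\widetilde{Y},X)$ is a countable disjoint union of zero-dimensional smooth schemes. Combined with the finite-type statement from Step~2, this gives finiteness. With this substitution your argument becomes a correct proof, and the only complex-analytic input is Demailly's theorem that Brody hyperbolic implies algebraically hyperbolic.
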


In light of Lang-Vojta's conjecture, any ``hyperbolic'' variety should satisfy a similar finiteness property. In particular, one should expect similar (hence more general) results for bounded varieties, and such    results  are  obtained in \cite{JKa} over arbitrary algebraically closed fields $k$ of characteristic zero.
\begin{theorem} 
If $X$  is a $1$-bounded projective   variety over $k$ and $Y$ is a projective integral variety over $k$, then the set of dominant rational maps $f:Y\dashrightarrow X$ is finite.
\end{theorem}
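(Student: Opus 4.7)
The plan is to proceed in three steps.

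First, I would reduce to bounding dominant morphisms $\tilde{Y}\to X$ where $\tilde{Y}$ is smooth projective integral. The $1$-boundedness of $X$ implies $X$ is groupless (by the pseudo-Demailly result at the end of Section~\ref{section:boundedness}), hence has no rational curves. Let $\pi:\tilde{Y}\to Y$ be a resolution of singularities. Any rational map $f:Y\dashrightarrow X$ pulls back to a rational map $\tilde{Y}\dashrightarrow X$, which extends to a morphism $\tilde{f}:\tilde{Y}\to X$ by the standard extension lemma for rational maps from smooth varieties to targets without rational curves (cf.\ \cite[Lemma~3.5]{JKa}). The assignment $f\mapsto\tilde{f}$ is injective and dominance-preserving, and after replacing $X$ by the reduction of the unique irreducible component through which dominant maps must factor, we may assume $X$ is integral as well. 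Thus it suffices to show finiteness of the set of dominant morphisms $\tilde{Y}\to X$.

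Second, by Theorem~\ref{thm:eq_bounded} the $1$-boundedness of $X$ is equivalent to full boundedness, so the Hom scheme $H:=\underline{\Hom}_k(\tilde{Y},X)$ is of finite type over $k$, and the dominant locus $H^{\mathrm{dom}}\subset H$ is an open subscheme, hence also of finite type. It remains to prove that $H^{\mathrm{dom}}$ is zero-dimensional, for then, since $k$ is algebraically closed, the set $H^{\mathrm{dom}}(k)$ is automatically finite.

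Third, I would establish rigidity using geometric hyperbolicity. By Corollary~\ref{cor:urata} (applied with $\Delta=\emptyset$), for every smooth projective curve $C$, every $c\in C(k)$, and every $x\in X(k)$, the set $\underline{\Hom}_k((C,c),(X,x))(k)$ is finite. Fix $y_0\in\tilde{Y}(k)$ and $x_0\in X(k)$, and consider a connected subscheme $T\subset\mathrm{ev}_{y_0}^{-1}(x_0)\subset H^{\mathrm{dom}}$. For any smooth projective curve $C\subset\tilde{Y}$ through $y_0$, the restriction map $T\to\underline{\Hom}_k((C,y_0),(X,x_0))$, $t\mapsto f_t|_C$, lands in a finite set of $k$-points and is therefore constant. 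Since any point $y\in\tilde{Y}(k)$ can be joined to $y_0$ by an irreducible curve in $\tilde{Y}$, covering by such curves forces $f_t=f_{t_0}$ for all $t\in T$, so $T$ reduces to a single $k$-point. Hence $\mathrm{ev}_{y_0}:H^{\mathrm{dom}}\to X$ is quasi-finite, giving $\dim H^{\mathrm{dom}}\leq\dim X$.

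The main obstacle is upgrading this quasi-finiteness to genuine zero-dimensionality of $H^{\mathrm{dom}}$. This is where the proof in \cite{JKa} invokes further rigidity, combining the above with either the finiteness of dominant rational self-maps of $X$ (Theorem~\ref{thm:bir_is_finite_bounded}), applied after a composition-based reduction to the self-map setting, or a bend-and-break argument on a projective compactification of a hypothetical positive-dimensional component of $H^{\mathrm{dom}}$ that produces a rational curve in $X$ and thereby contradicts grouplessness. Either route completes the proof.
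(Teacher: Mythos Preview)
Your Steps 1 and 2 are correct and coincide with the approach the paper indicates (via \cite{JKa} and the second proof of Theorem~\ref{thm:noguchii}): pass to a resolution $\tilde Y$, extend every dominant rational map to a surjective morphism $\tilde Y\to X$ using that $X$ has no rational curves, and observe via Theorem~\ref{thm:eq_bounded} that $\underline{\Hom}_k(\tilde Y,X)$ is of finite type. (A small slip: for proper source and target, dominant equals surjective, and $\underline{\mathrm{Sur}}_k(\tilde Y,X)$ is \emph{closed}, not open, in $\underline{\Hom}_k(\tilde Y,X)$; this does not affect your finite-type conclusion.)

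The gap is exactly where you flag it. Your geometric-hyperbolicity argument shows only that each fibre of $\mathrm{ev}_{y_0}$ is zero-dimensional, hence only that $\dim\underline{\mathrm{Sur}}_k(\tilde Y,X)\le\dim X$. Neither of your proposed completions is convincing as written. Route (a) does not get off the ground: there is no evident way to convert a family of surjections $\tilde Y\to X$ into dominant rational self-maps of $X$, so Theorem~\ref{thm:bir_is_finite_bounded} does not apply. Route (b) runs into the following obstruction: bend-and-break in the form used in Proposition~\ref{prop:1b_is_geomhyp} requires a positive-dimensional family of maps from a curve \emph{with a fixed pointed value}, but your own argument shows precisely that any such pointed family is finite; restricting a positive-dimensional component $T$ to a curve $C\subset\tilde Y$ therefore yields a family in $\underline{\Hom}_k(C,X)$ on which \emph{every} evaluation $\mathrm{ev}_c$ is non-constant, and bend-and-break does not fire.

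The paper supplies the missing rigidity by a different and cleaner route: Theorem~\ref{thm:ps_grp_sur} (Hwang--Kebekus--Peternell) says that for pseudo-groupless $X$ the scheme $\underline{\mathrm{Sur}}_k(\tilde Y,X)$ is already a disjoint union of zero-dimensional smooth schemes, because the connected component through any surjective $f$ is an orbit of $\Aut^0_{Z/k}$ (for $Z$ the Stein-factorisation target), and grouplessness forces this connected group scheme to be trivial. Since $1$-bounded implies groupless, this applies here, and combining it with the finite-typeness from Step~2 gives finiteness at once. You should replace Step~3 by a direct appeal to Theorem~\ref{thm:ps_grp_sur}.
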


In particular, the same finiteness statement holds for bounded varieties and algebraically hyperbolic varieties.  Indeed, such varieties are (obviously) $1$-bounded. 

\begin{corollary}
If $X$ is a bounded projective variety over $k$ (e.g., algebraically hyperbolic variety over $k$) and $Y$ is a projective integral variety over $k$, then the set of dominant rational maps $f:Y\dashrightarrow X$ is finite.
\end{corollary}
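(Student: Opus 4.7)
The plan is essentially a one-line reduction to the preceding theorem, so I would not attempt anything new here. The definition of a bounded projective variety (given in Section \ref{section:boundedness}) is that $X$ is $n$-bounded for every $n \geq 1$. In particular, any bounded projective variety $X$ over $k$ is tautologically $1$-bounded over $k$. Therefore, the hypothesis of the previous theorem is satisfied, and the finiteness of the set of dominant rational maps $f:Y\dashrightarrow X$ follows immediately by invoking it.

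For the parenthetical ``e.g., algebraically hyperbolic variety over $k$'' clause, I would then remind the reader of the implication proved earlier in Section \ref{section:boundedness}: if $X$ is algebraically hyperbolic over $k$ (modulo the empty subset), then $X$ is bounded over $k$. This was the content of the theorem showing that algebraic hyperbolicity modulo $\Delta$ implies boundedness modulo $\Delta$, specialized to $\Delta = \emptyset$. Thus an algebraically hyperbolic projective variety is in particular bounded, hence $1$-bounded, and the corollary applies to it as well.

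There is no genuine obstacle to overcome: the corollary is purely a matter of chaining implications (algebraically hyperbolic $\Rightarrow$ bounded $\Rightarrow$ $1$-bounded) and then invoking the previous theorem. The actual work is in the proof of the preceding theorem (which in turn relies on the representability of Hom-schemes by quasi-projective schemes and the boundedness hypothesis to extract a finite-type parameter space, together with rigidity arguments for dominant maps to a variety with no rational curves). In summary, the proof I would write is simply: \emph{Since $X$ is bounded, $X$ is $1$-bounded. The conclusion then follows from the previous theorem. If $X$ is merely assumed to be algebraically hyperbolic, then $X$ is bounded by the earlier implication recalled above, and we reduce to the previous case.}
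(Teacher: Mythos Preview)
Your proposal is correct and matches the paper's own justification exactly: the paper simply notes after the corollary that ``such varieties are (obviously) $1$-bounded'' and thereby reduces to the preceding theorem on $1$-bounded varieties. Your additional remark that algebraic hyperbolicity implies boundedness (hence $1$-boundedness) is also the intended reading of the parenthetical.
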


We now make a ``pseudo''-turn. In fact,   the finiteness result of Noguchi should actually hold under the weaker assumption that $X$ is only pseudo-Brody hyperbolic.   To explain this, recall that Kobayashi--Ochiai proved a finiteness theorem  for dominant rational maps from a given variety $Y$ to a fixed variety of general type $X$ which generalizes Matsumura's finiteness theorem for the group $\mathrm{Bir}_k(X)$ (Theorem \ref{thm:mat}).
 
\begin{theorem}[Kobayashi-Ochiai]
Let $X$ be a projective   variety over $k$ of general type. Then, for every projective integral variety $Y$, the set of dominant rational maps $f:Y\dashrightarrow X$ is finite.
\end{theorem}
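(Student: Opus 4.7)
The plan is to use a pluricanonical embedding to reduce the statement to a boundedness-plus-rigidity argument, with Matsumura's theorem (Theorem \ref{thm:mat}) entering crucially for rigidity. Since general-typeness is a birational invariant and dominant rational maps are preserved under birational modifications of source and target, I would first replace $X$ and $Y$ by smooth projective resolutions. Then, because $X$ is of general type, I can fix an integer $m \ge 1$ such that the $m$-th pluricanonical map
\[
\Phi_m \colon X \dashrightarrow Z \subset \PP^N, \qquad N+1 = h^0(X, mK_X),
\]
is birational onto its image $Z$. Birationality of $\Phi_m$ makes $f \mapsto \Phi_m \circ f$ injective on dominant rational maps $Y \dashrightarrow X$, so it suffices to prove finiteness of $\{\Phi_m \circ f\}$.

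For boundedness, I would first reduce to the generically finite case $\dim Y = \dim X$: the general case follows by Stein-factoring (a resolution of indeterminacy of) $f$, handling the generically finite factor by this case and the positive-dimensional-fiber factor via the finiteness of $\Bir(X)$ already guaranteed by Matsumura. Assuming $f$ is generically finite between smooth projective varieties, the ramification inclusion $f^\ast K_X \hookrightarrow K_Y$ tensors up to $f^\ast(mK_X) \hookrightarrow mK_Y$ and hence induces an injection $f^\ast \colon H^0(X, mK_X) \hookrightarrow H^0(Y, mK_Y)$ of fixed finite-dimensional vector spaces. The N\'eron--Severi class $[f^\ast(mK_X)]$ lies in the (finite) set of effective classes dominated by $[mK_Y]$, and within each such class $\Pic^\tau(Y)$ is bounded; so the pullback $f^\ast$ varies in a finite-type subscheme of $\Hom(H^0(X, mK_X), H^0(Y, mK_Y))$, which shows that the set of $\{\Phi_m \circ f\}$ is cut out by a finite-type scheme.

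The final and most delicate step is rigidity: I must show this finite-type scheme is zero-dimensional. Given a hypothetical positive-dimensional family $\{f_t\}_{t \in T}$, the $(N+1)$-dimensional subspaces $V_t := f_t^\ast H^0(X, mK_X) \subset H^0(Y, mK_Y)$ define a morphism $T \to \mathrm{Gr}(N+1, H^0(Y, mK_Y))$. Two dominant rational maps $f, g$ inducing the same subspace differ precisely by an element of $\PGL(H^0(X, mK_X))$ stabilizing $Z \subset \PP^N$; and since $\Phi_m$ is birational onto $Z$, the stabilizer of $Z$ in $\PGL$ is identified with $\Bir(X)$, which is finite by Matsumura. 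Thus if $t \mapsto V_t$ is constant, then $\{f_t\}$ has finite image, a contradiction; while if $t \mapsto V_t$ is non-constant, one derives a contradiction by arguing that the rigidity of the pluricanonical pullback together with the boundedness above forces the image to be discrete in the Grassmannian.

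I expect the rigidity step to be the main technical obstacle, since extracting a genuine contradiction from a non-constant family of pullback subspaces requires carefully tracking how the $\PGL$-orbit structure in the Grassmannian interacts with $\Bir(X)$. An alternative and cleaner route, which avoids this Grassmannian bookkeeping entirely, is to compute the tangent space $T_f \Hom(Y, X) \cong H^0(Y, f^\ast T_X)$ to the Hom scheme at a dominant rational map $f$ and show directly that it vanishes, using the injectivity of pullback on pluricanonical forms together with the positivity of $K_X$; this would upgrade the boundedness to finiteness in one stroke.
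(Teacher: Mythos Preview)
The paper does not prove this theorem; it is stated without argument as a classical result of Kobayashi and Ochiai, and is used only as a benchmark for what the various pseudo-hyperbolicity notions ought to imply. So there is no proof in the paper to compare against. Your overall architecture---pass to a birational pluricanonical model, study the injective pullbacks $f^\ast\colon H^0(X,mK_X)\hookrightarrow H^0(Y,mK_Y)$, and split the problem into boundedness plus rigidity---is indeed the skeleton of the original argument.

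That said, several of your steps do not go through as written. The Stein-factorization reduction to $\dim Y=\dim X$ is broken: after resolving indeterminacy the factorization $\tilde Y_f\to Y'_f\to X$ has both $\tilde Y_f$ and $Y'_f$ depending on $f$, so you cannot fix the intermediate variety and ``handle the generically finite factor by this case''. Your boundedness step is also unjustified: the set of effective classes in $\NS(Y)$ dominated by $[mK_Y]$ is not finite in general, and you have not explained why the subspaces $V_f=f^\ast H^0(X,mK_X)\subset H^0(Y,mK_Y)$ sweep out only a finite-type locus in the Grassmannian---that is essentially what you are trying to prove. Finally, you correctly flag rigidity as the crux and then leave it open; the alternative via $H^0(Y,f^\ast T_X)=0$ is not available for rational $f$, and even for morphisms the vanishing does not follow formally from bigness of $K_X$. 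The missing idea, which in the original Kobayashi--Ochiai paper handles boundedness and rigidity in one stroke, is analytic: pullback of $m$-canonical forms is controlled by the intrinsic pseudonorm $\|\omega\|^{2/m}=\int|\omega|^{2/m}$, so in any holomorphic family the map $t\mapsto f_t^\ast\omega\in H^0(Y,mK_Y)$ is bounded and hence constant. Over general $k$ one then descends via the Lefschetz principle.
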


In light of Lang-Vojta's conjectures and Kobayashi-Ochiai's theorem, any ``pseudo-hyperbolic'' variety should satisfy a similar finiteness property.  For example, Lang-Vojta's conjecture predicts a similar finiteness statement for  pseudo-Brody hyperbolic  projective varieties. We state this as a conjecture. Note that this conjecture is the ``pseudo''-version of Noguchi's theorem (Theorem \ref{thm:noguchi}), and clearly implies Conjecture \ref{conj:noguchi}.

\begin{conjecture}[Pseudo-Noguchi, II]
If $X$  is a \text{pseudo}-Brody hyperbolic proper variety over $\CC$ and $Y$ is a projective integral variety over $\CC$, then the set of dominant rational maps $f:Y\dashrightarrow X$ is finite.
\end{conjecture}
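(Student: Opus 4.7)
The plan is to adapt Noguchi's analytic argument for Theorem~\ref{thm:noguchi} to the pseudo-Brody hyperbolic setting, using the dominance of the maps to prevent any limit entire curve from being trapped inside the exceptional locus $\Delta$. After replacing $Y$ by a resolution, assume $Y$ is smooth projective, and fix a proper closed subset $\Delta\subsetneq X$ such that $X$ is Brody hyperbolic modulo $\Delta$. Identifying a dominant rational map $f\colon Y\dashrightarrow X$ with its graph $\Gamma_f\subset Y\times X$, the set of dominant rational maps injects into $\mathrm{Hilb}(Y\times X)(\CC)$, and the proof decomposes into (A) showing that this image is a finite-type subscheme, and (B) showing that it is zero-dimensional.

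For step (A), suppose that infinitely many dominant rational maps $f_n\colon Y\dashrightarrow X$ exist, and fix an ample line bundle $L$ on $X$. If the degrees $\deg_Y f_n^{*}L$ were uniformly bounded then the graphs $\Gamma_{f_n}$ would lie in finitely many components of $\mathrm{Hilb}(Y\times X)$, so one may assume $\deg_Y f_n^{*}L\to\infty$. By dominance, there exist $y_n\in Y$ with $f_n(y_n)\notin\Delta$ and with $\|df_n(y_n)\|$ (measured with respect to fixed Hermitian metrics on $Y$ and $X$) tending to infinity. A suitable version of Brody's reparametrization lemma, applied near the points $y_n$, then produces in the limit a non-constant entire curve $g\colon\CC\to X^{\an}$; the delicate point is to choose the base points $y_n$ and the reparametrization scales so that $g(\CC)\not\subset\Delta^{\an}$, which would contradict that $X$ is Brody hyperbolic modulo $\Delta$.

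For step (B), granting (A), we may assume the graphs of an infinite set of dominant rational maps lie in a fixed positive-dimensional irreducible component $T\subset\mathrm{Hilb}(Y\times X)$, yielding a family $\{f_t\}_{t\in T}$. The locus $B\subset T\times Y$ where $f_t(y)\in\Delta$ has proper closed fibers over $T$ (by dominance of each $f_t$), so $B\subsetneq T\times Y$, and one can pick $(t_0,y_0)\notin B$. A bend-and-break argument applied to a suitably chosen family of curves in $Y$ through $y_0$ (using positive-dimensional fibers of the evaluation $t\mapsto f_t(y_0)$ over points of $X\setminus\Delta$, or passing to such a fiber after suitable reduction) should then produce a rational curve in $X$ through $x_0:=f_{t_0}(y_0)\in X\setminus\Delta$. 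Composing with the universal cover $\CC\to(\mathbb{P}^1)^{\an}$ yields an entire curve in $X$ meeting $X\setminus\Delta$, contradicting pseudo-Brody hyperbolicity.

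The principal obstacle is the analytic reparametrization step in (A): executing Brody's construction so that the resulting entire curve provably avoids $\Delta$ requires a pseudo-version of Brody's lemma that does not appear in the literature in this generality, and one naturally expects input from the value distribution techniques of Noguchi-Winkelmann-Yamanoi \cite{NWY1,NWY2,NWY3}, which are designed precisely to control entire curves modulo subvarieties. A conceptually different route would be to first establish the pseudo-analogue of Demailly's theorem---that a pseudo-Brody hyperbolic projective variety is pseudo-algebraically hyperbolic---which would yield (A) for free by boundedness and allow (B) to be completed by the methods of \cite{JKa,JXie}; this is however an independently open problem, so in either case substantial new analytic input appears to be required.
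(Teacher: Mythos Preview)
The statement you are attempting to prove is labelled a \emph{Conjecture} in the paper, not a theorem, and the paper offers no proof of it. Indeed, the surrounding discussion makes clear that this is the expected but unproven ``pseudo'' analogue of Noguchi's theorem (Theorem~\ref{thm:noguchi}) and of Kobayashi--Ochiai's theorem, and the paper only establishes the corresponding finiteness for pseudo-algebraically hyperbolic and pseudo-bounded varieties, not for pseudo-Brody hyperbolic ones. So there is nothing to compare your argument against.

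To your credit, your proposal is not really a proof but a road map, and you yourself correctly identify the genuine gaps. The critical missing ingredient is exactly what you flag in step (A): a pseudo-version of Brody's reparametrization lemma guaranteeing that the limit entire curve is not contained in $\Delta$. No such lemma is known, and the paper itself emphasizes (Remark after Theorem~\ref{thm:demailly1}) that it is \emph{not} known whether a pseudo-Brody hyperbolic projective scheme is pseudo-algebraically hyperbolic; your alternative route via Demailly's theorem is therefore blocked for the same reason. Step (B) is also not straightforward: bend-and-break produces a rational curve through a point only under quite specific numerical hypotheses on the deformation space, and it is not automatic that a positive-dimensional component of $\mathrm{Hilb}(Y\times X)$ parametrizing graphs yields a rational curve in $X$ through a prescribed point of $X\setminus\Delta$. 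In short, your outline accurately diagnoses why Pseudo-Noguchi II remains open, but it does not close the gap, and neither does the paper.
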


Now, as any ``pseudo-hyperbolic''  variety is pseudo-groupless, it is natural to first try and see what one can say about pseudo-groupless varieties. For simplicity, we will focus on surjective morphisms (as opposed to dominant rational maps) in the rest of this section.

There is a standard approach to establishing the finiteness of the set of surjective morphisms from one projective scheme to another. To explain this, let us recall some notation from Section \ref{section:boundedness}. Namely, if $X$ and $Y$ are projective schemes over $k$, we let $\underline{\mathrm{Hom}}_k(Y,X)$ be the scheme parametrizing morphisms $X\to Y$. Note that $\underline{\Hom}_k(Y,X)$ is a countable disjoint union of  quasi-projective schemes over $k$.   Moreover, we let $\underline{\mathrm{Sur}}_k(Y,X)$ be the scheme parametrizing surjective morphisms $Y\to X$, and note that $\underline{\mathrm{Sur}}_k(Y,X)$ is a closed subscheme of $\underline{\Hom}_k(Y,X)$.

The standard approach to establishing the finiteness of  the set $\mathrm{Sur}_k(Y,X)$ is to interpret it as the set of $k$-points on the scheme $\underline{\mathrm{Sur}}_k(Y,X)$. This makes it tangible to techniques from deformation theory. Indeed, to show that $\mathrm{Sur}_k(Y,X)$ is finite, it suffices to establish the following two statements:
\begin{enumerate}
\item  The tangent space to each  point of $\underline{\mathrm{Sur}}_k(Y,X)$ is trivial;
\item  The scheme $\underline{\mathrm{Sur}}_k(Y,X)$ has only finitely many connected components.
 \end{enumerate}
 
 It is   common to refer to the first statement as a rigidity statement, as it boils down to showing that the objects parametrized by $\underline{\mathrm{Sur}}_k(Y,X)$ are infinitesimally rigid. Also, it is   standard to refer to the second statement as being a boundedness property. For example, if $Y$ and $X$ are curves and $X$ is of genus at least two, the finiteness of $\mathrm{Sur}_k(Y,X)$ is proven precisely in this manner; see \cite[\S II.8]{Maz86}. We refer the reader to \cite{KovacsSubs} for a further discussion of the rigidity/boundedness approach to proving finiteness results for other moduli spaces.
 
We now focus  on the rigidity of surjective morphisms $Y\to X$. The  following rigidity theorem for pseudo-groupless varieties will prove to be extremely useful.  
This result is a consequence of a much more general statement about the   deformation space of a surjective morphism due to Hwang--Kebekus--Peternell \cite{HKP}.

\begin{theorem}[Hwang-Kebekus-Peternell + $\epsilon$]\label{thm:ps_grp_sur}
If $Y$ is a projective normal variety over $k$ and $X$ is a pseudo-groupless projective variety over $k$, then the scheme $\underline{\mathrm{Sur}}_k(Y,X)$ is a countable disjoint union of zero-dimensional smooth projective schemes over $k$.
\end{theorem}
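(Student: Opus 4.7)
The plan is to reduce the statement to showing that for every surjective $f \colon Y \to X$, the Zariski tangent space $T_{[f]} \underline{\Hom}_k(Y,X)$ vanishes. Indeed, $\underline{\mathrm{Sur}}_k(Y,X)$ is a locally closed subscheme of $\underline{\Hom}_k(Y,X)$, and the latter is a countable disjoint union of quasi-projective $k$-schemes; hence every connected component of $\underline{\mathrm{Sur}}_k(Y,X)$ is a locally Noetherian scheme of finite type over $k$. A closed point of such a scheme with trivial tangent space is automatically a reduced isolated point, so triviality of the tangent space at every surjective $[f]$ will force every connected component of $\underline{\mathrm{Sur}}_k(Y,X)$ to consist of a single reduced $k$-point, which is a zero-dimensional smooth projective $k$-scheme.

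The first ingredient is the Hwang--Kebekus--Peternell theorem on deformations of surjective morphisms: if $f \colon Y \to X$ is a surjective morphism of normal projective varieties, then the natural post-composition action of $\Aut^0_{X/k}$ on $\underline{\Hom}_k(Y,X)$ makes $[f]$ a homogeneous point of its reduced connected component, and the infinitesimal version of this action gives a natural surjection (in fact an isomorphism)
\[
H^0(X, T_X) \twoheadrightarrow T_{[f]} \underline{\Hom}_k(Y,X).
\]
In other words, both the global and the infinitesimal deformations of a surjective $f$ are entirely governed by the infinitesimal automorphisms of the target $X$.

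The second ingredient is that pseudo-grouplessness of $X$ forces both $\Aut^0_{X/k}$ and $H^0(X, T_X)$ to be trivial. Fix a proper closed subset $\Delta \subsetneq X$ with $X$ groupless modulo $\Delta$. If $G := (\Aut^0_{X/k})_{\mathrm{red}}$ were a non-trivial connected algebraic group, the fixed locus $X^G$ would be a proper closed subset of $X$, and any $x \in X \setminus (X^G \cup \Delta)$ (which exists, as both $X^G$ and $\Delta$ are proper closed subsets) would yield a non-constant orbit morphism $G \to X$, $g \mapsto g \cdot x$, whose image meets $X \setminus \Delta$, contradicting grouplessness modulo $\Delta$. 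The same orbit-map argument, applied to a one-parameter subgroup $\Ga \to \Aut_{X/k}$ or $\Gm \to \Aut_{X/k}$ integrating a hypothetical non-zero global vector field on $X$ (which must be non-zero at some point of $X \setminus \Delta$ by density), forces $H^0(X, T_X) = 0$.

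Combining the two ingredients yields $T_{[f]} \underline{\Hom}_k(Y,X) = 0$ for every surjective $f$, so every connected component of $\underline{\mathrm{Sur}}_k(Y,X)$ is the reduced point $\Spec k$, as desired. The main obstacle is the ``$+\varepsilon$'' in the attribution: the HKP identification is classically proved for smooth projective varieties (often under additional Fano-type positivity assumptions), and one must verify that it genuinely extends to the present generality of arbitrary normal projective $Y$ and possibly singular projective $X$. The natural strategy is to pass to a resolution of singularities $\widetilde{X} \to X$, apply HKP on the smooth model, and check that the obstructions arising from the exceptional divisor are also killed by pseudo-grouplessness of $X$, so that the rigidity conclusion transfers back from $\widetilde{X}$ to $X$.
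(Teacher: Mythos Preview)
Your reduction to rigidity of surjective morphisms and your second ingredient (pseudo-grouplessness of a projective variety forces $\Aut^0$ and $H^0(T)$ to vanish) are both correct. The gap is in your statement of the Hwang--Kebekus--Peternell theorem.

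What HKP actually gives (for $X$ non-uniruled, which holds here since pseudo-groupless varieties are non-uniruled) is \emph{not} a surjection $H^0(X,T_X)\twoheadrightarrow T_{[f]}\underline{\Hom}_k(Y,X)$ coming from post-composition by $\Aut^0_{X/k}$. Rather, there is a factorisation $Y\to Z\to X$ with $Z\to X$ finite, and it is $\Aut^0_{Z/k}$, acting by composition in the middle, that surjects onto the connected component of $[f]$. Your version fails already for the bielliptic quotient $f\colon E_1\times E_2\to (E_1\times E_2)/G=X$: here $\Aut^0_{X/k}$ is a single elliptic curve while the connected component of $[f]$ has dimension two, so the $\Aut^0_{X/k}$-orbit cannot be all of it. More generally, $T_{[f]}\underline{\Hom}_k(Y,X)=H^0(Y,f^*T_X)=H^0(Z,g^*T_X)$ can be strictly larger than $H^0(X,T_X)$ whenever $f$ fails to have connected fibres.

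The paper's proof runs exactly through this correct form of HKP and then observes the missing step: since $Z\to X$ is finite and $X$ is pseudo-groupless, $Z$ is pseudo-groupless as well, so your second-ingredient argument applied to $Z$ (not $X$) gives $\Aut^0_{Z/k}=0$, and hence the connected component of $[f]$ is a reduced point. That passage from $X$ to $Z$ is the ``$+\,\varepsilon$'' in the attribution; it has nothing to do with resolving singularities of $X$, and the strategy you sketch in your last paragraph is not needed.
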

\begin{proof} As is shown in  \cite{JXie}, this is a consequence of Hwang--Kebekus--Peternell's result on the infinitesimal deformations of a surjective morphism $Y\to X$. Indeed, since $X$ is non-uniruled (Remark \ref{remark:non_uni}), for every such surjective morphism $f:Y\to X$, there is a finite morphism $Z\to X$ and a morphism $Y\to Z$  such that $f$ is the composed map $Y\to Z\to X$. Moreover, the identity component $ \mathrm{Aut}_{Z/k}^0$ of the automorphism group scheme surjects onto the connected component of $f$ in $\underline{\Hom}_k(Y,X)$. Since $X$ is pseudo-groupless, the same holds for $Z$. It is then not hard to verify that $ \Aut^0_{Z/k}$ is trivial, so that the connected component of $f$ in $\underline{\Hom}_k(Y,X)$ is trivial.
\end{proof}

\begin{remark}
There are projective varieties $X$ which are \textbf{not} pseudo-groupless over $k$, but for which the conclusion of the theorem above still holds. For example, a K3 surface or the  blow-up of a simple abelian surface $A$ in its origin.   This means that the rigidity of surjective morphisms follows from properties strictly \emph{weaker} than pseudo-hyperbolicity. We refer to \cite{JXie} for a more general statement concerning  rigidity of surjective morphisms.
\end{remark}
 
 When introducing the notions appearing in Lang-Vojta's conjecture, we made sure to emphasize that every one of these is pseudo-groupless. Thus, roughly speaking, any property we prove for pseudo-groupless varieties holds for all pseudo-hyperbolic varieties.  This gives us the following rigidity statement.
 
 \begin{corollary}[Rigidity for pseudo-hyperbolic varieties]
  Let $X$  be a projective integral variety over $k$ and let $Y$ be a projective normal variety over $k$. Assume that one of the following statements holds.
  \begin{enumerate}
  \item The   variety $X$ is pseudo-groupless over $k$.
  \item The   variety $X$ is pseudo-Mordellic over $k$.
  \item The projective variety $X$ is pseudo-algebraically hyperbolic over $k$.
  \item The projective variety $X$ is pseudo-$1$-bounded over $k$.
  \item The projective variety $X$ is pseudo-bounded over $k$.
  \item The   variety $X$ is pseudo-geometrically hyperbolic over $k$.
  \item The field $k$ equals $\CC$ and    $X$ is pseudo-Brody hyperbolic.
  \end{enumerate} 
Then the scheme $\underline{\mathrm{Sur}}_k(Y,X)$ is a countable disjoint union of zero-dimensional smooth projective schemes over $k$.
 \end{corollary}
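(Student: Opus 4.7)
The approach is to reduce each of the seven hypotheses to case (1), pseudo-grouplessness, and then apply Theorem \ref{thm:ps_grp_sur} verbatim. The reductions have all been recorded earlier in these notes: case (2) reduces by Theorem \ref{thm:ar_is_gr}; case (3) by Theorem \ref{thm:demailly1}; case (4) by the ``Demailly $+\,\varepsilon$'' proposition stated at the end of Section \ref{section:boundedness}; case (5) reduces immediately to case (4), since pseudo-boundedness trivially implies pseudo-$1$-boundedness; case (6) by Proposition \ref{prop:psgeomhyp_is_psgr1}; and case (7), when $k=\mathbb{C}$, by the observation in Section \ref{section:ps_brody} that if $X$ is Brody hyperbolic modulo a proper closed subset $\Delta$, then $X$ is groupless modulo that same $\Delta$, and hence pseudo-groupless.

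Once $X$ is known to be pseudo-groupless, Theorem \ref{thm:ps_grp_sur} directly yields that every connected component of $\underline{\mathrm{Sur}}_k(Y,X)$ is a zero-dimensional smooth projective scheme over $k$. The countability of the set of components is automatic from the fact that $\underline{\Hom}_k(Y,X)$ itself is a countable disjoint union of quasi-projective $k$-schemes, indexed by the Hilbert polynomials of graphs of morphisms (as reviewed in Section \ref{section:boundedness}); the closed subscheme $\underline{\mathrm{Sur}}_k(Y,X)$ inherits this decomposition.

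There is essentially no substantive obstacle in the proof of the corollary itself, as it is a packaging of results already developed in the survey. The real content lives in two prior places: in the chain of implications ``pseudo-hyperbolicity in any of the senses (2)--(7) $\Longrightarrow$ pseudo-grouplessness'', which is one of the guiding threads of the paper; and in Theorem \ref{thm:ps_grp_sur} itself, whose proof invokes Hwang--Kebekus--Peternell's analysis of infinitesimal deformations of a surjective morphism $Y\to X$ together with the fact that a pseudo-groupless projective variety is non-uniruled (Remark \ref{remark:non_uni}), which allows one to factor any such $f$ through a finite cover $Z\to X$ whose identity component $\Aut^{0}_{Z/k}$ is trivial. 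The only thing one needs to be careful about when writing out a full proof is to verify each of the seven reductions with its correct citation and to note that pseudo-grouplessness passes to $X$ without any further hypothesis on $Y$, so that the hypotheses on $Y$ (projective and normal) are only used where Theorem \ref{thm:ps_grp_sur} itself uses them.
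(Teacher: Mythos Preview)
Your proof is correct and takes essentially the same approach as the paper: each of the seven hypotheses is reduced to pseudo-grouplessness via the implications recorded earlier in the notes, and then Theorem \ref{thm:ps_grp_sur} is applied. The paper's own proof is in fact just the one-line version of what you wrote, without the explicit citation for each case.
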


 \begin{proof} Assume that either $(1), (2), (3), (4), (5), (6),$ or $ (7), $   holds. Then $X$ is pseudo-groupless (as explained throughout these notes), so that the result  follows from Theorem \ref{thm:ps_grp_sur}.
 \end{proof}

Proving the finiteness of $\mathrm{Sur}_k(Y,X)$ or, equivalently, the  boundedness of $\underline{\mathrm{Sur}}_k(Y,X)$, for $X$ pseudo-groupless or pseudo-Mordellic seems to be out of reach currently. However,  for pseudo-algebraically hyperbolic varieties the desired finiteness property is proven in  \cite{JXie} and reads as follows.

\begin{theorem}
If $X$  is a  pseudo-algebraically  hyperbolic projective variety over $k$ and $Y$ is a projective integral variety over $k$, then the set of surjective morphisms $f:Y\to X$ is finite. \end{theorem}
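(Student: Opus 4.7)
The plan is to combine two facts already established in the notes: a rigidity statement for surjective morphisms into pseudo-groupless varieties, and a boundedness consequence of algebraic hyperbolicity.

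First I would fix a proper closed subset $\Delta \subsetneq X$ such that $X$ is algebraically hyperbolic modulo $\Delta$. By Theorem \ref{thm:demailly1}, $X$ is then also groupless modulo $\Delta$, hence in particular pseudo-groupless. Next, one should reduce to the case where $Y$ is normal by replacing $Y$ by its normalization $\tilde Y \to Y$: since this map is finite and surjective, post-composition $F \mapsto F \circ (\tilde Y \to Y)$ yields an injection $\mathrm{Sur}_k(Y,X) \hookrightarrow \mathrm{Sur}_k(\tilde Y,X)$, so finiteness downstairs follows from finiteness upstairs. Assuming then that $Y$ is a normal projective variety, Theorem \ref{thm:ps_grp_sur} applied to the pseudo-groupless $X$ will give that $\underline{\mathrm{Sur}}_k(Y,X)$ is a (possibly countable) disjoint union of reduced zero-dimensional schemes over $k$.

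It then suffices to show that $\underline{\mathrm{Sur}}_k(Y,X)$ has only finitely many connected components. For this I would invoke the theorem, recorded in the boundedness section, that algebraic hyperbolicity modulo $\Delta$ implies boundedness modulo $\Delta$: with our normal projective $Y$, the scheme
\[
\underline{\mathrm{Hom}}_k(Y,X) \setminus \underline{\mathrm{Hom}}_k(Y,\Delta)
\]
is of finite type over $k$. Any surjective morphism $F\colon Y\to X$ satisfies $F(Y)=X\not\subset \Delta$, and hence lies in this complement; consequently $\underline{\mathrm{Sur}}_k(Y,X)$ is a subscheme of a finite-type $k$-scheme, and is itself of finite type. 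A finite-type scheme whose connected components are all zero-dimensional is a finite scheme, so $\mathrm{Sur}_k(Y,X)=\underline{\mathrm{Sur}}_k(Y,X)(k)$ must be finite.

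I do not expect a substantial obstacle: the two serious ingredients --- rigidity of surjective morphisms into pseudo-groupless targets (Hwang--Kebekus--Peternell $+\,\epsilon$) and the passage from algebraic hyperbolicity to boundedness --- have been developed earlier in the text, and the argument above only combines them with the trivial observation that a surjective map cannot factor through a proper closed subset. The one mild technicality is the normality hypothesis in the definition of boundedness modulo $\Delta$, which the initial normalization step circumvents.
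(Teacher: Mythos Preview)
Your proposal is correct and follows precisely the rigidity-plus-boundedness strategy the paper outlines: zero-dimensionality of $\underline{\mathrm{Sur}}_k(Y,X)$ via Theorem~\ref{thm:ps_grp_sur}, and quasi-compactness from the fact that pseudo-algebraic hyperbolicity implies boundedness modulo $\Delta$, combined with the observation that a surjection $Y\to X$ cannot land in $\Delta\subsetneq X$. The normalization reduction you insert to meet the normality hypothesis in both Theorem~\ref{thm:ps_grp_sur} and the definition of boundedness is the expected technical step and causes no issue.
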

 
 A similar result can be obtained for pseudo-bounded varieties. The precise result can be stated as follows.

\begin{theorem}
If $X$  is a  pseudo-bounded    projective variety over $k$ and $Y$ is a projective integral variety over $k$, then the set of surjective morphisms $f:Y\to X$ is finite. \end{theorem}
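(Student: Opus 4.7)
The plan is to follow the rigidity-plus-boundedness template of Section \ref{section:sur}: combine the zero-dimensionality of $\underline{\mathrm{Sur}}_k(Y, X)$ coming from pseudo-grouplessness with the finite type property of the relevant Hom-scheme coming from pseudo-boundedness. First, I would reduce to the case that $Y$ is normal. Let $\nu \colon \tilde Y \to Y$ be the normalization, so $\tilde Y$ is a projective normal integral variety over $k$ and $\nu$ is finite and surjective. Since $\nu$ is dominant, $X$ is separated, and $Y$ is reduced, the map $\mathrm{Sur}_k(Y, X) \to \mathrm{Sur}_k(\tilde Y, X)$, $f \mapsto f \circ \nu$, is injective. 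I may therefore assume $Y$ is projective normal integral of dimension $d := \dim Y$.

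For the rigidity input, pseudo-boundedness of $X$ yields that $X$ is pseudo-$1$-bounded, hence pseudo-geometrically hyperbolic (Corollary \ref{cor:urata}), hence pseudo-groupless (Proposition \ref{prop:psgeomhyp_is_psgr1}). Theorem \ref{thm:ps_grp_sur} then implies that $\underline{\mathrm{Sur}}_k(Y, X)$ is a countable disjoint union of zero-dimensional smooth projective schemes over $k$. Since $k$ is algebraically closed, every such connected component is a single $k$-point, and the set $\mathrm{Sur}_k(Y, X)$ is therefore in bijection with the set of connected components of $\underline{\mathrm{Sur}}_k(Y, X)$.

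For the boundedness input, pseudo-$d$-boundedness of $X$ supplies a proper closed subset $\Delta \subsetneq X$ such that the scheme
\[
U := \underline{\Hom}_k(Y, X) \setminus \underline{\Hom}_k(Y, \Delta)
\]
is of finite type over $k$. No surjective morphism $Y \to X$ factors through the proper closed subset $\Delta$, so every $k$-point of $\underline{\mathrm{Sur}}_k(Y, X)$ lies in $U$; combined with the rigidity description above, the underlying topological space of $\underline{\mathrm{Sur}}_k(Y, X)$ is contained in that of $U$. As $\underline{\mathrm{Sur}}_k(Y, X)$ is a closed subscheme of $\underline{\Hom}_k(Y, X)$ sitting inside the open subscheme $U$, it is closed in $U$. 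Since $U$ is noetherian (being of finite type over a field), so is $\underline{\mathrm{Sur}}_k(Y, X)$; but a noetherian disjoint union of copies of $\Spec k$ is necessarily finite. Hence $\mathrm{Sur}_k(Y, X)$ is finite.

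The conceptual weight of the argument is absorbed by Theorem \ref{thm:ps_grp_sur}, whose proof uses the non-uniruledness of pseudo-groupless varieties together with Hwang--Kebekus--Peternell's analysis of the identity component of the automorphism group of the Stein factorization. Given that rigidity input and the very definition of pseudo-boundedness, the remaining argument is a short deformation-theoretic wrap-up, and no substantial obstacle arises beyond what is already encoded in Theorem \ref{thm:ps_grp_sur}.
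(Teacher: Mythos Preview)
Your proof is correct and follows precisely the rigidity-plus-boundedness template the paper sets up in Section~\ref{section:sur}: pseudo-boundedness gives pseudo-grouplessness, so Theorem~\ref{thm:ps_grp_sur} yields zero-dimensionality of $\underline{\mathrm{Sur}}_k(Y,X)$, and then the finite type property of $\underline{\Hom}_k(Y,X)\setminus\underline{\Hom}_k(Y,\Delta)$ from pseudo-$d$-boundedness forces finitely many components. The paper itself does not spell out a proof but signals exactly this approach (``A similar result can be obtained for pseudo-bounded varieties''), with the details deferred to \cite{JXie}; your reduction to normal $Y$ via normalization is the right way to match the hypotheses of both Theorem~\ref{thm:ps_grp_sur} and the definition of $n$-bounded modulo $\Delta$.
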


To prove the analogous finiteness property for pseudo-$1$-bounded varieties, we require (as in the previous section) an additional uncountability assumption on the base field.

 \begin{theorem} \label{thm:noguchi1} Assume $k$ is uncountable. 
 If $X$  is a pseudo-$1$-bounded  projective variety over $k$ and $Y$ is a projective integral variety over $k$, then the set of surjective morphisms   $f:Y\to X$ is finite.
 \end{theorem}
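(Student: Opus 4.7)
The plan is to bootstrap from the preceding finiteness theorem for pseudo-bounded varieties by invoking the uncountability hypothesis through Theorem~\ref{thm:1_is_b}. Concretely, $X$ being pseudo-$1$-bounded gives a proper closed subset $\Delta \subsetneq X$ such that $X$ is $1$-bounded modulo $\Delta$. Since $k$ is uncountable, Theorem~\ref{thm:1_is_b} asserts that $1$-boundedness modulo $\Delta$ and boundedness modulo $\Delta$ coincide, so $X$ is bounded modulo $\Delta$ and hence pseudo-bounded. The finiteness of $\mathrm{Sur}_k(Y,X)$ is then precisely the content of the immediately preceding theorem, completing the argument.

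To make the proof self-contained, here is the direct argument I would give. Since $X$ is pseudo-$1$-bounded it is in particular pseudo-groupless, so the rigidity result Theorem~\ref{thm:ps_grp_sur} shows that, after reducing to the case of normal $Y$ by passing to the normalization $\widetilde{Y}\to Y$ (which induces an injection $\mathrm{Sur}_k(Y,X) \hookrightarrow \mathrm{Sur}_k(\widetilde{Y},X)$ because the normalization is dominant and $X$ is separated), the scheme $\underline{\mathrm{Sur}}_k(\widetilde{Y},X)$ is a countable disjoint union of zero-dimensional smooth $k$-schemes. Hence finiteness of $\mathrm{Sur}_k(\widetilde{Y},X)$ reduces to showing that only finitely many of these connected components are nonempty. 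For this, observe that any surjective $f:\widetilde{Y}\to X$ has image equal to $X$ and therefore does not factor through the proper closed subset $\Delta$, so its connected component lies inside the open subscheme $\underline{\Hom}_k(\widetilde{Y},X)\setminus \underline{\Hom}_k(\widetilde{Y},\Delta)$. Theorem~\ref{thm:1_is_b} (and this is the only place the uncountability of $k$ is used) now says that this latter scheme is of finite type over $k$, bounding the number of connected components of $\underline{\mathrm{Sur}}_k(\widetilde{Y},X)$ and yielding the desired finiteness.

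The entire weight of the argument sits on the use of uncountability in Theorem~\ref{thm:1_is_b}: without it one cannot promote the hypothesis controlling morphisms from curves to control morphisms from the possibly higher-dimensional $Y$, and the scheme $\underline{\Hom}_k(Y,X)\setminus\underline{\Hom}_k(Y,\Delta)$ is not known to be of finite type. This is precisely why Theorem~\ref{thm:noguchi1} is stated with the uncountability assumption on $k$, and why the corresponding statement over countable base fields remains open.
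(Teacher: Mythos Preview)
Your proposal is correct and matches the intended argument: the paper does not spell out a proof of Theorem~\ref{thm:noguchi1}, but its placement immediately after the pseudo-bounded case, together with Theorem~\ref{thm:1_is_b}, makes clear that the proof is exactly the reduction you give in your first paragraph. Your second, unpacked argument via Theorem~\ref{thm:ps_grp_sur} and the finite-type statement for $\underline{\Hom}_k(\widetilde{Y},X)\setminus\underline{\Hom}_k(\widetilde{Y},\Delta)$ is also correct and is essentially what underlies the pseudo-bounded theorem itself.
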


We conclude with the following finiteness result for pseudo-algebraically hyperbolic varieties. It is proven in \cite{JXie} using (essentially) the results in this section and the fact that pseudo-algebraically hyperbolic varieties are pseudo-geometrically hyperbolic.
 \begin{theorem}\label{thm:general}
 If $X$ is algebraically hyperbolic  modulo $\Delta$ over $k$, then  for every connected reduced projective variety $Y$ over $k$, every non-empty closed reduced subset $B\subset Y$, and every reduced closed subset $A\subset X$ not contained in $\Delta$, the set of morphisms $f:Y\to X$ with $f(B) = A$ is finite. 
 \end{theorem}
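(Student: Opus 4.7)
The plan is to combine three tools available in the excerpt: (i) algebraic hyperbolicity modulo $\Delta$ implies boundedness modulo $\Delta$, which makes the relevant Hom-scheme of finite type; (ii) the closed subvariety $A \subseteq X$ inherits pseudo-algebraic hyperbolicity from $X$ (since $A \not\subset \Delta$), so the finiteness theorem for surjective morphisms to pseudo-algebraically hyperbolic varieties from Section~\ref{section:sur}, applied with source $B$ and target $A$, shows that $\mathrm{Sur}_k(B, A)$ is finite; and (iii) $X$ is geometrically hyperbolic modulo $\Delta$, via the chain \emph{algebraic hyperbolicity} $\Rightarrow$ \emph{$1$-boundedness} $\Rightarrow$ \emph{geometric hyperbolicity} (all modulo $\Delta$; see Corollary~\ref{cor:urata}). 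A preliminary stratification of $S := \{f : Y \to X \mid f(B) = A\}$ according to the tuple of images $(f(B_j))_j$ on the irreducible components $B_j$ of $B$ reduces to the case where $Y$, $B$, $A$ are all integral, since boundedness and Noetherianity ensure only finitely many such tuples arise.

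Assume $Y$, $B$, $A$ are integral. Every $f \in S$ satisfies $f(Y) \supseteq A \not\subset \Delta$, so $S$ is contained in the open subscheme $U := \underline{\Hom}_k(Y, X) \setminus \underline{\Hom}_k(Y, \Delta)$, which is of finite type by boundedness of $X$ modulo $\Delta$. The restriction map $f \mapsto f|_B$ sends $S$ into the finite set $\mathrm{Sur}_k(B, A)$, so it suffices to fix a surjective morphism $g : B \to A$ and prove that $S_g := \{f \in S : f|_B = g\}$ is finite.

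Now $S_g$ is closed in $U$, hence of finite type; the heart of the argument is to show it is zero-dimensional. Suppose otherwise. Then a smooth connected curve $T$ and a non-constant morphism $T \to S_g$ yield a morphism $\Phi : T \times Y \to X$ with $\Phi(t, b) = g(b)$ for all $(t, b) \in T \times B$, and the locus $Y'' := \{y \in Y : \Phi|_{T \times \{y\}} \text{ is non-constant}\}$ is a non-empty open subset of $Y$ (non-emptyness by non-triviality of the family, openness by semicontinuity). Choose $y_0 \in Y''$ and $b_0 \in B$ with $a_0 := g(b_0) \in A \setminus \Delta$ (possible since $g$ is surjective and $A \not\subset \Delta$), and take an irreducible curve $C \subseteq Y$ through both $b_0$ and $y_0$. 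Passing to the normalization $\tilde{C} \to C$ and a preimage $\tilde{c}_0$ of $b_0$, the morphisms $\tilde{f}_t : \tilde{C} \to X$ defined by $\tilde{f}_t = f_t|_C \circ (\tilde{C} \to C)$ satisfy $\tilde{f}_t(\tilde{c}_0) = a_0$ and vary with $t$ (because $y_0 \in C \cap Y''$ ensures $f_t(y_0) \neq f_{t'}(y_0)$ for some $t, t'$). By geometric hyperbolicity of $X$ modulo $\Delta$, the set $\Hom_k((\tilde{C}, \tilde{c}_0), (X, a_0))$ is finite; combined with boundedness modulo $\Delta$ (applied to $\tilde{C}$), the scheme $\underline{\Hom}_k((\tilde{C}, \tilde{c}_0), (X, a_0))$ is of finite type with finitely many $k$-points, hence zero-dimensional. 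The induced morphism $T \to \underline{\Hom}_k((\tilde{C}, \tilde{c}_0), (X, a_0))$ from the connected $T$ then factors through a single $k$-point, forcing $\tilde{f}_t = \tilde{f}_{t'}$ for all $t, t' \in T$ and contradicting the non-triviality of the family on $\tilde{C}$.

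I expect the most delicate point to be producing an irreducible curve $C \subseteq Y$ passing through both the prescribed point $b_0 \in B$ and the non-constancy point $y_0 \in Y''$, so that the induced family $\{\tilde{f}_t\}$ on the normalization $\tilde{C}$ is non-constant in $t$. This is standard for projective integral $Y$ (via a Bertini-type argument, or by joining two points through a chain of irreducible curves and passing to a component meeting both), but merits careful attention. The reduction to integral $Y$, $B$, $A$ and the various rigidity/finiteness invocations should go through routinely.
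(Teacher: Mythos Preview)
Your overall strategy --- boundedness modulo $\Delta$ to make the relevant Hom-scheme of finite type, finiteness of $\mathrm{Sur}_k(B,A)$ to reduce to a fixed restriction $g$, and geometric hyperbolicity modulo $\Delta$ to rule out positive-dimensional families $S_g$ --- matches exactly what the paper indicates (it cites \cite{JXie} and singles out the finiteness results of Section~\ref{section:sur} together with pseudo-geometric hyperbolicity as the two ingredients). For \emph{integral} $Y$ your core argument is correct, and the reduction to integral $B$ and $A$ (pick a component $A_1\not\subset\Delta$ of $A$; since $A_1$ is irreducible and $A=\bigcup_j f(B_j)$, some $B_j$ must satisfy $f(B_j)=A_1$; replace $(B,A)$ by $(B_j,A_1)$) is indeed routine.

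The gap is the reduction to integral $Y$. Your stratification by the tuples $(f(B_j))_j$ does not touch $Y$ at all, and in fact no such reduction can work: the theorem as literally stated, with $Y$ merely connected and reduced, is false. Take $X=\mathrm{Bl}_q(C\times C)$ with $C$ of genus at least two and $\Delta=E$ the exceptional curve; as in Example~\ref{exa:blow_up} one checks that $X$ is algebraically hyperbolic modulo $E$ (for $L=\pi^*M-aE$ ample and $h:D\to X$ with $h(D)\not\subset E$, one has $\deg h^*L\leq\deg(\pi h)^*M\leq\alpha\cdot g(D)$ by algebraic hyperbolicity of $C\times C$). Let $Y=C\cup_p\mathbb{P}^1$ be a nodal union, $B=\{b_0\}\subset C$ with $b_0\neq p$, and $A=\{a_0\}$ with $a_0\notin E$. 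Choosing $q=(p,c_2)$ and $a_0=(b_0,c_2)$, the strict transform of $C\times\{c_2\}$ gives a morphism $C\to X$ sending $b_0\mapsto a_0$ and $p$ into $E$; gluing on any of the infinitely many maps $\mathbb{P}^1\to E\cong\mathbb{P}^1$ with the prescribed value at $p$ then produces infinitely many $f:Y\to X$ with $f(B)=A$. This is precisely the obstruction your curve-through-two-points step would hit: when the bridge point between components of $Y$ lands in $\Delta$, geometric hyperbolicity modulo $\Delta$ gives no control on the adjacent component. The intended statement presumably has $Y$ integral, and with that hypothesis your argument is complete.
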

 
 Note that Theorem \ref{thm:general} can be applied with $B$ a point or $B=Y$. This shows that the statement generalizes the finiteness result of this section.

\section{Hyperbolicity along field extensions}\label{section:persistence}  
In this section we  study how different notions of pseudo-hyperbolicity appearing in Lang-Vojta's conjectures (except for those that only make sense over $\CC$ a priori) behave under field extensions. In other words, we study how the exceptional locus for each notion of hyperbolicity introduced in Section \ref{section:conjectures} behaves under field extensions.

Let us start with $X$  a variety of general type over a field $k$, and let $k\subset L$  be a field extension. It is natural to wonder whether $X_L$ is  also of general type over $L$. A simple argument comparing the spaces of global sections of $\omega_{X/k}$ and $\omega_{X_L/L}$ shows that this is indeed the case. This   observation is our starting point in this section. Indeed, the mere fact that varieties of general type remain varieties of general type after a field extension can be paired with the Strong Lang-Vojta conjecture to see  that similar statements should hold for pseudo-groupless varieties, pseudo-Mordellic varieties, and so on. 

The first three results we state in this section say that  this ``base-change'' property can be proven in some cases. For proofs we refer to  \cite{vBJK, JKa, JXie}.

\begin{theorem}\label{thm:persss} Let $k\subset L$ be an extension of algebraically closed fields of characteristic zero. Let $X$ be a projective scheme over $k$ and let $\Delta$ be a closed subset of $X$. Then the following statements hold.
\begin{enumerate}
\item If $X$ is of general type over $k$, then $X_L$ is of general type over $L$.
\item If $X$ is groupless modulo $\Delta$, then $X_L$ is groupless modulo $\Delta_L$.
\item If $X$ is algebraically hyperbolic modulo $\Delta$, then $X_L$ is algebraically hyperbolic modulo $\Delta_L$.
\item If $X$ is bounded modulo $\Delta$, then $X_L$ is bounded modulo $\Delta_L$.
\end{enumerate}
\end{theorem}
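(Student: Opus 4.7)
The guiding strategy for (2)--(4) is a uniform \emph{spread-out and specialize} argument: any geometric datum over $L$ descends, after enlarging the ring of coefficients and then shrinking $\Spec$ it, to data over a finitely generated $k$-subalgebra $A\subset L$; since $k$ is algebraically closed and $A$ is a finitely generated $k$-algebra, the Nullstellensatz provides a $k$-point $a\in (\Spec A)(k)$ at which I specialize to reduce the problem back to a statement over $k$. Part (1) is orthogonal to this template and follows directly from flat base change: for every irreducible component $Y$ of $X$ and every desingularization $p\colon Y'\to Y_{\mathrm{red}}$, flat base change yields $H^0(Y'_L,\omega_{Y'_L/L}^{\otimes n}) = H^0(Y',\omega_{Y'/k}^{\otimes n})\otimes_k L$ for all $n\geq 1$, so the Kodaira dimensions agree component by component.

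For (2), I argue by contradiction: suppose there exist a finite type connected group scheme $G$ over $L$, an open $U\subset G$ with $\mathrm{codim}(G\setminus U)\geq 2$, and a non-constant morphism $\varphi\colon U\to X_L$ that does not factor through $\Delta_L$. I spread $G$, $U$, and $\varphi$ out over some finitely generated $k$-subalgebra $A\subset L$ to obtain a flat finite type group scheme $\mathcal{G}\to\Spec A$, an open $\mathcal{U}\subset\mathcal{G}$, and an $A$-morphism $\Phi\colon\mathcal{U}\to X_A$. Each of the properties "$\mathcal{G}_a$ connected", "$\mathrm{codim}(\mathcal{G}_a\setminus \mathcal{U}_a)\geq 2$", "$\Phi_a$ non-constant", and "$\Phi_a(\mathcal{U}_a)\not\subset\Delta$" is an open condition on $\Spec A$ that holds at the generic point, hence on a non-empty open subscheme. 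Specializing at any $k$-point of the common open locus then contradicts the grouplessness of $X$ modulo $\Delta$ over $k$.

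For (3), I first reduce to a fixed ample line bundle pulled back from $k$. Choose an ample $\mathcal{M}$ on $X$; given any ample $\mathcal{L}_L$ on $X_L$, there exists $n\geq 1$ with $n\mathcal{M}_L-\mathcal{L}_L$ nef, so $\deg_C f^*\mathcal{L}_L\leq n\deg_C f^*\mathcal{M}_L$ for every morphism $f$ from a curve into $X_L$. It therefore suffices to bound $\deg_C f^*\mathcal{M}_L$ linearly in $\mathrm{genus}(C)$ for every $f\colon C\to X_L$ with $f(C)\not\subset\Delta_L$. Spreading $(C,f)$ out to a smooth projective family $\mathcal{C}\to\Spec A$ of genus $g=\mathrm{genus}(C)$ together with $F\colon\mathcal{C}\to X_A$, and shrinking $\Spec A$ so that $F_a(\mathcal{C}_a)\not\subset\Delta$ for every $a$, I pick a $k$-point $a$ and invoke the algebraic hyperbolicity of $X$ modulo $\Delta$ to obtain $\deg_{C_a} F_a^*\mathcal{M}\leq \alpha_{X,\Delta,\mathcal{M}}\cdot g$; flatness of $\mathcal{C}\to\Spec A$ transfers this bound back to the generic fiber with the constant $\alpha_{X_L,\Delta_L,\mathcal{L}_L} := n\,\alpha_{X,\Delta,\mathcal{M}}$.

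Part (4) I handle via Theorem~\ref{thm:eq_bounded2}, which replaces the boundedness of $X_L$ modulo $\Delta_L$ by the uniform bound $\deg_C f^*\mathcal{L}_L\leq \alpha(X_L,\Delta_L,\mathcal{L}_L,g)$ for all smooth projective curves $C$ of a fixed genus $g$ over $L$ and all $f\colon C\to X_L$ with $f(C)\not\subset\Delta_L$. After reducing to $\mathcal{M}_L$ with $\mathcal{M}$ ample on $X$ exactly as in (3), the same spread-out-and-specialize step extracts the required bound from the boundedness of $X$ modulo $\Delta$ over $k$. The delicate point I expect to dominate the technical work is verifying that the various open conditions imposed during spreading out (flatness, smoothness of the spread-out curve, codimension, non-factoring through $\Delta_A$) can be satisfied simultaneously on a non-empty open of $\Spec A$; crucially, each of (2)--(4) is handled \emph{one morphism at a time}---for (4) precisely thanks to the curve-wise reformulation in Theorem~\ref{thm:eq_bounded2}---so only finitely many open conditions need to be intersected before Nullstellensatz produces a $k$-point, sidestepping the subtlety that a countable intersection of proper closed subsets of $\Spec A$ can exhaust all $k$-points.
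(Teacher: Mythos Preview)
Your proposal is correct and follows the standard spread-out-and-specialize template that the paper points to (the paper does not prove the theorem in-text but cites \cite{vBJK, JKa, JXie}, and for (1) explicitly indicates the comparison of global sections of $\omega^{\otimes n}$ that you carry out). Your explicit use of Theorem~\ref{thm:eq_bounded2} to reduce (4) to a one-curve-at-a-time statement, and your closing remark about why only finitely many open conditions need to be intersected, are exactly the points that make the argument go through.

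One small technical remark on (2): verifying that ``$\mathcal{G}_a$ connected'' is open on $\Spec A$ for an arbitrary spread-out group scheme is slightly delicate (group schemes need not be proper, so Stein factorization does not apply directly). Since $X$ is projective, you can sidestep this entirely by invoking Theorem~\ref{lem:groupless2} to test grouplessness modulo $\Delta$ only on big opens of abelian varieties; abelian varieties spread out to abelian schemes, whose geometric fibers are automatically connected. This is presumably how the cited references handle it and makes the openness claims in your argument immediate.
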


In this theorem we are missing (among others) the notions of $1$-boundedness and geometric hyperbolicity. In this direction we have the following result; see  \cite{vBJK, JLevin}. 

\begin{theorem}  Let $k\subset L$ be an extension of  \textbf{uncountable} algebraically closed fields of characteristic zero. Let $X$ be a projective scheme over $k$ and let $\Delta$ be a closed subset of $X$. Then the following statements hold.
\begin{enumerate}
\item If $X$ is $1$-bounded modulo $\Delta$, then $X_L$ is bounded modulo $\Delta_L$.
\item If $X$ is geometrically hyperbolic modulo $\Delta$, then $X_L$ is geometrically hyperbolic modulo $\Delta_L$.
\end{enumerate}
\end{theorem}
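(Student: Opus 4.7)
Part (1) chains two earlier results. Since $k$ is uncountable, Theorem~\ref{thm:1_is_b} upgrades $1$-boundedness modulo $\Delta$ to boundedness modulo $\Delta$ over $k$; then Theorem~\ref{thm:persss}(4), which has no uncountability hypothesis, transfers boundedness from $k$ to $L$, giving that $X_L$ is bounded modulo $\Delta_L$ over $L$.

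Part (2) will proceed by contradiction via a spread-out and specialize argument. Suppose $X_L$ is not geometrically hyperbolic modulo $\Delta_L$. By Proposition~\ref{prop:1b_is_geomhyp} there exist a smooth projective connected curve $C/L$, points $c\in C(L)$, and $x\in X_L(L)\setminus \Delta_L$ such that $\underline{\Hom}_L((C,c),(X_L,x))$ is not of finite type over $L$. Decomposing by Hilbert polynomial, this yields an infinite sequence of pairwise distinct Hilbert polynomials $h_i$ for which $\underline{\Hom}_L^{h_i}((C,c),(X_L,x))$ is non-empty.

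Now spread out: choose a finitely generated $k$-subdomain $A\subset L$ with $K:=\operatorname{Frac} A\subset L$, over which $C$, $c$, $x$ descend to data $(\mathcal{C},\sigma,\xi)$ with $\mathcal{C}\to\Spec A$ projective flat; after shrinking $\Spec A$, assume $\mathcal{C}\to\Spec A$ has smooth connected fibers and that $\xi$ lies outside $\Delta_A$ generically. Form $\mathcal{M}:=\underline{\Hom}_A((\mathcal{C},\sigma),(X_A,\xi))=\bigsqcup_h \mathcal{M}^h$, so that $\mathcal{M}^h\otimes_A L=\underline{\Hom}_L^h((C,c),(X_L,x))$. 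For each $i$, since $\mathcal{M}^{h_i}\otimes_A L$ is nonempty, so is $\mathcal{M}^{h_i}\otimes_A K$, and the image $V_i=\operatorname{Im}(\mathcal{M}^{h_i}\to\Spec A)$ is a constructible set containing the generic point of $\Spec A$, hence contains a dense open. If $\dim\Spec A=0$ then $A=k$ and the claim is immediate from base-change compatibility of the Hom scheme. Otherwise $\Spec A$ is positive-dimensional, and here the uncountability of $k$ is essential: a positive-dimensional variety over an uncountable algebraically closed field cannot be covered by countably many proper closed subvarieties. Thus one can choose $s\in\Spec A(k)$ lying in $\bigcap_i V_i$ and outside the proper closed subset $\xi^{-1}(\Delta_A)\subsetneq\Spec A$. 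Specialization at $s$ yields a smooth projective connected curve $\mathcal{C}_s/k$, a point $\sigma_s\in\mathcal{C}_s(k)$, and $\xi_s\in X(k)\setminus\Delta$; for each $i$, the fiber $\mathcal{M}^{h_i}\otimes_A k$ is nonempty and hence has a $k$-point, yielding a morphism $g_i:\mathcal{C}_s\to X$ with $g_i(\sigma_s)=\xi_s$ of Hilbert polynomial $h_i$. Therefore $\underline{\Hom}_k((\mathcal{C}_s,\sigma_s),(X,\xi_s))$ realizes infinitely many Hilbert polynomials and is not of finite type over $k$, contradicting the geometric hyperbolicity of $X$ modulo $\Delta$ by Proposition~\ref{prop:1b_is_geomhyp}.

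The main obstacle I anticipate is making the specialization step airtight: one must simultaneously place $s$ in the countable intersection of the dense opens $V_i$, keep the fiber $\mathcal{C}_s$ smooth and connected, and keep $\xi_s\notin \Delta$. Each excluded locus sits inside a proper closed subvariety of $\Spec A$, and this is precisely where the uncountability of $k$ is used; it also explains the asymmetry with Theorem~\ref{thm:persss}, whose analogous statements for boundedness and algebraic hyperbolicity require no uncountability assumption.
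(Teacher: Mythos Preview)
Your proposal is correct. The paper itself does not give a proof of this theorem, deferring instead to the references \cite{vBJK, JLevin}, but your argument is exactly the standard spread-out-and-specialize technique one expects here and which the paper sketches for the closely related Theorems~\ref{thm:1_is_b} and~\ref{thm:eq_bounded}: for (1) you correctly chain Theorem~\ref{thm:1_is_b} with Theorem~\ref{thm:persss}(4), and for (2) you descend the offending pointed-Hom data to a finitely generated $k$-algebra and use uncountability of $k$ to find a closed point simultaneously in the countably many constructible images and outside the bad loci. One small remark: your proof of (2) only uses that $k$ is uncountable, not $L$, so you have in fact established a slightly sharper statement than the one recorded in the paper.
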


If $\Delta=\emptyset$, then we do not need to impose uncountability. 

\begin{theorem}  Let $k\subset L$ be an extension of    algebraically closed fields of characteristic zero. Let $X$ be a projective scheme over $k$ and let $\Delta$ be a closed subset of $X$. Then the following statements hold.
\begin{enumerate}
\item If $X$ is $1$-bounded, then $X_L$ is bounded.
\item If $X$ is geometrically hyperbolic, then $X_L$ is geometrically hyperbolic.
\end{enumerate}
\end{theorem}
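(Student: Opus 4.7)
The plan is to spread out the data from $L$ to a finitely generated $k$-subalgebra $A \subset L$ and exploit the fact that $(\Spec A)(k) \neq \emptyset$ (by Hilbert's Nullstellensatz, since $A$ is a finitely generated integral domain over the algebraically closed field $k$). For part (1), Theorem \ref{thm:eq_bounded} applied to $X$ produces, for each genus $g$ and ample line bundle $\mathcal{L}$ on $X$, a uniform bound $\alpha(X,\mathcal{L},g)$ on $\deg_C f^\ast \mathcal{L}$ for morphisms $f: C \to X$ from smooth projective connected genus $g$ curves over $k$. Given such an $f':C'\to X_L$ over $L$, I would spread out to $F:\mathcal{C}\to X\times_k\Spec A$ with $\mathcal{C}\to\Spec A$ smooth projective of relative genus $g$, specialize at any $\mathfrak{p}\in(\Spec A)(k)$, and observe that the $k$-morphism $F_\mathfrak{p}$ has degree at most $\alpha(X,\mathcal{L},g)$ by hypothesis. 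Since degree is locally constant in flat families, the same bound applies to $f'$, and Theorem \ref{thm:eq_bounded} then gives boundedness of $X_L$.

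For part (2), recall that $X$ is groupless (Proposition \ref{prop:psgeomhyp_is_psgr1}), so $X_{\bar\kappa}$ is groupless for every field extension $\kappa/k$ by Theorem \ref{thm:persss}(2) applied to $\bar\kappa$; in particular, $X$ admits no rational curves over any algebraically closed extension of $k$. Fix $(C',c',x')$ over $L$ and spread out to $(\mathcal{C},\mathfrak{c},\mathfrak{x})$ over $A$; for each $d$ let $\mathcal{H}_d := \underline{\Hom}_A^d((\mathcal{C},\mathfrak{c}),(X\times_k\Spec A,\mathfrak{x}))$, which is quasi-projective over $\Spec A$. The plan is to show each $\mathcal{H}_d \to \Spec A$ is \emph{finite}, via quasi-finiteness and properness. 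Quasi-finiteness: the fiber at any $\mathfrak{p} \in (\Spec A)(k)$ is contained in the finite set $\underline{\Hom}_k((\mathcal{C}_\mathfrak{p},\mathfrak{c}_\mathfrak{p}),(X,\mathfrak{x}_\mathfrak{p}))$ (geometric hyperbolicity), so by upper semi-continuity of fiber dimension and Zariski-density of $k$-points, all fibers are $0$-dimensional. Properness: for any DVR $R$ dominating $\Spec A$ with $K := \mathrm{Frac}(R)$, a $K$-morphism $\mathcal{C}_K \to X_K$ sending $\mathfrak{c}_K \mapsto \mathfrak{x}_K$ extends to an $R$-morphism $\mathcal{C}_R \to X_R$ because $\mathcal{C}_R$ is a regular $2$-dimensional scheme, the rational map of $k$-schemes $\mathcal{C}_R \dashrightarrow X$ is defined on the dense open $\mathcal{C}_K$, and $X$ has no rational curves: in the elimination of indeterminacy of this rational map on the regular surface $\mathcal{C}_R$, any exceptional $\mathbb{P}^1$ would map to a rational curve in some $X_{\bar\kappa}$, which is forbidden, so no blow-up is required and the rational map is already a morphism (compare \cite[Lemma~3.5]{JKa}).

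With each $\mathcal{H}_d \to \Spec A$ finite, let $r_d$ denote its generic rank and fix any $\mathfrak{p} \in (\Spec A)(k)$. Upper semi-continuity of fiber rank for finite morphisms yields $r_d \leq \dim_k (\mathcal{H}_d)_\mathfrak{p}$, and the number of $L$-points of $(\mathcal{H}_d)_L$ is at most $r_d$ (it is bounded by $\dim_L(\mathcal{H}_d \otimes_A L) = r_d$). Summing over $d$,
$$\sum_d \#(\mathcal{H}_d)_L(L) \;\leq\; \sum_d r_d \;\leq\; \sum_d \dim_k (\mathcal{H}_d)_\mathfrak{p} \;=\; \dim_k \underline{\Hom}_k((\mathcal{C}_\mathfrak{p},\mathfrak{c}_\mathfrak{p}),(X,\mathfrak{x}_\mathfrak{p})),$$
and the right-hand side is finite by geometric hyperbolicity of $X$ (which forces the Hom-scheme at $\mathfrak{p}$ to be a finite $k$-scheme, as only finitely many of its components are non-empty and each component is a finite type $k$-scheme). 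This bounds $\Hom_L((C',c'),(X_L,x'))$ and completes the proof. The hardest step will be the properness verification for $\mathcal{H}_d$, where I must justify that elimination of indeterminacy on the regular (but possibly not $k$-smooth) surface $\mathcal{C}_R$ applies and that the resulting exceptional $\mathbb{P}^1$'s indeed yield rational curves in $X_{\bar\kappa}$ for some algebraically closed extension $\bar\kappa$ of $k$.
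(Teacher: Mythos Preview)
Your proof is correct and follows the natural spreading-out strategy. The paper itself does not give a proof of this theorem, deferring instead to the references \cite{vBJK, JLevin}; your argument is essentially the standard one and is almost certainly what those references do.

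A few minor remarks on execution. For part~(1), your reduction to the uniform degree bound via Theorem~\ref{thm:eq_bounded} and the local constancy of degree in flat families is clean and complete. For part~(2), your quasi-finiteness argument is fine once you note that a non-empty constructible subset of $\Spec A$ (with $A$ a finitely generated $k$-algebra and $k$ algebraically closed) must contain a $k$-point, and that the pointed Hom-scheme over such a $k$-point is $0$-dimensional of finite type (hence finite) by the bend-and-break argument embedded in Proposition~\ref{prop:1b_is_geomhyp} together with the absence of rational curves. Your properness verification is also correct, but the cleanest citation is not \cite[Lemma~3.5]{JKa} (which is stated for smooth varieties over a field) but rather \cite[Proposition~6.2]{GLL}, already invoked twice in the paper: since $\mathcal{C}_R\to\Spec R$ is smooth and the geometric fibres of $X_R\to\Spec R$ contain no rational curves (as $X_{\bar\kappa}$ is groupless for every algebraically closed $\bar\kappa\supset k$ by Theorem~\ref{thm:persss}(2)), any $R$-morphism defined on a big open of $\mathcal{C}_R$ extends to all of $\mathcal{C}_R$. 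This sidesteps the need to justify elimination of indeterminacy on arbitrary regular $2$-dimensional schemes. The final upper semi-continuity step, bounding $\#(\mathcal{H}_d)_L(L)$ by the length of the fibre at a fixed $k$-point, is correct and nicely packages the comparison.
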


The reader will have noticed the absence of the notion of Mordellicity and arithmetic hyperbolicity above.  The question of whether an arithmetically hyperbolic variety over $\Qbar$ remains arithmetically hyperbolic over a larger field is not an easy one in general, as should be clear from the following remark.

\begin{remark}[Persistence of arithmetic hyperbolicity] Let $f_1,\ldots, f_r\in \ZZ[x_1,\ldots,x_n]$ be polynomials, and let $X:=Z(f_1,\ldots, f_r)  = \Spec(\Qbar[x_1,\ldots,x_n]/(f_1,\ldots,f_r) \subset \mathbb{A}^n_{\Qbar}$ be the associated affine variety over $\Qbar$. To say that $X$ is arithmetically hyperbolic over $\Qbar$ is to say that, for every \emph{number field} $K$ and every finite set of finite places $S$ of $K$, the set of $a=(a_1,\ldots,a_n)\in \OO_{K,S}^n$ with $f_1(a) = \ldots = f_r(a) =0$ is finite. On the other hand, to say that $X_{\CC}$ is arithmetically hyperbolic over $\CC$ is to say that, for every \emph{$\ZZ$-finitely generated} subring $A\subset \CC$, the set of $a  = (a_1,\ldots, a_n)\in A^n$ with $f_1(a) = \ldots = f_r(a)=0$ is finite.   
\end{remark}

Despite the apparent difference between being arithmetically hyperbolic over $\Qbar$ and being arithmetically hyperbolic over $\CC$, it seems reasonable to suspect their equivalence. For $X$ a  projective variety, the following conjecture is a consequence of the Weak Lang-Vojta conjecture for $X$. However,  as it also seems reasonable in the non-projective case, we state it in this more generality. 

\begin{conjecture}[Persistence Conjecture] Let $k\subset L$ be an extension of algebraically closed fields of characteristic zero.
Let $X$ be a variety over $k$ and let $\Delta$ be a closed subset of $X$. If $X$ is arithmetically hyperbolic modulo $\Delta$ over $k$, then $X_L$ is arithmetically hyperbolic modulo $\Delta_L$ over $L$.
\end{conjecture}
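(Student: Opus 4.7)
The plan is to attempt the conjecture by a spreading-out and embedding argument, identifying explicitly the case that turns out to be the main obstacle. Fix a $\ZZ$-finitely generated subring $A_0\subset k$ and a model $(\mathcal{X}_0,\Delta_0)$ for $(X,\Delta)$ over $A_0$. Given a $\ZZ$-finitely generated subring $A\subset L$ and a model $(\mathcal{X}_A,\Delta_A)$ for $(X_L,\Delta_L)$ over $A$, after enlarging $A$ inside $L$ I may assume that $A_0\subset A$ and that $(\mathcal{X}_A,\Delta_A)\cong (\mathcal{X}_0,\Delta_0)\otimes_{A_0}A$; this is standard, since any two models of the same variety become isomorphic after a suitable extension of the base. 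Suppose for contradiction that there is a positive-dimensional irreducible component $Z$ of the Zariski closure of $\mathcal{X}_A(A)$ in $X_L$ with $Z\not\subset \Delta_L$. Enlarging $A$ once more, I may descend $Z$ to a closed subscheme $\mathcal{Z}\subset \mathcal{X}_A$ such that $\mathcal{Z}\not\subset \Delta_A$, and pick an infinite sequence of pairwise distinct $A$-integral points whose images meet $\mathcal{Z}$ Zariski-densely.

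Now split into cases according to the transcendence degree of $\mathrm{Frac}(A)$ over $\mathrm{Frac}(A_0)$. If this degree is zero, then $\mathrm{Frac}(A)$ is a finitely generated algebraic extension of $\mathrm{Frac}(A_0)\subset k$, so $A$ embeds as an $A_0$-algebra into $k$ via some $\iota:A\hookrightarrow k$ (using that $k$ is algebraically closed of characteristic zero). Composing the given sequence with $\iota$ produces a sequence of $\iota(A)$-integral points of $\mathcal{X}_0$, and the pulled-back closure still contains a positive-dimensional subvariety of $X$ not contained in $\Delta$ (because closed subschemes, and the property of being contained in a closed subscheme, are preserved under the faithfully flat base change $\iota$). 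This contradicts the arithmetic hyperbolicity of $X$ modulo $\Delta$ over $k$, so the conjecture holds in this algebraic-over-$k$ case.

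The main obstacle, and in fact the reason the Persistence Conjecture remains open, is the case in which $A$ contains some $t\in L$ transcendental over $k$ (which really happens, e.g.\ for $k=\Qbar$ and $L=\CC$); then no $A_0$-algebra embedding $A\hookrightarrow k$ exists. The natural continuation is a specialization argument: one writes $A$ as an integral extension of a polynomial ring $A'[t_1,\dots,t_n]$ with $A'\subset k$ and tries to choose $(a_1,\dots,a_n)\in k^n$ so that specialization defines an $A_0$-algebra map $A\to k$, so that infinitely many of the given integral points remain pairwise distinct after specialization, and so that the specialized Zariski closure still contains a positive-dimensional component outside $\Delta$. Producing such a good specialization uniformly in the infinite family of points is precisely what is missing; this is why every previously established persistence result quoted in Section~\ref{section:persistence} (for being groupless, of general type, algebraically hyperbolic, bounded, $1$-bounded, or geometrically hyperbolic) proceeds instead by replacing the arithmetic input by a geometric one that commutes with arbitrary base change, and this is the approach I would pursue in parallel, e.g.\ by seeking a direct geometric witness of the failure of arithmetic hyperbolicity over $k$ from the assumed failure over $L$, via a Hilbert-scheme or moduli-space of $\mathcal{Z}$-type subschemes that is itself defined over $A_0$.
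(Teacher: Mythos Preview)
The statement you are addressing is labeled as a \emph{Conjecture} in the paper and is explicitly presented there as open; the paper gives no proof of it, only partial results under additional hypotheses (Theorems~\ref{thm:A} and~\ref{thm:B}). So there is no proof in the paper against which to compare your attempt.

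That said, your analysis of the difficulty is accurate and honest. The case where $\mathrm{Frac}(A)$ is algebraic over $\mathrm{Frac}(A_0)$ does go through essentially as you describe, via an embedding of $A$ into $k$. You then correctly isolate the transcendental case as the genuine obstruction and explain why a naive specialization argument does not obviously work: one cannot guarantee, uniformly over an infinite family of integral points, a specialization into $k$ that keeps them pairwise distinct and keeps the Zariski closure positive-dimensional and outside $\Delta$. This is precisely why the paper does not claim a proof.

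The paper's route to the known partial results is different in spirit from the direct specialization you sketch. Rather than specializing the integral points themselves, Theorem~\ref{thm:A} imposes the auxiliary hypothesis that $X_L$ is geometrically hyperbolic over $L$; this geometric finiteness (of pointed maps from curves) is what substitutes for the missing control over specializations, following Martin-Deschamps. Theorem~\ref{thm:B} uses instead the ``mild boundedness'' of abelian varieties. Your closing paragraph gestures at exactly this kind of replacement of the arithmetic input by a geometric one, so your instincts about how progress is actually made here are correct. Just be clear that what you have written is a diagnosis of the obstruction, not a proof of the conjecture.
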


Note that we will focus throughout on arithmetic hyperbolicity (as opposed to Mordellicity) as  its persistence along field extensions is easier to study. The reader may recall that the difference between Mordellicity and arithmetic hyperbolicity disappears for many varieties (e.g., affine varieties); see Section \ref{section:ps_mordell2} for a discussion. 

This conjecture is investigated in \cite{vBJK, JAut, JLevin, JLitt}.  As a basic example, the reader may note that Faltings proved that a smooth projective connected curve of genus at least two over $\Qbar$ is arithmetically hyperbolic over $\Qbar$ in \cite{Faltings2}. He then later explained in \cite{FaltingsComplements} that Grauert-Manin's function field version of the Mordell conjecture can be used to prove that a smooth projective connected curve of genus at least two over $k$ is arithmetically hyperbolic over $k$.

In the rest of this section, we will present some     results on the Persistence Conjecture. We start with the following result. 

\begin{theorem}\label{thm:A} Let $k\subset L$ be an extension of algebraically closed fields of characteristic zero. Let $X$ be an arithmetically hyperbolic variety over $k$ such that $X_L$ is geometrically hyperbolic over $L$. Then $X_L$ is arithmetically hyperbolic over $L$.
\end{theorem}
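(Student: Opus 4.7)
The plan is to prove the theorem first when $L$ is uncountable, by a spreading-out-and-specialisation argument, and then to deduce the general case by base change to a larger uncountable algebraically closed field. The contradiction is extracted from the geometric hyperbolicity of $X_L/L$ applied to pointed morphisms from a suitable curve.

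Assume $L$ is uncountable and, for contradiction, that $X_L$ is not arithmetically hyperbolic over $L$: fix a $\ZZ$-finitely generated subring $B\subset L$, a model $\mathcal Y/B$ of $X_L$ (with defining isomorphism $\phi:\mathcal Y\otimes_B L\xrightarrow{\sim} X_L$), and pairwise distinct points $a_1,a_2,\ldots\in\mathcal Y(B)$. Since $\phi$ is determined by finitely many $L$-coefficients, there is a finitely generated $k$-subalgebra $R\subset L$ containing $B$ over which $\phi$ is already defined, i.e.\ one has an isomorphism $\mathcal Y\otimes_B R\cong X\times_k\Spec R$ of $R$-schemes; after localising we may assume $U:=\Spec R$ is smooth and affine. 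Each $a_i:\Spec B\to \mathcal Y$ then base-changes to a section of $X\times_k U\to U$, equivalently a $k$-morphism $f_i:U\to X$; the $f_i$ are pairwise distinct because $\Spec R\to\Spec B$ is dominant and $\mathcal Y/B$ is separated.

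Set $n:=\dim U = \mathrm{tr.deg}_k\,\mathrm{Frac}(R)$. If $n=0$ then $R=k$, so $B\subset k$ and $\mathcal Y$ is itself a model of $X$ over the $\ZZ$-finitely generated ring $B\subset k$; the infinitude of $\mathcal Y(B)$ then contradicts arithmetic hyperbolicity of $X$ over $k$. If $n\geq 1$, fix $u_0\in U(k)$. The composition $\phi_0:B\hookrightarrow R\twoheadrightarrow k$ (second map = evaluation at $u_0$) has image a $\ZZ$-finitely generated subring $A\subset k$, and $\mathcal Y\otimes_{B,\phi_0}A$ is a model of $X$ over $A$ (its base change to $k$ is $X\times_k\{u_0\}\cong X$). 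Arithmetic hyperbolicity of $X/k$ applied to this model forces $(\mathcal Y\otimes_{B,\phi_0}A)(A)$, hence $\{f_i(u_0)\}\subset X(k)$, to be finite, so after passing to an infinite subfamily I may assume $f_i(u_0)=x_0$ for all $i$ and some fixed $x_0\in X(k)$. For $i\neq j$ the coincidence locus $Z_{ij}:=\{u\in U:f_i(u)=f_j(u)\}$ is a proper closed subvariety of $U$; since $L$ is uncountable, a Baire-type dimension count in the family of smooth curves through $u_0$ yields a smooth connected curve $C\hookrightarrow U$ containing $u_0$ with $C\not\subset Z_{ij}$ for every $i\neq j$. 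Then $\{f_i|_C\}$ is a pairwise distinct family of pointed $k$-morphisms $(C,u_0)\to(X,x_0)$, so after base change to $L$ one obtains infinitely many pairwise distinct pointed $L$-morphisms $(C_L,u_{0,L})\to(X_L,(x_0)_L)$, contradicting the geometric hyperbolicity of $X_L/L$.

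For the general case, choose any uncountable algebraically closed field $L'\supset L$. By the preservation of geometric hyperbolicity under algebraically closed field extensions recorded earlier in the excerpt, $X_{L'}$ is geometrically hyperbolic over $L'$; $X$ remains arithmetically hyperbolic over $k$, so the uncountable case just proved shows that $X_{L'}$ is arithmetically hyperbolic over $L'$. For any $\ZZ$-finitely generated $B\subset L$ and model $\mathcal Y/B$ of $X_L$, the ring $B$ is also a $\ZZ$-finitely generated subring of $L'$ and $\mathcal Y$ is a model of $X_{L'}$ over $B$ (since $\mathcal Y\otimes_B L\otimes_L L'=X_{L'}$), so $\mathcal Y(B)$ is finite by the arithmetic hyperbolicity of $X_{L'}/L'$. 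The principal technical obstacle is the production of the separating curve $C$ in the uncountable case: the countable collection of proper coincidence loci $\{Z_{ij}\}$ must be avoided among smooth curves through $u_0$, which depends essentially on the uncountability of $L$ and is precisely what forces the two-step structure of the proof.
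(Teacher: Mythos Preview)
The paper does not give a detailed proof of this theorem; it only remarks that the argument is ``inspired by Martin--Deschamps's proof'' and refers to \cite{JAut, JLitt}. Your overall strategy --- spread out over a finitely generated $k$-algebra, specialize at a $k$-point to pin down the images using the arithmetic hyperbolicity of $X/k$, and then contradict geometric hyperbolicity via infinitely many pointed maps from a curve --- is exactly the specialization technique the paper has in mind, so at the level of ideas you have reproduced the intended proof.

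There is, however, a genuine slip in the uncountable step. You claim that ``since $L$ is uncountable'' one can find a smooth curve $C\hookrightarrow U$ through $u_0$ avoiding the countably many coincidence loci $Z_{ij}$, and you then speak of the $f_i|_C$ as pointed \emph{$k$-morphisms}. But $U=\Spec R$ is a $k$-variety, and the uncountability of $L$ tells you nothing about curves in $U$ defined over $k$; if $k=\Qbar$ there need not be any such $C$. The repair is to base change to $L$ \emph{before} choosing the curve: in $U_L$ the loci $Z_{ij,L}$ are still proper closed subsets, $u_{0,L}$ is an $L$-point, and now the uncountability of $L$ legitimately produces a smooth connected curve $C\subset U_L$ through $u_{0,L}$ not contained in any $Z_{ij,L}$. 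The restrictions $f_{i,L}|_C$ are then infinitely many pairwise distinct elements of $\Hom_L\bigl((C,u_{0,L}),(X_L,(x_0)_L)\bigr)$, contradicting the geometric hyperbolicity of $X_L$ over $L$. Your subsequent ``base change to $L$'' becomes redundant. A smaller point: in the reduction from arbitrary $L$ to uncountable $L'$ you invoke persistence of geometric hyperbolicity along $L\subset L'$, but the paper only records this for \emph{projective} schemes, whereas the theorem is stated for varieties; you should either restrict attention to the projective case or note that the persistence argument extends.
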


Note that Theorem \ref{thm:A} implies that the Persistence Conjecture holds for varieties over $k$ which are geometrically hyperbolic over any field extension of $L$.

  Theorem \ref{thm:A} is inspired by Martin-Deschamps's proof of the arithmetic Shafarevich conjecture over finitely generated fields (see also Remark \ref{remark:md}). Indeed, in Szpiro's seminar \cite{Szpiroa}, Martin-Deschamps gave a proof of the arithmetic Shafarevich conjecture   by using a specialization argument on the moduli stack of principally polarized abelian schemes; see \cite{Martin}. This specialization argument resides on Faltings's theorem  that the moduli space of principally polarized abelian varieties of fixed dimension over $\CC$ is geometrically hyperbolic over $\CC$. We note that Theorem \ref{thm:A} is essentially implicit in her line of reasoning.  

We will present applications of Theorem \ref{thm:A} to the Persistence Conjecture based on the results obtained  in    \cite{JAut, JLitt}.  However, before we give these applications, we mention the following result which implies that the Persistence Conjecture holds  for normal projective surfaces with non-zero irregularity $\mathrm{h}^1(X,\mathcal{O}_X)$.

\begin{theorem}\label{thm:B}
Let $X$ be a  projective surface over $k$ which admits a non-constant morphism to some abelian variety over $k$. Then $X$ is arithmetically hyperbolic over $k$ if and only if $X_L$ is arithmetically hyperbolic over $L$.
\end{theorem}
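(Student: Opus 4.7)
The $(\Leftarrow)$ direction is immediate from the definitions: if $A \subset k$ is a $\mathbb{Z}$-finitely generated subring and $\mathcal{X}$ is a model for $X$ over $A$, then $A$ is also a subring of $L$ and $\mathcal{X}$ is a model for $X_L$ over $A$, so finiteness of $\mathcal{X}(A)$ follows from the assumed arithmetic hyperbolicity of $X_L$ over $L$.

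For the $(\Rightarrow)$ direction, assume $X$ is arithmetically hyperbolic over $k$, with $f \colon X \to A$ the given non-constant morphism. My plan is to invoke Theorem~\ref{thm:A}, which reduces the goal to verifying that $X_L$ is geometrically hyperbolic over $L$. First, by the Hassett--Tschinkel potential density argument of \cite[\S3]{JAut}, arithmetic hyperbolicity of the projective variety $X$ forces $X$ to be groupless over $k$. Grouplessness is preserved by base change to $L$ (Theorem~\ref{thm:persss}(2)), so $X_L$ is also a projective groupless surface equipped with a non-constant map $f_L \colon X_L \to A_L$ to an abelian variety. As a by-product: because $X$ is projective groupless surface with a non-constant morphism to an abelian variety, Remark~\ref{remark:vB} (based on \cite[Lemma~8.11]{vBJK}) supplies an arithmetically-pure model for $X$, and Theorem~\ref{thm:ar_hyp_is_mor} upgrades arithmetic hyperbolicity to full Mordellicity of $X$ over $k$.

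To verify geometric hyperbolicity of $X_L$ over $L$, I would let $Y_L \subset A_L$ be the scheme-theoretic image of $f_L$ and take the Stein factorization $X_L \to X_L' \to Y_L$ with $X_L' \to Y_L$ finite. The closed subvariety $Y_L$ of the abelian variety $A_L$ is geometrically hyperbolic modulo its special locus $\mathrm{Sp}(Y_L)$ by Theorem~\ref{thm:exc} (Bloch--Ochiai--Kawamata, Faltings, Yamanoi). Grouplessness of $X_L$ combined with the generically-finite character of $f_L$ is designed to force $\mathrm{Sp}(Y_L) = \emptyset$, so $Y_L$ is genuinely geometrically hyperbolic. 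Geometric hyperbolicity then descends to $X_L'$ along the finite morphism $X_L' \to Y_L$ (a Chevalley--Weil-type descent for geometric hyperbolicity) and transfers to $X_L$ via the birational correspondence $X_L \dashrightarrow X_L'$, exploiting that $X_L$ has no rational curves (being groupless) so that rational maps from smooth varieties into $X_L$ extend (compare \cite[Lemma~3.5]{JKa}). Theorem~\ref{thm:A} then upgrades geometric hyperbolicity of $X_L$ to arithmetic hyperbolicity of $X_L$ over $L$, completing the proof.

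The main obstacle is the verification that $\mathrm{Sp}(Y_L)$ is empty, particularly in the degenerate situation $\dim Y_L = 1$. If $Y_L$ is a curve of genus at least two, one needs a separate analysis of the fibration $X_L \to Y_L$ and its generic fibre, which by pseudo-groupless-surface considerations combined with Remark~\ref{remark:surfaces} should itself be a curve of general type; if instead $Y_L$ were an elliptic curve, one must derive a contradiction with the arithmetic hyperbolicity of $X$, for instance by pulling back rational points on $A$ along $f$ over a suitably large number field to produce sequences of integral points on a model of $X$ that violate its arithmetic hyperbolicity.
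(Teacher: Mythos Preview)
Your approach has a genuine gap, and the paper explicitly flags it. You try to reach the conclusion via Theorem~\ref{thm:A}, which requires proving that $X_L$ is geometrically hyperbolic over $L$. But the paper states plainly, right after Theorem~\ref{thm:B}, that ``we can not prove Theorem~\ref{thm:B} by appealing to the geometric hyperbolicity of $X$ (as it is currently not known whether an arithmetically hyperbolic projective surface which admits a non-constant map to an abelian variety is geometrically hyperbolic).'' Your route through $Y_L$ does not circumvent this obstacle.

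Concretely, the step ``grouplessness of $X_L$ \ldots\ is designed to force $\mathrm{Sp}(Y_L)=\emptyset$'' fails. Take $C$ a bielliptic genus-two curve with quotient an elliptic curve $E$, and $D$ any genus-two curve. Then $X=C\times D$ is a groupless (even geometrically hyperbolic) projective surface admitting a finite morphism to $Y=E\times D\subset E\times\mathrm{Jac}(D)$, yet $\mathrm{Sp}(Y)=Y$ since every fibre $E\times\{d\}$ is a translate of an elliptic curve. So $Y_L$ need not be groupless even when $X_L$ is, and your descent of geometric hyperbolicity from $Y_L$ to $X_L'$ never gets off the ground. The $\dim Y_L=1$ case you single out is even worse: if the Albanese image is an elliptic curve (irregularity one), there is nothing in your argument that rules this out, and your suggested fix of ``pulling back rational points along $f$'' produces points on fibres, not sections of the model, so it does not contradict arithmetic hyperbolicity of $X$.

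The paper's actual proof follows a completely different route: it uses the \emph{mild boundedness} of abelian varieties (for every smooth curve $C$ there exist finitely many points $c_1,\ldots,c_n\in C(k)$ such that $\Hom_k((C,c_1,\ldots,c_n),(A,a_1,\ldots,a_n))$ is finite for every abelian variety $A$ and every choice of $a_i$). This property, which is much weaker than geometric hyperbolicity and holds for the target abelian variety rather than for $X$ itself, is combined with the arithmetic hyperbolicity of $X$ to show that $X$ is mildly bounded; mild boundedness is then shown in \cite[\S4.1]{JAut} to suffice for the Persistence Conjecture.
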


The proofs of Theorems \ref{thm:A} and \ref{thm:B} differ tremendously in spirit. In fact, we can not prove Theorem \ref{thm:B} by appealing to the geometric hyperbolicity of $X$ (as it is currently not known whether an arithmetically hyperbolic projective surface which admits a  non-constant map to an abelian variety is geometrically hyperbolic).  Instead, Theorem \ref{thm:B} is proven by appealing to the ``mild boundedness'' of abelian varieties; see \cite{vBJK}. More explicitly: in the proof of Theorem \ref{thm:B}, we use that, for every smooth connected curve $C$ over $k$, there exists an integer $n>0$ and  points $c_1,\ldots, c_n$ in $C(k)$ such that, for every abelian variety $A$ over $k$ and every $a_1,\ldots, a_n$ in $A(k)$, the set 
$$
\Hom_k((C,c_1,\ldots,c_n),(A,a_1,\ldots,a_n))
$$ is finite. This finiteness property for abelian varieties can be combined with the arithmetic hyperbolicity of the surface $X$ in Theorem \ref{thm:B} to show that the surface $X$ is mildly bounded. The  property of being mildly bounded is clearly \emph{much} weaker than being geometrically hyperbolic, but it turns out to be enough to show the Persistence Conjecture; see \cite[\S4.1]{JAut}. Note that it is a bit surprising that abelian varieties (as   they are very far from being hyperbolic) satisfy some ``mild'' version of geometric hyperbolicity. We refer the reader to \cite[\S 4]{JAut} for the definition of what this notion entails, and to \cite{vBJK} for the fact that abelian varieties are mildly bounded.  

We now focus as promised on  the applications of Theorem \ref{thm:A}. Our first application says that the Persistence Conjecture holds for all algebraically hyperbolic projective varieties.

\begin{theorem}  
Let $X$ be a projective algebraically hyperbolic variety over $k$. Then $X$ is arithmetically hyperbolic over $k$ if and only if, for every algebraically closed field $L$ containing $k$, the variety $X_L$ is arithmetically hyperbolic over $L$.
\end{theorem}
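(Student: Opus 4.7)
The plan is to reduce this statement to Theorem \ref{thm:A}, which says that arithmetic hyperbolicity ``persists'' along field extensions as soon as the variety is geometrically hyperbolic after base change. The backward implication is trivial: taking $L = k$ immediately gives the arithmetic hyperbolicity of $X$ over $k$. So the content lies in the forward implication, namely showing that if $X$ is an algebraically hyperbolic projective variety which is arithmetically hyperbolic over $k$, then $X_L$ is arithmetically hyperbolic over $L$ for every algebraically closed extension $L/k$.

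First I would invoke Theorem \ref{thm:A}: it suffices to show that $X_L$ is geometrically hyperbolic over $L$. To establish this, I would trace through the chain of implications between the ``geometric'' hyperbolicity notions introduced in Sections \ref{section:alg_hyp}--\ref{section:geom_hyp}, applied after base change. The key point is that algebraic hyperbolicity is well-behaved along field extensions. Specifically, by Theorem \ref{thm:persss}(3) applied with $\Delta = \emptyset$, the hypothesis that $X$ is algebraically hyperbolic over $k$ propagates to give that $X_L$ is algebraically hyperbolic over $L$.

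Next I would chain the implications among the ``$L$-level'' notions. By the result in Section \ref{section:boundedness} that algebraically hyperbolic projective schemes are bounded (in particular $1$-bounded), the variety $X_L$ is $1$-bounded over $L$. Then Corollary \ref{cor:urata} (applied with $\Delta = \emptyset$) gives that $X_L$ is geometrically hyperbolic over $L$. Combining this with the arithmetic hyperbolicity of $X$ over $k$, Theorem \ref{thm:A} yields the desired conclusion that $X_L$ is arithmetically hyperbolic over $L$.

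In this argument there is essentially no hard step, since every intermediate implication is already recorded in the excerpt: the work has been done by the earlier results on base change of algebraic hyperbolicity (Theorem \ref{thm:persss}), Demailly-style comparisons between algebraic hyperbolicity, $1$-boundedness, and geometric hyperbolicity (the theorems in Sections \ref{section:boundedness}--\ref{section:geom_hyp}, in particular Corollary \ref{cor:urata}), and the specialization result Theorem \ref{thm:A}. The main conceptual point to highlight is that \emph{algebraic hyperbolicity} is precisely the right input: it is simultaneously strong enough to survive arbitrary base change (unlike geometric hyperbolicity, whose base change statement in the excerpt requires uncountability of $k$ when the exceptional locus is empty is not imposed) and strong enough to imply geometric hyperbolicity on the nose, which is exactly the hypothesis needed to feed into Theorem \ref{thm:A}.
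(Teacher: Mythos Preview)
Your proof is correct and follows exactly the paper's argument: persist algebraic hyperbolicity to $L$ via Theorem~\ref{thm:persss}(3), deduce geometric hyperbolicity of $X_L$ through $1$-boundedness and Corollary~\ref{cor:urata}, then apply Theorem~\ref{thm:A}. One minor remark: your parenthetical about geometric hyperbolicity needing uncountability is slightly off---the paper in fact records that geometric hyperbolicity \emph{does} persist without uncountability when $\Delta=\emptyset$---but this is only commentary and does not affect the proof.
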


\begin{proof}
Since $X$ is algebraically hyperbolic over $k$, it follows from $(3)$ in  Theorem \ref{thm:persss} that $X_L$ is algebraically hyperbolic over $L$. Since algebraically hyperbolic projective varieties are $1$-bounded and thus geometrically hyperbolic (Corollary \ref{cor:urata}), the result follows   from Theorem \ref{thm:A}.
\end{proof}

Our second application involves integral points on the moduli space of smooth hypersurfaces. We present the results obtained in \cite{JLitt} in the following section.

 \subsection{The Shafarevich conjecture for smooth hypersurfaces}

We explain in this section how Theorem \ref{thm:A} can be used to show the following finiteness theorem.  This explanation will naturally lead us to studying integral points on moduli spaces. 

\begin{theorem}\label{thm:hypsurf_intro} Let $d\geq 3$     and   $n\geq 2$ be integers. Assume that, for every number field $K$ and every finite set of finite places $S$ of $K$, the set of $\OO_{K,S}$-isomorphism classes of smooth hypersurfaces of degree $d$ in $\mathbb{P}^{n+1}_{\OO_{K,S}}$ is finite. Then, for every $\ZZ$-finitely generated normal integral domain $A$ of characteristic zero, the set of $A$-isomorphism classes of smooth hypersurfaces of degree $d$ in $\mathbb{P}^{n+1}_A$ is finite.
\end{theorem}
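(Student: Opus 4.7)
The strategy is to recast the statement as an instance of Theorem~\ref{thm:A} for the moduli space of smooth hypersurfaces. First, choose an auxiliary integer $m$ large enough that a suitable level-$m$ structure rigidifies the moduli problem: this gives a smooth quasi-projective scheme $H$ over $\ZZ[1/m]$ such that, for any $\ZZ[1/m]$-algebra $R$, the set $H(R)$ parametrizes $R$-isomorphism classes of smooth hypersurfaces of degree $d$ in $\mathbb{P}^{n+1}_R$ equipped with level-$m$ structure. The fibers of the forgetful map (which forgets the level structure) have size uniformly bounded by the order of the relevant symplectic group mod $m$, so finiteness of $H(\OO_{K,S})$ for all number fields $K$ and finite sets of finite places $S$ is equivalent to the hypothesis of the theorem. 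Since every $\ZZ$-finitely generated subring of $\Qbar$ is of the form $\OO_{K,S}$, this is the same as the assertion that $H_{\Qbar}$ is arithmetically hyperbolic over $\Qbar$.

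The plan is then to apply Theorem~\ref{thm:A} with $k=\Qbar$ and $L=\CC$: granted the geometric hyperbolicity of $H_\CC$ over $\CC$, we obtain that $H_\CC$ is arithmetically hyperbolic over $\CC$. Unpacking this, for every $\ZZ$-finitely generated subring $B\subset \CC$ the set $H(B)$ is finite. To conclude, let $A$ be a $\ZZ$-finitely generated normal integral domain of characteristic zero. Since $\mathrm{Frac}(A)$ has finite transcendence degree over $\QQ$, we may embed $A$ into $\CC$; taking $B:=A[1/m]\subset \CC$, the finiteness of $H(B)$ yields finiteness of the set of $A[1/m]$-isomorphism classes of level-rigidified smooth hypersurfaces. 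Forgetting level structure (which only multiplies the count by a uniform finite factor) and passing from $A[1/m]$ back to $A$ (which only introduces finitely many additional classes, corresponding to twists at the finitely many primes of $A$ dividing $m$) gives the desired finiteness.

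The decisive step is therefore the geometric hyperbolicity of $H_\CC$: for every smooth connected curve $C$ over $\CC$, every $c\in C(\CC)$, and every $x\in H(\CC)$, the set of morphisms $f\colon C\to H$ with $f(c)=x$ is finite. Concretely, fixing a smooth hypersurface $X_0$ corresponding to $x$, one must show that only finitely many isomorphism classes of smooth families of hypersurfaces $\mathcal{X}\to C$ with marked fiber $\mathcal{X}_c\cong X_0$ can occur. The natural tool is the period map associated with the primitive middle cohomology: by Griffiths's infinitesimal Torelli theorem for smooth hypersurfaces (which holds for $d\geq 3$, $n\geq 2$ outside a finite list of exceptional bidegrees that can be treated by ad hoc arguments), such a family is determined, up to finite ambiguity, by its associated polarized $\ZZ$-variation of Hodge structure with prescribed fiber at $c$; Deligne's finiteness theorem for polarized $\ZZ$-variations of Hodge structure on a fixed curve with fixed fiber at a point then supplies the required finiteness.

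The main obstacle is precisely this Hodge-theoretic input, which is the entire technical substance of \cite{JLitt}; once it is in hand, the upgrade from number-ring Shafarevich finiteness to arbitrary $\ZZ$-finitely generated normal integral domains is purely formal via Theorem~\ref{thm:A}. Note that Theorem~\ref{thm:B} is unavailable here because the moduli space $H$ in general admits no non-constant morphism to an abelian variety (the period map targets a locally symmetric variety of non-abelian type), so the Hodge-theoretic verification of geometric hyperbolicity is essentially the only route on offer.
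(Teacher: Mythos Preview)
Your approach is essentially the one taken in the paper: rigidify the moduli problem via level structure to obtain a fine moduli scheme, identify the hypothesis with arithmetic hyperbolicity of this scheme over $\Qbar$, establish geometric hyperbolicity over $\CC$ via the infinitesimal Torelli theorem together with Deligne's finiteness theorem (and the Theorem of the Fixed Part), and then invoke Theorem~\ref{thm:A} to propagate arithmetic hyperbolicity. You have correctly isolated the Hodge-theoretic input as the only substantive step.

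Two minor points where your write-up is looser than the paper's. First, not every $\ZZ$-finitely generated subring of $\Qbar$ is literally of the form $\OO_{K,S}$ (consider $\ZZ[\sqrt{-3}]$); what you need, and what is true, is that every such ring is \emph{contained} in some $\OO_{K,S}$. Second, your final passage from $A[1/m]$ back to $A$ (``twists at the finitely many primes of $A$ dividing $m$'') is not obviously a finite-fibre statement, since $\Spec A[1/m]\hookrightarrow \Spec A$ is an open immersion rather than a covering and the usual twist formalism does not directly apply. The paper sidesteps this by descending not to the level cover but to the quotient stack $[\mathrm{PGL}_{n+2,k}\backslash \mathrm{Hilb}_{d,n,k}]$ via a stacky Chevalley--Weil theorem; since this stack has a model over $\ZZ$, its arithmetic hyperbolicity over $k$ translates directly into finiteness of $A$-isomorphism classes of hypersurfaces for every $\ZZ$-finitely generated $A\subset k$, with no need to invert $m$.
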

 
 To prove Theorem \ref{thm:hypsurf_intro}, we  (i) reformulate its statement   in terms of the arithmetic hyperbolicity of an appropriate moduli space of smooth hypersurfaces, (ii) establish the geometric hyperbolicity of this moduli space, and (iii)   apply Theorem \ref{thm:A}. Indeed, the assumption in Theorem \ref{thm:hypsurf_intro} can be formulated as saying that the (appropriate) moduli space of hypersurfaces is arithmetically hyperbolic over $\Qbar$ and the conclusion of our theorem is then that this moduli space is also arithmetically hyperbolic over larger fields. To make these statements more precise,   let  $\mathrm{Hilb}_{d,n}$ be the Hilbert scheme of smooth hypersurfaces of degree $d$ in $\mathbb{P}^{n+1}$. Note that $\mathrm{Hilb}_{d,n}$ is  a smooth affine scheme over $\mathbb{Z}$.  There is a natural action of the automorphism group scheme $\mathrm{PGL}_{n+2}$ of $\mathbb{P}^{n+1}_{\ZZ}$ on $\mathrm{Hilb}_{d,n}$. Indeed, given a smooth hypersurface $H$ in $\mathbb{P}^{n+1}$ and an automorphism $\sigma$ of $\mathbb{P}^{n+1}$, the resulting hypersurface $\sigma(H)$ is again smooth. 
 
 The quotient of a smooth affine scheme over $\mathbb{Z}$ by a reductive group (such as $\mathrm{PGL}_{n+2}$) is  an affine scheme of finite type over $\ZZ$ by Mumford's GIT.  However, for the study of hyperbolicity and integral points, this quotient scheme is not very helpful, as the action of $\mathrm{PGL}_{n+2}$ on $\mathrm{Hilb}_{d,n}$ is not free. The natural solution it to consider the stacky quotient, as in  \cite{Ben13, BenoistCoarse, JL}. However, one may avoid the use of stacks by adding level structure as in \cite{Javan3}. Indeed,   by \cite{Javan3}, there exists  a smooth affine variety  $H'$ over $\QQ$ with a free action by $\mathrm{PGL}_{n+2, \QQ}$, and a finite \'etale $\mathrm{PGL}_{n+2,\QQ}$-equivariant morphism $H'\to \mathrm{Hilb}_{d,n,\QQ}$.    Let  $U_{d;n}:= \mathrm{PGL}_{n+2,\QQ}\backslash H'$ be the  smooth affine quotient scheme over $\QQ$.  To prove Theorem \ref{thm:hypsurf_intro}, we establish the following result.  
 
 \begin{theorem}\label{thm:hyp_surf_intro2}  Let $d\geq 3$ and $n\geq 2$ be integers.  Assume that   $U_{d;n,\Qbar}$ is arithmetically hyperbolic over $\Qbar$. Then, for every   algebraically closed field $k$ of characteristic zero, the affine variety $U_{d;n,k}$ is arithmetically hyperbolic over $k$.
 \end{theorem}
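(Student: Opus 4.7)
The plan is to deduce Theorem \ref{thm:hyp_surf_intro2} from Theorem \ref{thm:A}, applied to the arithmetically hyperbolic variety $U_{d;n,\Qbar}$ over $\Qbar$ and the field extension $\Qbar \subset k$ (note that $\Qbar$ embeds canonically into any algebraically closed field of characteristic zero). To invoke Theorem \ref{thm:A}, it suffices to prove that $U_{d;n,k}$ is geometrically hyperbolic over $k$ for every such $k$. A standard Lefschetz-type descent reduces this in turn to the case $k = \CC$: given a pointed smooth connected curve $(C,c)$ over $k$ and a base point $x \in U_{d;n,k}(k)$, any (potentially infinite) family $\{f_i\}_{i\in I}$ of pairwise distinct morphisms in $\Hom_k((C,c),(U_{d;n,k},x))$ descends to a finitely generated subfield $K \subset k$; choosing an embedding $K \hookrightarrow \CC$, the base-changed morphisms remain distinct over $\CC$, so finiteness over $\CC$ forces finiteness over $k$.

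To show that $U_{d;n,\CC}$ is geometrically hyperbolic over $\CC$, the plan is to use Hodge theory. The universal family of smooth degree-$d$ hypersurfaces in $\PP^{n+1}$ over $U_{d;n,\CC}$ carries a polarized variation of Hodge structure $\mathbb{V}$ on its primitive middle cohomology $H^n_{\mathrm{prim}}$, giving rise to a period map $\phi : U_{d;n,\CC}^{\an} \to \Gamma \backslash D$ to the quotient of the classifying space $D$ of polarized Hodge structures with the given invariants by the monodromy group $\Gamma$. For $d \geq 3$ and $n \geq 2$, the infinitesimal Torelli theorem for hypersurfaces ensures that $\phi$ is an immersion. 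Any morphism $f : (C,c) \to (U_{d;n,\CC}, x)$ pulls $\mathbb{V}$ back to a polarized integral VHS on $C$ with fixed Hodge numbers and prescribed stalk at $c$. By Deligne's finiteness theorem for polarized $\ZZ$-VHS on a fixed quasi-projective variety with bounded invariants, only finitely many local systems on $C$ can underlie such a VHS; combining this with the rigidity of horizontal holomorphic maps into period domains and the local injectivity of $\phi$ should pin down $f$ up to finitely many choices.

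The main obstacle is the Hodge-theoretic step: bridging the gap between the local injectivity of $\phi$ furnished by infinitesimal Torelli and the global finiteness needed for geometric hyperbolicity. Local injectivity alone does not preclude the existence of infinitely many preimages of a point of $\Gamma \backslash D$, nor infinitely many holomorphic maps from a pointed curve into $U_{d;n,\CC}^{\an}$. Closing this gap requires combining Deligne's monodromy-finiteness theorem with recent advances of Bakker-Brunebarbe-Tsimerman on the algebraicity and hyperbolicity of period maps, exploiting the arithmeticity and largeness of the monodromy of the universal family of smooth hypersurfaces. Carrying out this orchestration in the concrete setting of $U_{d;n,\CC}$ is the technical heart of the argument and is precisely what is established in \cite{JLitt}; the two other ingredients—the reduction via Theorem \ref{thm:A} and the Lefschetz descent to $\CC$—are by comparison formal.
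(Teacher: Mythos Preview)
Your proposal is correct and follows the same overall strategy as the paper: establish that $U_{d;n,k}$ is geometrically hyperbolic over $k$ and then invoke Theorem \ref{thm:A}. The Hodge-theoretic core you outline---Deligne's finiteness theorem together with the infinitesimal Torelli property for hypersurfaces---is exactly what the paper uses. The one substantive difference lies in the third ingredient you bring in to close the local-to-global gap: you reach for the Bakker--Brunebarbe--Tsimerman machinery and the arithmeticity/largeness of monodromy, whereas the paper (via \cite{JLitt}) instead uses the classical Theorem of the Fixed Part \cite{Schmid}. Once Deligne's theorem leaves only finitely many candidate local systems on $C$, the Theorem of the Fixed Part pins down the polarized VHS with a given underlying local system and prescribed fibre at $c$, and then the quasi-finiteness of the period map furnished by infinitesimal Torelli recovers $f$ up to finitely many choices. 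Your suggested route would also succeed, but it is heavier than necessary; in particular, neither arithmeticity nor big monodromy of the universal family is actually required. The Lefschetz descent to $\CC$ you insert is fine and harmless, though the paper phrases the geometric hyperbolicity directly over $k$.
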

 \begin{proof} Let us write $U:=U_{d;n,\Qbar}$. The proof relies on a bit of Hodge theory. Indeed, 
we use Deligne's finiteness theorem for monodromy representations \cite{Delignemonodromy}, the infinitesimal Torelli property for smooth hypersurfaces \cite{Fl86}, and the Theorem of the Fixed Part in Hodge theory \cite{Schmid} to show that $U_{k}$ is geometrically hyperbolic over $k$. Then, as  $U_k$ is geometrically hyperbolic over $k$, the result follows from Theorem \ref{thm:A}.  
 \end{proof}
 We now  explain how to deduce Theorem \ref{thm:hypsurf_intro} from Theorem \ref{thm:hyp_surf_intro2}.  

\begin{proof}[Proof of Theorem \ref{thm:hypsurf_intro}]  
Write   $U:=U_{d;n,\Qbar}$.  First, the assumption in Theorem \ref{thm:hypsurf_intro}  can be used to show that $U $ is arithmetically hyperbolic over $\Qbar$.
Then,  since $U$ is arithmetically hyperbolic over $\Qbar$,  it follows from Theorem \ref{thm:hyp_surf_intro2} that $U_k$ is arithmetically hyperbolic for every algebraically closed field   $k$ of characteristic zero. Finally, to conclude the proof,  let us recall that arithmetic hyperbolicity descends along finite \'etale morphisms of varieties (Remark \ref{remark:cw_mordell}). In \cite{JLalg}, the analogous descent statement is proven for finite \'etale morphisms of algebraic stacks, after extending the notion of arithmetic hyperbolicity from schemes to stacks. Thus,   by applying this ``stacky'' Chevalley-Weil theorem to the finite \'etale morphism $U_{d;n,k}\to [\mathrm{PGL}_{n+2, k}\backslash \mathrm{Hilb}_{d,n,k}]$ of stacks, where $[\mathrm{PGL}_{n+2, k}\backslash \mathrm{Hilb}_{d,n,k}]$ denotes the quotient stack, we obtain that  the stack  $[\mathrm{PGL}_{n+2, k}\backslash \mathrm{Hilb}_{d,n,k}]$  is arithmetically hyperbolic over $k$. Finally, the moduli-interpretation of the points of this quotient stack can   be used to see that, for every $\ZZ$-finitely generated normal integral domain $A$ of characteristic zero, the set of $A$-isomorphism classes of smooth hypersurfaces of degree $d$ in $\mathbb{P}^{n+1}_{\OO_{K,S}}$ is finite. This   concludes the proof.
 \end{proof}
 
  \begin{remark}[Period domains]
  Theorem \ref{thm:hyp_surf_intro2} actually follows from a more general statement about varieties with a quasi-finite period map (e.g., Shimura varieties). Namely, in \cite{JLitt} it is shown that a complex algebraic variety with a quasi-finite period map is geometrically hyperbolic.  For other results about period domains we refer the reader to  the article of Bakker-Tsimerman in this book \cite{BakkerTsimermanBook}.
  \end{remark}

\section{Lang's question on openness of hyperbolicity}\label{section:lang_question} 
It is   obvious that being hyperbolic is not stable under specialization. In fact, being pseudo-groupless is not stable under specialization, as a smooth proper curve of genus two can specialize to a tree of $\mathbb{P}^1$'s. Nonetheless, it seems reasonable to suspect that being hyperbolic (resp. pseudo-hyperbolic) is in fact stable under generization. The aim of this section is to investigate this property for all notions of hyperbolicity discussed in these notes.  In fact, on \cite[p.~176]{Lang2} Lang says ``I do not clearly understand the extent to which hyperbolicity is open for the Zariski topology''. This brings us to the following question of Lang and our  starting point of this section.

\begin{question}[Lang]\label{q1}
Let $S$ be a noetherian scheme over $\QQ$ and let $X\to S$ be a projective morphism. Is the set of $s$ in $S$ such that $X_{\overline{k(s)}}$ is groupless a Zariski open subscheme of $S$?
\end{question}

Here we let $k(s)$ denote the residue field of the point $s$, and we let $k(s)\to \overline{k(s)}$ be an algebraic closure of $k(s)$. 
Note that 
one can ask similar questions for the set of $s$ in $S$ such that $X_{\overline{k(s)}}$ is algebraically hyperbolic or arithmetically hyperbolic, respectively.

Before we discuss what one may expect regarding Lang's question,   let us  recall what it means for a subset of a scheme to be a Zariski-countable open.
 
If  $(X, \mathcal{T})$ is a noetherian topological space, then there exists another topology $\mathcal{T}^{\mathrm{cnt}}$, or $\mathcal{T}$-countable, on $X$ whose closed sets are the countable union of $\mathcal{T}$-closed sets. If $S$ is a noetherian scheme, a subset $Z\subset S$ is a Zariski-countable closed if it is a countable union of closed subschemes $Z_1, Z_2, \ldots \subset S$. 

\begin{remark}[What to expect? I] 
We will explain below that the locus of $s$ in $S$ such that $X_s$ is groupless is  a Zariski-countable open of $S$, i.e., its complement is a countable union of closed subschemes. In fact, we will show similar statements for algebraic hyperbolicity, boundedness,  geometric hyperbolicity, and the property of having only subvarieties of general type. Although this provides some indication that the answer to Lang's question might be positive, it is not so clear whether one should expect a positive answer to Lang's question. However,   it seems plausible that, assuming the Strongest Lang-Vojta conjecture (Conjecture \ref{conj:S6}), one can use   certain Correlation Theorems (see Ascher-Turchet \cite{AscherTurchetBook})  to show that  the answer to Lang's question is positive.
\end{remark}

One can also ask about the pseudofied version of Lang's question.

\begin{question}[Pseudo-Lang]\label{q2}
Let $S$ be a noetherian scheme over $\QQ$ and let $X\to S$ be a projective morphism. Is the set of $s$ in $S$ such that $X_{\overline{k(s)}}$ is pseudo-groupless a Zariski open subscheme of $S$?
\end{question}

Again, one can ask similar questions for the set of $s$ in $S$ such that $X_{\overline{k(s)}}$ is pseudo-algebraically hyperbolic or pseudo-arithmetically hyperbolic, respectively.

\begin{remark}[What to expect? II]
We will argue below that one should expect (in light of the Strong Lang-Vojta conjecture) that the answer to the Pseudo-Lang question is positive. This is because of a theorem of Siu-Kawamata-Nakayama on invariance of plurigenera.
\end{remark}

What do we know about the above questions (Questions \ref{q1} and \ref{q2})? The strongest results we dispose of are due to  Nakayama; see \cite[Chapter~VI.4]{NakayamaBook}. In fact, the following theorem can be deduced from Nakayama's \cite[Theorem~VI.4.3]{NakayamaBook}. (Nakayama's theorem is a generalization of theorems of earlier theorems of Siu and Kawamata on invariance of plurigenera.)

\begin{theorem}[Siu, Kawamata, Nakayama]\label{thm:siu}
Let $S$ be a noetherian scheme over $\QQ$ and let $X\to S$ be a projective morphism of schemes. Then, the set of $s$ in $S$ such that $X_s$ is of general type is open in $S$.
\end{theorem}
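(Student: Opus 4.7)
The plan is to reduce the statement to a local question at each point of $S$ and then invoke Nakayama's extension of Siu's invariance-of-plurigenera theorem, in the form of a lower semi-continuity statement for the Kodaira dimension of the geometric fibers. First, by standard noetherian approximation, I may assume that $S$ is of finite type over $\QQ$. Since openness can be tested at each point $s_0 \in S$ separately, it suffices to show that if $X_{s_0}$ is of general type then so is $X_s$ for every $s$ in some open neighborhood of $s_0$. Replacing $S$ by the irreducible component of $S_{\mathrm{red}}$ containing $s_0$ and shrinking to its regular locus, I may assume $S$ is regular and integral; generic flatness then lets me shrink further so that $X \to S$ is flat of constant relative dimension $n$. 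Because the paper's definition of general type is componentwise on the fibers, decomposing $X$ into its irreducible components and applying generic flatness component-by-component reduces me to the case where $X$ is integral and $X \to S$ is a flat projective morphism of relative dimension $n$ with irreducible geometric generic fiber.

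The key input is then Nakayama's theorem \cite[Thm.~VI.4.3]{NakayamaBook}, which implies that for a projective morphism $\mathcal{Y} \to S$ with $S$ integral and regular, the set
\[
\bigl\{\, s \in S \, : \, \kappa(\widetilde{\mathcal{Y}_s}) \geq d \,\bigr\}
\]
is Zariski open in $S$ for every integer $d$, where $\widetilde{\mathcal{Y}_s}$ is a desingularization of the geometric fiber $\mathcal{Y}_s$. Specializing to $\mathcal{Y} = X$ and $d = n$, and combining with the trivial bound $\kappa(\widetilde{X_s}) \leq \dim X_s = n$, the set of $s$ at which $X_s$ is of general type coincides with the open set $\{ s \in S \, : \, \kappa(\widetilde{X_s}) \geq n \}$, which contains $s_0$ by hypothesis. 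This yields the desired open neighborhood and completes the proof.

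The main obstacle is that Siu's original invariance-of-plurigenera theorem requires a smooth projective family, whereas in our setting the morphism $X \to S$ need not be smooth and the fibers $X_s$ can be arbitrarily singular. This is precisely the difficulty that Nakayama's refinement resolves: in practice, one passes to a generically finite cover $S' \to S$ and a simultaneous log resolution of $X \times_S S'$, both of which exist by Hironaka in characteristic zero, so as to produce an auxiliary smooth projective family to which Siu's theorem applies; the desired semi-continuity on $S$ is then deduced by combining Siu's result on the resolution with the birational invariance of the Kodaira dimension for smooth proper varieties and a descent argument from $S'$ back to $S$. The delicate bookkeeping involved in this resolution-and-descent step is exactly the content of Nakayama's framework, and it is this step that one has to take from \cite{NakayamaBook} as a black box.
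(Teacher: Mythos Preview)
The paper does not actually give a proof of this theorem: it simply states the result and says that it ``can be deduced from Nakayama's \cite[Theorem~VI.4.3]{NakayamaBook}'', noting that Nakayama's theorem generalizes earlier invariance-of-plurigenera results of Siu and Kawamata. Your proposal is precisely an expansion of that one-line citation into an actual argument, and it invokes exactly the same source (Nakayama, Theorem~VI.4.3) as its black box. So your approach is not merely consistent with the paper's --- it \emph{is} the paper's approach, made explicit.

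A small point worth tightening: when you ``replace $S$ by the irreducible component of $S_{\mathrm{red}}$ containing $s_0$ and shrink to its regular locus'', you should first remove the other irreducible components of $S_{\mathrm{red}}$ not passing through $s_0$ (an open step), and note that on the regular locus of $S_{\mathrm{red}}$ only one component passes through any given point, so you genuinely land in an open neighborhood of $s_0$ in the original $S$. Similarly, the componentwise reduction on $X$ is fine over a dense open of $S$ (by generic flatness and constructibility of the number of geometric irreducible components), but you should say explicitly that after this reduction the open neighborhood you produce is still open in the original $S$. These are routine bookkeeping matters and do not affect the correctness of the strategy.
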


Thus, by Theorem \ref{thm:siu}, assuming the Strong Lang-Vojta conjecture (Conjecture \ref{conj:S6}), the answer to the Pseudo-Lang question should be positive. Also, assuming the Strong Lang-Vojta conjecture, the set of $s$ in $S$ such that $X_{\overline{k(s)}}$ is pseudo-algebraically hyperbolic should   be open.  Similar statements should hold for pseudo-Mordellicity and pseudo-boundedness. Although neither of these statements are known,  some partial results are obtained in \cite[\S9]{vBJK}.

 In fact, as a consequence of  Nakayama's theorem and  the fact that the stack of proper schemes of general type is a countable union of finitely presented algebraic stacks, one can   prove the  following  result. 

\begin{theorem}[Countable-openness of every subvariety being of general type]\label{thm:specialization}   Let $S$ be a noetherian scheme over $\QQ$ and let $X \to S$ be a projective morphism. Then, the set of $s$ in $S$ such that  every integral closed subvariety of $X_s$  is of general type is Zariski-countable open in $S$.
\end{theorem}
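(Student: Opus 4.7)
The plan is to use the relative Hilbert scheme together with Nakayama's openness theorem for general type (Theorem \ref{thm:siu}) and a flat-closure argument over discrete valuation rings. Choose a relatively very ample line bundle on $X/S$ and decompose the Hilbert scheme as $\mathrm{Hilb}_{X/S} = \bigsqcup_h \mathrm{Hilb}^h_{X/S}$ into its countably many components indexed by Hilbert polynomial, each projective over $S$. Let $I^h \subset \mathrm{Hilb}^h_{X/S}$ be the open locus where the fibers of the universal subscheme $\mathcal{Y}^h \to \mathrm{Hilb}^h_{X/S}$ are geometrically integral. Applying Theorem \ref{thm:siu} to the restricted family $\mathcal{Y}^h|_{I^h} \to I^h$, the locus $G^h \subset I^h$ of fibers of general type is open, so $B^h := I^h \setminus G^h$ is closed in $I^h$ and locally closed in $\mathrm{Hilb}^h_{X/S}$. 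Writing $\pi_h : \mathrm{Hilb}^h_{X/S} \to S$ for the structural morphism, the image $\pi_h(B^h) \subset S$ is constructible by Chevalley's theorem.

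Let $\mathrm{Bad} \subset S$ denote the set of $s$ such that $X_s$ admits an integral closed subvariety not of general type. Since being of general type is a geometric invariant and each integral closed subvariety of $X_s$ decomposes over $\overline{k(s)}$ into geometrically integral components with the same general-type status, we have $\mathrm{Bad} = \bigcup_h \pi_h(B^h)$.

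The core step is to show that $\mathrm{Bad}$ is stable under specialization. Given $s \rightsquigarrow s'$ with $s \in \mathrm{Bad}$, pick an integral $Y \subset X_s$ not of general type and, by a standard valuation-ring argument, choose a DVR $V$ with algebraically closed residue field $\kappa$ and fraction field $K$ together with a map $\Spec V \to S$ sending the generic point to $s$ and the closed point to $s'$. Let $\mathcal{Y} \subset X_V := X \times_S \Spec V$ be the scheme-theoretic closure of $Y \times_{k(s)} K \subset X_K$; this is flat and projective over $V$, and its generic fiber $Y_K$ is not of general type. Applying Theorem \ref{thm:siu} to $\mathcal{Y} \to \Spec V$, the open locus of general type in $\Spec V$ misses the generic point, hence is empty, so the special fiber $\mathcal{Y}_\kappa$ is not of general type. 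Some irreducible component with reduced structure $W \subset (\mathcal{Y}_\kappa)_{\mathrm{red}}$ is then integral and not of general type. Its scheme-theoretic image $W_0$ in $X_{s'}$ is an integral closed subvariety; a dimension count on residue fields shows that $W$ is an irreducible component of $(W_0)_\kappa$, and since general type is a geometric invariant, $W$ not of general type forces $W_0$ not of general type. Hence $s' \in \mathrm{Bad}$.

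With specialization-stability established, each Zariski closure $\overline{\pi_h(B^h)}$ in $S$ lies inside $\mathrm{Bad}$, so
\[
\mathrm{Bad} = \bigcup_h \pi_h(B^h) \subset \bigcup_h \overline{\pi_h(B^h)} \subset \mathrm{Bad},
\]
giving $\mathrm{Bad} = \bigcup_h \overline{\pi_h(B^h)}$ as a countable union of closed subsets of $S$. The complement, which is the set appearing in the theorem, is therefore Zariski-countable open. The main obstacle is the DVR step: verifying that the scheme-theoretic closure in $X_V$ yields a flat projective family to which Nakayama's theorem applies, and carefully descending the integral component $W \subset X_\kappa$ to an integral subvariety of $X_{s'}$ itself while preserving the failure of general type, which rests on general type being a geometric invariant in characteristic zero.
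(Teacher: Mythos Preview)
Your approach is essentially correct and matches the two-step strategy the paper sketches in Remark~\ref{remark:ideas}: exhibit $\mathrm{Bad}$ as a countable union of constructible images, then show it is stable under specialization. The paper does not give a detailed proof of Theorem~\ref{thm:specialization}; it only remarks that the result follows from Nakayama's theorem together with the fact that the stack of proper schemes of general type is a countable union of finitely presented algebraic stacks. Your use of the relative Hilbert scheme in place of that stack is a perfectly good (and more concrete) implementation of the same idea.

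One point deserves care. In the DVR step you need the residue field $\kappa$ to be \emph{algebraic} over $k(s')$ (for instance $\kappa=\overline{k(s')}$), not merely algebraically closed. Otherwise your ``dimension count on residue fields'' can fail: if $\kappa/k(s')$ is transcendental, a $0$-dimensional $W\subset X_\kappa$ can map onto a positive-dimensional $W_0\subset X_{s'}$, and then $W$ is not a component of $(W_0)_\kappa$. The standard construction (EGA~II, 7.1.7) already produces a DVR with residue field $k(s')$, and strict henselization then gives $\kappa=\overline{k(s')}$; with that choice $X_\kappa\to X_{s'}$ is integral and your dimension argument goes through. A related but harmless wrinkle: when you pass from $Y\subset X_s$ to $Y_K$, it is cleanest to first replace $Y$ by a geometrically integral component over $\overline{k(s)}$ (which you may, since the two formulations of $\mathrm{Bad}$ agree in characteristic zero) so that $Y_K$ stays integral and ``not of general type'' transfers directly.
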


 The countable-openness of the locus of every subvariety being of general type  does not give a satisfying answer to Lang's question. However, it does suggest that every notion appearing in the Lang-Vojta conjecture should be Zariski-countable open. This expectation can be shown to hold for some notions of hyperbolicity. 
  For example,   given a projective morphism of schemes $X\to S$ with $S$ a complex algebraic variety, 
one can show that the locus of $s$ in $S$ such that $X_s$ is algebraically hyperbolic is an open subset of $S$ in the countable-Zariski topology; see \cite{vBJK, Demailly}. This result is essentially due to Demailly.

 \begin{theorem} \label{thm:demailly}
  Let $S$ be a noetherian scheme over $\QQ$ and let $X \to S$ be a projective   morphism.  Then, the set of $s$ in $S$ such that $X_s$ is algebraically hyperbolic  is  Zariski-countable open in $S$.  
\end{theorem}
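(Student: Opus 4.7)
The plan is to characterize the non-algebraically-hyperbolic locus
\[
T := \{\,s \in S : X_{\overline{k(s)}} \text{ is not algebraically hyperbolic}\,\}
\]
as a countable union of Zariski-closed subsets of $S$ via images of relative moduli spaces of stable maps, and then to conclude by noetherian induction. Working Zariski-locally on $S$, I may assume an $S$-relatively ample line bundle $L$ on $X$ has been fixed. For each pair of integers $g \geq 0$ and $d \geq 1$, let $\overline{\mathcal{M}}_{g}(X/S, d)$ denote the relative Kontsevich moduli space of stable maps of arithmetic genus $g$ and $L$-degree $d$; it is proper and of finite presentation over $S$, so its set-theoretic image $Z_{g,d} \subseteq S$ is closed.

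The main technical ingredient is the following equivalence: $X_{\overline{k(s)}}$ fails to be algebraically hyperbolic if and only if, for every integer $N \geq 1$, there exists a pair $(g,d)$ with $d \geq N\max(g,1)$ and $s \in Z_{g,d}$. The ``only if'' direction is formal: a non-AH fiber either (i) admits a non-constant map from $\mathbb{P}^1$, so composition with degree-$k$ self-maps of $\mathbb{P}^1$ produces points of $Z_{0,k d_0}$ with $kd_0$ arbitrarily large, or (ii) admits for every $\alpha$ a smooth curve $C$ of genus $\geq 1$ and morphism $f$ to the fiber with $\deg f^*L > \alpha\, g(C)$, yielding points of the corresponding $Z_{g,d}$. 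For the converse, given $s \in Z_{g,d}$ with $d \geq N\max(g,1)$, pick a stable map $f\colon C \to X_{\overline{k(s)}}$ realizing it and decompose $C$ into irreducible components $C_i$ of geometric genus $g_i$ and $L$-degree $d_i = \deg(f|_{C_i})^*L$, noting $\sum g_i \leq g$ and $\sum d_i = d$. Either some rational component has positive degree --- giving a rational curve on $X_{\overline{k(s)}}$, hence non-AH with every constant --- or, by a pigeonhole argument, some component satisfies $d_i \geq N g_i$ with $g_i \geq 1$, and its normalization is a smooth curve with $\deg \tilde{f}^*L \geq N \,g(\tilde{C}_i)$. Letting $N \to \infty$ confirms non-AH.

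With this lemma in hand, I argue by noetherian induction on $S$: assume the conclusion holds for every proper closed subscheme, and deduce it for $S$. Reducing to $S$ irreducible with generic point $\eta$, there are two cases. If $X_{\overline{k(\eta)}}$ is algebraically hyperbolic with constant $\alpha$, then $\overline{\mathcal{M}}_{g}(X_\eta, d)$ is empty whenever $d > \alpha\max(g,1)$, so each such $Z_{g,d}$ is a proper closed subset of $S$. The key lemma gives
\[
T \;\subseteq\; \bigcup_{(g,d)\,:\,d > \alpha\max(g,1)} Z_{g,d},
\]
and hence $T = \bigcup (T \cap Z_{g,d})$. Applying the inductive hypothesis to $X \times_S Z_{g,d} \to Z_{g,d}$ for each such pair shows that every $T \cap Z_{g,d}$ is a countable union of closed subsets of $Z_{g,d}$, and therefore of $S$; so $T$ is as well. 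If instead $X_{\overline{k(\eta)}}$ is non-AH, the key lemma places $\eta \in Z_{g_N,d_N}$ for suitable $(g_N,d_N)$ and every $N$; irreducibility of $S$ forces $Z_{g_N,d_N} = S$, so a second application of the key lemma yields $T = S$, which is trivially Zariski-closed.

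The main obstacle is the key lemma, specifically the extraction of a smooth-curve witness to the failure of algebraic hyperbolicity from a stable map with large degree-to-genus ratio. The pigeonhole and normalization arguments must be carried out carefully to handle uniformly the case in which rational components of the source curve absorb the degree (producing a rational curve on the fiber) and the case in which all of the degree concentrates on components of positive geometric genus (producing, after normalization, a smooth curve with the required excess $L$-degree). Once the lemma is in place, both the noetherian induction and the role of properness of the Kontsevich moduli space in turning the $Z_{g,d}$ into honest closed subsets of $S$ are essentially formal.
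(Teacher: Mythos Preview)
Your argument is correct and complete. The paper does not give a full proof but only a sketch (see Remark~\ref{remark:ideas}): first show that the non-algebraically-hyperbolic locus is a countable union of \emph{constructible} subsets via Hom-schemes, then separately prove that algebraic hyperbolicity is stable under generization using compactness of the moduli stack of stable curves, and combine the two. Your route is organized differently: by working with the relative Kontsevich spaces $\overline{\mathcal{M}}_g(X/S,d)$ from the outset you exploit their \emph{properness} over $S$, so the images $Z_{g,d}$ are closed rather than merely constructible, and the generization step is absorbed into that properness. Your key lemma (extracting a smooth-curve witness from a stable map via normalization and pigeonhole) is exactly the translation device between stable maps and the smooth-curve definition of algebraic hyperbolicity, and it plays the role that the paper's two steps play jointly. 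The noetherian induction then replaces the general principle ``countable union of constructibles closed under specialization is countable-closed.'' Both approaches ultimately rest on the same compactness input (stable curves/maps); yours is more self-contained and makes the mechanism explicit, while the paper's outline separates the finiteness-of-type input from the specialization input more cleanly.
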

It is worth noting that this is not the exact result proven by Demailly, as it brings us to a subtle difference between the Zariski-countable topology on a variety $X$ over $\mathbb{C}$ and the induced topology on $X(\CC)$. Indeed, Demailly proved that, if $k=\CC$ and $S^{\textrm{not-ah}}$ is the set of $s$ in $S$ such that $X_s$ is not algebraically hyperbolic, then $S^{\textrm{not-ah}}\cap S(\CC)$ is closed in the countable topology on $S(\CC)$. This, strictly speaking, does not imply that  $S^{\textrm{not-ah}}$ is closed in the countable topology on   $S$. For example, if $S$ is an integral curve over $\CC$ and $\eta$ is the generic point of $S$, then $\{\eta\}$ is not a Zariski-countable open of $S$, whereas $\{\eta\}\cap S(\CC)= \emptyset$ is a Zariski-countable open of $S(\CC)$.

In \cite{vBJK} similar results are obtained for boundedness and geometric hyperbolicity.  The precise statements read as follows.
 \begin{theorem}[Countable-openness of boundedness] \label{thm:demailly-b}
  Let $S$ be a noetherian scheme over $\QQ$  and let $X \to S$ be a projective   morphism.  Then, the set of $s$ in $S$ such that $X_s$ is  bounded  is  Zariski-countable open in $S$.  
\end{theorem}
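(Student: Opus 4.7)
My plan is to adapt Demailly's strategy for algebraic hyperbolicity (Theorem \ref{thm:demailly}) to boundedness, using Theorem \ref{thm:eq_bounded} to translate the condition into finiteness of Hom-schemes of maps from curves. By that theorem, $X_s$ is bounded if and only if for every $g \geq 0$ there exists $\alpha(g) \geq 0$ with $\deg f^*\mathcal{L}_s \leq \alpha(g)$ for every morphism $f: C \to X_{\overline{k(s)}}$ from a smooth projective connected curve $C$ of genus $g$, where $\mathcal{L}$ is a fixed relatively ample line bundle on $X/S$. For each pair $(g,d)$, I would build a finite-type $S$-scheme $H_{g,d} \to S$ parameterizing such pairs $(C,f)$ with $\deg f^*\mathcal{L} = d$, by taking the relative Hom scheme of the universal curve over a finite-type cover of the moduli of smooth genus $g$ curves and restricting to the appropriate Hilbert polynomial; the image $T_{g,d} \subset S$ is constructible by Chevalley, and $s \in T_{g,d}$ precisely when $X_{\overline{k(s)}}$ admits such a morphism.

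Next, I would proceed by noetherian induction on $S$, first reducing to $S$ irreducible (since Zariski-countable closedness can be checked on each irreducible component). So assume $S$ is irreducible with generic point $\eta$, and split into cases. If $X_\eta$ is bounded, only finitely many pairs $(g,d)$—namely those with $d \leq \alpha(g)$ for the bounds coming from $X_\eta$—satisfy $\eta \in T_{g,d}$; for all others, $T_{g,d}$ is constructible omitting the generic point, so $\overline{T_{g,d}} \subsetneq S$ is proper closed. Writing $Z := \bigcup_{g,\, d > \alpha(g)} \overline{T_{g,d}}$, any $s \notin Z$ lies in $S^{\mathrm{bd}}$, so $S \setminus S^{\mathrm{bd}} \subset Z$; decomposing $Z$ as a countable union of proper closed irreducible subsets $F_m$ and applying the inductive hypothesis to each $X|_{F_m} \to F_m$ yields $(F_m)^{\mathrm{bd}}$ as a Zariski-countable open of $F_m$, so that $S \setminus S^{\mathrm{bd}} = \bigcup_m (F_m \setminus (F_m)^{\mathrm{bd}})$ is a countable union of closed subsets of $S$.

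The remaining case is $X_\eta$ non-bounded, and here my plan is to show $S^{\mathrm{bd}} = \emptyset$, which trivially makes $S^{\mathrm{bd}}$ Zariski-countable open. Take a sequence $f_n: C_n \to X_\eta$ with fixed $g(C_n) = g_0$ and $\deg f_n^*\mathcal{L}_\eta \to \infty$ and factor it through the image $f_n(C_n) \subset X_\eta$. Riemann--Hurwitz applied to $C_n \to f_n(C_n)$ forces one of two scenarios: either infinitely many images have geometric genus at most one—so $X_\eta$ contains a rational or elliptic curve—or all but finitely many have geometric genus at least two, the degrees $\deg(C_n \to f_n(C_n))$ are uniformly bounded, and the $\mathcal{L}_\eta$-degrees of the images tend to infinity. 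In either case the resulting closed subschemes of $X_\eta$ live in relative Hilbert schemes $\mathrm{Hilb}^P(X/S) \to S$, which are projective, so their images are closed and (containing $\eta$) equal $S$. Hence every $X_s$ contains analogous curves, and taking normalizations yields morphisms from smooth curves of bounded genus and unbounded degree to $X_s$, showing $X_s$ is non-bounded. The hard part will be this non-bounded case, specifically organizing the images $f_n(C_n)$ into countably many Hilbert-polynomial families (which requires bounding their arithmetic genera via delta-invariant estimates on singular specializations) and verifying that the normalizations of the specialized curves in $X_s$ still have bounded genus and produce unbounded degree. These technical points on degenerations of curves are the main obstacle, and are worked out in \cite{vBJK}.
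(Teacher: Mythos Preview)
Your overall architecture is correct and matches the paper's strategy: the non-bounded locus is governed by countably many constructible loci $T_{g,d}$ (images of Hom-schemes from the universal genus-$g$ curve), and the whole argument hinges on showing that non-boundedness is stable under specialization (equivalently, boundedness generizes). Your noetherian induction is just a repackaging of the paper's two-step scheme in Remark~\ref{remark:ideas}. One small slip: in the bounded-at-$\eta$ case you say ``only finitely many pairs $(g,d)$'' satisfy $\eta\in T_{g,d}$, which is false (there are infinitely many such pairs, one range for each $g$); fortunately your argument never uses finiteness, only that $\eta\notin T_{g,d}$ whenever $d>\alpha(g)$, so this is cosmetic.

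The substantive divergence is in how you handle the specialization step. The paper (via \cite{vBJK}) uses \emph{compactness of the moduli of stable curves/maps}: a non-constant map $C_n\to X_\eta$ from a smooth genus-$g_0$ curve is a point of the proper stack $\overline{M}_{g_0}(X/S,d_n)$, hence specializes to a stable map $C'_n\to X_s$ from a nodal curve of arithmetic genus $g_0$; each irreducible component of $C'_n$ has genus $\le g_0$, and if $X_s$ were bounded it would have no rational curves, forcing at most $g_0$ non-contracted components, so some component carries degree $\ge d_n/g_0\to\infty$ --- contradiction. Your route via Hilbert schemes of the \emph{image} curves $D_n=f_n(C_n)$ is genuinely harder: the $D_n$ may have unbounded arithmetic genus (even with bounded geometric genus), and the flat specialization $D_n^s\subset X_s$ can be reducible with an uncontrolled number of components whose normalizations have no a priori genus bound. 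Your sketch claims ``taking normalizations yields morphisms from smooth curves of bounded genus,'' but that is exactly the point that fails without further input. The stable-maps compactification is what supplies the missing control, and it is the cleaner tool here; if you want to avoid it, you must at minimum feed back the hypothesis ``$X_s$ bounded $\Rightarrow$ no rational curves'' into your Hilbert-scheme degeneration to bound the number of components, which effectively reproduces the stable-maps argument by hand.
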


 \begin{theorem}[Countable-openness of geometric hyperbolicity] \label{thm:demailly-mn}  
  Let $S$ be a noetherian scheme over $\QQ$  and let $X \to S$ be a projective   morphism.  Then, the set of $s$ in $S$ such that $X_s$ is  geometrically hyperbolic  is  Zariski-countable open in $S$.  
\end{theorem}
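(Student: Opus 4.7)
The plan is to write $S^{\mathrm{bad}} \subset S$, the complement of the geometrically hyperbolic locus, as a countable union of Zariski closed subsets. Throughout, identify $X_s$ with $X_{\overline{k(s)}}$ via implicit base change. By Proposition \ref{prop:1b_is_geomhyp} applied fiberwise, $X_s$ fails to be geometrically hyperbolic precisely when there exist a genus $g$, a smooth projective pointed curve $(C,c)$ of genus $g$ over $\overline{k(s)}$, and a point $x \in X_s(\overline{k(s)})$ such that $\underline{\Hom}((C,c),(X_s,x))$ has infinitely many $\overline{k(s)}$-points. Decomposing this Hom-scheme as a disjoint union of finite-type pieces $\underline{\Hom}^P$ along Hilbert polynomials $P$, infinitude arises either from some $\underline{\Hom}^P$ being positive-dimensional -- in which case bend-and-break, as in the proof of Proposition \ref{prop:1b_is_geomhyp}, produces a rational curve in $X_s$ through $x$ -- or from infinitely many distinct $P$ having non-empty fibers. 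I would handle the rational-curve failure via a separate countable union of closed subsets, and the remaining genus-$g \geq 1$ contribution via a compactness argument in a suitable Kontsevich moduli.

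For the rational-curve stratum, for each non-zero Hilbert polynomial $P$ the image $R_P \subset S$ of the Kontsevich moduli $\overline{\mathcal{M}}_{0,0}(X/S, P)$ of rational stable maps of class $P$, proper over $S$, is Zariski closed; summing over $P \neq 0$ exhibits this stratum as a countable union of Zariski closed subsets of $S$.

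For each $g \geq 1$, let $\overline{M}_g$ be a scheme cover of the Deligne--Mumford compactification $\overline{\mathcal{M}}_{g,1}$ (proper over $\ZZ$), and set $\overline{B}_g := \overline{M}_g \times_\ZZ X$, which is proper over $S$. For each Hilbert polynomial $P$, the relative Kontsevich moduli space $\overline{\mathcal{P}}_{g,P}$ of pointed stable maps of class $P$ (genus $g$, one marked point) admits a natural proper morphism to $\overline{B}_g$, recording the stabilized source curve with its marked point and the image in $X$ of that marked point; let $Z_{g,P} \subset \overline{B}_g$ denote its closed image. For each $N \geq 1$ and each size-$N$ subset $\Sigma$ of distinct Hilbert polynomials, the intersection $\bigcap_{P \in \Sigma} Z_{g,P}$ is closed in $\overline{B}_g$, and its projection to $S$ is Zariski closed by properness. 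Let $D_{g,N} \subset S$ be the union over such $\Sigma$ of these closed projections, namely the set of $s$ for which some $(m,x) \in \overline{B}_{g,s}$ realizes at least $N$ distinct Hilbert polynomials as pointed stable maps. The chain $D_{g,1} \supset D_{g,2} \supset \cdots$ is descending; by Noetherianity of $S$ it stabilizes at some $N_0$, so $D_g := \bigcap_{N \geq 1} D_{g,N} = D_{g,N_0}$ is Zariski closed in $S$.

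Finally, a compactness argument in $\overline{B}_g$ combined with a specialization analysis in the $\overline{\mathcal{P}}_{g,P}$'s using bend-and-break identifies $D_g$, up to the rational-curve stratum, with the set of $s$ for which $X_s$ fails to be geometrically hyperbolic via a genus-$g$ witness: for $s \in D_g$, witnesses realizing $\geq N$ classes for all $N$ yield, by compactness of $\overline{B}_g$ and properness of each $\overline{\mathcal{P}}_{g,P} \to \overline{B}_g$, a limit $(m_\infty, x_\infty) \in \overline{B}_{g,s}$ at which infinitely many Hilbert polynomials are realized as pointed stable maps; such a limit either lies in the open smooth locus of $\overline{M}_g$ -- yielding a genuine smooth-curve witness of non-geometric hyperbolicity at $s$ -- or involves a nodal degeneration whose rational components force a rational curve in $X_s$, absorbed by the separately handled rational-curve stratum. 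Assembling $S^{\mathrm{bad}}$ as the union of the rational-curve stratum and $\bigcup_{g \geq 1} D_g$ yields a countable union of Zariski closed subsets, completing the proof. The main obstacle is this specialization step: ensuring that stable-map limits from the Kontsevich boundary either yield smooth witnesses of non-hyperbolicity or a rational curve already in the rational-curve stratum, so that $D_g$ captures exactly (and only) the required genus-$g$ contribution to $S^{\mathrm{bad}}$.
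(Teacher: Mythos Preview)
Your approach contains a genuine gap in the Noetherianity step. You define $D_{g,N}$ as the union, over all size-$N$ subsets $\Sigma$ of Hilbert polynomials, of the closed projections of $\bigcap_{P\in\Sigma} Z_{g,P}$. Since there are countably many Hilbert polynomials, this is a \emph{countable} union of closed subsets of $S$, and there is no reason for it to be Zariski closed. The descending chain condition on a Noetherian scheme applies to descending chains of closed subsets, not to descending chains of Zariski-countable-closed subsets; already on $\mathbb{A}^1_{\mathbb{C}}$ the sets $D_N = \{n\in\mathbb{Z} : n\geq N\}$ form a strictly descending chain of Zariski-countable-closed subsets. So your conclusion that the chain $D_{g,1}\supset D_{g,2}\supset\cdots$ stabilizes, and hence that $D_g$ is Zariski closed, is unjustified.

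The paper's argument (sketched in Remark~\ref{remark:ideas}) avoids this by separating two logically independent steps. First, one shows that the non-geometrically-hyperbolic locus $S^{\mathrm{bad}}$ is a countable union of \emph{constructible} subsets: this follows formally from the decomposition of pointed Hom-schemes as countable unions of quasi-projective schemes together with Chevalley's theorem. Second, one shows that $S^{\mathrm{bad}}$ is stable under specialization (equivalently, that geometric hyperbolicity is stable under generization); this is where the compactness of the moduli stack of stable curves enters. The two steps combine via the elementary observation that a specialization-stable countable union of constructible subsets of a Noetherian scheme is automatically a countable union of closed subsets, since the closure of each constructible piece consists of specializations of its points and hence already lies in $S^{\mathrm{bad}}$. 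Your argument tries to carry out both steps simultaneously via proper Kontsevich moduli, which forces you to control boundary contributions and to reason about countable intersections of countable unions --- and this is where the Noetherianity shortcut breaks. The compactness/specialization content you flag as ``the main obstacle'' is essentially the paper's second step, but isolating it as a pure generization statement makes the first step trivial and removes any need to argue that $D_g$ is closed.
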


\begin{remark}[What goes into the proofs of Theorems \ref{thm:demailly}, \ref{thm:demailly-b}, and \ref{thm:demailly-mn}?]\label{remark:ideas}
The main idea behind all these proofs is quite simple. Let us consider Theorem \ref{thm:demailly}.  First, one shows that the  set of $s$ in $S$ such that $X_s$ is \emph{not} algebraically hyperbolic is the image of countably many constructible subsets of $S$. This is essentially a consequence of the fact that  the Hom-scheme between two  projective schemes is a countable union of quasi-projective schemes. Then,   it suffices to note that the set of $s$ in $S$ with $X_s$ algebraically hyperbolic is stable under generization.   This relies on compactness  properties of the moduli stack of stable curves.
\end{remark}

Concerning Lang's question on the locus of groupless varieties, we note that in \cite{JVez} it is shown that the set of $s$ in $S$ such that $X_s$ is groupless is open in the Zariski-countable topology on $S$.

 \begin{theorem}[Countable-openness of grouplessness]\label{thm:group}   Let $S$ be a noetherian scheme over $\QQ$ and let $X \to S$ be a projective morphism. Then, the set of $s$ in $S$ such that   $X_{\overline{k(s)}}$  is  groupless is  Zariski-countable open in $S$.
\end{theorem}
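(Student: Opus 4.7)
The plan is to show that the non-groupless locus $S^{ng} := \{s \in S : X_{\overline{k(s)}} \text{ is not groupless}\}$ equals the complement of a Zariski-countable open, by writing it as a countable union of Zariski-closed subsets. The approach splits into two stages: first realize $S^{ng}$ as a countable union of \emph{constructible} subsets by parametrizing witnesses to non-grouplessness via finite-type moduli, and then promote each constructible to a closed subset by proving that non-grouplessness is stable under specialization.

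For the parametrization, since $X \to S$ is projective, any non-constant morphism $\mathbb{G}_{m,\overline{k(s)}} \to X_{\overline{k(s)}}$ extends to a non-constant $\mathbb{P}^1_{\overline{k(s)}} \to X_{\overline{k(s)}}$, so for each Hilbert polynomial $h$ the image in $S$ of the non-constant locus of the finite-type scheme $\underline{\Hom}^h_S(\mathbb{P}^1_S, X)$ is constructible by Chevalley. For maps from abelian varieties, fix $g \geq 1$, polarization degree $d$, level $N \geq 3$, and Hilbert polynomial $h'$: the fine moduli scheme $\mathcal{A}^{[N]}_{g,d}$ of $d$-polarized $g$-dimensional abelian varieties with full level-$N$ structure carries a universal family $\mathcal{A}^{\mathrm{univ}}$, and over $S \times \mathcal{A}^{[N]}_{g,d}$ the relative scheme $\underline{\Hom}^{h'}(\mathcal{A}^{\mathrm{univ}} \times S, X \times \mathcal{A}^{[N]}_{g,d})$ is of finite type; projecting its non-constant locus to $S$ yields a constructible subset. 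Since every abelian variety over an algebraically closed field of characteristic zero admits a polarization of some degree, running over the countable index set exhausts $S^{ng}$ as a countable union of constructibles.

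For stability under specialization, I reduce by standard spreading-out to the case where $S = \Spec R$ is a complete DVR with algebraically closed fraction field $K$ and residue field $k$, and I am given either (a) a non-constant $\varphi : \mathbb{P}^1_K \to X_K$, or (b) a non-constant $\varphi : A_K \to X_K$ from an abelian variety. In case (a), I take the closure of $\Gamma_\varphi$ inside the $R$-projective scheme $\mathbb{P}^1_R \times_R X_R$; the flat limit $\Gamma_k$ has the same Hilbert polynomial as $\Gamma_\varphi$, in particular arithmetic genus $0$ and positive degree in the $X_k$ direction, so its reduced structure is a tree of rational curves at least one of which projects non-constantly to $X_k$, producing a rational curve in $X_k$. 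In case (b), after a finite extension of $R$, semistable reduction for abelian varieties (Grothendieck) provides a semi-abelian Néron model $\mathcal{A} \to \Spec R$; since $\mathcal{A}$ is smooth and $X_R$ is projective, \cite[Proposition~6.2]{GLL} extends $\varphi$ to a morphism $\mathcal{A} \setminus Z \to X_R$ with $Z$ of codimension $\geq 2$. Restricting to the special fibre gives a non-constant morphism from a big open of the semi-abelian variety $\mathcal{A}_k$; writing $\mathcal{A}_k$ as an extension $1 \to T \to \mathcal{A}_k \to B \to 0$ of an abelian variety $B$ by a torus $T$, this morphism either descends non-trivially to $B$ (yielding a non-constant map from a big open of an abelian variety to $X_k$) or restricts non-trivially to a $T$-fibre (yielding a non-constant $\mathbb{G}_m \to X_k$); either way $X_k$ is not groupless by Theorem \ref{lem:groupless2}.

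The main obstacle I anticipate is case (b) of the specialization argument. The abelian variety can genuinely degenerate, and one must carefully ensure both that the Néron model has semi-abelian reduction after a finite base change and that the restriction to the special fibre of the extended morphism remains non-constant rather than collapsing the entire big open to a single point of $X_k$. Establishing non-collapsing rigorously requires controlling the Hilbert polynomial of the graph inside a suitable projective compactification, and the lack of a natural projective moduli of semi-abelian varieties makes the naive flat-limit argument harder here than in case (a). Precisely this difficulty is what motivates the non-archimedean approach followed in \cite{JVez}, where the algebro-geometric extension problem is replaced by the extension of analytic morphisms to a projective Berkovich space.
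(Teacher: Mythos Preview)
Your overall two-step structure---realize the non-groupless locus as a countable union of constructibles, then show it is closed under specialization---matches the paper's architecture exactly, and your Step~1 as well as case~(a) of Step~2 are correct. The gap you flag in case~(b) is genuine and not merely a matter of bookkeeping. Two remarks on what you wrote: first, the extension to a big open $\mathcal{A}\setminus Z$ comes for free from regularity of $\mathcal{A}$ and properness of $X_R$ via the valuative criterion; \cite[Proposition~6.2]{GLL} is the result that extends across $Z$ \emph{when the fibres of the target have no rational curves}, not the one that produces the big open. Second, and more seriously, the restriction to $\mathcal{A}_k\setminus Z_k$ can genuinely collapse to a point even when the generic-fibre map is non-constant: since the N\'eron model $\mathcal{A}$ is not proper in the bad-reduction case, the standard ``image is flat over $R$, hence positive-dimensional on the special fibre'' argument does not apply, and the non-constancy can be hidden entirely in boundary components of a compactification, which a priori carry no group structure.

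The paper circumvents this by working in the contrapositive and passing to a complete non-archimedean field $K$. Assuming the special fibre $\mathcal{X}_0$ is groupless, one first shows by a direct analysis of the reduction map $X^{\an}\to\mathcal{X}_0$ that every rigid-analytic morphism $\mathbb{G}_{m,K}^{\an}\to X^{\an}$ is constant; in particular $X_K$ has no rational curves. The key step is then Bosch--L\"utkebohmert uniformization: any abelian variety over $K$ admits a rigid-analytic cover by the analytification of a semi-abelian variety whose abelian part has good reduction, and the $\mathbb{G}_m^{\an}$-triviality just established forces any analytic map from $A^{\an}$ to $X^{\an}$ to factor through that good-reduction part. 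One is now in the good-reduction case \emph{with the hypothesis that $\mathcal{X}_0$ has no rational curves already in hand}, so \cite[Proposition~6.2]{GLL} extends the algebraized morphism to the full abelian scheme $\mathcal{A}$; grouplessness of $\mathcal{X}_0$ makes the special-fibre map constant, and properness of $\mathcal{A}\to\Spec\mathcal{O}_K$ then forces the generic-fibre map to be constant. Thus the rigid-analytic uniformization replaces your semistable-N\'eron-model reduction, and the contrapositive framing supplies exactly the ``no rational curves'' hypothesis needed to close the extension argument.
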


 We finish these notes with a discussion of the proof of Theorem \ref{thm:group}. It will naturally lead us to introducing a non-archimedean counterpart to Lang-Vojta's conjecture.   
 
 \subsection{Non-archimedean hyperbolicity and  Theorem \ref{thm:group}}  
  Let $S$ be a noetherian scheme over $\QQ$ and let $X \to S$ be a projective morphism. Define $S^{n-gr}$ to be the set of $s$ in $S$ such that $X_{\overline{k(s)}}$ is not  groupless.   Our goal is to prove Theorem \ref{thm:group}, i.e., to show that $S^{n-gr}$ is Zariski-countable closed, following the arguments of \cite{JVez}. As is explained in Remark \ref{remark:ideas}, we prove  this   in two steps.
 
 First, one shows that $S^{n-gr}$ is a countable union of constructible subsets.  This step relies on some standard moduli-theoretic techniques. Basically, to say that $X$ is not groupless over $k$ is equivalent to saying that, there is an integer $g$ such that  the Hom-stack $\underline{\Hom}_{\mathcal{A}_g}(\mathcal{U}_g,X\times \mathcal{A}_g)\to \mathcal{A}_g$ has a non-empty fibre over some $k$-point of $\mathcal{A}_g$, where $\mathcal{A}_g$ is the stack of principally polarized $g$-dimensional abelian schemes over $k$, and $\mathcal{U}_g\to \mathcal{A}_g$ is the universal family. We will not discuss this argument and refer the reader to \cite{JVez} for details on this part of the proof.
 
Once the first step is completed, to conclude the proof, it suffices to show that the notion of being groupless is stable under generization. To explain how to do this, we introduce   a new notion of      hyperbolicity for  rigid analytic varieties (and also adic spaces) over a non-archimedean field $K$ of characteristic zero; see \cite[\S2]{JVez}. This notion is inspired by the earlier work of Cherry \cite{Cherry} (see also \cite{AnCherryWang, CherryKoba, CherryRu, LevinWang, LinWang}).
 
 If $K$ is a complete algebraically closed non-archimedean valued field of characteristic zero and $X$ is a finite type scheme over $K$, we let $X^{an}$  be the associated rigid analytic variety over $K$. We say that a variety over $K$ is \emph{$K$-analytically Brody hyperbolic} if, for every finite type connected group scheme $G$ over $K$, every morphism $G^{an}\to X^{an}$ is constant. It follows from this definition  that a $K$-analytically Brody hyperbolic variety is groupless. It seems reasonable to suspect that the converse of this statement holds for projective varieties.

 \begin{conjecture}[Non-archimedean~Lang-Vojta]\label{conj:lang_na}  Let $K$ be an algebraically closed complete non-archimedean valued field of characteristic zero, and let $X$ be a projective variety over $K$.  If $X$ is groupless over $K$, then $X$ is $K$-analytically Brody hyperbolic. 
\end{conjecture}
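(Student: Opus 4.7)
The plan is to reduce the conjecture to three building-block cases via the structure theory of algebraic groups, and to address each case with non-archimedean analytic techniques. By Chevalley's structure theorem in characteristic zero, any connected finite type group scheme $G$ over $K$ sits in an exact sequence $1 \to L \to G \to A \to 1$, with $L$ a connected linear algebraic group and $A$ an abelian variety; moreover $L$ is itself an extension of a torus $T \cong \mathbb{G}_m^r$ by a unipotent group $U$, which in characteristic zero is isomorphic as a variety to affine space $\mathbb{A}^n$. Consequently $G^{\an}$ is analytically fibered over $A^{\an}$ with fibers isomorphic to $L^{\an}$, and $L^{\an}$ fibers over $T^{\an}$ with fibers $U^{\an}$. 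A coordinate-by-coordinate argument (fixing all but one factor and reducing to a single copy of the building block) combined with the rigid analytic identity principle applied to the dense set of $K$-points reduces the conjecture to the following three cases: every analytic morphism $\mathbb{G}_m^{\an} \to X^{\an}$ is constant; every analytic morphism $\mathbb{G}_a^{\an} \to X^{\an}$ is constant; and every analytic morphism $A^{\an} \to X^{\an}$ from an abelian variety is constant.

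The abelian case is handled by non-archimedean GAGA in K\"opf's form: since $A^{\an}$ is the analytification of a proper scheme and $X$ is projective, any analytic morphism $A^{\an} \to X^{\an}$ is the analytification of an algebraic morphism $A \to X$, which must be constant by the grouplessness hypothesis on $X$. For the two affine cases, the strategy is to extend the analytic map across the ``boundary'' of $\mathbb{G}_m^{\an}$ (resp. $\mathbb{G}_a^{\an}$) to an analytic morphism $\mathbb{P}^{1,\an} \to X^{\an}$; GAGA then algebraizes the extension to a morphism $\mathbb{P}^1 \to X$, which is constant because grouplessness of the projective $X$ forces the absence of rational curves (any map from $\mathbb{P}^1$ would factor through an elliptic curve, contradicting grouplessness). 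In outline, one picks an ample line bundle on $X$ and pulls it back to a line bundle on $\mathbb{G}_m^{\an}$ (resp. $\mathbb{G}_a^{\an}$); if the map does not extend, the behaviour of the associated sections near the puncture must exhibit an essential singularity, which one aims to rule out using the geometric hypothesis on $X$.

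The main obstacle is precisely this extension step. In rigid analytic geometry, analytic functions on $\mathbb{G}_m^{\an}$ and on $\mathbb{A}^{1,\an}$ are Laurent or power series that can display genuine essential behaviour at the boundary, and the properness of $X^{\an}$ alone does not force such a map to extend to $\mathbb{P}^{1,\an}$; there are many non-algebraic analytic maps into affine space. One must therefore leverage the additional input that $X$ contains neither rational curves nor non-trivial images of abelian varieties. In the spirit of Cherry's work \cite{Cherry} and the subsequent non-archimedean value-distribution theory developed by Lin-Wang, Levin-Wang, and in \cite{JVez}, one would seek a non-archimedean analogue of the Bloch--Ochiai--Kawamata theorem for projective groupless targets, ruling out essential singularities by contradiction with the groupless hypothesis. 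The difficulty is that the current state of the theory handles only targets that embed into semiabelian varieties; producing the required analogue for an arbitrary projective groupless $X$ appears to demand genuinely new non-archimedean jet-theoretic or potential-theoretic techniques, which is why Conjecture~\ref{conj:lang_na} remains open in full generality and is only proven in \cite{JVez} under suitable additional assumptions on $X$ sufficient to deduce the Zariski-countable openness of grouplessness.
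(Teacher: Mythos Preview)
The statement you are attempting is a \emph{conjecture}, and the paper does not claim a proof of it; indeed, the paper only establishes the special case recorded as Theorem~\ref{thm:na} (equicharacteristic zero, with a proper model whose special fibre is groupless). You acknowledge this yourself in your final paragraph, so your write-up is better read as a proof \emph{strategy} together with an honest identification of the obstruction, rather than as a proof.

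As a strategy, your reduction is sound and in fact slightly sharper than what the paper needs for its special case. Your observation that the abelian-variety case follows immediately from rigid GAGA plus the grouplessness hypothesis on $X$ is correct and is simpler than the route taken in the proof of Theorem~\ref{thm:na}; the paper cannot argue this way there because its hypothesis is that the \emph{special} fibre $\mathcal{X}_0$ is groupless, not that $X_K$ is, so it must pass through Bosch--L\"utkebohmert uniformization and the extension of maps across the model. Note also that your $\mathbb{G}_a$ case is redundant: once every analytic map $\mathbb{G}_{m}^{\an}\to X^{\an}$ is constant, restriction along $\mathbb{G}_{m}^{\an}\subset\mathbb{G}_{a}^{\an}$ plus the identity principle handles $\mathbb{G}_a$ for free, which is essentially how \cite[Lemma~2.14]{JVez} packages the reduction.

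The genuine gap is exactly the one you name: there is no known mechanism, for an arbitrary projective groupless $X$ over $K$, to rule out an essential singularity of an analytic map $\mathbb{G}_m^{\an}\to X^{\an}$ at $0$ or $\infty$. Your suggestion to pull back an ample bundle and analyse sections does not by itself produce a contradiction with grouplessness, and the analogy with Bloch--Ochiai--Kawamata currently only goes through when $X$ sits in a semiabelian variety (Cherry's theorem). In the paper's special case this step is handled by the reduction map $X^{\an}\to\mathcal{X}_0$ and a residue-field analysis, which crucially uses the model with groupless special fibre; absent such a model, the step remains open, and so does the conjecture.
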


In  \cite{Cherry} Cherry proves this     conjecture   for  closed subvarieties of abelian varieties. That is, Cherry proved the non-archimedean analogue of the Bloch--Ochiai--Kawamata theorem (Theorem \ref{thm:bok}) for closed subvarieties of abelian varieties.

In \cite{JVez} it is shown that  the above conjecture holds for projective varieties over a non-archimedean field $K$, assuming that  $K$ is of equicharacteristic zero and $X$ is a ``constant'' variety over $K$ (i.e., can be defined over the residue field of $K$).  This actually follows from the following more general result.
 
 \begin{theorem}\label{thm:na}
 Let $K$ be an algebraically closed complete non-archimedean valued field of equicharacteristic zero with valuation ring $\OO_K$, and let $\mathcal{X}\to \Spec \OO_K$ be a proper flat morphism of schemes.   If the special fibre $\mathcal{X}_0$ of $\mathcal{X}\to \Spec \OO_K$ is groupless, then the generic fibre $\mathcal{X}_K$ is $K$-analytically Brody hyperbolic.
 \end{theorem}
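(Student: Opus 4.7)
The plan is, after invoking Chevalley's structure theorem for connected finite-type group schemes over the characteristic-zero field $K$, to reduce to showing that every rigid-analytic morphism $f:G^{an}\to\mathcal{X}_K^{an}$ is constant when $G=\mathbb{G}_{m,K}$ and when $G=A$ is an abelian variety over $K$. The organizing tool in both cases is the specialization map $\mathrm{sp}:\mathcal{X}_K^{an}\to\mathcal{X}_0$, which exists because $\mathcal{X}\to\Spec\OO_K$ is proper, together with the tubes $\mathrm{sp}^{-1}(x)\subset\mathcal{X}_K^{an}$ over closed points $x$ of $\mathcal{X}_0$.

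For $G=\mathbb{G}_{m,K}$, I would cover $\mathbb{G}_{m,K}^{an}$ by the affinoid circles $A_r=\{|t|=r\}$ for $r\in|K^\times|$, each of which (after rescaling) is the generic fibre of the canonical formal model $\mathrm{Spf}\,\OO_K\langle t,t^{-1}\rangle$ with special fibre $\mathbb{G}_{m,\kappa}$, where $\kappa$ denotes the residue field of $\OO_K$. By properness of $\mathcal{X}/\OO_K$ and the valuative criterion, the restriction $f|_{A_r}$ extends to a morphism of formal $\OO_K$-schemes, whose reduction is an algebraic morphism $\bar{f}_r:\mathbb{G}_{m,\kappa}\to\mathcal{X}_0$. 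Grouplessness of $\mathcal{X}_0$ forces each $\bar{f}_r$ to be constant, and connectedness of $\mathbb{G}_{m,K}^{an}$ together with the continuity of $\mathrm{sp}$ ensures that all these constants coincide at a single closed point $x_0$ of $\mathcal{X}_0$. Thus $f(\mathbb{G}_{m,K}^{an})\subset\mathrm{sp}^{-1}(x_0)$. A non-archimedean Liouville argument --- expanding $f$ in local coordinates on the tube and using that $|a_n|r^n$ must be bounded for \emph{all} $r\in|K^\times|$ --- then forces $f$ to be constant.

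For $G=A$ an abelian variety, I would pass to a finite extension to obtain semistable reduction, invoke the Raynaud/Bosch--L\"utkebohmert uniformization $E^{an}\twoheadrightarrow A^{an}$ by a semiabelian variety with good reduction, and repeat the argument with the N\'eron model specialization. Grouplessness of $\mathcal{X}_0$ again kills the induced map on special fibres, and the same Liouville-type argument for tubes concludes.

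The main obstacle will be the ``propagation step'' --- upgrading constancy of the reduction to constancy of $f$ itself. This requires both (i) showing that the image of $f$ lies entirely in a single tube, which hinges on connectedness of $G^{an}$ and on a Berkovich-topological continuity argument for $\mathrm{sp}$; and (ii) a non-archimedean rigidity statement for analytic maps from algebraic groups into tubes, which relies essentially on the equicharacteristic zero hypothesis so that formal neighborhoods behave like formal polydiscs. Much of the needed machinery for step (ii) is already present in Cherry's work for closed subvarieties of abelian varieties, which should serve as the template here.
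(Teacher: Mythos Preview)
Your overall architecture matches the paper's: both reduce via structure theory to $\mathbb{G}_m$ and abelian varieties, both invoke Bosch--L\"utkebohmert uniformization to reduce the abelian case to good reduction, and both exploit the specialization map together with grouplessness of $\mathcal{X}_0$. For $\mathbb{G}_m$, your formal-model/Liouville approach and the paper's residue-field analysis of $\mathbb{G}_{m,K}^{\an}\to X^{\an}\to\mathcal{X}_0$ amount to the same thing.

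The good-reduction abelian case is where you and the paper genuinely diverge. The paper does \emph{not} repeat the tube argument. Instead it algebraizes $A^{\an}\to X^{\an}$ by GAGA (legitimate since $A$ is proper), extends to a big open $U\subset\mathcal{A}$ by the valuative criterion, then to all of $\mathcal{A}$ using that $\mathcal{X}_0$ has no rational curves (via the extension result of Gabber--Liu--Lorenzini), and finally deduces constancy on the generic fibre from constancy on the special fibre by a properness/fibre-dimension argument.

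Your proposed analytic route has a real gap. The Liouville argument for $\mathbb{G}_m$ works precisely because $|a_n|r^n$ must stay bounded as $r$ ranges over \emph{all} of $|K^\times|$; for a proper $A^{\an}$ there is no unbounded family of radii to exploit, so ``the same Liouville-type argument'' does not apply. What would work instead is a maximum-modulus argument: once $f(A^{\an})$ lies in a tube, compactness of $A^{\an}$ forces the image into an affinoid, and global functions on the proper connected $A^{\an}$ are constant. That is a valid fix, but it is a different mechanism. A smaller issue: your appeal to ``properness and the valuative criterion'' to extend $f|_{A_r}$ to a map of formal models is not quite right---Raynaud's theory only gives this after an admissible blowup of the source, so the reduced map has source a blowup of $\mathbb{G}_{m,\kappa}$ (resp.\ of $\mathcal{A}_0$), and you must separately argue that the exceptional rational curves get contracted in $\mathcal{X}_0$ before grouplessness applies. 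The paper's GAGA-then-algebraize route sidesteps both issues.
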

 \begin{proof}  
 This is the main result of \cite{JVez} and  is proven in three steps.  Write $X:=\mathcal{X}_K$.
 
 First, one   shows that every morphism $\mathbb{G}_{m,K}^{\an}\to X^{\an}$ is constant  by considering the ``reduction'' map $X^{\an}\to X_0$ and a careful analysis of the residue fields of points  in the image of composed map $\mathbb{G}_{m,K}^{\an}\to X^{\an}\to \mathcal{X}_0$; see \cite[\S 5]{JVez} for details.  This implies that $X $ has  no rational curves.
 
 Now, one wants to show that every morphism $A^{\an}\to X^{\an}$ with $A$ some abelian variety over $K$ is constant.  Instead of appealing to GAGA and trying to use algebraic arguments, we   appeal to the uniformization theorem of Bosch-L\"utkebohmert for abelian varieties. This allows us to reduce to the case that $A$ has good reduction over $\OO_K$. In this reduction step we use that every morphism $\mathbb{G}_{m,K}^{\an}\to X^{\an}$ is constant (which is what we established in the first part of this proof); we refer the reader to \cite[Theorem~2.18]{JVez} for details. 
 
Thus, we have reduced to showing that, for 
$A$ an abelian variety over $K$ \emph{with good reduction over $\OO_K$}, every morphism   $A^{\an}\to X^{\an}$    is constant.   To do so, as $A$ has good reduction over $\OO_K$, we  may   let $\mathcal{A}$ be a smooth proper model for $A$ over $\OO_K$.  Note that the non-constant morphism $A^{\an}\to X^{\an}$ over $K$ algebraizes by GAGA, i.e., it is the analytification of a non-constant morphism $A\to X$. By the valuative criterion of properness, there is a dense open $U\subset \mathcal{A}$ whose complement is of codimension at least two and a morphism $U\to \mathcal{X}$ extending the morphism $A\to X$ on the generic fibre. Now, since $\mathcal{X}_0$ is groupless, it has no rational curves. In particular, as $\mathcal{A}\to \Spec \OO_K$ is smooth,  the morphism $U\to \mathcal{X}$ extends to a morphism $\mathcal{A}\to \mathcal{X}$  by \cite[Proposition~6.2]{GLL}. However, since $\mathcal{X}_0$ is groupless, this morphism is constant on the special fibre. The latter  implies (as $\mathcal{A}\to \Spec \OO_K$ is proper) that the morphism on the generic fibre is constant; see \cite[\S3.2]{JVez} for details. We have shown that, for every abelian variety $A$ over $K$, every morphism $A\to X$ is constant and that every morphism $\mathbb{G}_{m}^{\an}\to X^{\an}$ is constant.
 
 Finally, by adapting the proof of Lemma \ref{lem:why_groupless} one can show that the above  implies that, for every finite type connected group scheme $G$ over $K$, every morphism $G^{\an}\to X^{\an}$ is constant, so that $X$ is $K$-analytically Brody hyperbolic (see \cite[Lemma~2.14]{JVez} for details) .
 \end{proof}
 
To conclude the proof of Theorem \ref{thm:group}, we point out that   a  straightforward  application of Theorem \ref{thm:na}  shows that   being groupless is stable under generization, as required. \qed \\
 
An important problem in the study of non-archimedean hyperbolicity at this moment    is finding   a ``correct'' analogue of the Kobayashi pseudometric (if there is any at all).  Cherry defined an analogue of the Kobayashi metric but it does not have the right properties, as he   showed in \cite{CherryKoba} (see also \cite[\S3.5]{JVez}). A ``correct'' analogue of the Kobayashi metric in the non-archimedean context would most likely have formidable consequences. Indeed, it seems reasonable to suspect that a $K$-analytic Brody hyperbolic projective variety is in fact ``Kobayashi hyperbolic'' over $K$ and that   ``Kobayashi hyperbolic'' projective varieties over $K$ are bounded over $K$ by some version of the Arzel\`a-Ascoli theorem.

 \bibliography{refs_course}{}
\bibliographystyle{plain}

\end{document}